\numberwithin{equation}{section} 
\numberwithin{figure}{section} 
\theoremstyle{plain}
\newtheorem{thm}{Theorem}[section]
  \theoremstyle{definition}
  \newtheorem{defn}[thm]{Definition}
\newtheorem*{ackn}{Acknowledgement}
  \theoremstyle{plain}
  \newtheorem{lem}[thm]{Lemma}
  \theoremstyle{plain}
  \newtheorem{prop}[thm]{Proposition}
  \newtheorem{cor}[thm]{Corollary}
  \theoremstyle{remark}
  \newtheorem*{rem*}{Remark}
 \theoremstyle{definition}
  \newtheorem{example}[thm]{Example}
  \theoremstyle{remark}
  \newtheorem{rem}[thm]{Remark}
\def\ft#1{{\mathsf #1}}
\def\chow{{\mathscr X}}
\def\hcoY{{\mathscr Y}}
\def\Hes{{\mathscr H}}
\def\UU{{\mathscr U}}
\def\Zpq{{\mathscr Z}}
\def\Prt{{\mathscr P}}
\def\Lrho{\text{\large{\mbox{$\rho\hskip-2pt$}}}}
\def\tLrho{\text{\large{\mbox{$\tilde\rho\hskip-2pt$}}}}
\def\Lpi{\text{\large{\mbox{$\pi\hskip-2pt$}}}}
\def\Lp{\text{\large{\mbox{$p\hskip-1pt$}}}}
\font\eu=eusm10 at 10pt
\def\eF{\text{\eu F}}
\def\eG{\text{\eu G}}
\def\eQ{\text{\eu W}}
\def\eS{\text{\eu U}}
\def\oF#1{\overset{\,_\circ\;\;\;}{F^{#1}}}
\newcommand{\vcorr}[3][1]{%
  \begingroup
    \tabcolsep=.5\tabcolsep
    \sbox0{%
      \begin{tabular}[b]{@{}l}%
        #3%
         \tabularnewline
      \end{tabular}%
    }%
    \settoheight{\dimen0 }{%
      \rotatebox{#2}{%
        \copy0 %
        \kern-\tabcolsep
      }%
    }%
    \rule{0pt}{#1\dimen0}%
    \setlength{\wd0 }{1em}%
    \setlength{\ht0 }{1em}%
    \rotatebox{#2}{\usebox{0}}%
  \endgroup
}
\newenvironment{fcaption}{\begin{list}{}{
\setlength{\leftmargin}{35pt}
\setlength{\rightmargin}{35pt}
\setlength{\labelsep}{5pt}
}}{\end{list}}
\newenvironment{myitem2}{\begin{list}{}{
\setlength{\leftmargin}{0.5cm}
\setlength{\itemindent}{-0.3cm}
\setlength{\itemsep}{0cm}
}}{\end{list}}
\begin{document}
\global\long\def\sA{\mathcal{A}}
 \global\long\def\sB{\mathcal{B}}
 \global\long\def\sC{\mathcal{C}}
 \global\long\def\sD{\mathcal{D}}
 \global\long\def\sE{\mathcal{E}}
 \global\long\def\sF{\eF}
 \global\long\def\sG{\eG}
 \global\long\def\sH{\mathcal{H}}
 \global\long\def\sI{\mathcal{I}}
 \global\long\def\sJ{\mathcal{J}}
 \global\long\def\sK{\mathcal{K}}
 \global\long\def\sL{\mathcal{L}}
 \global\long\def\sN{\mathcal{N}}
 \global\long\def\sM{\mathcal{M}}
 \global\long\def\sO{\mathcal{O}}
 \global\long\def\sP{\mathcal{P}}
 \global\long\def\sS{\mathcal{S}}
 \global\long\def\sR{\mathcal{R}}
 \global\long\def\sQ{\mathcal{Q}}
 \global\long\def\sT{\mathcal{T}}
 \global\long\def\sU{\mathcal{U}}
 \global\long\def\sV{\mathcal{V}}
 \global\long\def\sW{\mathcal{W}}
 \global\long\def\sX{\mathcal{X}}
 \global\long\def\sY{\mathcal{Y}}
 \global\long\def\sZ{\mathcal{Z}}
 \global\long\def\tA{{\widetilde{A}}}
 \global\long\def\mA{\mathbb{A}}
 \global\long\def\mC{\mathbb{C}}
 \global\long\def\mF{\mathbb{F}}
 \global\long\def\mG{\mathbb{G}}
 \global\long\def\G{{\bf G}}
 \global\long\def\mN{\mathbb{N}}
 \global\long\def\mP{\mathbb{P}}
 \global\long\def\mQ{\mathbb{Q}}
\def\frQ{{\mathfrak Q}}
 \global\long\def\mZ{\mathbb{Z}}
 \global\long\def\mW{\mathbb{W}}
 \global\long\def\Ima{\mathrm{Im}\,}
 \global\long\def\Ker{\mathrm{Ker}\,}
 \global\long\def\Alb{\mathrm{Alb}\,}
 \global\long\def\ap{\mathrm{ap}}
 \global\long\def\Bs{\mathrm{Bs}\,}
 \global\long\def\Chow{\mathrm{Chow}}
 \global\long\def\CP{\mathrm{CP}}
 \global\long\def\Div{\mathrm{Div}\,}
 \global\long\def\divi{\mathrm{div}\,}
 \global\long\def\expdim{\mathrm{expdim}\,}
 \global\long\def\ord{\mathrm{ord}\,}
 \global\long\def\Aut{\mathrm{Aut}\,}
 \global\long\def\Hilb{\mathrm{Hilb}}
 \global\long\def\Hom{\mathrm{Hom}}
 \global\long\def\id{\mathrm{id}}
 \global\long\def\Ext{\mathrm{Ext}}
 \global\long\def\sHom{\mathcal{H}{\!}om\,}
 \global\long\def\Lie{\mathrm{Lie}\,}
 \global\long\def\mult{\mathrm{mult}}
 \global\long\def\opp{\mathrm{opp}}
 \global\long\def\Pic{\mathrm{Pic}\,}
 \global\long\def\Pf{{\bf Pf}}
 \global\long\def\Sec{\mathrm{Sec}}
 \global\long\def\Spec{\mathrm{Spec}\,}
 \global\long\def\Sym{\mathrm{Sym}}
 \global\long\def\sQpec{\mathcal{S}{\!}pec\,}
 \global\long\def\Proj{\mathrm{Proj}\,}
 \global\long\def\Rhom{{\mathbb{R}\mathcal{H}{\!}om}\,}
 \global\long\def\aw{\mathrm{aw}}
 \global\long\def\exc{\mathrm{exc}\,}
 \global\long\def\emb{\mathrm{emb\text{-}dim}}
 \global\long\def\codim{\mathrm{codim}\,}
 \global\long\def\OG{\mathrm{OG}}
 \global\long\def\pr{\mathrm{pr}}
 \global\long\def\Sing{\mathrm{Sing}\,}
 \global\long\def\Supp{\mathrm{Supp}\,}
 \global\long\def\SL{\mathrm{SL}\,}
 \global\long\def\Reg{\mathrm{Reg}\,}
 \global\long\def\rank{\mathrm{rank}\,}
 \global\long\def\VSP{\mathrm{VSP}\,}
 \global\long\def\B{B}
 \global\long\def\Q{Q}
 \global\long\def\rG{\mathrm{G}}
 \global\long\def\rF{\mathrm{F}}
 \global\long\def\nS{\textsc{S}}
 \global\long\def\nT{\textsc{T}}
 \global\long\def\nU{\textsc{U}}
 \global\long\def\nR{\textsc{R}}


\title{Geometry of symmetric determinantal loci}
\author{Shinobu Hosono and Hiromichi Takagi}
\begin{abstract}
We study algebro-geometric properties of determinantal loci of 
$(n+1)\times(n+1)$ symmetric matrices and also their double covers for even 
ranks. Their singularities, Fano indices and birational geometries are 
studied in general. The double covers of symmetric determinantal loci 
of rank four are studied with special interest by noting their relation to 
the Hilbert schemes of conics on Grassmannians. 
\end{abstract}
\maketitle

\section{{\bf Introduction}}

Throughout this paper,
we work over $\mC$, the complex number field,
and we fix a vector space $V$ of dimension $n+1$.

We define $\nS_r\subset \mP({\ft S}^2 V^*)$ to be the locus 
of quadrics in $\mP(V)$ of rank at most $r$.
Taking a basis of $V$, $\nS_r$ is defined by $(r+1)\times (r+1)$ minors
of the generic $(n+1)\times (n+1)$ symmetric matrix.
We call $\nS_r$ the {\it symmetric determinantal locus of rank at most $r$}.
For example, $\nS_1=v_2(\mP(V^*))$ with 
$v_2(\mP(V^*))$ being the second Veronese variety of $\mP(V^*)$ 
 and $\nS_{n+1}=\mP({\ft S}^2 V^*)$. 
There is a natural stratification of $\mP({\ft S}^2 V^*)$ by $\nS_r$:
\[
v_2(\mP(V^*))=\nS_1\subset \nS_2\subset \cdots \subset \nS_n\subset 
\nS_{n+1}=\mP({\ft S}^2 V^*).
\] 
We call a point of $\nS_r\setminus \nS_{r-1}$ {\it a rank $r$ point}.
Similarly we define the symmetric determinantal locus $\nS_r^*$ in 
the dual projective space $\mP(\ft{S}^2V)$.  It is a well-known fact that the 
stratification of $\mP(\ft{S}^2V^*)$ by $\nS_r$ and that of $\mP(\ft{S}^2V)$ 
by $\nS_r^*$ are reversed under the projective duality. 

Recently, classical projective duality is highlighted in the study of derived categories of coherent sheaves on projective varieties, where the duality is called {\it homological projective duality} (HPD) due to Kuznetsov \cite{HPD1}. HPD is a powerful framework to describe the derived category of a projective variety with its dual variety, and has been worked out in several interesting examples such as Pfaffian varieties (i.e., determinantal loci of anti-symmetric matrices) \cite{HPD2} and the second Veronese variety $\nS^*_1$  \cite{Quad}. Interestingly, it is often the case that we have interesting pairs of Calabi-Yau manifolds associated to HPDs 
\cite{BC,HPD2}.
%
%
%
%
In a series of papers \cite{HoTa1}--\cite{HoTa3},
we have studied the case $\nS_2^*$ and $\nS_4$ for $n=4$ in detail, where 
a pair of smooth Calabi-Yau threefolds $X$ and $Y$ appears, respectively, as a 
linear section of  $\nS^*_2$ and the double cover of the orthogonal linear 
section of $\nS_4$ branched along the set of rank 3 points. 
It has been shown in \cite{HoTa3} that these $X$ and $Y$ are derived-equivalent, indicating that $\nS_2^*$ and the double cover $\nT_4$ of $\nS_4$ (called {\it double quintic symmetroids}) are HPD to each other. 
Also, for $n=3$, we have established in \cite{ReyeEnr} the relations between the derived categories of a $2$-dimensional linear section $X$ of $\nS^*_2$ and the double cover $Y$ of the orthogonal linear section of $\nS_4$ branched along the set of rank 2 or 3 points after the inspiring works \cite{Lines} and \cite{IK}. In the latter case of $n=3$, $X$ is known  as an Enriques surface of Reye congruence, while $Y$ is known as an Artin-Mumford double solid.

The aim of the present paper is to put an algebro-geometric ground for our work \cite{HoTa3}. Indeed this is an extended version of the first part of \cite{Arxiv}. In a companion paper \cite{DerSym}, we will study homological properties of $\nS_2^*$ and $\nT_4$ for the cases $n=3,4$ based on the results of this paper.
%
%
In this paper, we are concerned with the birational geometry of $\nS_r$ for general $n$ from the viewpoint of minimal model theory. In particular, for even $r$, we  present a precise description of the double covers $\nT_r$ of $\nS_r$ branched along $\nS_{r-1}$. If $r\leq n$, we show that $\nS_r$ and $\nT_r$ are $\mQ$-factorial $\frac{(2n+3-r)r-2}{2}$-dimensional Fano varieties with Picard number one and Fano index $\frac{r(n+1)}{2}$ with only canonical singularities
in Subsection \ref{subsection:Spr}. 

As an interesting application of these general results, we will consider {\it orthogonal} linear sections of $\nS^*_{n+2-r}$ and $\nT_r$, which entail a pair of Calabi-Yau varieties of the same dimensions.  These Calabi-Yau varieties naturally generalize those studied in  \cite{HoTa3, Arxiv, DerSym} for $n=4$, and indicates that HPD holds for  $\nS^*_{n+2-r}$ and $\nT_r$ (see Subsection \ref{subsection:Pl}).

Below is the summary of the birational geometry of the double covering $T_4$ of $S_4$ for genreal $n$ which we establish in this paper. 
Note that a general point of $\nS_4$ corresponds to a quadric of rank four in $\mP(V)$. It 
has two connected $\mP^1$-families of $(n-2)$-planes which we identify with the respective conics in $\rG(n-1,V)$. The double cover $\nT_4$ will be defined as the space which parametrizes the connected families of $(n-2)$-planes in quadrics, and will be described by making precise connection to the Hibert scheme of conics in $\rG(n-1,V)$. In Section \ref{section:BirY}, we show the following:
\begin{thm}
Set $\hcoY:=\nT_4$ and denote by $\hcoY_0$
the Hilbert scheme of conics in $\rG(n-1,V)$.
Then there is a commutative diagram of birational maps as follow\,$:$
\[
\xymatrix{ & &  \hcoY_0\ar[d]\\
\hcoY_3\ar[dr]\ar@{-->}[rr]^{\text{\tiny{\text{$($anti-$)$flip}}}} & & \widetilde{\hcoY}\ar[dl]\ar[dr]^{\Lrho\,_{\widetilde{\hcoY}}} & \\
 & \overline{\hcoY}' & &\;\;\;\;\hcoY:=\nT_4,}
\]
where
\begin{itemize}
\item $\hcoY_3:=\mathrm{G}(3,\wedge^2 \mathfrak{Q})$
with the universal quotient bundle $\frQ$ of $\rG(n-3,V)$,
\item $\overline{\hcoY}'$ is the normalization
of the subvariety $\overline{\hcoY}$ of $\rG(3,\wedge^{n-1} V)$
parametrizing $3$-planes annihilated by at least $n-3$ linearly independent vectors in $V$ by the wedge product {$\text{\rm
(Propositions~\ref{prop:barY-1-2-3}, \ref{lem:appendixB-UU-solve})}$},
\item
$\hcoY_3\to \overline{\hcoY}'$
is a small contraction 
with non-trivial fibers being copies of $\mP^{n-3}$ 
{$\text{\rm (Proposition~\ref{pro:barYsing})}$},
\item
$\hcoY_3\dashrightarrow \widetilde{\hcoY}$
is the $($anti-$)$\,flip for the small contraction  
$\hcoY_3\to \overline{\hcoY}'$ 
{$\text{\rm (Section \ref{subsection:BlowUp})}$,}
\item
$\widetilde{\hcoY}\to \overline{\hcoY}'$
is a small contraction
with non-trivial fibers being copies of $\mP^5$ 
{$\text{\rm (Proposition~\ref{prop:tildeY})}$},
\item
$\Lrho\,_{\widetilde{\hcoY}}\colon \widetilde{\hcoY}\to \hcoY$ is an extremal 
divisorial contraction 
{$\text{\rm (Proposition~\ref{prop:gendescr}(2))}$},
\item
$\hcoY_0\to \widetilde{\hcoY}$ is the blow-up along a smooth subvariety 
{$\text{\rm (Section \ref{subsection:BlowUp})}$}.
\end{itemize}
\end{thm}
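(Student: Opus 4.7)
The plan is to assemble the cited propositions into a single commutative diagram, exploiting the common moduli description: each vertex parameterizes, in some form, a family of $(n-2)$-planes lying on a rank-four quadric of $\mP(V)$, with the spaces differing only in how they record the ``residual'' data around the quadric's vertex. For the left branch, $\hcoY_3=\rG(3,\wedge^2\frQ)$ is a smooth $\rG(3,6)$-bundle over $\rG(n-3,V)$ since $\frQ$ has rank four, and a point is a pair $([U],W)$ with $U\subset V$ an $(n-3)$-plane and $W$ a $3$-plane in $\wedge^2(V/U)$. Wedging with $\wedge^{n-3}U$ embeds $\wedge^2(V/U)$ into $\wedge^{n-1}V$, sending $W$ to a $3$-plane annihilated under wedge product by every vector of $U$; this defines the morphism to $\overline{\hcoY}$, and Propositions~\ref{prop:barY-1-2-3} and \ref{lem:appendixB-UU-solve} normalize it to give $\hcoY_3\to\overline{\hcoY}'$. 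One then recovers $U$ as the annihilator of the image $3$-plane: generically $U$ is uniquely determined, but when the annihilator enlarges to an $(n-2)$-plane $U^+$, every $(n-3)$-subspace of $U^+$ is a preimage, giving the $\mP^{n-3}$ of Proposition~\ref{pro:barYsing}. A standard Atiyah-style blow-up/blow-down in the normal direction (Section~\ref{subsection:BlowUp}) then produces the (anti-)flip $\hcoY_3\dashrightarrow\widetilde{\hcoY}$ with the new $\mP^5$-fibers of Proposition~\ref{prop:tildeY}.

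For the right branch, interpret a generic point of $\widetilde{\hcoY}$ as a rank-four quadric together with a choice of one of its two $\mP^1$-families of $(n-2)$-planes; by definition of the double cover $\nT_4$ this yields a natural morphism $\widetilde{\hcoY}\to\hcoY$, which Proposition~\ref{prop:gendescr}(2) identifies as an extremal divisorial contraction. The Hilbert scheme $\hcoY_0$ maps to $\widetilde{\hcoY}$ via the ``span'' construction sending a conic $C\subset\rG(n-1,V)$ to the $3$-plane $\langle C\rangle\subset\wedge^{n-1}V$ paired with the rank-four quadric containing $C$; since that quadric's vertex annihilates $\langle C\rangle$, this lifts canonically to a point of $\widetilde{\hcoY}$, and the blow-up center is identified with the locus of degenerate conics along which the span collapses, its smoothness verified by a local coordinate computation. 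Commutativity of the full diagram is then checked on the common dense open stratum where every route is an isomorphism, and extended everywhere by normality and properness.

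The main obstacle is the flip $\hcoY_3\dashrightarrow\widetilde{\hcoY}$. Existence of a flip for a small contraction is not automatic: one must compute the normal bundle of each exceptional $\mP^{n-3}\subset\hcoY_3$ explicitly and check that its splitting type admits an Atiyah-style flip producing disjoint $\mP^5$'s of the predicted form. A secondary technical task is to show that the variety $\widetilde{\hcoY}$ so produced is globally projective and coincides with the moduli-theoretic description used in the right branch, which requires matching the two constructions on a dense open set and invoking normality to propagate the identification. Once these are in hand, the remaining verifications (commutativity, and that each small contraction really is extremal with the stated fiber type) reduce to tracking the parameter-space data through the diagram on each stratum.
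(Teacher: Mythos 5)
Your left-branch description (the annihilator $a_U$, the Springer-type resolution $\hcoY_3\to\overline{\hcoY}'$ with $\mP^{n-3}$-fibers) matches the paper, but the two steps you yourself flag as the ``main obstacle'' and the ``secondary task'' are exactly where your route diverges from the paper and where your proposal has genuine gaps. For the flip, the paper never computes the normal bundle of the exceptional $\mP^{n-3}$'s in $\hcoY_3$ nor performs an Atiyah-style surgery; note that the exceptional locus is not a disjoint union of $\mP^{n-3}$'s but a $\mP^{n-3}$-bundle over $\nu^{-1}\overline{\Prt}_{\rho}\simeq\rG(n-2,V)$, so a fiberwise flop argument does not by itself produce a projective variety with the stated $\mP^5$-fiber contraction. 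Instead the paper builds the other side directly: $\hcoY_0$ is smooth (quoting the known smoothness of the Hilbert scheme of conics), the divisor $F_{\sigma}$ over $\nu^{-1}\overline{\Prt}_{\sigma}$ spans a $K_{\hcoY_0}$-negative extremal ray because $\overline{\hcoY}'$ is smooth along $\overline{\Prt}_{\sigma}$, and its extremal contraction yields $\widetilde{\hcoY}$ together with the small contraction $\widetilde{\hcoY}\to\overline{\hcoY}'$ with $\mP^5$-fibers; the flip statement is then read off by comparing the two small modifications. This also shows your identification of the blow-up center of $\hcoY_0\to\widetilde{\hcoY}$ is off: the center is $G_{\sigma}$, the locus of $\sigma$-planes (isomorphic to $\rF(n-3,n,V)$), not a locus of degenerate conics where the span collapses -- the span of a $\sigma$-conic is still a genuine plane.

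The second gap is the morphism $\Lrho\,_{\widetilde{\hcoY}}\colon\widetilde{\hcoY}\to\hcoY$. Saying that a generic point of $\widetilde{\hcoY}$ is a quadric plus a choice of ruling only gives a rational map to $\nT_4$; extending it to a morphism on all of $\widetilde{\hcoY}$ and proving it is an extremal divisorial contraction is the bulk of the paper's Section \ref{section:BirY}: the double spin decomposition $\wedge^3(\wedge^2(V/V_{n-3}))\simeq \ft{S}^2(V/V_{n-3})\otimes\det\oplus\ft{S}^2(V/V_{n-3})^*\otimes\det^{\otimes 2}$ giving $\phi\colon\hcoY_3\dashrightarrow\Hes$, the blow-up $\hcoY_2\to\hcoY_3$ along $\Prt_{\rho}$ resolving it, the detailed $n=3$ fiber analysis (Proposition \ref{prop:n=3fib}, using the explicit $(v,w)$-equations of $\rG(3,6)$), the relative construction over $\rG(n-3,V)$ with Stein factorization through $\widetilde{\nT}_4$, and finally the uniqueness-of-flip argument identifying the normalization of the image of $\hcoY_2\to\overline{\hcoY}'\times\hcoY$ with $\widetilde{\hcoY}$ rather than $\hcoY_3$. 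In a blind proof you cannot simply cite Proposition \ref{prop:gendescr}(2) for this, and your proposal offers no substitute for that chain of arguments; as it stands both the existence/projectivity of $\widetilde{\hcoY}$ by your route and the divisorial contraction to $\hcoY$ remain unproved.
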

In the course of the proof, we give an explicit construction of the Hilbert scheme $\hcoY_0$ of conics in $\rG(n-1,V)$ in Subsection \ref{subsection:Hilb}.
In Section \ref{section:FY}, the contraction $\Lrho\,_{\widetilde{\hcoY}}\colon \widetilde{\hcoY}\to \hcoY$ is studied in detail. 
Let $F_{\widetilde{\hcoY}}$ be $\Lrho\,_{\widetilde{\hcoY}}$-exceptional divisor
and  $G_{\hcoY}$ be its image in $\hcoY$. 
We determine the biregular structure of $F_{\widetilde{\hcoY}}\to G_{\hcoY}$
by introducing a natural double cover of $F_{\widetilde{\hcoY}}$.  
Flattening of the morphism $F_{\widetilde{\hcoY}}\to G_{\hcoY}$ is constructed in Section \ref{section:FY}.  Despite its technical nature, the flat morphism plays crucial roles for our caluculations of the cohomologies of $\hcoY$ in \cite{DerSym}. 

\begin{ackn}
This paper is supported in part by Grant-in
Aid Scientific Research (S 24224001, B 23340010 S.H.) and Grant-in
Aid for Young Scientists (B 20740005, H.T.).  They also thank Nicolas
Addington and Sergey Galkin for useful communications.
\end{ackn}

\noindent 

\vspace{0.5cm}
\noindent\textbf{\textcolor{black}{Notation: }}
We will denote by $V_i$ an $i$-dimensional vector subspace of $V$.

\vspace{0cm}

\global\long\def\Homega{H_{\mP(\Omega(1))}}
 \global\long\def\Hwomega{H_{\mP(\Omega(1)^{\wedge2})}}
 \global\long\def\Lwomega{L_{\mP(\Omega(1)^{\wedge2})}}
 \global\long\def\HGo{H_{\mathrm{G}(\Omega(1)^{\wedge2})}}
 \global\long\def\HGt{H_{\mathrm{G}(\wedge^{2} T(-1))}}

\global\long\def\Lt{L_{\mP(T(-1))}}
 \global\long\def\Lwt{L_{\mP(T(-1)^{\wedge2})}}
 \global\long\def\Ht{H_{\mP(T(-1))}}
 \global\long\def\Hwt{H_{\mP(T(-1)^{\wedge2})}}


\section{{\bf Basics for symmetric determinantal loci $\nS_r$}}

As introduced in the preceding section, we denote by $\nS_r\subset \mP({\ft S}^2 V^*)$ the locus of quadrics in $\mP(V)$ of rank at most $r$. 

\subsection{Springer type resolution $\widetilde{\nS}_r$ of $\nS_r$}
\label{subsection:Spr}
Let $\mathfrak{Q}$ be 
the universal quotient bundle of rank $r$ on $\rG(n+1-r, V)$ and 
define the following projective bundle over $\rG(n+1-r, V)$: 
\begin{equation} \label{eq:deftSr}
\widetilde{\nS}_r:=\mP(\ft{S}^{2}\mathfrak{Q}^{*})\to \rG(n+1-r,V).
\end{equation}
When $r=n+1$, we consider this as the projective bundle over a point
\[
\widetilde{\nS}_{n+1}=\mP({\ft S}^2 V^*)\to {\rm pt} 
\]
{with $\widetilde{\nS}_{n+1}=\nS_{n+1}$.} 
Considering the (dual of the) universal exact sequence,
we see that there is a canonical injection $\mathfrak{Q}^{*}\hookrightarrow V^{*}\otimes\sO$,
which entails the injection $\ft{S}^{2}\mathfrak{Q}^{*}\hookrightarrow\ft{S}^{2}V^{*}\otimes\sO.$
With this injection, composed with the natural surjection $\mP(\ft{S}^{2}V^{*}\otimes\sO)\to\mP(\ft{S}^{2}V^{*})$,
we have a morphism\begin{equation}
\widetilde{\nS}_r=
\mP(\ft{S}^{2} \mathfrak{Q}^{*})\to\mP(\ft{S}^{2}V^{*}).\label{eq:UUtoS2V}\end{equation}

By construction,
the pull-back of $\sO_{\mP({\ft S}^2 V^*)}(1)$ to $\widetilde{\nS}_r$ is
the tautological divisor $\sO_{\mP({\ft S}^2 \frQ\,^*)}(1)$,
which we denote by $M_{\widetilde{\nS}_r}$.

\begin{prop}
\label{prop:Spr}~

\noindent{\rm (1)} The image of the morphism $($\ref{eq:UUtoS2V}$)$ 
coincides with $\nS_r$.
The induced morphism $\Lp\,_{\widetilde{\nS}_r}\colon \widetilde{\nS}_r\to \nS_r$
is a resolution of $\nS_r$.

\noindent{\rm (2)} $\widetilde{\nS}_r=\{([V_{n+1-r}],[Q])\mid V_{n+1-r}\subset 
\Sing Q
\}\subset\rG(n+1-r,V)\times\mP(\ft{S}^{2}V^{*})$,
where $Q$ is {a quadric} in $\mP(V)$.\end{prop}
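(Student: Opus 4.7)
The plan is to identify the image of each fiber of $\mP(\ft{S}^{2}\frQ^{*}) \to \rG(n+1-r,V)$ explicitly inside $\mP(\ft{S}^{2}V^{*})$, which will deliver both statements at once. Dualizing the universal exact sequence $0 \to \sU \to V\otimes\sO \to \frQ \to 0$ on $\rG(n+1-r,V)$ produces $\frQ^{*} \hookrightarrow V^{*}\otimes\sO$; at a closed point $[V_{n+1-r}]$ this specialises to the inclusion $(V/V_{n+1-r})^{*} \hookrightarrow V^{*}$, whose image is the annihilator $V_{n+1-r}^{\perp} \subset V^{*}$ of linear forms vanishing along $V_{n+1-r}$. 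Taking symmetric squares, the fiber of $\ft{S}^{2}\frQ^{*}$ at $[V_{n+1-r}]$ is identified with the subspace of $\ft{S}^{2}V^{*}$ consisting of those quadratic forms $q$ whose associated symmetric bilinear pairing $(v,w) \mapsto q(v,w)$ vanishes whenever $v \in V_{n+1-r}$.

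That vanishing condition is equivalent to $V_{n+1-r} \subset \ker q$, where $\ker q := \{v\in V : q(v,-)\equiv 0\}$ is the affine cone over $\Sing Q$ for $Q=\{q=0\}$. Hence a closed point of $\widetilde{\nS}_r$ is precisely a pair $([V_{n+1-r}],[Q])$ with $V_{n+1-r} \subset \Sing Q$, proving (2). For the image in (1), a quadric $[Q] \in \mP(\ft{S}^{2}V^{*})$ lies in the image of (\ref{eq:UUtoS2V}) if and only if some $(n+1-r)$-dimensional subspace fits inside $\ker q$, equivalently $\dim \ker q \geq n+1-r$, equivalently $\rank q \leq r$; that is, $[Q]\in \nS_r$.

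It remains to verify that $\Lp\,_{\widetilde{\nS}_r}$ is a resolution. Smoothness of $\widetilde{\nS}_r$ is automatic since it is a projective bundle over the smooth Grassmannian $\rG(n+1-r,V)$. For birationality, over the dense open locus $\nS_r\setminus\nS_{r-1}$ of genuine rank-$r$ quadrics one has $\dim\ker q = n+1-r$, so the unique $V_{n+1-r}$ sitting inside $\ker q$ is $\ker q$ itself; the fiber of $\Lp\,_{\widetilde{\nS}_r}$ over such a point is therefore a single reduced point. The edge case $r=n+1$ is trivial since $\rG(0,V)$ is a point and $\widetilde{\nS}_{n+1}=\nS_{n+1}=\mP(\ft{S}^{2}V^{*})$ by definition.

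The main obstacle, such as it is, is purely a matter of bookkeeping: one must keep straight that $\ft{S}^{2}\frQ^{*}|_{[V_{n+1-r}]}$ is sent into $\ft{S}^{2}V^{*}$ via the annihilator, so that the natural map (\ref{eq:UUtoS2V}) can be checked to agree, fiberwise, with the inclusion of symmetric forms vanishing on $V_{n+1-r}\times V$. Once this fiberwise identification is made, the image description, the incidence presentation (2), and the birationality all follow immediately.
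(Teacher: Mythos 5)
Your proof is correct and follows essentially the same route as the paper: identify the fiber of $\ft{S}^{2}\frQ^{*}$ over $[V_{n+1-r}]$ with the quadrics (symmetric forms) whose kernel contains $V_{n+1-r}$, deduce that the image is $\nS_r$ and the incidence description (2), and get birationality from the uniqueness of the kernel over the rank-$r$ locus together with smoothness of the projective bundle. No gaps.
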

\begin{proof}
(1) Since the fiber of $\mathfrak{Q}^{*}$ over a point $[V_{n+1-r}]\in\rG(n+1-r,V)$
is $(V/V_{n+1-r})^{*}$, the fiber of the projective bundle $\widetilde{\nS}_r\to\rG(n+1-r,V)$
over $[V_{n+1-r}]$ is $\mP(\ft{S}^{2}(V/V_{n+1-r})^{*})$,
which parameterizes quadrics in $\mP(V/V_{n+1-r})\simeq \mP^{r-1}$.
The morphism $\mP({\ft S}^2 \frQ^*)\to \mP({\ft S}^2 V^*)$
sends $\mP(\ft{S}^{2}(V/V_{n+1-r})^{*})$ into $\mP(\ft{S}^{2}V^{*})$.
Then the image is identified with quadrics in $\mP(V)$ which are
singular at $[V_{n+1-r}]$, or equivalently, symmetric matrices whose
kernels contain $[V_{n+1-r}]$. 
Therefore the image is $\nS_r$.
The morphism $\Lp\,_{\widetilde{\nS}_r}\colon \widetilde{\nS}_r\to \nS_r$ is
one to one over the locus of matrices of rank $r$ in $\nS_r$, since
a symmetric matrix of rank $r$ with the kernel $V_{n+1-r}$
{determines} uniquely the corresponding
quadric in $\mP(V/V_{n+1-r})$. Hence $\widetilde{\nS}_r$ is birational 
to $\nS_r$ under {$\Lp\,_{\widetilde{\nS}_r}$.}
Finally, $\widetilde{\nS}_r$ is smooth since it is a projective bundle, and hence
$\Lp\,_{\widetilde{\nS}_r}$ is a resolution of $\nS_r$. 

The assertion (2) easily follows from the proof of (1). 
\end{proof}

Using the Springer type resolution $\Lp\,_{\widetilde{\nS}_r}$,
we can derive several properties of $\nS_r$.

\noindent $\bullet$ {\bf Dimension.} 
Since $\widetilde{\nS}_r$ is a $\mP^{\binom{r+1}{2}-1}$-bundle over $\rG(n+1-r,V)$, it holds 
\begin{equation}
\label{eq:dimSr}
\dim \nS_r=\dim \widetilde{\nS}_r=
\frac{(r+1)r}{2}-1+r(n+1-r).
\end{equation}

\noindent $\bullet$ {\bf Canonical divisor.}
Since $\widetilde{\nS}_r=\mP({\ft S}^2 \mathfrak{Q}^*)$ and
$\det {\ft S}^2 \mathfrak{Q}\simeq \sO_{\rG(n+1-r,V)}(r+1)$,
we have
\begin{equation}
\label{eq:adjSr1}
K_{\widetilde{\nS}_r}=-\binom{r+1}{2} M_{\widetilde{\nS}_r}
-(n-r)L_{\widetilde{\nS}_r},
\end{equation}
where 
$M_{\widetilde{\nS}_r}$ is the tautological divisor of
$\mP({\ft S}^2 \mathfrak{Q}^*)$ and
$L_{\widetilde{\nS}_r}$ is the pull-back of 
$\sO_{\rG(n+1-r,V)}(1)$.

\vspace{5pt}

In the sequel in this subsection, we assume that $r\leq n$.

\vspace{5pt}

\noindent $\bullet$ {\bf Exceptional divisor.}
By Proposition \ref{prop:Spr} (2) and $\rho(\widetilde{\nS}_r/\nS_r)=1$,
the exceptional locus $E_r$ of $\Lp\,_{\widetilde{\nS}_r}$ is a prime divisor
and the induced map $E_r\to \nS_{r-1}$ is a $\mP^{n+1-r}$-bundle over
$\nS_{r-1}\setminus \nS_{r-2}$.
We have 
\begin{equation}
\label{eq:Er}
E_r=rM_{\widetilde{\nS}_r}-2L_{\widetilde{\nS}_r}.
\end{equation}
Indeed, 
note that we may write 
$E_r=aM_{\widetilde{\nS}_r}-bL_{\widetilde{\nS}_r}$ with some integers $a$ and $b$ since $M_{\widetilde{\nS}_r}$ and $L_{\widetilde{\nS}_r}$ generate
$\Pic \widetilde{\nS}_r$.
Let $\mP\simeq \mP^{n+1-r}$ be the fiber of 
$E_r\to \nS_{r-1}$ over a point of
$\nS_{r-1}\setminus \nS_{r-2}$.
Then, by (\ref{eq:adjSr1}) and
$M_{\widetilde{\nS}_r}|_{\mP}=0$,
we have $K_{\widetilde{\nS}_r}|_{\mP}=\sO_{\mP}(-(n-r))$.
Therefore, using $K_{\mP}=
K_{E_r}|_{\mP}=(K_{\widetilde{\nS}_r}+E_r)|_{\mP}$,
we obtain
$E_r|_{\mP}=\sO_{\mP}(-2)$.
Thus $b=2$.
We have $a=r$
since the restriction of $E_r$ to a fiber 
$\mP({\ft S}^2 (V/V_{n+1-r})^*)$
of $\widetilde{\nS}_r\to \rG(n+1-r,V)$ 
is the locus of singular quadrics in
$\mP(V/V_{n+1-r})$, and it is a degree $r$ hypersurface in
$\mP({\ft S}^2 (V/V_{n+1-r})^*)$.

\noindent $\bullet$ {\bf Generic Singularity.}
By $E_r|_{\mP}=\sO_{\mP}(-2)$,
we see that
\begin{equation}
\label{equation:SingSr}
\text{${\nS}_r$ has $\frac 12 (1^{n+2-r})$-singularities along $S_{r-1}\setminus S_{r-2}$},
\end{equation}
hence $\Sing \nS_r=\nS_{r-1}$.

\noindent $\bullet$ {\bf Discrepancy and Fano index.}
The two equalities (\ref{eq:adjSr1}) and (\ref{eq:Er}) give the following 
presentation of
$K_{\widetilde{\nS}_r}$:
\begin{equation}
\label{eq:adjSr2}
K_{\widetilde{\nS}_r}{=}_{{\mQ}} -\frac{r(n+1)}{2} M_{\widetilde{\nS}_r}
+\frac{n-r}{2} E_r.
\end{equation}
The pushforward of (\ref{eq:adjSr2}) immediately gives
\begin{equation}
\label{eq:Fanoindex}
K_{{\nS}_r}=_{\mQ} -\frac{r(n+1)}{2} M_{{\nS}_r}.
\end{equation}
Combining (\ref{eq:adjSr2}) and (\ref{eq:Fanoindex}),
we obtain
\[
K_{\widetilde{\nS}_r}{=}_{{\mQ}} \Lp\,_{\widetilde{\nS}_r}^*K_{{\nS}_r}
+\frac{n-r}{2} E_r.
\] 
In particular,
$\nS_r$ has only terminal singularities if $n>r$,
and canonical singularities if $n=r$.
$\nS_r$ is $\mQ$-factorial since $\widetilde{\nS}_r$ is smooth and
$\Lp\,_{\widetilde{\nS_r}}$ is a divisorial contraction.

\noindent $\bullet$
{\bf Gorenstein index.}
$K_{\nS_r}$ is Cartier in case $n-r$ is even. 
In case $n-r$ is odd, $2K_{\nS_r}$ is Cartier while $K_{\nS_r}$ is not.
%

Indeed, when $n-r$ is even,
the integral divisor $K_{\widetilde{\nS}_r}-\frac{n-r}{2} E_r$ is the pull-back of a Cartier
divisor on $\nS_r$
by the Kawamata-Shokurov base point free theorem. 
Then, in this case, the formulas (\ref{eq:adjSr2})
and (\ref{eq:Fanoindex})
mean linear equivalences.
In particular, $K_{\nS_r}$ is Cartier.
In case $n-r$ is odd, we see the assertion by a similar argument
and (\ref{equation:SingSr}).

\subsection{Double cover $\nT_r$ of $\nS_r$ with even $r$}
Throughout in this subsection, we suppose $r$ is even.
When $r$ is even, due to the fact that a quadric of even rank contains 
two connected families of maximal linear subspaces in it, the determinantal 
locus $\nS_r$ has a natural double cover. We describe below the double cover by 
formulating Springer type morphism.

%
%

Note that
any quadric of rank at most $r$ contains
$(n-\frac r2)$-planes.
We will introduce the variety $\nU_r$ which parameterizes pairs
$([\Pi],[Q])$ of quadrics $Q$ of rank at most $r$ 
and $(n-\frac r2)$-planes $\mP(\Pi)$
such that $\mP(\Pi)\subset Q$. To parametrize $(n-\frac r2)$-planes
in $\mP(V)$, consider the Grassmannian $\mathrm{G}(n-\frac r2 +1,V)$.
Let \begin{equation}
0\to{\eQ}_{\frac r2}^{*}\to V^{*}\otimes\sO_{\mathrm{G}(n-\frac r2+1,V)}\to\eS_{n-\frac r2+1}^{*}\to0\label{eq:Q*S}\end{equation}
 be the dual of the universal exact sequence on $\mathrm{G}(n-\frac r2+1,V)$,
where $\eQ_{\frac r2}$ is the universal quotient bundle of rank $\frac r2$
and $\eS_{n-\frac r2+1}$ is the universal subbundle of rank $n-\frac r2+1$. 
For brevity, we often omit the subscripts writing them by $\eS$ and $\eQ$.
For an $(n-\frac r2)$-plane $\mP(\Pi)\subset\mP(V)$, there exists a natural
surjection $\ft{S}^{2}V^{*}\to\ft{S}^{2}H^{0}(\mP(\Pi),\sO_{\mP(\Pi)}(1))$
such that the projectivization of the kernel consists of the quadrics
containing $\mP(\Pi)$. By relativizing this surjection over $\mathrm{G}(n-\frac r2+1,V)$,
we obtain the following surjection: $\ft{S}^{2}V^{*}\otimes\sO_{\mathrm{G}(n-\frac r2-1,V)}\to\ft{S}^{2}\eS^{*}.$
Let $\sE^{*}$ be the kernel of this surjection, and consider the
following exact sequence: \begin{equation}
0\to\sE^{*}\to\ft{S}^{2}V^{*}\otimes\sO_{\mathrm{G}(n-\frac r2+1,V)}\to\ft{S}^{2}\eS^{*}\to0.\label{eq:sE0}\end{equation}
Now we set $\nU_r:=\mP(\sE^{*})$ and denote by $\Lrho\,_{\nU_r}$ the projection
$\nU_r\to\mathrm{G}(n-\frac r2+1,V)$. By (\ref{eq:sE0}), $\nU_r$
is contained in $\mathrm{G}(n-\frac r2+1,V)\times\mP(\ft{S}^{2}V^{*})$.
Since the fiber of $\sE^{*}$ over $[\Pi]$ parameterizes quadrics
in $\mP(V)$ containing $\mP(\Pi)$, we have \[
\nU_r=\{([\Pi],[Q])\mid\mP(\Pi)\subset Q\}\subset\mathrm{G}(n-\frac r2+1,V)\times\mP(\ft{S}^{2}V^{*}). \]
 Note that $Q$ in $([\Pi],[Q])\in\nU_r$ is a quadric of rank at most $r$
since quadrics contain $(n-\frac r2)$-planes only when their ranks are 
at most $r$. 
%
%
Hence the
symmetric determinantal locus $\nS_r$ is the image of the natural projection $\nU_r\to\mP(\ft{S}^{2}V^{*})$.
Now we let \[
\xymatrix{ & \nU_r\;\ar[r]^{\;\;\Lpi\,_{\nU_r}\;\;} & \;\nT_r\;\ar[r]^{\;\;\Lrho\,\,_{\nT_r}\;\;} & \;\nS_r}
\]
be the Stein factorization of $\nU_r\to\nS_r$. By (\ref{eq:sE0}),
the tautological divisor of $\mP(\sE^{*})\to\mathrm{G}(n-\frac r2+1,V)$
is nothing but the pull-back of a hyperplane section of $\nS_r$. We
set \[
M_{\nU_r}:=\Lpi\,_{\nU_r}^{\;*}\circ\Lrho\,_{\nT_r}^{\;*}\sO_{\nS_r}(1).\]
 We denote by $\nU_{r[Q]}$ the fiber of $\nU_r\to\nS_r$ over a point
$[Q]\in\nS_r$.

\vspace{0.3cm}
 
\begin{prop}
\label{Z_Q} 
For a quadric $Q$ of rank $r$, the fiber ${\nU}_{r[Q]}$
is the orthogonal Grassmannian $\OG(\frac r2,r)$ \textcolor{black}{which
consists of two connected components. }
\end{prop}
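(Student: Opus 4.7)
The plan is to identify the fiber $\nU_{r[Q]}$ with the scheme of $(n-\tfrac{r}{2})$-planes contained in the rank $r$ quadric $Q$, and then exploit the cone structure of $Q$ to reduce the problem to counting maximal isotropic subspaces of a smooth quadric.

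First I would observe that, by the construction of $\nU_r$ via the exact sequence \eqref{eq:sE0}, the fiber $\nU_{r[Q]}$ is naturally the variety of $(n-\tfrac{r}{2})$-planes $\mP(\Pi)\subset\mP(V)$ lying on $Q$. Since $Q$ has rank exactly $r$, the underlying symmetric form $q$ on $V$ has radical $W:=\ker q$ of dimension $n+1-r$, and $q$ descends to a non-degenerate form $q'$ on $V/W$ which defines a smooth quadric $Q'\subset\mP(V/W)\simeq\mP^{r-1}$; thus $Q$ is a cone over $Q'$ with vertex $\mP(W)=\Sing Q$.

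The crux, which I expect to be the only non-formal step, is to show that every such $\Pi$ must contain $W$. For this I would perform a simple dimension count: if $\Pi'$ denotes the image of $\Pi$ in $V/W$, then $\Pi'$ is isotropic for $q'$ and so $\dim\Pi'\leq r/2$ by non-degeneracy; combined with $\dim\Pi=n-\tfrac{r}{2}+1$ and $\dim W=n+1-r$, this forces $\dim(\Pi\cap W)\geq\dim W$, hence $W\subset\Pi$. Consequently $\Pi/W$ is a maximal isotropic subspace of $(V/W,q')$, and conversely any such subspace lifts uniquely to a valid $\Pi$, so the assignment $\Pi\mapsto\Pi/W$ is bijective and evidently functorial in flat families.

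This sets up a natural isomorphism between $\nU_{r[Q]}$ and the variety of maximal isotropic subspaces of the non-degenerate $r$-dimensional quadratic space $(V/W,q')$, which is by definition $\OG(\tfrac{r}{2},r)$. The final assertion that $\OG(\tfrac{r}{2},r)$ has two connected components when $r$ is even is a standard fact about the two rulings of a smooth quadric of even rank, which I would simply invoke rather than reprove.
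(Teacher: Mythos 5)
Your proof is correct and follows essentially the same route as the paper: reduce to the non-degenerate form induced on $V/V_{n+1-r}$ and identify the $(n-\tfrac r2)$-planes on $Q$ with maximal isotropic subspaces there, then invoke the standard two-component structure of $\OG(\tfrac r2,r)$. The only difference is that you make explicit, via the dimension count, the step that every such plane contains the vertex, which the paper leaves as the "natural correspondence."
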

\begin{proof} Quadric $Q$ of even rank $r$ induces a non-degenerate
symmetric bilinear form $q$ on the quotient $V/V_{n+1-r}$, where $V_{n+1-r}$
is the $(n+1-r)$-dimensional vector space such that $[V_{n+1-r}]$ is the vertex
of $Q$. Then $(n-\frac r2)$-planes on $Q$ naturally correspond to
the maximal isotropic subspaces in $V/V_{n-r+1}$ with respect to $q$,
which are parameterized by the orthogonal Grassmannian $\OG(\frac r2,r)$.
\end{proof}

\vspace{0.3cm}

\begin{prop}
\label{cla:double} The finite morphism $\nT_r\to\nS_r$ is of degree two
and is branched along $\nS_{r-1}$. 
\end{prop}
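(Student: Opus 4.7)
The plan is to reduce the statement to a fiberwise count of connected components of $\nU_{r[Q]}$ via the Stein factorization. Since $\Lpi\,_{\nU_r}\colon \nU_r \to \nT_r$ has geometrically connected fibers by construction, the set-theoretic fiber of $\Lrho\,_{\nT_r}\colon \nT_r \to \nS_r$ above any $[Q]$ has cardinality $\#\pi_0(\nU_{r[Q]})$. By Proposition~\ref{Z_Q} this cardinality equals two over the rank-$r$ locus $\nS_r\setminus\nS_{r-1}$, and $\nT_r$ is irreducible as the surjective image of the irreducible projective bundle $\nU_r$; so $\Lrho\,_{\nT_r}$ is finite surjective of degree $2$. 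Over the regular locus $\nS_r\setminus\nS_{r-1}$ (see (\ref{equation:SingSr})) the two distinct set-theoretic preimages of each $[Q]$ together account for the whole generic degree, so $\Lrho\,_{\nT_r}$ is unramified there and the branch locus sits inside $\nS_{r-1}$.

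For the reverse inclusion, it suffices to show that for a general rank-$(r-1)$ quadric $Q$ the fiber $\nU_{r[Q]}$ is connected: a single set-theoretic preimage in $\nT_r$ then forces $\nS_{r-1}\setminus\nS_{r-2}$ (and, by closedness, all of $\nS_{r-1}$) into the branch locus. Writing $\mP(V_{n+2-r})$ for the vertex of $Q$ and $\bar q$ for the induced non-degenerate form of rank $r-1$ on $V/V_{n+2-r}$, I would establish by a dimension count that every $\Pi\subset V$ with $\dim\Pi = n-r/2+1$ and $q|_\Pi = 0$ necessarily contains $V_{n+2-r}$. Indeed, the image $\bar\Pi\subset V/V_{n+2-r}$ is $\bar q$-isotropic, so $\dim\bar\Pi\leq r/2-1$, and the identity $\dim\Pi = \dim(\Pi\cap V_{n+2-r})+\dim\bar\Pi$ together with $\dim V_{n+2-r} = n-r+2$ squeezes $\Pi\cap V_{n+2-r} = V_{n+2-r}$ and $\dim\bar\Pi = r/2-1$. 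This identifies $\nU_{r[Q]}$ with the orthogonal Grassmannian $\OG(r/2-1, r-1)$ of maximal $\bar q$-isotropics, which is connected because the ambient quadratic form has odd rank.

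I anticipate the dimension count in the previous paragraph to be the main obstacle: the fact that it produces an equality, rather than an inequality with slack, is exactly what causes the drop in the number of $\pi_0$-components when the rank of $Q$ descends from $r$ to $r-1$, and hence produces the branch divisor. Once that rigidity is in place, the rest is formal bookkeeping with the Stein factorization, and the two halves combine to give the claimed double cover $\nT_r \to \nS_r$ branched exactly along $\nS_{r-1}$.
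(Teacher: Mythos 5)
Your proposal is correct and follows essentially the same route as the paper: Stein factorization plus Proposition \ref{Z_Q} gives degree two over the rank-$r$ locus, and connectedness of the family of $(n-\tfrac r2)$-planes on lower-rank quadrics forces the branch locus to be exactly $\nS_{r-1}$. The only difference is that you supply the dimension-count argument identifying the fiber over a rank-$(r-1)$ point with the connected $\OG(\tfrac r2-1,r-1)$, a point the paper simply asserts.
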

\begin{proof} By Proposition \ref{Z_Q}, the degree of $\nT_r\to\nS_r$
is two since $\nU_{r[Q]}$ has two connected components for a quadric
$Q$ of rank $r$. If a quadric $Q$ has rank at most $r-1$, the family of $(n-\frac r2)$-planes in $Q$ is connected.
Hence we have the assertion. \end{proof}

By this proposition, we see that $\nT_r$ parameterizes connected
families of $(n-\frac r2)$-planes in quadrics of rank at most $r$ 
in $\mP(V)$
(cf. Fig.1).
\begin{defn}
We call $\nT_r$ the {\it double symmetric determinantal locus} of rank at most $r$.
We call a point of $\Lrho\,_{\nT_r}^{-1}(\nS_i\setminus \nS_{i-1})$
{\it a rank $i$ point} for $1\leq i\leq r$.
\end{defn}

$\nT_r$ inherits good properties from $\nS_r$ as follows:
 
\begin{prop}
\label{cla:ZY} 
\begin{enumerate}[$(1)$]
\item
The Picard number of $\nU_r$ is two and $\Lpi\,_{\nU_r}\colon\nU_r\to\nT_r$
is a Mori fiber space. In particular, $\nT_r$ is $\mQ$-factorial
and has Picard number one.
\item $\nT_r$ has only Gorenstein canonical singularities
and $\Sing \nT_r$ is contained in 
the inverse image of $\nS_{r-2}$.
In particular, $\dim \Sing \nT_r$ is smaller than
$\dim \Sing \nS_r$ in case $r\leq n$. 
\item 
$\nT_r$ is a Fano variety with
\begin{equation}
\label{eq:FanoindexTr}
K_{{\nT}_r}= -\frac{r(n+1)}{2} M_{{\nT}_r},
\end{equation}
where $M_{{\nT}_r}$ is the pull-back of $\sO_{\nS_r}(1)$.
\end{enumerate}
\end{prop}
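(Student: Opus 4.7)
The plan is to transfer the data on $\nS_r$ already established in Subsection \ref{subsection:Spr} up to $\nT_r$, reading properties of $\nU_r$ off the projective bundle presentation $\Lrho_{\nU_r}\colon \nU_r = \mP(\sE^*) \to G := \rG(n - \tfrac{r}{2}+1, V)$ and then pushing them down through the Stein factorization $\nU_r \xrightarrow{\Lpi_{\nU_r}} \nT_r \xrightarrow{\Lrho_{\nT_r}} \nS_r$.

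For (1), the projective bundle structure gives $\rho(\nU_r) = \rho(G) + 1 = 2$, so $\overline{NE}(\nU_r)$ is spanned by exactly two extremal rays. One is contracted by $\Lrho_{\nU_r}$; I would verify that the second is contracted by $\Lpi_{\nU_r}$. The generic fiber of $\Lpi_{\nU_r}$ is a connected component of $\OG(\tfrac{r}{2}, r)$, of positive dimension $\binom{r/2}{2}$, so the argument applies for $r \geq 4$ (the case $r=2$ is trivial since $\Lpi_{\nU_r}$ is an isomorphism there). The class of a line inside such a homogeneous fiber gives the second extremal ray, and restricting the projective bundle formula for $K_{\nU_r}$ to that fiber (a rational homogeneous, hence Fano, variety) verifies that $-K_{\nU_r}$ is $\Lpi_{\nU_r}$-ample. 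Thus $\Lpi_{\nU_r}$ is an extremal Mori fiber contraction, from which $\rho(\nT_r) = 1$ and $\mQ$-factoriality of $\nT_r$ follow by the standard theory of fiber-type extremal contractions from a smooth source.

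For (2), I would stratify by rank above $\nS_r$. Over the smooth stratum $\nS_r \setminus \nS_{r-1}$, Proposition \ref{Z_Q} shows $\Lrho_{\nT_r}$ is étale of degree two, so $\nT_r$ is smooth there. Above a general point of $\nS_{r-1} \setminus \nS_{r-2}$, $\nS_r$ has the transverse singularity $\tfrac{1}{2}(1^{n+2-r})$ by (\ref{equation:SingSr}); the key local claim is that $\Lrho_{\nT_r}$ coincides, étale-locally, with the tautological crepant double cover $\mA^{n+2-r} \to \mA^{n+2-r}/\{\pm 1\}$, whose total space is smooth. Granting this, $\nT_r$ is smooth above the rank $r-1$ stratum, which yields $\Sing \nT_r \subset \Lrho_{\nT_r}^{-1}(\nS_{r-2})$; the strict dimension inequality for $r \leq n$ follows from (\ref{eq:dimSr}) since $\dim \nS_{r-2} < \dim \nS_{r-1}$.

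For (3), the same local identification shows $\Lrho_{\nT_r}$ is crepant in codimension one, so $K_{\nT_r} =_\mQ \Lrho_{\nT_r}^{*} K_{\nS_r}$ as $\mQ$-Cartier divisors (agreement in codimension one on normal varieties extends globally). Combined with (\ref{eq:Fanoindex}) this gives (\ref{eq:FanoindexTr}). Since $r$ is even the coefficient $\tfrac{r(n+1)}{2}$ is an integer and $M_{\nT_r}$ is Cartier, so $K_{\nT_r}$ itself is Cartier and $\nT_r$ is Gorenstein; the canonical property is then inherited from $\nS_r$ (which has canonical singularities by Subsection \ref{subsection:Spr}) through the crepant finite morphism $\Lrho_{\nT_r}$. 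The main technical obstacle is the local étale identification used in (2) and (3): one must match the moduli-theoretic definition of $\nT_r$ (two connected families of maximal isotropic subspaces in nearby rank-$r$ quadrics) with the analytic normal form of $\tfrac{1}{2}(1^{n+2-r})$. I would handle this via an explicit calculation on a transverse slice to $\nS_{r-1}$, realizing the $\OG$-monodromy as a sign change of a chosen square-root coordinate so that the resulting cover is visibly the tautological one.
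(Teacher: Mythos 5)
Your part (1) and the smoothness analysis over the rank $r-1$ stratum in (2) follow the paper's own route (the paper identifies the cover (\ref{eq:specT}) near a rank $r-1$ point with the punctured universal cover of the $\frac 12(1^{n+2-r})$-singularity, which is exactly your transverse-slice claim), and your derivation of "canonical" from the finite morphism that is \'etale in codimension one is a legitimate alternative to the paper's resolution/discrepancy computation. The genuine gap is in the Gorenstein statement of (2) and the integral equality (\ref{eq:FanoindexTr}) in (3). From $K_{\nT_r}=\Lrho\,_{\nT_r}^*K_{\nS_r}$ and (\ref{eq:Fanoindex}) you only get a $\mQ$-linear equivalence $K_{\nT_r}=_{\mQ}-\frac{r(n+1)}{2}M_{\nT_r}$, and the inference "integer coefficient $+$ $M_{\nT_r}$ Cartier $\Rightarrow$ $K_{\nT_r}$ Cartier" is false: locally $K$ may be a nontrivial torsion class in the divisor class group. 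Indeed $\nS_r$ itself with $r$ even and $n-r$ odd satisfies $K_{\nS_r}=_{\mQ}-\frac{r(n+1)}{2}M_{\nS_r}$ with integer coefficient and $M_{\nS_r}$ Cartier, yet $K_{\nS_r}$ is not Cartier (see the Gorenstein-index paragraph of Subsection \ref{subsection:Spr}); so your step would "prove" too much. What is missing is precisely the paper's key relation (\ref{eq:triv}), $\Lrho\,_{\nT_r}^*(\frac r2 M_{\nS_r}-L_{\nS_r})\sim 0$, i.e.\ that the 2-torsion Weil class defining the cover in (\ref{eq:specT}) is trivialized on $\nT_r$; the paper obtains it from (\ref{eq:redEr}) on the Springer models and then gets the honest linear equivalence $K_{\nT_r}\sim-\frac{r(n+1)}{2}M_{\nT_r}$, hence Gorenstein, everywhere — including over $\nS_{r-2}$ and deeper strata, where your local identification gives no information because $\nT_r$ is genuinely singular there.

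Two further points. First, your argument silently assumes $r\leq n$: for $r=n+1$ the branch locus $\nS_n$ is a divisor, the cover is not crepant in codimension one, and (\ref{eq:FanoindexTr}) must be obtained from the ramification formula (the paper does this case separately, getting $-\binom{n+2}{2}M+\frac{n+1}{2}M$); your proposal does not cover it. Second, in (1) the assertion that $r=2$ is "trivial since $\Lpi\,_{\nU_2}$ is an isomorphism" does not yield the stated conclusion there (then $\Lpi$ is not of fiber type and $\rho(\nT_2)=\rho(\nU_2)=2$); this degenerate case is glossed over by the paper as well, but you should not present it as settled.
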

\begin{proof} 
(1) The Picard number of $\nU_r$ is two
since $\nU_r$ is a projective bundle over 
$\mathrm{G}(n-\frac r2 +1,V)$. Therefore 
the Picard number of $\nT_r$ is one since the relative Picard number
of $\Lpi_{\nU_r}\colon\nU_r\to\nT_r$ is one. 
$\Lpi_{\nU_r}$ is a Mori fiber
space since a general fiber of $\Lpi_{\nU_r}$
is a Fano variety by Proposition \ref{Z_Q}. 
$\nT_r$ is $\mQ$-factorial by \cite[Lemma 5-1-5]{KMM}.

\noindent(2)
To show the claim (2), we will construct the following commutative diagram:
\begin{equation}
\label{eq:STcomm}
\begin{matrix}
\xymatrix{\widetilde{\nU}_r\ar[r]^{\Lpi_{\widetilde{\nU}_r}}\ar[d]_{\Lp\,_{\widetilde{\nU}_r}} & \widetilde{\nT}_r\ar[r]^{\Lrho\,_{\widetilde{\nT}_r}}\ar[d]_{\Lp\,_{\widetilde{\nT}_r}} & \widetilde{\nS}_r\ar[d]_{\Lp\,_{\widetilde{\nS}_r}}\\
\nU_r\ar[r]_{\Lpi_{\nU_r}} & \nT_r\ar[r]_{\Lrho\,_{\nT_r}} & \nS_r.}
\end{matrix}
\end{equation}

\noindent $\bullet$ 
$\widetilde{\nU}_r$ is defined in $\mathrm{G}(\frac r2,\frQ)\times_{\rG(n+1-r,V)}\mP(\ft{S}^{2}\frQ^{*})$, in a similar way to $\nU_r$, by 
\[
\widetilde{\nU}_r:=\{([\Pi],[Q];[V_{n+1-r}])\mid\mP(\Pi)\subset Q\subset \mP(V/V_{n+1-r})\}. 
\]
\noindent $\bullet$ Then the projection to the second factor yields a morphism 
$\widetilde{\nU}_r\to \widetilde{\nS}_r$ and
the morphism 
$\mathrm{G}(\frac r2,\frQ)\times_{\rG(n+1-r,V)}\mP(\ft{S}^{2}\frQ^{*})
\to
\mathrm{G}(n+1-\frac r2,V)\times \mP(\ft{S}^{2}V^{*})$
induces a morphism 
$\Lp\,_{\widetilde{\nU}_r}\colon \widetilde{\nU}_r\to \nU_r$.
It is easy to see that $\Lp\,_{\widetilde{\nU}_r}$ is a birational morphism.

\noindent $\bullet$ Let
\[
\xymatrix{ & \widetilde{\nU}_r\;\ar[r]^{\;\;\Lpi_{\widetilde{\nU}_r}\;\;} & \;\widetilde{\nT}_r\;\ar[r]^{\;\;\Lrho_{\widetilde{\nT}_r}\;\;} & \;\widetilde{\nS}_r}
\]
be the Stein factorization of $\widetilde{\nU}_r\to\widetilde{\nS}_r$.
By the definition of Stein factorization,
we have
${\Lpi_{\widetilde{\nU}_r}}_*\sO_{\widetilde{\nU}_r}=\sO_{\widetilde{\nT}_r}$
and
${\Lpi_{{\nU}_r}}_*\sO_{{\nU}_r}=\sO_{{\nT}_r}$.
Therefore, by
\begin{equation}
\label{eq:long}
{\Lp_{\widetilde{\nS}_r}}_*{\Lrho\,_{\widetilde{\nT}_r}}_*\sO_{\widetilde{\nT}_r}=
{\Lp_{\widetilde{\nS}_r}}_*{\Lrho\,_{\widetilde{\nT}_r}}_*{\Lpi_{\widetilde{\nU}_r}}_*\sO_{\widetilde{\nU}_r}=
{\Lrho\,_{{\nT}_r}}_*{\Lpi_{{\nU}_r}}_*{\Lp\,_{\widetilde{\nU}_r}}_*\sO_{\widetilde{\nU}_r}={\Lrho\,_{{\nT}_r}}_*\sO_{{\nT}_r},
\end{equation}
we see that the Stein factorization of $\Lp\,_{\widetilde{S}_r}\circ
\Lrho\,_{\widetilde{T}_r}$ is $\widetilde{\nT}_r\to \nT_r\to \nS_r$.
We denote by $\Lp\,_{\widetilde{\nT}_r}\colon \widetilde{\nT}_r\to \nT_r$ the induced morphism.

Now we have completed the diagram (\ref{eq:STcomm}).
Similarly to the proof of Proposition \ref{cla:double},
we see that the branch locus of $\Lrho\,_{\widetilde{T}_r}\colon \widetilde{\nT}_r\to \widetilde{\nS}_r$ is $\Lp\,_{\widetilde{\nS}_r}$-exceptional divisor $E_r$.
Since $\widetilde{\nU}_r\to \widetilde{\nT}_r$ is a Mori fiber space,
$\widetilde{\nT}_r$ has only rational singularities by \cite{Fuj} and 
in particular is Cohen-Macaulay.
Therefore $\Lrho\,_{\widetilde{\nT}_r}$ is flat.
First we treat the case where $r=n+1$.
Then $\nT_{n+1}$ is Gorenstein since it is the double cover of $\nS_{n+1}=\mP({\ft S}^2 V^*)$ branched along the divisor $\nS_{n}$.
Thus $\nT_{n+1}$ has only canonical singularities by \cite{Fuj}.
$\Sing \nT_{n+1}$ is contained in the inverse image of $\Sing \nS_n=\nS_{n-1}$. Now we have verified the assertion (2) in case $r=n+1$.
{Let us} assume that $r\leq n$. 
Then, by (\ref{eq:Er}),
it holds that 
\begin{equation}
\label{eq:redEr}
\Lrho\,_{\widetilde{\nT_r}}^*(\frac r2 M_{\widetilde{\nS}_r}-L_{\widetilde{\nS}_r})\sim (\Lrho\,_{\widetilde{\nT_r}}^*E_r)_{\mathrm{red}}
\end{equation}
and
${\Lrho\,_{\widetilde{\nT}_r}}_*\sO_{{\widetilde{\nT}_r}}
=\sO_{{\widetilde{\nS}_r}}\oplus \sO_{{\widetilde{\nS}_r}}(-\frac r2 M_{\widetilde{\nS}_r}+L_{\widetilde{\nS}_r}).$
By (\ref{eq:long}),
we see that 
${\Lrho\,_{{\nT}_r}}_*\sO_{{\nT}_r}
=\sO_{{\nS}_r}\oplus \sO_{{\nS}_r}(-\frac r2 M_{{\nS}_r}+L_{{\nS}_r})$
with $L_{{\nS}_r}:=\Lpi_{\widetilde{\nS}_r *} L_{\widetilde{\nS}_r}$
and
\begin{equation}
\label{eq:specT}
{\nT}_r=\Spec_{{\nS}_r}
\big(\sO_{{\nS}_r}\oplus \sO_{{\nS}_r}(-\frac r2 M_{{\nS}_r}+L_{{\nS}_r})\big).
\end{equation}
Pushing (\ref{eq:redEr}) forward by $p_{\widetilde{\nT}_r}$,
we obtain
\begin{equation}
\label{eq:triv}
\Lrho\,_{{\nT_r}}^*(\frac r2 M_{{\nS}_r}-L_{{\nS}_r})\sim 0.
\end{equation}
In particular,
$\Lrho\,_{{\nT}_r}^* L_{{\nS}_r}$ is Cartier 
since so is $M_{{\nS}_r}$.
Therefore $K_{\nT_r}$ is Cartier by (\ref{eq:adjSr1})
and the formula $K_{\nT_r}=\Lrho\,_{\nT_r}^*K_{\nS_r}$.
Namely, $\nT_r$ is Gorenstein. 
To show that $\nT_r$ has only canonical singularities,
let $f\colon \widetilde{\nR}_r\to \widetilde{\nT}_r$ be a resolution.
Then, by the ramification formula, we have $K_{\widetilde{\nR}_r}\geq
f^*\Lrho\,_{\widetilde{\nT}_r}^* K_{\widetilde{\nS}_r}$.
Since $\nS_r$ has only canonical singularities, we have
$K_{\widetilde{\nS}_r}\geq \Lp\,_{\widetilde{\nS}_r}^* K_{{\nS}_r}$.
Therefore
\[
K_{\widetilde{\nR}_r}\geq
f^*\Lrho\,_{\widetilde{\nT}_r}^* K_{\widetilde{\nS}_r}\geq
f^*\Lrho\,_{\widetilde{\nT}_r}^* \Lp\,_{\widetilde{\nS}_r}^*K_{{\nS}_r}=
f^* \Lp\,_{\widetilde{\nT}_r}^*\Lrho\,_{{\nT}_r}^*K_{{\nS}_r}
=f^* \Lp\,_{\widetilde{\nT}_r}^*K_{{\nT}_r}.
\]
This means that $\nT_r$ has only canonical singularities.

By (\ref{equation:SingSr}) and (\ref{eq:specT}),
we see that  
$\nT_r$ is smooth at the inverse image of
a rank $r-1$ point $s \in \nS_r$
since $L_{\nS_r}$ generates the divisor class group at $s$
and then $(\ref{eq:specT})$ coincides with punctured universal cover
near $s$.

(3) 
If $r=n+1$, then
the canonical divisor of $\nT_r$ is given by \[
-\binom{n+2}{2} M_{\nT_r}+\frac {n+1}2 M_{\nT_r}=
-\frac{(n+1)^2}{2} M_{\nT_r}
\]
since the degree of the branch locus $\nS_n$ is $n+1$.
If $r\leq n$, then
the assertion follows from $K_{\nT_r}=\Lrho\,_{\nT_r}^*K_{\nS_r}$,
(\ref{eq:adjSr1}) and (\ref{eq:triv}).
\end{proof}

\begin{rem}
It is useful to consider that 
$\widetilde{\nT}_r\to\widetilde{\nS}_r$ as in the diagram (\ref{eq:STcomm})
is the family over $\rG(n+1-r,V)$ of the double cover
$\nT_r\to \nS_r$ for $r$-dimensional vector spaces $V/V_{n+1-r}$ with
$[V_{n+1-r}]\in \rG(n+1-r,V)$.
\end{rem}

\subsection{Dual situations and orthogonal linear sections}

To consider
projective duality for the symmetric determinantal loci
in $\mP({\ft S}^2 V^*)$, 
the symmetric determinantal loci in 
$\mP({\ft S}^2 V)$ naturally appear.
{Recall that we denote by $\nS^*_r$ the symmetric determinantal 
locus of rank at most $r$ in $\mP({\ft S}^2 V)$. Similarly to $\nS_r$, 
$\nS^*_1$ is the second Veronese variety $v_2(\mP(V))$ and
$\nS^*_r$ is the $r$-secant variety of $\nS^*_1$. Corresponding to our 
definitions $\nU_r, \nT_r$ and $\widetilde{\nS_r}$ for $\nS_r$ in 
$\mP(\ft{S}^2V^*)$, we have similar definitions  $\nU_r^*, \nT_r^*$ and 
$\widetilde{\nS_r^*}$ for $\nS_r^*$ in 
$\mP(\ft{S}^2V)$.}
%
%

For a linear subspace $L_{k+1}\subset {\ft S}^2 V^*$ of dimension $k+1$,
we say that $\nS_r\cap \mP(L_{k+1})$ is a {\it linear section} of
$\nS_r$ if $\nS_r\cap \mP(L_{k+1})$ is of codimension 
$\dim {\ft S}^2 V^*-(k+1)$ in $\nS_r$. {Linear sections of $\nS^*_r$ is 
defined for linear subspaces in  ${\ft S}^2 V$ in a 
similar way. }

Let $L_{k+1}^{\perp} \subset {\ft S}^2 V$ be the linear subspace 
orthogonal to $L_{k+1}$ with respect to the dual pairing.
For a triple $(\nS_r,\nS_s^*,L_{k+1})$, we say  
that linear sections
$\nS_r\cap \mP(L_{k+1})$ and $\nS^*_s\cap \mP(L_{k+1}^{\perp})$
are mutually {\it orthogonal}. 
%
%
By slight abuse of terminology, we also call the pull-back
of a linear section of $\nS_r$ by the double cover $\nT_r\to \nS_r$
{\it a linear section of $\nT_r$}.

\section{{\bf Pairs of Calabi-Yau sections and plausible duality}}
In this paper, we adopt the following definition of Calabi-Yau variety
and also Calabi-Yau manifold.

\begin{defn}
A normal projective variety $X$ is called \textit{a Calabi-Yau variety}
if $X$ has only Gorenstein canonical singularities, and its canonical
divisor is trivial and $h^{i}(\sO_{X})=0$ for $0<i<\dim X$.
If $X$ is smooth, then $X$ is called \textit{a Calabi-Yau manifold}.
A smooth Calabi-Yau threefold is abbreviated as a Calabi-Yau threefold.
\end{defn}

\subsection{Calabi-Yau linear section of $\nS_r$}
\begin{prop}
\label{prop:CYSr}
Assume that 
$n-r$ is even and $r<n+1$.
Then a general linear section $\nS_r^{\textsc{CY}}$
of codimension $\frac{r(n+1)}{2}$
is a Calabi-Yau variety 
of dimension $\frac{r(n+2-r)}{2}-1$
with only terminal $($resp.~canonical\,$)$ singularities if $r<n$ $($resp.~$r=n)$. 
Moreover, a general $\nS_r^{\textsc{CY}}$ is smooth
if and only if
$r\leq 2$.
\end{prop}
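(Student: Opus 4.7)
The plan is to verify the four assertions in turn. The dimension is immediate from \eqref{eq:dimSr}: subtracting the codimension $\frac{r(n+1)}{2}$ from $\dim\nS_r=\frac{r(r+1)}{2}-1+r(n+1-r)$ yields $\frac{r(n+2-r)}{2}-1$. The parity hypothesis $n-r$ even makes $K_{\nS_r}$ a genuine Cartier divisor by the Gorenstein-index discussion at the end of Subsection~\ref{subsection:Spr}, so \eqref{eq:Fanoindex} upgrades to the honest linear equivalence $K_{\nS_r}\sim -\tfrac{r(n+1)}{2}M_{\nS_r}$. Because the codimension of the section equals the Fano index, adjunction on the transverse complete intersection $\nS_r^{\textsc{CY}}$ delivers $K_{\nS_r^{\textsc{CY}}}\sim 0$.

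For the intermediate cohomology vanishing, I would apply Kodaira vanishing on the Gorenstein Fano variety $\nS_r$, whose canonical singularities are in particular rational, to obtain $H^j(\nS_r,\sO_{\nS_r}(-k))=0$ for $0\le j<\dim\nS_r$ and $k\ge 0$, and then push these vanishings through the Koszul resolution of $\sO_{\nS_r^{\textsc{CY}}}$ produced by the $\frac{r(n+1)}{2}$ defining linear forms. A standard spectral-sequence argument then yields $h^i(\sO_{\nS_r^{\textsc{CY}}})=0$ for $0<i<\dim\nS_r^{\textsc{CY}}$, completing the Calabi-Yau package.

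The singularity analysis is the delicate step, and I would handle it by lifting everything to the Springer resolution $\Lp\,_{\widetilde{\nS}_r}\colon\widetilde{\nS}_r\to\nS_r$. Since $M_{\widetilde{\nS}_r}$ is the pullback of the base-point-free hyperplane system on $\mP({\ft S}^2V^*)$, Bertini on the smooth variety $\widetilde{\nS}_r$ produces a smooth complete intersection $\widetilde{\nS}_r^{\textsc{CY}}\subset\widetilde{\nS}_r$ of $\frac{r(n+1)}{2}$ general members of $|M_{\widetilde{\nS}_r}|$ that meets $E_r$ transversely; under $\Lp\,_{\widetilde{\nS}_r}$ it restricts to a resolution $\widetilde{\nS}_r^{\textsc{CY}}\to\nS_r^{\textsc{CY}}$. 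Restricting the discrepancy identity $K_{\widetilde{\nS}_r}\sim_{\mQ}\Lp\,_{\widetilde{\nS}_r}^{*}K_{\nS_r}+\tfrac{n-r}{2}E_r$ obtained from \eqref{eq:adjSr2} and \eqref{eq:Fanoindex} to this resolution, and combining with $K_{\nS_r^{\textsc{CY}}}\sim 0$, yields $K_{\widetilde{\nS}_r^{\textsc{CY}}}\sim\tfrac{n-r}{2}E_r|_{\widetilde{\nS}_r^{\textsc{CY}}}$; every exceptional discrepancy over $\nS_r^{\textsc{CY}}$ thus equals $\tfrac{n-r}{2}$, giving terminal singularities when $r<n$ and canonical singularities when $r=n$.

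Finally, smoothness is equivalent to the general linear section missing $\Sing\nS_r=\nS_{r-1}$, which by a dimension-count Bertini happens iff $\dim\nS_{r-1}<\frac{r(n+1)}{2}$; using \eqref{eq:dimSr} with $r$ replaced by $r-1$ reduces this inequality to $(n+2-r)(r-2)<2$, and in the range $r\le n$ this holds precisely for $r\in\{1,2\}$. The principal obstacle is the transversality claim in the third step, namely that the strict transform of a general linear section on $\widetilde{\nS}_r$ remains a smooth resolution of $\nS_r^{\textsc{CY}}$ with exceptional divisor $E_r|_{\widetilde{\nS}_r^{\textsc{CY}}}$; this reduces to a routine Bertini argument once one records that $|M_{\widetilde{\nS}_r}|$ is base-point-free on the smooth variety $\widetilde{\nS}_r$.
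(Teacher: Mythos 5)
Your argument is correct and, for most steps, coincides with the paper's: the trivial canonical divisor comes from \eqref{eq:Fanoindex} plus the Cartier property of $K_{\nS_r}$ when $n-r$ is even, the intermediate cohomology vanishing comes from Kawamata--Viehweg-type vanishing on the Fano variety $\nS_r$ fed through the Koszul complex, and the smoothness criterion is exactly the dimension count $\dim\nS_{r-1}<\frac{r(n+1)}{2}\iff r\le 2$ (your reduction to $(n+2-r)(r-2)<2$ checks out). The genuine difference is the singularity statement: the paper simply invokes a Bertini-type theorem (Andreatta, \cite[Prop.~0.8]{And}) asserting that a general member of a base-point-free system on a variety with terminal (resp.\ canonical) singularities again has terminal (resp.\ canonical) singularities, whereas you reprove this in the situation at hand by cutting the Springer resolution $\widetilde{\nS}_r$ with general members of $|M_{\widetilde{\nS}_r}|$ and restricting the discrepancy formula $K_{\widetilde{\nS}_r}=_{\mQ}\Lp\,_{\widetilde{\nS}_r}^*K_{\nS_r}+\frac{n-r}{2}E_r$. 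Your route is more self-contained and makes the discrepancy $\frac{n-r}{2}$ visible, at the cost of having to verify the routine transversality, normality (Seidenberg) and irreducibility statements you flag; the paper's citation buys brevity and avoids those checks. Two small imprecisions in your write-up, neither fatal: the conclusion should be that every exceptional divisor \emph{of this particular resolution} has discrepancy $\frac{n-r}{2}$ (which suffices, since the resolution is smooth, to conclude terminal for $r<n$ and canonical for $r=n$ -- other divisors over $\nS_r^{\textsc{CY}}$ have larger discrepancy); and your vanishing statement $H^j(\nS_r,\sO_{\nS_r}(-k))=0$ for $0\le j<\dim\nS_r$, $k\ge 0$ should exclude $(j,k)=(0,0)$, as in the paper's formulation, though this does not affect the Koszul argument.
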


\begin{proof}
$\nS_r^{\textsc{CY}}$
has trivial canonical divisor
by (\ref{eq:Fanoindex}) since $K_{\nS_r}$ is Cartier
in case $n-r$ is even.
Since $\nS_r$ has only terminal (resp.~canonical) singularities
in case $r<n$ (resp.~$r=n$) and
is a Fano variety as we saw in
the subsection \ref{subsection:Spr},
it holds that $h^i(\sO_{\nS_r})=0$ for any $i>0$ 
and $h^i(\sO_{\nS_r}(-jM_{\nS_r}))=0$ for any $i<\dim \nS_r$ and
$j>0$
by the Kodaira-Kawamata-Viehweg vanishing theorem.
Therefore we have 
$h^i(\sO_{\nS_r^{\textsc{CY}}})=0$ for any $0<i<\dim \nS_r^{\textsc{CY}}$
by the Koszul complex.
By a version
of the Bertini theorem (cf.~\cite[Prop.~0.8]{And}),
a general 
$\nS_r^{\textsc{CY}}$ has only 
terminal (resp.~canonical) singularities
in case $r<n$ (resp.~$r=n$).
Therefore a general 
$\nS_r^{\textsc{CY}}$ is a Calabi-Yau variety.

Since $r<n+1$,
$\Sing \nS_r=\nS_{r-1}$.
Thus the second assertion is equivalent to
that
$\dim \nS_{r-1}=\frac{r(r-1)}{2}-1+(r-1)(n+2-r)<\frac{r(n+1)}{2}$ holds 
if and only if $r\leq 2$.
A proof of this claim is elementary.
\end{proof}

\begin{rem}
\label{rem:CYSr}
In case $n-r$ is odd,
we can show the following by the same argument as
in the proof of Proposition \ref{prop:CYSr}:

Linear sections of $\nS_r$ of codimension $\frac {r(n+1)}{2}$ 
does not have trivial canonical divisors but
bi-canonical divisors are trivial.
Except this, the same properties as $\nS_r^{\textsc{CY}}$
hold for them.
\end{rem}

By {the above} proposition,
we observe that
\begin{equation}
\label{eq:ob}
\dim
\nS_r^{\textsc{CY}}=
\dim \nS_{n+2-r}^{\textsc{CY}}=
\dim \nS_{n+2-r}^{* \textsc{CY}}.
\end{equation}
This indicates certain duality between 
$\nS_r$ and $\nS_{n+2-r}^*$.
We will discuss this duality in Subsection \ref{subsection:Pl}.

If $r=1$,
then $\nS_1$ is isomorphic to
the second Veronese variety $v_2(\mP(V))$.
Therefore its linear sections are
complete intersections of quadrics in $\mP(V)$.
 
In the next subsection, 
we adopt the dual setting and consider $\nS^*_2$ and
its linear sections $\nS_2^{*\textsc{CY}}$
in detail.

\subsection{Rank two case and Calabi-Yau manifold $X$ of a Reye congruence}

\label{subsection:ranktwo}

{Consider the determinant locus $\nS_2^*$ in $\mP(\ft{S}^2V)$ and also 
$\nU_2^*,\nT_2^*,\widetilde{\nS_2^*}$ defined in the same way as $\nU_2,\nT_2,
\widetilde{\nS_2}$ for $\nS_2$ in $\mP(\ft{S}^2V^*)$. Note that  $\nU^*_2\simeq \nT^*_2$ holds in this case.}
%
%

Let us {write} the exact sequence (\ref{eq:sE0}) for $\nS^*_2$
{by} noting that $\rG(n,V^*)=\mP(V)$ and $\eS=\Omega^1_{\mP(V)}$:
\begin{equation}
\label{eq:Er=2}
0\to\sE^{*}\to\ft{S}^{2}V\otimes\sO_{\mP(V)}\to\ft{S}^{2}T_{\mP(V)}(-1)\to0.
\end{equation}

\begin{prop}
\label{prop:sEtriv}
$\sE\simeq V^*\otimes \sO_{\mP(V)}(1)$.
\end{prop}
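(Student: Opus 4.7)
The plan is to identify $\sE^{*}$ with the image of a natural multiplication map, show this map is injective from $V \otimes \sO_{\mP(V)}(-1)$, and then dualize. The key algebraic input I will use is the standard identity: for any short exact sequence $0 \to A \to B \to C \to 0$ of locally free sheaves, the kernel of the induced surjection $\ft{S}^{2} B \twoheadrightarrow \ft{S}^{2} C$ equals the image of the multiplication map $A \otimes B \to \ft{S}^{2} B$. (This follows by presenting $\ft{S}^{\bullet} C$ as the quotient of $\ft{S}^{\bullet} B$ by the graded ideal generated by $A$ and extracting degree $2$.)

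I would apply this identity to the Euler sequence $0 \to \sO_{\mP(V)}(-1) \to V \otimes \sO_{\mP(V)} \to T_{\mP(V)}(-1) \to 0$, whose $\ft{S}^{2}$-quotient sequence is exactly $(\ref{eq:Er=2})$. This immediately identifies $\sE^{*}$ with the image of the multiplication map
$$\mu\colon V \otimes \sO_{\mP(V)}(-1) \;=\; \sO_{\mP(V)}(-1) \otimes (V \otimes \sO_{\mP(V)}) \;\longrightarrow\; \ft{S}^{2} V \otimes \sO_{\mP(V)}.$$

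Next I would verify that $\mu$ is injective by a fiberwise check. Over $[v_{0}] \in \mP(V)$, the inclusion $\sO_{\mP(V)}(-1) \hookrightarrow V \otimes \sO_{\mP(V)}$ is $\mC v_{0} \hookrightarrow V$, so the fiber of $\mu$ is the map $v \mapsto v \cdot v_{0}$ in $\ft{S}^{2} V$, which is clearly injective since $v_{0} \neq 0$ (completing $v_{0}$ to a basis of $V$, the images form part of a monomial basis of $\ft{S}^{2} V$). Both $V \otimes \sO_{\mP(V)}(-1)$ and $\sE^{*}$ are locally free of rank $n+1$ (since $\binom{n+2}{2} - \binom{n+1}{2} = n+1$), so the fiberwise injection $\mu$ is an isomorphism onto $\sE^{*}$, and dualizing yields $\sE \simeq V^{*} \otimes \sO_{\mP(V)}(1)$. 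I do not anticipate a serious obstacle; the crux is recognizing the multiplication-image description of the kernel of $\ft{S}^{2} B \to \ft{S}^{2} C$, which bypasses the need to compute any extension class in a symmetric-square filtration.
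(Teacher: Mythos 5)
Your proof is correct and takes essentially the same route as the paper's: both arguments come down to identifying the fiber of $\sE^{*}$ at a point $[v_{0}]\in\mP(V)$ with $v_{0}\cdot V\cong V\otimes \mC v_{0}$ inside $\ft{S}^{2}V$, i.e.\ recognizing $\sE^{*}\simeq V\otimes\sO_{\mP(V)}(-1)$. The paper states this purely as a fiberwise computation from $(\ref{eq:Er=2})$, whereas you make the globalization explicit via the multiplication map $V\otimes\sO_{\mP(V)}(-1)\to\ft{S}^{2}V\otimes\sO_{\mP(V)}$ and a rank count — a slightly more careful packaging of the same idea, not a different approach.
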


\begin{proof}
Taking fibers of (\ref{eq:Er=2}) at a point $[V_1]\in \mP(V)$,
we obtain the exact sequence
$0\to V\otimes V_1\to {\ft S}^2 V\to {\ft S}^2 (V/V_1)\to 0$.
Therefore the fiber of $\sE^*$ at $[V_1]$ is $V\otimes V_1$,
which show the claim.
\end{proof}
Therefore it holds that
\[
\nT_2^*\simeq \nU^*_2:=\mP(\sE^*)\simeq \mP(V)\times \mP(V).
\]
Moreover, by the proof of Proposition \ref{prop:sEtriv},
we see that
the map $\nT^*_2\to \mP({\ft S}^2 V)$
is given by
$\mP(V)\times \mP(V)\ni ([{\bf v}],[{\bf w}]) \mapsto
[{\bf v}\otimes {\bf w}+{\bf w}\otimes {\bf v}]\in \mP({\ft S}^2 V)$.
Therefore $\nS^*_2$, which is the image of this map, is nothing but
the symmetric product
${\ft S}^2 \mP(V)$.
In \cite{Arxiv}, we show that,
by identifying ${\ft S}^2 \mP(V)$ with the Chow variety of 
degree two $0$-cycles in $\mP(V)$ (cf.~\cite{GKZ}),
$\widetilde{\nS^*_2}$ is isomorphic to the Hilbert scheme of 
length two subschemes in $\mP(V)$, and 
the Springer resolution $\widetilde{\nS^*_2}\to \nS^*_2$ coincides with
the Hilbert-Chow morphism. 

{For brevity of notation, we fix the following definitions 
in what follows:}
\[
\chow:=\nS^*_2 
\;\text{ and }\;
X:=\text{a codimension $n+1$ linear section of } \nS_2^*.
\]
In \cite{Ol} (see also \cite{Arxiv}), 
a general $X$ is called a
\textit{Reye congruence} since it is isomorphic to 
a $(n-1)$-dimensional subvariety of $\rG(2,V)$.
By Proposition \ref{prop:CYSr} and Remark \ref{rem:CYSr}, 
Reye congruence $X$ is a Calabi-Yau variety when 
$n$ is even; when $n$ is odd, $X$ has similar properties except that 
$2K_X\sim 0$. In particular, when  $n=3$, $X$ is an 
Enriques surface (see \cite{Co}).

The proof of the following proposition
is standard, so we omit it here (cf.~\cite{Arxiv}).

\begin{prop}
For a general $X$, it holds that
\[
\pi_{1}(X)\simeq\mZ_{2},\;\;\Pic X\simeq\mZ\oplus\mZ_{2},\]
where the free part of $\Pic X$ is generated by the class $D$ of
a hyperplane section of $\chow$ restricted to $X$. 
\end{prop}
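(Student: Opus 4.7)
I would pull back $X$ along the double cover $\mP(V)\times\mP(V)\simeq \nT_2^*\to \chow=\nS^*_2$ of Proposition \ref{prop:sEtriv}. Since the branch locus $\nS^*_1\subset\chow$ has codimension $n$, strictly less than $\codim X=n+1$, a general $X$ avoids it, so $\tilde X:=X\times_{\chow}\nT_2^*\to X$ is étale of degree $2$. The composition $\mP(V)\times\mP(V)\to\mP(\ft S^2 V)$, $([v],[w])\mapsto[v\otimes w+w\otimes v]$, shows that the pullback of a hyperplane of $\chow$ is a $(1,1)$-divisor. Hence $\tilde X$ is cut out of $\mP(V)\times\mP(V)$ by $n+1$ general divisors of type $(1,1)$, and is smooth of dimension $n-1$ by Bertini.

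Since $\sO(1,1)$ is very ample via the Segre embedding, I would apply the Lefschetz hyperplane theorem along a chain of smooth intermediate sections from $\mP(V)\times\mP(V)$ down to $\tilde X$. For $n\geq 3$ this yields $\pi_1(\tilde X)=\pi_1(\mP(V)\times\mP(V))=1$, and for $n\geq 4$ the Grothendieck--Lefschetz theorem gives $\Pic\tilde X\simeq \mZ h_1\oplus\mZ h_2$. Moreover $\tilde X$ is connected as a complete intersection of ample divisors in the irreducible variety $\mP(V)\times\mP(V)$.

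With $\tilde X$ a connected, simply connected étale double cover of $X$, the deck transformation yields $\pi_1(X)\simeq\mZ_2$. For the Picard group, the Hochschild--Serre spectral sequence for the étale $\mZ_2$-cover $\tilde X\to X$ with coefficients in the sheaf of units gives
\[
0\to H^1(\mZ_2,\mC^*)\to\Pic X\to(\Pic\tilde X)^{\mZ_2}\to H^2(\mZ_2,\mC^*),
\]
and the outer terms compute as $\mZ_2$ and $0$ respectively, since $\mZ_2$ acts trivially on $H^0(\tilde X,\sO^*_{\tilde X})=\mC^*$ and $\mC^*$ is divisible. The covering involution swaps the factors of $\mP(V)\times\mP(V)$, exchanging $h_1$ and $h_2$, so $(\Pic\tilde X)^{\mZ_2}=\mZ(h_1+h_2)$. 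The hyperplane class $D\in\Pic X$ pulls back to $h_1+h_2$ and splits the sequence, yielding $\Pic X\simeq \mZ\langle D\rangle\oplus\mZ_2$.

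The main delicate step is controlling the surjectivity of $\Pic X\to(\Pic\tilde X)^{\mZ_2}$, which reduces to the vanishing of $H^2(\mZ_2,\mC^*)$; the rest is a routine combination of the codimension estimate guaranteeing étaleness, Bertini smoothness, and Lefschetz comparison. (Note that, as expected, the Picard computation requires $n\geq 4$, consistent with the fact that for $n=3$ the linear section $X$ is an Enriques surface with Picard rank $10$.)
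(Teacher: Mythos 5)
Your argument is correct and is essentially the standard proof that the paper itself omits (it only refers to \cite{Arxiv}): pass to the \'etale double cover $\tilde{X}\subset\mP(V)\times\mP(V)$ cut out by $n+1$ general $(1,1)$-divisors, apply Lefschetz to get $\pi_1(\tilde{X})=1$ and $\Pic\tilde{X}=\mZ h_1\oplus\mZ h_2$, and descend by the $\mZ_2$-action via Hochschild--Serre, with $D\mapsto h_1+h_2$ splitting the extension. Your caveat that the Picard statement needs $n\geq 4$ is also well taken, since for $n=3$ the section is an Enriques surface and the proposition as stated only applies in the higher-dimensional range the paper has in mind.
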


When $n=4$, $X$ is a Calabi-Yau threefold with the following invariants
\cite[Proposition 2.1]{HoTa1}: \[
\deg(X)=35,\;\; c_{2}.D=50,\;\; h^{2,1}(X)=26,\; h^{1,1}(X)=1,\]
 where $c_{2}$ is the second Chern class of $X$.

\subsection{Calabi-Yau linear section of $\nT_r$}
In this subsection, we assume that $r$ is even.

\begin{prop}
\label{prop:CYTr}
A general linear section $\nT_r^{\textsc{CY}}$
of codimension $\frac{r(n+1)}{2}$
is a Calabi-Yau variety 
{of dimension $\frac{r(n+2-r)}{2}-1$
with only canonical singularities.} 
Moreover, a general $\nT_r^{\textsc{CY}}$ is smooth
if 
$r\leq 4$.
\end{prop}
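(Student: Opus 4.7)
The plan is to mirror the argument of Proposition~\ref{prop:CYSr}, substituting the properties of $\nT_r$ established in Proposition~\ref{cla:ZY}: namely, $\nT_r$ is a $\mQ$-factorial Fano variety with Gorenstein canonical singularities, its canonical divisor $K_{\nT_r}=-\frac{r(n+1)}{2}M_{\nT_r}$ is Cartier (since $r$ is even), and $\Sing\nT_r\subset \Lrho_{\nT_r}^{-1}(\nS_{r-2})$.

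First I would apply adjunction on $\nT_r$: because $K_{\nT_r}$ is Cartier and equals $-\frac{r(n+1)}{2}M_{\nT_r}$, a general linear section $\nT_r^{\textsc{CY}}$ of codimension $\frac{r(n+1)}{2}$ has trivial canonical divisor. The dimension $\frac{r(n+2-r)}{2}-1$ follows from subtracting the codimension from $\dim\nT_r=\dim\nS_r=\frac{(2n+3-r)r-2}{2}$ given in (\ref{eq:dimSr}).

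Second, the required vanishing $h^i(\sO_{\nT_r^{\textsc{CY}}})=0$ for $0<i<\dim\nT_r^{\textsc{CY}}$ is obtained exactly as in the proof of Proposition~\ref{prop:CYSr}: since $\nT_r$ is a Fano variety with Gorenstein canonical singularities, Kawamata--Viehweg vanishing gives $h^i(\sO_{\nT_r})=0$ for $i>0$ and $h^i(\sO_{\nT_r}(-jM_{\nT_r}))=0$ for $i<\dim\nT_r$ and $j>0$, and then the Koszul resolution of $\sO_{\nT_r^{\textsc{CY}}}$ propagates the vanishing. For the singularities, the Bertini-type result \cite[Prop.~0.8]{And} applies to $\nT_r$ because it has canonical singularities, so a general linear section inherits canonical singularities, establishing the Calabi-Yau property.

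The final step is smoothness in the range $r\leq 4$. Since $\Lrho_{\nT_r}$ is finite, $\dim\Sing\nT_r\leq \dim\nS_{r-2}$, and a general codimension-$\frac{r(n+1)}{2}$ linear section avoids $\Sing\nT_r$ whenever $\dim\nS_{r-2}<\frac{r(n+1)}{2}$; combined with the standard Bertini smoothness away from $\Sing\nT_r$, this yields smoothness of $\nT_r^{\textsc{CY}}$. The main (and really only) obstacle is checking this numerical inequality using (\ref{eq:dimSr}): for $r=2$ the set $\nS_0$ is empty so the bound is trivial, and for $r=4$ it reduces to $4n+2<4n+6$, which holds. One should also note that at $r=6$ the analogous inequality fails once $n\geq 6$, which is precisely why the smoothness statement is restricted to $r\leq 4$.
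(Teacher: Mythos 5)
Your proposal is correct and follows essentially the same route as the paper: adjunction via $K_{\nT_r}=-\frac{r(n+1)}{2}M_{\nT_r}$ from Proposition~\ref{cla:ZY}, vanishing by Kawamata--Viehweg plus the Koszul complex, the Bertini-type result of \cite[Prop.~0.8]{And} for canonical singularities, and smoothness for $r\leq 4$ from the bound $\dim\nS_{r-2}<\frac{r(n+1)}{2}$ exactly as in the paper. (Only a cosmetic slip: for $r=4$ the inequality is $2n<2n+2$, not the figures you quote, but your conclusion is unaffected.)
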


\begin{proof}
By (\ref{eq:FanoindexTr}),
$\nT_r^{\textsc{CY}}$
has trivial canonical divisor.
Since $\nT_r$ is a Fano variety with only canonical singularities 
by Proposition \ref{cla:ZY},
We can show that   
$h^i(\sO_{\nT_r^{\textsc{CY}}})=0$ for any $0<i<\dim \nT_r^{\textsc{CY}}$,
and a general 
$\nT_r^{\textsc{CY}}$ has only canonical singularities
{in} the same way as in the proof of Proposition \ref{prop:CYSr}.
Therefore a general 
$\nT_r^{\textsc{CY}}$ is a Calabi-Yau variety.

Since 
$\Sing \nT_r$ is contained in the inverse image of $\nS_{r-2}$
by Proposition \ref{cla:ZY} (2),
the second assertion follows once we show
that
$\dim \nS_{r-2}=\frac{(r-1)(r-2)}{2}-1+(r-2)(n+3-r)<\frac{r(n+1)}{2}$ 
{holds} if and only if
$r\leq 4$.
A proof of the latter is elementary.
\end{proof}

We have already studied $\nT_2^{\textsc{CY}}$ in the subsection
\ref{subsection:ranktwo}.
We deal with $\nT_4^{\textsc{CY}}$ in detail in the subsection \ref{subsection:CY3Y}.

\subsection{Rank four case and Calabi-Yau manifold $Y$}

\label{subsection:CY3Y}
For brevity of notation, we introduce the following definitions:   
\[
\Hes:=\nS_4,\quad \UU:=\widetilde{\nS}_4,\quad \hcoY:=\nT_4, 
\quad \Zpq:=\nU_4,
\]
{while retaining} the notation $\nS_1,\nS_2,\nS_3\subset \Hes$. 
{We denote by $\Zpq_{[Q]}$ the fiber of the morphism $\Zpq\to \Hes$ over 
a point $[Q]$. Recall that $\Lpi_{\nU_4}=\Lpi_\Zpq:\Zpq \to \hcoY$ is defined 
by the Stein factorization $\Zpq\to\hcoY\to\Hes$ of $\Zpq\to\Hes$.}

\def\FigQuad{\resizebox{11cm}{!}{\includegraphics{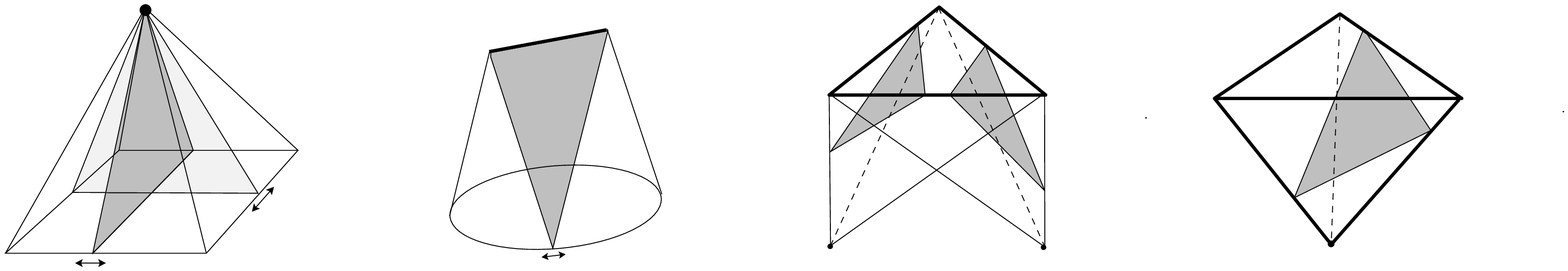}}} 
\def\FigQuadDisplay{ 
\begin{xy} 
(0,0)*{\FigQuad}, 
(-45,12)*{\mP(V_{n-3})}, 
(-16,12)*{\mP(V_{n-2})}, 
( 11,12)*{\mP(V_{n-1})}, 
( 40,12)*{\mP(V_n)}, 
(-45,-12)*{\mP^1 \sqcup \mP^1}, 
(-16,-12)*{\mP^1}, 
( 11,-12)*{\mP^{n-1}\sqcup_{1pt}\mP^{n-1}}, 
( 40,-12)*{2\,\mP^{n-1}} \end{xy} }

\[
\FigQuadDisplay\]
 \vspace{0.2cm}
 \begin{fcaption} 
\item \textbf{Fig.1. Quadrics $Q$ of rank at most four in $\mP(V)$ and families of $(n-2)$-planes
therein.} The singular loci of $Q$ are written by $\mP(V_{k})$ with
$k=n+1-{\rm {rk}\, Q}$. Also the parameter spaces of the planes in
each $Q$ are shown ($\mP^{n-1}\sqcup_{1pt}\mP^{n-1}$ represents the
union of $\mP^{n-1}$'s intersecting at one point). See also Fig.2 in
the subsection \ref{sub:tildeY-Y}. \end{fcaption} 
\vspace{0.0cm}

\begin{prop}
 \label{cla:double2} If $\rank Q=4$, then $\Zpq_{[Q]}$ is a disjoint union of two smooth rational curves, {each of which} 
is identified
with a conic in $\mathrm{G}(n-1,V)$. If $\rank Q=3$, then $\Zpq_{[Q]}$
is a smooth rational curve, which is also identified with a conic
in $\mathrm{G}(n-1,V)$. If $\rank Q=2$, then $\Zpq_{[Q]}$ is the
union of two $\mP^{n-1}$'s intersecting at one point. If $\rank Q=1$,
then $\Zpq_{[Q]}$ is a $($non-reduced\,$)$ $\mP^{n-1}$. 
In particular, $\Lpi_{\Zpq}\colon \Zpq\to \hcoY$ is generically a conic bundle.
\end{prop}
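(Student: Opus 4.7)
The plan is a case-by-case analysis on $\rank Q \in \{1,2,3,4\}$. In each case I would identify the scheme $\Zpq_{[Q]}$ of $(n-2)$-planes contained in $Q$, then check how it embeds into $\rG(n-1,V)$ via Pl\"ucker. The key observation is that for $\rank Q \in \{3,4\}$ an $(n-2)$-plane in $Q$ is a maximal linear subspace of $Q$, hence necessarily contains $\Sing Q = \mP(V_{n+1-\rank Q})$; so the analysis reduces to maximal isotropic subspaces in the quotient $V/V_{n+1-\rank Q}$, and the induced map to $\rG(n-1,V)$ is linear in Pl\"ucker coordinates, given by wedging with a fixed generator of the top exterior power of $V_{n+1-\rank Q}$. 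For $\rank Q \in \{1,2\}$ the $(n-2)$-planes are not maximal and have to be analyzed directly.

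For $\rank Q = 4$, Proposition \ref{Z_Q} identifies $\Zpq_{[Q]}$ with $\OG(2,4)$, which is classically the disjoint union of the two rulings of the smooth $2$-quadric $\mP^1 \times \mP^1$. To see each ruling embeds as a conic in $\rG(n-1,V)$, I would write $q = x_1 x_4 - x_2 x_3$ on $V/V_{n-3} \simeq \mC^4$, parametrize one ruling by $[a\!:\!b] \mapsto \mathrm{span}(a e_1 + b e_3,\, a e_2 + b e_4)$, and observe that the Pl\"ucker image $a^2 e_{12} + ab(e_{14} - e_{23}) + b^2 e_{34}$ is visibly a degree-two curve spanning a $\mP^2$ in $\mP(\wedge^2(V/V_{n-3}))$; linearity of the inclusion $\wedge^2(V/V_{n-3}) \hookrightarrow \wedge^{n-1}V$ then preserves the degree. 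For $\rank Q = 3$, $Q$ is a cone with vertex $\mP(V_{n-2})$ over a smooth conic $C \subset \mP(V/V_{n-2}) \simeq \mP^2$; its $(n-2)$-planes are cones over points of $C$, yielding $C \simeq \mP^1$, and the same linearity argument applied to $\mP(V/V_{n-2}) = \rG(1, V/V_{n-2}) \hookrightarrow \rG(n-1,V)$ shows $C$ maps to a conic.

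For $\rank Q = 2$, $Q = H_1 \cup H_2$ with $H_1 \cap H_2 = \mP(V_{n-1})$, and any $(n-2)$-plane of $Q$ must lie entirely in $H_1$ or $H_2$, yielding two copies of $\mP^{n-1}$ (hyperplanes in the respective $H_i$) meeting only at the common hyperplane $\mP(V_{n-1})$. For $\rank Q = 1$, $Q = 2H$ and the reduced scheme of $(n-2)$-planes in $Q$ is $\mP^{n-1}$ (hyperplanes of $H$); the non-reduced structure is inherited from $Q = 2H$ and can be extracted from the local behavior of the exact sequence (\ref{eq:sE0}) near $[Q]$. The final assertion that $\Lpi_\Zpq$ is generically a conic bundle then combines the rank-$4$ description with the Stein factorization $\Zpq \to \hcoY \to \Hes$ of Proposition \ref{cla:double}: over a general $[Q] \in \Hes$ the two conic components of $\Zpq_{[Q]}$ get separated onto the two sheets of $\hcoY \to \Hes$, so the general fiber of $\Lpi_\Zpq$ is a single smooth conic. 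The main obstacle I foresee is the degree-two check in the rank-$3$ and rank-$4$ cases: one must ensure the rulings (respectively $C$) do not collapse to lines in $\rG(n-1,V)$, and the explicit Pl\"ucker parametrization above settles this since the image already spans a $\mP^2$.
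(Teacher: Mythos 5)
Your proposal is correct and follows essentially the same route as the paper: a case-by-case analysis by rank, reducing via the vertex to the quotient $V/V_{n+1-\rank Q}$ (using Proposition \ref{Z_Q} for rank four), and transporting the resulting rulings/conic to $\rG(n-1,V)$ through the linear Pl\"ucker embedding. Your explicit Pl\"ucker parametrization merely makes precise the degree-two check that the paper treats as classical, so there is nothing substantive to change.
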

\begin{proof} If $\rank Q=4$, the fiber $\Zpq_{[Q]}$ consists of
two disconnected components, and is isomorphic to the orthogonal Grassmannian
$\OG(2,4)$ by Proposition \ref{Z_Q}. To be more explicit, let $\mP(V_{n-3})\subset\mP(V)$
be the vertex of $Q$. Then the quadric $Q$ is the cone over $\mP^{1}\times\mP^{1}$
with the vertex $\mP(V_{n-3})$. There are two distinct $\mP^{1}$-families
of lines in $\mP^{1}\times\mP^{1}$. Each of the families can be understood
as the corresponding conic in $\mathrm{G}(2,V/V_{n-3})$, which gives
one of the connected components of $\OG(2,4)$. Under the natural
map $\mathrm{G}(2,V/V_{n-3})\rightarrow\mathrm{G}(n-1,V)$, we have two
$\mP^{1}$- families of $2$-planes in $Q$ parameterized by the conics
in $\mathrm{G}(n-1,V)$.

If $\rank Q=3$, the vertex of the quadric $Q$ is a $\mP(V_{n-2})\subset\mP(V)$.
The quadric $Q$ is the cone over a conic with the vertex $\mP(V_{n-2})$.
The conic is contained in $\mP(V/V_{n-2})=\mathrm{G}(1,V/V_{n-2})$, and
can be identified with a conic in $\mathrm{G}(n-1,V)$ under the natural
map $\mathrm{G}(1,V/V_{n-2})\rightarrow\mathrm{G}(n-1,V)$.

If $\rank Q=2$, then the quadric $Q$ has a vertex $\mP(V_{n-1})\subset\mP(V)$
and is the union of two $(n-1)$-planes intersecting along the $(n-2)$-plane
$\mP(V_{n-1})$. Hence $\Zpq_{[Q]}\subset\mathrm{G}(n-1,V)$ is given by
the union of the corresponding $\mP^{n-1}$'s, i.e., $\mathrm{G}(n-1,n)$'s
in $\mathrm{G}(n-1,V)$, which intersect at one point $\mP(V_{n-1})$.

If $\rank Q=1$, then $Q$ is a double $(n-1)$-plane. Thus $\Zpq_{[Q]}$
is a (non-reduced) $\mP^{n-1}\cong\mathrm{G}(n-1,n)$. \end{proof}

We write by $G_{\hcoY}^{1}$ (resp.~$G_{\hcoY}^{2}$, $G_{\hcoY}$) the inverse
image under $\Lrho\,_{\hcoY}$ of $\nS_1$ 
(resp.~$\nS_2\setminus \nS_1$, $\nS_2$). We note that $G_{\hcoY}\simeq \nS_1\simeq \ft{S}^{2}\mP(V^{*})$ and $G_{\hcoY}^{1}\simeq \nS_2 \simeq v_{2}(\mP(V^{*}))$
since
$\nS_2$ is contained in the branch locus of $\Lrho\,_{\hcoY}$. Using these,
we summarize our construction above in the following diagram:
\def\xyHYZG{ \begin{matrix} 
\begin{xy} 
(35,0)*+{\Zpq}="Z",  
(65,0)*+{\mathrm{G}(n-1,V)}="G", 
(35,-13)*+{\hcoY}="Y", 
(35,-26)*+{\ \Hes,}="H", 
(18,-13)*+{G_\hcoY^1\;\;\subset \;\;\;G_\hcoY \;\subset\;\;},  
(15,-20)*+{\vcorr{-90}{$\simeq$} \;\qquad\;\; \vcorr{-90}{$\simeq$}}, 
(15,-26)*+{v_2(\mathbb{P}(V^*))\subset \ft{S}^2\mathbb{P}(V^*) \;\subset\ \ \;},
\ar^{\Lpi_\Zpq} "Z";"Y" 
\ar^{\Lrho_\hcoY} "Y";"H" 
\ar_{\Lrho_\Zpq\;\;}^{\;_{\text{proj.~bundle}}\;\;\;\;\;\quad} "Z";"G" 
\end{xy} \end{matrix} }
\begin{equation}
\xyHYZG\label{eq:Z}\end{equation}
where $\Lpi_{\Zpq}$ is a $\mP^{1}$-fibration over $\hcoY\setminus G_{\hcoY}$
by Proposition \ref{cla:double2}. In Section \ref{section:BirY},
we will construct a nice desingularization $\widetilde{\hcoY}$ of
$\hcoY$. Also, in Sections \ref{section:BirY} and \ref{section:FY}, 
we will study the geometry of $\widetilde{\hcoY}\to\hcoY$
along the loci $G_{\hcoY}$ and $G_{\hcoY}^{1}$ in full detail. 

Now {consider the linear section of $\hcoY=T_4$ and we set}
\[
Y:=\nT_4^{\textsc{CY}}.
\]
By Proposition \ref{prop:CYTr},
a general $Y$ is a Calabi-Yau manifold of dimension $2n-5$.

{
By using the fibration $\L\pi_{\Zpq}\colon \Zpq\to \hcoY$,  
it is possible to compute several invariants of $Y$.   
Computations have been done 
for the case $n=4$ in \cite[Prop.3.11 and Prop.3.12]{HoTa1}, \cite{Arxiv}, 
which we summarize below:}

\begin{prop}
\label{prop:Y} A general $Y$ is a simply connected smooth Calabi-Yau
$3$-fold such that $\Pic Y=\mZ[M]$, ${M}^{3}=10$, $c_{2}(Y).{M}=40$
and $e(Y)=-50$. In particular, $h^{1,1}(Y)=1$ and $h^{1,2}(Y)=26$. 
\end{prop}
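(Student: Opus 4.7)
The plan is to decompose the proposition into five claims---smooth Calabi-Yau property, Picard group, intersection numbers $M^3 = 10$ and $c_2(Y)\cdot M = 40$, Euler number $e(Y) = -50$, and simple connectedness---and to handle the numerical claims by pulling back along $\Lpi\,_\Zpq$ and exploiting the projective bundle structure $\Lrho\,_\Zpq \colon \Zpq = \mP(\sE^*) \to \rG(3,V)$. First, Proposition~\ref{prop:CYTr} (applied with $n=r=4$) already gives that a general $Y$ is a smooth Calabi-Yau threefold: triviality of $K_Y$ comes from (\ref{eq:FanoindexTr}) and adjunction, and smoothness holds because $\Sing \hcoY$ lies in $\Lrho\,_\hcoY^{-1}(\nS_2)$ of dimension $\leq 8$, so a generic codimension-$10$ linear section $Y$ of the $13$-dimensional $\hcoY$ is disjoint from it.

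For $\Pic Y = \mZ[M]$ and $\pi_1(Y) = 0$, combine Proposition~\ref{cla:ZY}(1)---which asserts that $\hcoY$ is $\mQ$-factorial of Picard number one with Fano index $10$, so $\Pic \hcoY = \mZ[M_\hcoY]$ with $M_\hcoY$ primitive---with iterated application of the Grothendieck-Lefschetz hyperplane theorem on the smooth locus $\hcoY^{\mathrm{sm}} \supset Y$. This yields $\Pic Y = \mZ[M]$ and reduces $\pi_1(Y)$ to $\pi_1(\hcoY^{\mathrm{sm}})$, which vanishes since $\hcoY$ is Fano with canonical singularities by Proposition~\ref{cla:ZY}(2)--(3).

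To compute $M^3$, set $Z_Y := \Lpi\,_\Zpq^{-1}(Y) \subset \Zpq$. Since $Y$ is cut out by $10$ sections of $|M_\hcoY|$, we have $[Z_Y] = M_\Zpq^{10}$ in $A^*(\Zpq)$, and by Proposition~\ref{cla:double2} the map $Z_Y \to Y$ is a generic $\mP^1$-bundle whose fibers are smooth conics in $\rG(3,V)$. Setting $L := \Lrho\,_\Zpq^*\sO_{\rG(3,V)}(1)$, a general fiber $F$ satisfies $L\cdot F = 2$, whence
\[
2\, M^3 \;=\; \int_{Z_Y}\bigl(M_\Zpq^3\cdot L\bigr)\big|_{Z_Y} \;=\; \int_\Zpq M_\Zpq^{13}\cdot L.
\]
From the exact sequence (\ref{eq:sE0}) one gets $s(\sE^*) = c(\ft{S}^2 \eS^*)$, hence $\Lrho\,_{\Zpq\,*} M_\Zpq^{13} = s_5(\sE^*) = c_5(\ft{S}^2 \eS^*)$; expanding this in the Chern roots of $\eS^*$ and evaluating by Schubert calculus on $\rG(3,5)$ yields $20$, so $M^3 = 10$. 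The identical strategy computes $c_2(Y)\cdot M = 40$: use adjunction with the split conormal bundle $\sO_Y(-M)^{\oplus 10}$, the relative Euler sequence for $\Zpq \to \rG(3,V)$, and $c(T_{\rG(3,V)}) = c(\eS^*\otimes\eQ)$ to reduce once more to a Schubert calculation on $\rG(3,5)$.

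Finally, for $e(Y) = -50$, the cleanest route is topological: since $h^{1,0}(Y) = h^{2,0}(Y) = 0$ by the Calabi-Yau property and $h^{1,1}(Y) = 1$ from the previous step, one has $e(Y) = 2(1 - h^{1,2}(Y))$, so $-50$ is equivalent to $h^{1,2}(Y) = 26$. Either compute $h^0(Y, \Omega_Y^2)$ via the Koszul resolution of $\sO_Y$ inside $\hcoY$ combined with Bott-type vanishing on a Springer-type desingularization, or determine $e(Y)$ by Euler-characteristic additivity along $Z_Y \to Y$, accounting for degenerate fibers. The main obstacle is the last step: the conic bundle $Z_Y \to Y$ has reducible fibers over $Y \cap \Lrho\,_\hcoY^{-1}(\nS_2\setminus\nS_1)$ and non-reduced fibers over $Y \cap \Lrho\,_\hcoY^{-1}(\nS_1)$ as described in Proposition~\ref{cla:double2}, and extracting the Euler characteristic contribution requires a precise analysis of these discriminant cycles inside $Y$.
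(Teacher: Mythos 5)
The paper itself does not prove Proposition \ref{prop:Y}: it is quoted from \cite[Prop.~3.11 and Prop.~3.12]{HoTa1} and \cite{Arxiv}, so your proposal has to be judged as a self-contained argument. Its core idea --- pulling the computation back along the conic bundle $Z_Y=\Lpi_{\Zpq}^{-1}(Y)\to Y$ and reducing to Schubert calculus on $\rG(3,V)\simeq \rG(3,5)$ via the projective bundle $\Zpq=\mP(\sE^*)$ --- is sound and is in the spirit of the cited computations; in particular your degree computation is correct ($\int_{\Zpq}M_{\Zpq}^{13}\cdot L=20$, hence $M^3=10$, consistent with $Y$ being a double cover of a quintic threefold section of $\nS_4$), and the $c_2(Y)\cdot M$ strategy via adjunction for $Z_Y\subset\Zpq$ works in principle. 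But two steps are genuinely defective.

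First, simple connectedness: the claim that $\pi_1(\hcoY^{\mathrm{sm}})=0$ ``since $\hcoY$ is Fano with canonical singularities'' is not a theorem --- for klt Fano varieties the fundamental group of the smooth locus is only known to be finite, and it can be nontrivial --- and ``iterated Grothendieck--Lefschetz on the smooth locus'' is not the standard Grothendieck--Lefschetz theorem either, since $\hcoY^{\mathrm{sm}}$ is not projective and the intermediate linear sections of $\hcoY$ do meet $\Sing\hcoY$; one would need quasi-projective Lefschetz theorems (or Ravindra--Srinivas type statements for $\Pic$) applied with care. The clean repair stays upstairs: $Z_Y$ is a complete intersection of ten ample divisors in the smooth $14$-fold $\Zpq$, so $\pi_1(Z_Y)\simeq\pi_1(\Zpq)=\pi_1(\rG(3,V))=1$ and $\Pic Z_Y\simeq\mZ M_{\Zpq}|_{Z_Y}\oplus\mZ L|_{Z_Y}$ by the usual Lefschetz theorems; since $Z_Y\to Y$ is proper surjective with connected fibers, $\pi_1(Y)=1$, and $\Pic Y$ injects into the fiber-degree-zero part of $\Pic Z_Y$, giving $\Pic Y=\mZ[M]$. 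Second, the ``main obstacle'' you flag for $e(Y)$ is illusory: degenerate fibers of $\Lpi_{\Zpq}$ occur only over $\Lrho_{\hcoY}^{-1}(\nS_2)$, which has dimension $8<10$ and is therefore missed by a general codimension-$10$ section --- exactly the dimension count you already used to get smoothness of $Y$. Hence $Z_Y\to Y$ is a smooth conic fibration with no reducible or non-reduced fibers, so $e(Z_Y)=2\,e(Y)$, and $e(Z_Y)=\int_{Z_Y}c_4(T_{Z_Y})$ is again a routine Chern class/Schubert computation in $\Zpq$. As written, your proposal stops short of this and leaves $e(Y)=-50$ (equivalently $h^{1,2}(Y)=26$) unproved, and its $\pi_1$ argument would not survive scrutiny; both gaps are fixable along the lines above.
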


{ It should be noted here that the Spec construction 
(\ref{eq:specT}) of $\nT_4=\hcoY$ generalizes the covering  
constructed in \cite[eq.(3.4)]{HoTa1} for $n=4$.}

\hspace{5pt}

In the following two subsections, we discuss
two plausible dualities between
$\nS^*_a$ and $\nT_b$ for certain pairs of $a$ and $b$.

\subsection{Linear duality and beyond}
The exact sequence (\ref{eq:sE0}) means that
the fibers of ${\ft S}^2 \eS$ and $\sE^*$ over a point of 
$\mathrm{G}(n+1-\frac r2,V)$
are the orthogonal spaces to each other when we consider them
as subspaces in ${\ft S}^2 V$ and ${\ft S}^2 V^*$, respectively.
The pair ${\ft S}^2 \eS$ and $\sE^*$ is an example of 
{\it orthogonal bundles}.

In \cite[\S 8]{HPD1}, Kuznetsov has established 
the homological projective duality between
a projective bundle $\mP(\sV)$ over a smooth base $S$
and its orthogonal bundle $\mP(\sV^{\perp})$ for a 
globally generated vector bundle $\sV$ on $S$.
He has called this duality {\it linear duality} in \cite{ICM}.
{Due to this general result, we know that} $\mP({\ft S}^2 \eS)$ 
and $\mP(\sE^*)$ are homological projective dual. Note that $\mP({\ft S}^2 \eS)=\widetilde{\nS^*}_{n+1-\frac r2}$ and $\mP(\sE^*)=\nU_r$.
Mutually orthogonal linear sections $X$ and $Z$
of $\mP({\ft S}^2 \eS)$ and $\mP(\sE^*)$
of codimensions $\rank {\ft S}^2 \eS$ and $\rank \sE^*$ respectively
{have the equal dimensions,} 
$\dim \rG(n+1-\frac r2,V)-1=\frac r2 (n+1-\frac r2)-1$, 
and are derived equivalent by \cite[\S 8]{HPD1}. 
Let $Y$ be the double cover of the image of $Z$ on $\mP({\ft S}^2 V^*)$.
{The derived equivalence between $X$ and $Z$} 
indicates that {there is some} relationship between 
non-commutative resolutions of $\sD^b(X)$ and $\sD^b(Y)$.
{Indeed, in \cite{ReyeEnr},
we have shown that this is the case when $n=3$ and $r=4$.} 
Note that in this case,
a general $X$ is a so-called Enriques-Fano threefold and 
a general $Y$ is a del Pezzo surface of degree two [ibid.].
{
In this case (of $n=3$ and $r=4$),  
we can also investigate the derived categories of  mutually orthogonal 
linear sections of $\nS^*_2$ and $\nT_4$ for a triple $(\nS_4,\nS_2^*,L_4)$, 
which define, respectively, an Enriques surface of Reye congruence and 
Artin Mumford double solid.  In \cite{DerSym}, we have found natural 
Lefschetz collections, which indicates that certain non-commutative 
resolutions of $\nS^*_2$ and $\nT_4$  are homological projective dual 
to each other. One may suspect that, with finding 
suitable Lefschetz collections, non-commutative resolutions of 
$\nS^*_{n+1-\frac r2}$ and $\nT_r$ are homologically projective dual 
to each other in general.}

\subsection{Plausible duality}
\label{subsection:Pl}
Assume that $r$ is even. Then $n-(n+2-r)$ is also even.
Therefore we obtain mutually orthogonal
Calabi-Yau linear sections $\nS^{* \textsc{CY}}_{n+2-r}$ and $\nT^{\textsc{CY}}_r$ by Propositions \ref{prop:CYSr} and \ref{prop:CYTr}.

We suspect an equivalence of 
the derived categories of 
certain non-commutative resolutions of 
orthogonal linear sections
$\nS^{* \textsc{CY}}_{n+2-r}$ and $\nT^{\textsc{CY}}_r$ 
rather than
$\nS^{* \textsc{CY}}_{n+2-r}$ and $\nS^{\textsc{CY}}_r$.
More generally, we speculate
that 
certain non-commutative resolutions of 
$\nS^*_{n+2-r}$ and $\nT_r$ with suitable Lefschetz collections {for each}
are homologically projective dual.
In fact, this is established in case $r=n+1$ \cite{Quad}
({called} Veronese-Clifford duality).
Note that in case $n=r=4$,
both {$\nS^{* \textsc{CY}}_{2}=X$ and $\nT^{\textsc{CY}}_4=Y$}
are smooth, and hence they are of considerable interest.
In \cite{DerSym}, we {have} constructed
(dual) Lefschetz collections in the derived categories 
of $\widetilde{\nS}^*_{2}$ and $\nT_4$, and 
{have proved} the derived equivalence between
$\nS^{* \textsc{CY}}_{2}$ and $\nT^{\textsc{CY}}_4$ in \cite{HoTa3}
using the properties of these collections.

{Having these applications in mind, in the rest of this paper, 
we study the birational geometry of $\hcoY=\nT_4$ for general $n$. 
Since we will be concentrated on the case $r=4$, we will extensively 
use the notation introduced in the beginning of 
the subsection \ref{subsection:CY3Y}. }

\section{{\bf Birational geometry of $\hcoY$}}
\label{section:BirY}

Proposition \ref{cla:double2} indicates {a correspondence}
between points in $\hcoY$ and conics in $\rG(n-1,V)$.
In this section, we explicitly construct a birational map between $\hcoY$ 
and the Hilbert scheme $\hcoY_0$
of conics in $\rG(n-1,V)$.

\subsection{Conics and planes in $\mathrm{G}(n-1,V)$\label{sub:Conics-and-planes}}

Let $q$ be a conic in $\rG(n-1,V)$ and $\mP_q$ the plane spanned by $q$.
Noting that $\rG(n-1, V)$ is the intersection of the
Pl\"ucker quadrics in $\mP(\wedge^{n-1} V)$,
we see that either 
$\mP_q\subset \rG(n-1,V)$ or 
$\rG(n-1,V)\cap \mP_q=q$ holds for $\mP_q$.

When $\mP_q\subset \rG(n-1,V)$,
we note that there are exactly two types of planes contained in
$\rG(n-1,V)\subset \mP(\wedge^{n-1} V)$:\begin{equation}
\begin{alignedat}{2}{\rm P}_{V_{n-2}}:= & \{[\Pi]\in\mathrm{G}(n-1,V)\mid V_{n-2}\subset\Pi\}\cong\mP^{2} & (\rho\text{-plane}),\\
{\rm P}_{V_{n-3}V_{n}}:= & \{[\Pi]\in\mathrm{G}(n-1,V)\mid V_{n-3}\subset\Pi\subset V_{n}\}\cong\mP^{2} & (\sigma\text{-plane})\end{alignedat}
\label{eq:xxxxx}\end{equation}
with some $V_{n-2}\subset V$ and $V_{n-3}\subset V_{n}\subset V$, respectively.
As displayed above, we call these planes \textit{$\rho$-plane} and
\textit{$\sigma$-plane}, respectively. 
It is easy to deduce the following proposition:
%
%
\begin{prop}
\label{prop:barPrho-barPsigma} In $\rG(3,\wedge^{n-1}V)$,
the set of $\rho$-planes $\overline{\Prt}_{\rho}$
and 
the set of $\sigma$-planes $\overline{\Prt}_{\sigma}$
are given by
\[
\begin{aligned}
\overline{\Prt}_{\rho} & =  \left\{ \big[\left(V/V_{n-2}\right)\wedge\left(\wedge^{n-2}V_{n-2}\right)\big]\mid[V_{n-2}]\in\rG(n-2,V)\right\}\\
\overline{\Prt}_{\sigma} & =  \left\{ \;\big[\wedge^{2}\left(V_{n}/V_{n-3}\right)\wedge (\wedge^{n-3}V_{n-3})\big]\;\mid[V_{n-3}\subset V_{n}]\in \rF(n-3,n,V)\right\},
\end{aligned}
\]
where $\overline{\Prt}_{\rho}\simeq  \rG(n-2,V)$ and $\overline{\Prt}_{\sigma}
\simeq \rF(n-3,n,V)$. 
 \end{prop}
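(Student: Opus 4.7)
The plan is to directly compute the Plücker embedding of each family of $(n-1)$-planes and then verify that the resulting assignments are isomorphisms with the stated parameter spaces.

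For the $\rho$-plane ${\rm P}_{V_{n-2}}$, I would first observe that any $\Pi\in {\rm P}_{V_{n-2}}$ is of the form $\Pi=V_{n-2}\oplus\mC v$ for a unique $[v]\in\mP(V/V_{n-2})$. Choosing any generator $\omega\in\wedge^{n-2}V_{n-2}$, the Plücker coordinate of $\Pi$ is $\tilde v\wedge\omega$, where $\tilde v\in V$ is any lift of $v$. As $[v]$ varies, the locus of such Plücker points in $\mP(\wedge^{n-1}V)$ is exactly $\mP\bigl((V/V_{n-2})\wedge\wedge^{n-2}V_{n-2}\bigr)$, giving the first formula.

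For the $\sigma$-plane ${\rm P}_{V_{n-3}V_n}$, I note that the conditions $V_{n-3}\subset\Pi\subset V_n$ put $\Pi/V_{n-3}$ in bijection with a point of $\rG(2,V_n/V_{n-3})\cong\mP^2$. Under the Plücker embedding, $\wedge^{n-1}\Pi=\wedge^2(\Pi/V_{n-3})\wedge \wedge^{n-3}V_{n-3}$, and as $\Pi/V_{n-3}$ sweeps out $\rG(2,V_n/V_{n-3})$, these vectors span the $3$-dimensional subspace $\wedge^{2}(V_n/V_{n-3})\wedge\wedge^{n-3}V_{n-3}\subset\wedge^{n-1}V$. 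This yields the second formula.

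Finally, to identify $\overline{\Prt}_\rho$ with $\rG(n-2,V)$ and $\overline{\Prt}_\sigma$ with $\rF(n-3,n,V)$, I would show that the natural maps sending $[V_{n-2}]$ (resp.\ $[V_{n-3}\subset V_n]$) to the corresponding plane are bijections: $V_{n-2}$ is recovered as $\bigcap_{[\Pi]\in{\rm P}_{V_{n-2}}}\Pi$, while $V_{n-3}$ and $V_n$ are recovered from ${\rm P}_{V_{n-3}V_n}$ as $\bigcap\Pi$ and $\sum\Pi$ respectively. Since both constructions are manifestly algebraic and functorial in families, the resulting bijections upgrade to isomorphisms of schemes.

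The computation is essentially bookkeeping in exterior algebra, and no real obstacle is expected; the only mild subtlety is checking that the Plücker image of each family indeed fills the entire $2$-plane described (rather than a proper subvariety of it), which amounts to the observation that the Plücker embedding of $\mP^2=\mP(V/V_{n-2})$ (respectively $\rG(2,V_n/V_{n-3})\cong\mP^2$) into the relevant $3$-dimensional vector space is surjective onto a linearly embedded plane.
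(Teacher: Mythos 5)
Your proposal is correct and is exactly the routine verification the paper has in mind: the paper gives no proof at all, introducing the statement with ``It is easy to deduce the following proposition.'' Your two key observations --- that $[v]\mapsto[\tilde v\wedge\omega]$ is a linear isomorphism of $\mP(V/V_{n-2})$ onto the plane $\mP\bigl((V/V_{n-2})\wedge(\wedge^{n-2}V_{n-2})\bigr)$, and that the Pl\"ucker embedding of $\rG(2,V_n/V_{n-3})$ fills all of $\mP\bigl(\wedge^{2}(V_n/V_{n-3})\bigr)$ --- together with recovering $V_{n-2}$ (resp.\ $V_{n-3}$, $V_n$) as the intersection (resp.\ intersection and span) of the planes parameterized, supply precisely the missing details.
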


Let us make the following definition:
\begin{defn}
We call a conic $q$ in $\rG(n-1,V)$ a {\it $\tau$-conic} if 
$\mP_{q}\cap \rG(n-1,V)=q$.
A conic $q$ is called a {\it $\rho$-conic} and 
{\it $\sigma$-conic} 
if the plane $\mP_{q}$ is contained in $\rG(n-1,V)$, {and in that case}
$\mP_q$ is {called} a $\rho$-plane and $\sigma$-plane, 
{respectively.} 
\end{defn}

{Let us denote by $[Q_y]$ the image of $y \in \hcoY$ under $\hcoY\to\Hes$. 
By slight abuse of terminology, we say $y$ is a rank $k$ point if 
$\rank Q_y=k$. By Proposition \ref{cla:double2}, the fiber of $\Zpq\to \hcoY$ 
over a rank $3$ or $4$ point $y$ is a conic, which we denote it by $q_y$.}


\begin{prop}
\label{lem:qy}$(1)$ If $\rank Q_{y}=4$, then $q_{y}$ is a $\tau$-conic. $(2)$
If $\rank Q_{y}=3$, then 
the plane $\mP_{q_{y}}$ is a $\rho$-plane, hence $q_{y}$ is a $\rho$-conic. 
\end{prop}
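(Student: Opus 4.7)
The plan is to invoke the dichotomy recorded in Subsection~\ref{sub:Conics-and-planes}: for any conic $q$ in $\rG(n-1,V)$, either $\mP_q\subset\rG(n-1,V)$, in which case $\mP_q$ must be one of the two types of planes in $\rG(n-1,V)$ listed there (a $\rho$-plane or a $\sigma$-plane), or else $\mP_q\cap\rG(n-1,V)=q$. In both parts I will go back to the explicit description of the $(n-2)$-planes in $Q_y$ given in the proof of Proposition~\ref{cla:double2}, and then read off the $(n-1)$-dimensional subspaces of $V$ that they correspond to.

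\textbf{Case (2).} This is the easy case. By Proposition~\ref{cla:double2}, when $\rank Q_y=3$ the quadric $Q_y$ is a cone with vertex $\mP(V_{n-2})$, and each $(n-2)$-plane in $Q_y$ has the form $\mP(V_{n-2}+\mathbb{C}v)$ with $[v]$ on the smooth conic in $\mP(V/V_{n-2})$. So every $(n-1)$-dimensional subspace $\Pi$ parametrized by $q_y$ contains $V_{n-2}$, which gives $q_y\subset {\rm P}_{V_{n-2}}$. Since $q_y$ is an irreducible conic it spans a $2$-plane $\mP_{q_y}$, and since ${\rm P}_{V_{n-2}}\simeq\mP^{2}$ already contains $q_y$, we must have $\mP_{q_y}={\rm P}_{V_{n-2}}$, which is a $\rho$-plane by definition.

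\textbf{Case (1).} Again using Proposition~\ref{cla:double2}, $Q_y$ is the cone with vertex $\mP(V_{n-3})$ over a smooth quadric $\mP^{1}\times\mP^{1}\subset\mP(V/V_{n-3})$, and the connected component $q_y\subset\Zpq_{[Q_y]}$ corresponds to one ruling of $\mP^{1}\times\mP^{1}$. The $(n-1)$-dimensional subspaces $\Pi\subset V$ parametrized by $q_y$ are exactly those of the form $V_{n-3}+W_2$, where $W_2\subset V/V_{n-3}$ ranges over $2$-dimensional subspaces whose projectivizations form that ruling. By the dichotomy it is enough to rule out both $\mP_{q_y}={\rm P}_{V_{n-2}}$ (a $\rho$-plane) and $\mP_{q_y}={\rm P}_{V_{n-3}V_{n}}$ (a $\sigma$-plane). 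Distinct lines of one ruling of $\mP^{1}\times\mP^{1}$ are disjoint in $\mP^{3}$, so $W_2\cap W_2'=0$ for distinct $W_2,W_2'$ in that ruling, and therefore
\[
\bigcap_{[\Pi]\in q_y}\Pi=V_{n-3},
\]
which has vector dimension $n-3$ rather than $n-2$; this rules out a $\rho$-plane. Similarly one ruling of $\mP^{1}\times\mP^{1}$ spans all of $\mP(V/V_{n-3})$, so $\sum_{[\Pi]\in q_y}\Pi=V$, precluding a common hyperplane $V_n\subsetneq V$ and thus ruling out a $\sigma$-plane. Hence $\mP_{q_y}\not\subset\rG(n-1,V)$, and the dichotomy gives $\mP_{q_y}\cap\rG(n-1,V)=q_y$, i.e.\ $q_y$ is a $\tau$-conic.

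\textbf{Main difficulty.} There is no real obstacle once Proposition~\ref{cla:double2} is in hand; the only care needed is in verifying the two elementary facts about rulings of $\mP^{1}\times\mP^{1}$ used above (disjointness of distinct lines of a single ruling, and that a single ruling spans all of $\mP^{3}$), both of which can be read off directly from the Segre presentation $\mP^{1}\times\mP^{1}\hookrightarrow\mP^{3}$.
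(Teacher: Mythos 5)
Your proposal is correct and follows essentially the same route as the paper: for (2) observe that all planes parametrized by $q_y$ contain the vertex $\mP(V_{n-2})$, hence lie in the $\rho$-plane ${\rm P}_{V_{n-2}}$, and for (1) exclude the $\rho$- and $\sigma$-cases because the $(n-2)$-planes in a rank-four quadric neither share a common $\mP(V_{n-2})$ nor lie in a common $\mP(V_n)$. The only difference is that you spell out, via the rulings of $\mP^{1}\times\mP^{1}$, the two impossibility claims that the paper states without detail ("this can not be the case"), which is a welcome but not essentially different elaboration.
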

\begin{proof}
(1) If $q_y$ is a $\rho$-conic,
then $(n-2)$-planes in $Q_y$ parameterized by $q_y$ must contain a $\mP(V_{n-2})$ in common
but this can not be the case.
If $q_y$ is a $\sigma$-conic,
then $(n-2)$-planes in $Q_y$ parameterized by $q_y$
must be contained in one $\mP(V_{n})$
but this also can not be  the case.
Hence $q_{y}$ is a $\tau$-conic. The claim (2) is clear since the planes
parametrized by $q_{y}$ contain the vertex $\mP(V_{n-2})$ of $Q_{y}$
in common. 
\end{proof}

\begin{example}
\label{ex:conics} 

(Smooth Conics) Taking a basis ${\bf e}_{1},\dots,{\bf e}_{n+1}$
of $V$, consider the subspaces  $V_{n-3}=\langle{\bf e}_{4},\dots, {\bf e}_{n}\rangle$,
$V_{n}=\langle{\bf e}_{1},\dots,{\bf e}_{n}\rangle$
and $V_{n-2}=\langle{\bf e}_{4},\dots, {\bf e}_{n+1}\rangle$. An 
example of $\tau$-conic may be given\[
q_{\tau}=\left\{ [s{\bf e}_{1}+t{\bf e}_{2},s{\bf e}_{3}+t{\bf e}_{4},{\bf e}_{5},\dots, {\bf e}_{n+1}]\mid[s,t]\in\mP^{1}\right\} .\]
Similarly, as a $\mP^{1}$-family of planes in the $\rho$-plane ${\rm P}_{V_{n-2}}$
and $\tau$-plane ${\rm P}_{V_{n-3}V_{n}}$, respectively, we have the
following examples: \[
q_{\rho}=\left\{ [s^{2}{\bf e}_{1}+st{\bf e}_{2}+t^{2}{\bf e}_{3},{\bf e}_{4},\dots,{\bf e}_{n+1}]\right\} ,\; q_{\sigma}=\left\{ [s{\bf e}_{1}+t{\bf e}_{2},s{\bf e}_{2}+t{\bf e}_{3},{\bf e}_{4},\dots, {\bf e}_n]\right\} ,\]
where $[s,t]\in\mP^{1}$ parameterizes each conic $q$. \hfill $\square$ 
\end{example}

\begin{example}
\label{ex:ranktwo}
(Rank two conics)
Since a line in $\rG(n-1,V)$ takes the form $l_{V_{n-2}V_{n}}=\left\{ [\Pi]\mid V_{n-2}\subset\Pi\subset V_{n}\right\} $
with some $V_{n-2}\subset V_{n}\subset V$, reducible conics $q$ have
the following form: \begin{equation}
q=l_{V_{n-2}V_{n}}\cup l_{V_{n-2}'V_{n}'}
\label{eq:conic-q-rk2}\end{equation}
with

$\bullet$ $\dim(V_{n-2}\cap V_{n-2}')\geq
n-3$, 

$\bullet$ $V_{n-2},V_{n-2}'\subset V_{n}\cap V_{n}'$, and

$\bullet$ $V_{n-2}\not =V'_{n-2}$ or $V_n\not =V'_n$.

\noindent
These conics will be described in detail in the section \ref{section:FY}. 
\end{example}

{Descriptions of rank one conics may be found in Appendix \ref{app:aU}.}

\subsection{Hilbert scheme $\hcoY_0$ of conics on $\rG(n-1,V)$}
\label{subsection:Hilb}
Consider a point $[U]\in \rG(3,\wedge^{n-1}V)$.  
To describe conics in $\rG(n-1,V)\subset \mP(3,\wedge^{n-1}V)$,
it suffices to find a {condition} for a plane $\mP(U)$
to be contained in $\rG(n-1,V)$ or cut out a conic from 
$\rG(n-1,V)$.
For this, we introduce 
the composite $\varphi$ of the following maps:
\begin{equation}
\varphi\colon \ft{S}^{2}(\wedge^{n-1}V)\simeq\ft{S}^{2}(\wedge^{2}V^{*})
\overset{\psi}{\to}\wedge^{4}V^{*},\label{eq:phiU}
\end{equation}
where the first map is {induced by} 
the duality $\wedge^{n-1}V\simeq\wedge^{2}V^{*}$
coming from the wedge product pairing 
$\wedge^{n-1}V\times\wedge^{2}V\to\wedge^{n+1}V$,
and $\psi$ is {induced by} the wedge product.
Note that the zero locus of $\psi$ is nothing but $\rG(2,V^*)$ 
{since we obtain the Pl\"ucker quadrics defining $\rG(2,V^*)$
by writing $\psi$ with coordinates.}
Moreover, the duality $\wedge^{n-1}V\simeq\wedge^{2}V^{*}$
induces an isomorphism $\rG(n-1,V)\simeq \rG(2,V^*)$.
Therefore $\rG(n-1,V)$ is the zero locus of $\varphi$.

Now we consider the restriction of $\varphi$ to
a $3$-plane $U\subset \wedge^{n-1} V$:
\[
\varphi_{U}:=\varphi\vert_{\ft{S}^{2}U}\colon  \ft{S}^{2}U\to \wedge^4 V^*.
\]
Let $U'$ be the $3$-plane of $\wedge^2 V^*$ corresponding to $U$
and denote by $\psi_{U'}$ the restriction of $\psi$ to $U'$.
Since $\rG(2,V^*)$ is
the zero locus of $\psi$,
$\mP(U')\subset \rG(2,V^*)$ iff $\psi_{U'}=0$.
Similarly,
$\mP(U')\cap \rG(2,V^*)$ is a conic iff 
the restrictions of the Pl\"ucker quadrics on $\mP({\ft S}^2 {U'}^*)$ form 
a point, i.e., one-dimensional subspace of ${\ft S}^2 {U'}^*$,
which is equivalent to the condition $\rank \psi_{U'}=1$.
Translating this, we immediately obtain the following 
descriptions on the {intersection} $\mP(U)\cap \rG(n-1,V)$:

\begin{prop}
\label{pro:conic-varphiU}
For a $3$-plane $U\subset \wedge^{n-1}V$,
$\mP(U)\cap \rG(n-1,V)$ contains a conic
iff $\rank \phi_U\leq 1$.
Moreover, the following {properties} hold$\,:$
\begin{myitem2} \item[$(1)$]  
$\left\{ [U]\in\rG(3,\wedge^{n-1} V)\mid
\varphi_{U}=0\right\} =\overline{\Prt}_{\rho}\sqcup\overline{\Prt}_{\sigma}$.

\item[$(2)$] If $\rank \varphi_{U}=1$, then 
$\mP(U)\cap \rG(n-1,V)$ is a conic which is the zero
locus of $\varphi_U$.
\end{myitem2}\end{prop}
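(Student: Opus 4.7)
The plan is to transfer the problem to the classical Pl\"ucker setup $\rG(2,V^*) \subset \mP(\wedge^2 V^*)$ via the duality $\wedge^{n-1}V \simeq \wedge^2 V^*$, and then exploit the fact that the coordinate components of $\psi$ are exactly the Pl\"ucker quadrics defining $\rG(2,V^*)$. Dualising $\varphi_U: {\ft S}^2 U \to \wedge^4 V^*$ produces a subspace of plane quadrics on $\mP(U)$ of dimension $\rank \varphi_U$ whose common zero locus is exactly $\mP(U) \cap \rG(n-1,V)$. Thus the intersection is governed by a linear system of plane quadrics whose dimension we can read off from $\rank \varphi_U$.

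With this reformulation, the main biconditional and part $(2)$ become transparent. If $\rank \varphi_U = 0$, then $\mP(U)$ sits inside $\rG(n-1,V)$, so the intersection trivially contains plane conics. If $\rank \varphi_U = 1$, a single non-zero plane quadric cuts out the intersection, which is then a plane conic; this gives $(2)$ directly. Conversely, if the intersection contains a plane conic $C$, every restricted Pl\"ucker quadric must vanish on $C$. Since the ideal of any plane conic in $\mP(U) \cong \mP^2$ is generated by a single degree-$2$ form (this holds uniformly for smooth conics, pairs of distinct lines, and double lines), the space of plane quadrics vanishing on $C$ is at most one-dimensional, forcing $\rank \varphi_U \leq 1$.

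For part $(1)$, note that $\varphi_U = 0$ is equivalent to $\mP(U) \subset \rG(n-1,V)$, so the problem reduces to the classical classification recalled in (\ref{eq:xxxxx}): any $2$-plane in $\rG(n-1,V)$ is either a $\rho$-plane or a $\sigma$-plane. Proposition \ref{prop:barPrho-barPsigma} identifies these two families as $\overline{\Prt}_\rho$ and $\overline{\Prt}_\sigma$ respectively. To conclude the disjoint-union statement one checks that the families are disjoint: the intersection of all $(n-1)$-planes parameterised by ${\rm P}_{V_{n-2}}$ is the $(n-2)$-dimensional space $V_{n-2}$, whereas for ${\rm P}_{V_{n-3}V_n}$ it is the $(n-3)$-dimensional $V_{n-3}$, and a common plane would have two intrinsically defined kernels of different dimensions, which is impossible.

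The main obstacle I anticipate is the converse direction of the biconditional in the degenerate cases. Here one must interpret "contains a conic" scheme-theoretically; in particular for a double line $2L$ a vanishing quadric must be a scalar multiple of $L^2$, not merely of $L$, so that the space of plane quadrics vanishing on $2L$ is still one-dimensional. Once this subtlety is handled, the rest of the proof is a bookkeeping argument combining the definition of $\psi, \varphi_U$ with the classification of planes in Grassmannians.
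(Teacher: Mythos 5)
Your proposal is correct and follows essentially the same route as the paper: both transfer the problem to $\rG(2,V^*)$ via the duality $\wedge^{n-1}V\simeq\wedge^2 V^*$, identify $\rank\varphi_U$ with the dimension of the space of restricted Pl\"ucker quadrics on $\mP(U)$ whose common zero locus is $\mP(U)\cap\rG(n-1,V)$, and reduce part $(1)$ to the classification of planes in the Grassmannian from (\ref{eq:xxxxx}) and Proposition \ref{prop:barPrho-barPsigma}. Your added details (the degree-two part of the ideal of a conic, including a double line, being one-dimensional, and the kernel-dimension check for disjointness of $\overline{\Prt}_{\rho}$ and $\overline{\Prt}_{\sigma}$) simply make explicit what the paper leaves as an immediate translation.
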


Motivated from the above descriptions of conics, we {define} 
the following scheme with reduced structure:
\begin{equation}
\hcoY_{0}:=\left\{ ([U],[c_{U}])\mid[U]\in\rG(n-1,V),
[c_{U}]\in\mP(\ft{S}^{2}U^{*})\text{ s.t. } (c_{U})_0\subset (\varphi_{U})_0
\right\},\label{eq:resolutionY0}\end{equation}
where 
$(c_{U})_0$ and $(\varphi_{U})_0$ {represents} the zero locus in $\mP(U)$ of 
$c_U$ and $\varphi_{U}$, respectively.


\begin{thm}
\label{prop:Y0Hilb}
$\hcoY_0$ is smooth and isomorphic to the Hilbert
scheme of conics on $\rG(n-1,V)$. 
\end{thm}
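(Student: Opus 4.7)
The plan is to identify $\hcoY_0$ with the Hilbert scheme of conics by (i) constructing a universal family on $\hcoY_0$, (ii) exhibiting an inverse, and (iii) verifying smoothness. The bijection on closed points is already at hand: by the defining inclusion $(c_U)_0\subset (\varphi_U)_0\subset \rG(n-1,V)\cap \mP(U)$ every $([U],[c_U])\in \hcoY_0$ yields a conic $(c_U)_0$ in $\rG(n-1,V)$, and conversely every conic $q\subset \rG(n-1,V)$ has a unique linear span $\mP(U_q)$ and, up to scalar, a unique defining equation $c_{U_q}\in {\ft S}^2 U_q^*$ (forced to equal $\varphi_{U_q}$ when $\varphi_{U_q}\neq 0$, and arbitrary when $\mP(U_q)\subset \rG(n-1,V)$, i.e.\ $\varphi_{U_q}=0$).

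For the morphism $\iota\colon \hcoY_0\to \Hilb_{2t+1}(\rG(n-1,V))$, view $\hcoY_0$ as a closed subscheme of the projective bundle $\mP({\ft S}^2\sU^*)\to \rG(3,\wedge^{n-1}V)$, where $\sU$ is the tautological rank-$3$ subbundle. The tautological inclusion $\sO(-1)\hookrightarrow {\ft S}^2\sU^*$ is a universal quadric, whose zero locus in the incidence $\{([U],[w]):w\in U\}$, pulled back to $\hcoY_0$, defines a flat family $\sC\subset \hcoY_0\times \rG(n-1,V)$ of conics (flatness from the constant Hilbert polynomial $2t+1$); $\iota$ is then defined by the universal property of $\Hilb$. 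The inverse assigns to a flat family $\sC'\subset T\times \rG(n-1,V)$ of conics, with projection $p\colon \sC'\to T$, the rank-$3$ subbundle $\sU_T\subset \wedge^{n-1}V\otimes \sO_T$ of linear spans (dual to $p_*\sO_{\sC'}(1)$) and the line subbundle $\sL_T\subset {\ft S}^2\sU_T^*$ recording the relative equation of $\sC'$; the inclusion $\sC'\subset T\times \rG(n-1,V)$ forces $\varphi_{\sU_T}\in \sL_T\otimes \wedge^4V^*$, producing the required morphism $T\to \hcoY_0$ inverse to $\iota$.

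For smoothness, the projection $\hcoY_0\to \rG(3,\wedge^{n-1}V)$ is an isomorphism over the rank-$1$ locus of $\varphi$ and a $\mP^5$-bundle over $B=\{\varphi_U=0\}=\overline{\Prt}_\rho\sqcup \overline{\Prt}_\sigma$; both loci are smooth (the latter by Proposition~\ref{prop:barPrho-barPsigma}), so the crux is smoothness along the transition between the two strata. I plan to realize $\hcoY_0$ as the scheme-theoretic zero locus in $\mP({\ft S}^2\sU^*)$ of the section of $\bigl({\ft S}^2\sU^*/\sO(-1)\bigr)\otimes \wedge^4V^*$ induced by $\varphi$, and verify pointwise that this section cuts $\hcoY_0$ out by a regular sequence, or equivalently that the Zariski tangent space has constant dimension; this is the main technical obstacle. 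A cleaner alternative is the Hilbert-scheme smoothness criterion $H^1(C,N_{C/\rG(n-1,V)})=0$ for every conic $C$, reduced by a splitting-type computation of the normal bundle on $\mP^1$ for smooth conics together with a case analysis for reducible and double-line conics.
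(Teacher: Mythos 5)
Your first two steps are fine and in fact more explicit than the paper's own argument: the paper only constructs the forward morphism $\hcoY_0\to \mathrm{Hilb}^{\mathrm{co}}\rG(n-1,V)$ from the tautological family and the universal property, and then \emph{imports} the smoothness of the Hilbert scheme of conics from \cite{IM} and \cite{CHK}, concluding the isomorphism from the pointwise bijection. The genuine gap in your proposal is precisely the smoothness step, which you defer as ``the main technical obstacle'' and never carry out, and whose first proposed mechanism cannot work as stated. The incidence condition realizes $\hcoY_0$ inside $\mP(\ft{S}^{2}\sU^{*})$ as the zero locus of a section of $\bigl(\ft{S}^{2}\sU^{*}/\sO(-1)\bigr)\otimes\wedge^{4}V^{*}$, a bundle of rank $5\binom{n+1}{4}$, whereas $\hcoY_0$ has codimension $3\binom{n+1}{2}-4n-1$ in $\mP(\ft{S}^{2}\sU^{*})$: for $n=4$ this is $13$ versus $25$, and the two numbers agree only for $n=3$. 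So for $n\geq 4$ the section is nowhere regular, ``cut out by a regular sequence'' is simply false, and smoothness of the zero locus does not follow from any complete-intersection criterion; it requires a genuine tangent-space (or normal-sheaf) computation at every point, in particular across the locus where $\rank\varphi_U$ drops from $1$ to $0$, i.e.\ along $\overline{\Prt}_{\rho}\sqcup\overline{\Prt}_{\sigma}$, which is exactly where the fibre dimension of $\hcoY_0\to\rG(3,\wedge^{n-1}V)$ jumps.

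Your fallback, $H^{1}(C,N_{C/\rG(n-1,V)})=0$ for every conic $C$, is the standard unobstructedness route and is essentially the content of the results the paper cites; but the hard cases are the ones you leave out: non-reduced double lines (where $N_{C/\rG(n-1,V)}$ is only a coherent sheaf on a non-reduced curve, not a split bundle on $\mP^{1}$) and conics contained in $\rho$- or $\sigma$-planes. Without these cases the theorem is not proved, and note also that $H^{1}=0$ gives smoothness of the Hilbert scheme, not of $\hcoY_0$, so you still need your inverse morphism (or Zariski's main theorem applied to the bijective morphism onto the smooth, hence normal, Hilbert scheme) to transfer it. A related subtlety: since $\hcoY_0$ carries the reduced structure by definition, the classifying map attached to a flat family over an arbitrary, possibly non-reduced, base lands a priori only in the incidence subscheme of $\mP(\ft{S}^{2}\sU^{*})$, not in $\hcoY_0$; this is harmless once smoothness (hence reducedness) of that subscheme or of the Hilbert scheme is established, but as written it makes your ``inverse'' circular until the missing smoothness argument is supplied. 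In short: either carry out the normal-sheaf vanishing in all degenerate cases, or do what the paper does and quote \cite{IM}, \cite{CHK}.
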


\begin{proof}
By definition, $\hcoY_0$ obviously parameterizes conics
in $\rG(n-1,V)$ in one to one way.
Moreover, there is a family 
in $\mP(\wedge^{n-1} V)\times \hcoY_0$ 
of corresponding conics $(c_U)_0$ at each point $([U], [c_U])\in \hcoY_0$.
Therefore, by the universal property of the Hilbert scheme,
there is a unique map from $\hcoY_0$ 
to the Hilbert scheme {$\mathrm{Hilb}^{\mathrm{co}}\rG(n-1,V)$} of 
conics in $\rG(n-1,V)$.
{Since the smoothness of the Hilbert scheme 
is known in \cite{IM} and \cite{CHK}, 
we have $\hcoY_0\simeq \mathrm{Hilb}^{\mathrm{co}}\rG(n-1,V)$.}
\end{proof}

{
Let us consider the natural projection $\hcoY_0\to \rG(n-1,V)$ and denote
by $\overline{\hcoY}$ its image with the reduced structure. 
Let $\nu\colon \overline{\hcoY}'\to \overline{\hcoY}$ be 
the normalization (one should be able to show that $\overline{\hcoY}$ 
is normal in general extending the explicit 
description given in \cite{Arxiv} for $n=4$).
The following descriptions of $\overline{\hcoY}$ 
and related properties are easy to derive:
}

\begin{prop} \label{prop:barY-1-2-3}
{$(1)$ We have
\[
\overline{\hcoY}=
\left\{ [U]\in\rG(3,\wedge^{n-1}V)\mid\rank \varphi_U\leq 1\right\}.
\]
$(2)$ $\hcoY_0\to \overline{\hcoY}'$ is isomorphism 
outside $\nu^{-1}\overline{\Prt}_{\rho}$ and $\nu^{-1}\overline{\Prt}_{\sigma}$.}

\noindent
$(3)$ Let $G_{\rho}$ and $F_{\sigma}$ be 
the exceptional set over $\nu^{-1}\overline{\Prt}_{\rho}$ and $\nu^{-1}\overline{\Prt}_{\sigma}$, respectively. 
Then $G_{\rho}\to \nu^{-1}\overline{\Prt}_{\rho}$ and $F_{\sigma}\to \nu^{-1}\overline{\Prt}_{\sigma}$ are $\mP^5$-bundles whose fiber parameterizes $\rho$- or $\sigma$-conics in a fixed $\rho$- or $\sigma$-plane respectively. 

\end{prop}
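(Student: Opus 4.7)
The plan is to directly unpack the definition \eqref{eq:resolutionY0} and apply Proposition \ref{pro:conic-varphiU} fiberwise to the natural projection $\pi\colon \hcoY_0 \to \rG(3,\wedge^{n-1}V)$ onto the first factor. All three parts follow from controlling the fiber of $\pi$ at a given $[U]$ according to whether $\rank\varphi_U$ is $2$, $1$, or $0$.

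For (1), a point $[U]$ lies in the image $\overline{\hcoY}$ of $\pi$ exactly when the fiber $\pi^{-1}([U])=\{[c_U]\in\mP(\ft{S}^2 U^*)\mid(c_U)_0\subset(\varphi_U)_0\}$ is nonempty, i.e., when $(\varphi_U)_0$ contains a conic in $\mP(U)$. By Proposition \ref{pro:conic-varphiU} this is equivalent to $\rank\varphi_U\leq 1$, yielding the set-theoretic description; the equality as schemes then follows from the definition of $\overline{\hcoY}$ with the reduced structure.

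For (2), on the open locus $\{[U]\mid\rank\varphi_U=1\}=\overline{\hcoY}\setminus(\overline{\Prt}_\rho\sqcup\overline{\Prt}_\sigma)$, Proposition \ref{pro:conic-varphiU}(2) forces the conic $(c_U)_0=(\varphi_U)_0$ to be uniquely determined, hence $[c_U]$ is unique up to scalar. Therefore $\pi$ is bijective on this locus. Since $\hcoY_0$ is smooth by Theorem \ref{prop:Y0Hilb}, hence normal, the morphism to $\overline{\hcoY}$ factors uniquely through the normalization as $\hcoY_0\to\overline{\hcoY}'\to\overline{\hcoY}$. Over the open locus the induced map $\hcoY_0\to\overline{\hcoY}'$ is finite and bijective onto the normal target, so Zariski's main theorem gives the asserted isomorphism.

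For (3), over $[U]\in\overline{\Prt}_\rho\cup\overline{\Prt}_\sigma$, part (1) of Proposition \ref{pro:conic-varphiU} gives $\varphi_U=0$, hence $(\varphi_U)_0=\mP(U)$ and the containment condition in \eqref{eq:resolutionY0} becomes vacuous. Thus $\pi^{-1}([U])=\mP(\ft{S}^2 U^*)\simeq\mP^5$. Letting $\mathcal{U}$ denote the tautological rank-$3$ subbundle on $\rG(3,\wedge^{n-1}V)$, the preimages $\pi^{-1}(\overline{\Prt}_\rho)$ and $\pi^{-1}(\overline{\Prt}_\sigma)$ are precisely the total spaces of $\mP(\ft{S}^2\mathcal{U}^*|_{\overline{\Prt}_\rho})$ and $\mP(\ft{S}^2\mathcal{U}^*|_{\overline{\Prt}_\sigma})$, which are $\mP^5$-bundles. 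By construction each fiber parametrizes all conics in the $\rho$- or $\sigma$-plane $\mP(U)$, i.e., $\rho$- or $\sigma$-conics respectively. The main subtlety, and the step I expect to be the hardest, is passing from $\mP^5$-bundles over $\overline{\Prt}_\rho,\overline{\Prt}_\sigma$ to bundles over $\nu^{-1}\overline{\Prt}_\rho,\nu^{-1}\overline{\Prt}_\sigma$: one has to verify that $\nu$ restricts to an isomorphism (or at least is single-sheeted) over these strata, so that no further splitting is introduced by normalization. This can be checked using the explicit descriptions in Proposition \ref{prop:barPrho-barPsigma} of $\overline{\Prt}_\rho\simeq\rG(n-2,V)$ and $\overline{\Prt}_\sigma\simeq\rF(n-3,n,V)$ as smooth loci parametrizing flat families of planes in $\rG(n-1,V)$; in the case $n=4$ this has been carried out explicitly in \cite{Arxiv}.
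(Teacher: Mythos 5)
The paper itself gives no written proof of Proposition \ref{prop:barY-1-2-3} (it is introduced as ``easy to derive''), and your fiberwise analysis of the projection $\hcoY_0\to\rG(3,\wedge^{n-1}V)$ via the definition (\ref{eq:resolutionY0}) and Proposition \ref{pro:conic-varphiU} is exactly the intended argument; parts (1) and (2) are complete as you write them (nonemptiness of the fiber is equivalent to $\rank\varphi_U\leq 1$, uniqueness of the conic on the rank-one locus gives bijectivity there, and Zariski's main theorem on the normal open set $\nu^{-1}\bigl(\overline{\hcoY}\setminus(\overline{\Prt}_{\rho}\sqcup\overline{\Prt}_{\sigma})\bigr)\subset\overline{\hcoY}'$ gives the isomorphism). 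The only soft spot is the step you flag at the end of (3): you defer to \cite{Arxiv} (which treats only $n=4$) the verification that the normalization $\nu$ does not introduce extra sheets over $\overline{\Prt}_{\rho}$ and $\overline{\Prt}_{\sigma}$. This can be closed for all $n$ without any explicit computation: $\hcoY_0\to\overline{\hcoY}'$ is proper and dominant, hence surjective, while the full fiber of $\hcoY_0\to\overline{\hcoY}$ over a point $[U]\in\overline{\Prt}_{\rho}\sqcup\overline{\Prt}_{\sigma}$ is the connected $\mP(\ft{S}^2U^*)\simeq\mP^5$, so it maps to a single point of the finite set $\nu^{-1}([U])$; surjectivity then forces $\nu^{-1}([U])$ to be a single point. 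Thus $\nu$ is bijective over these strata, and since $\overline{\Prt}_{\rho}\simeq\rG(n-2,V)$ and $\overline{\Prt}_{\sigma}\simeq\rF(n-3,n,V)$ are smooth, the finite bijective map $\nu^{-1}\overline{\Prt}_{\rho}\to\overline{\Prt}_{\rho}$ (resp.\ for $\sigma$), taken with reduced structure, is an isomorphism; after this identification your description of $G_{\rho}$ and $F_{\sigma}$ as the restrictions of the projectivized bundle $\mP(\ft{S}^2\,\mathcal{U}^*)$ gives the asserted $\mP^5$-bundles parameterizing conics in a fixed $\rho$- or $\sigma$-plane. With that short observation substituted for the appeal to \cite{Arxiv}, your proof is complete and coincides with the paper's (implicit) approach.
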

\subsection{Small resolution $\hcoY_{3}\to \overline{\hcoY}'$}
{We find  a small resolution
$\hcoY_3\to \overline{\hcoY}'$ by 
translating the condition  $\rank \varphi_U\leq 1$  into an equivalent 
form.}
%
For each $v\in V$, {let us} define a linear map 
$E_{v}:\wedge^{n-1}V\to\wedge^{n}V$
by $u\mapsto v\wedge u$. Consider the restriction $E_{v}\vert_{U}$
to $U\subset\wedge^{n-1}V$ and introduce 
\[
a_{U}=\left\{ v\in V\mid E_{v}\vert_{U}=0\right\},
\]
which is nothing but the annihilator of $U$.
Note that $\dim U=3$ implies $\dim a_U\leq n-2$.
We prove the following proposition  
in Appendix \ref{sec:Appendix-B}.
\begin{prop}
\label{lem:appendixB-UU-solve} 
For $[U]\in\rG(3,\wedge^{n-1}V)$,
$\dim a_{U}\geq n-3 \Longleftrightarrow \rank\varphi_{U}\leq 1$.
\end{prop}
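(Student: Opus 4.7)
The plan is to translate both sides of the equivalence to the dual space $V^*$ via the duality $\wedge^{n-1}V \simeq \wedge^2 V^*$ used already in the definition~(\ref{eq:phiU}) of $\varphi_U$. Under this isomorphism, $U \subset \wedge^{n-1}V$ corresponds to $U' \subset \wedge^2 V^*$, the operator $E_v\colon\wedge^{n-1}V \to \wedge^n V \simeq V^*$ corresponds to the contraction $\iota_v\colon \wedge^2 V^* \to V^*$, and $\varphi_U$ corresponds to the restriction $\psi_{U'}\colon \ft{S}^2 U' \to \wedge^4 V^*$ of the wedge product. Then $a_U$ is precisely the annihilator in $V$ of the smallest subspace $A^0 \subset V^*$ with $U' \subset \wedge^2 A^0$, so $\dim a_U + \dim A^0 = n+1$ and the condition $\dim a_U \geq n-3$ is equivalent to $\dim A^0 \leq 4$. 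In particular the implication $(\Leftarrow)$ is immediate: if $\dim A^0 \leq 4$, then $\psi_{U'}$ factors through $\wedge^4 A^0$, which has dimension $\leq 1$, hence $\rank \psi_{U'} \leq 1$.

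For the converse $(\Rightarrow)$, suppose $\rank \psi_{U'} \leq 1$, and split according to $\rank \psi_{U'} \in \{0,1\}$. If $\rank \psi_{U'} = 0$, then $\omega \wedge \omega = 0$ for every $\omega \in U'$, so every element of $U'$ is decomposable and $\mP(U')$ is a plane in $\rG(2,V^*) \simeq \rG(n-1,V)$. By the dual version of Proposition~\ref{prop:barPrho-barPsigma}, such a plane is either a $\rho$-plane (all 2-planes in a fixed 3-dimensional subspace of $V^*$) or a $\sigma$-plane (2-planes through a fixed line inside a fixed 4-dimensional subspace of $V^*$); in both cases $U' \subset \wedge^2 K$ with $\dim K \leq 4$, as required.

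The main step is the case $\rank \psi_{U'} = 1$: there exist a nonzero $\eta \in \wedge^4 V^*$ and a nonzero symmetric bilinear form $Q$ on $U'$ with $\omega \wedge \omega' = Q(\omega, \omega')\,\eta$. Since $Q \neq 0$ over $\mC$, polarization gives some $\omega_0 \in U'$ with $\omega_0 \wedge \omega_0 \neq 0$, i.e., with Darboux rank $\geq 4$. I would first show this Darboux rank equals $4$: if $\omega_0 = \sum_{i=1}^{r} a_i \wedge b_i$ with $r \geq 3$, then decomposing an arbitrary $\omega' \in U'$ according to $V^* = \mathrm{supp}(\omega_0) \oplus W$ and inspecting the graded components of $\omega_0 \wedge \omega' \in \mC\cdot\omega_0^2/2$ forces first $\omega' \in \wedge^2 \mathrm{supp}(\omega_0)$ and then, by comparing coefficients in a Darboux basis, $\omega'$ proportional to $\omega_0$, contradicting $\dim U' = 3$. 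So $\omega_0$ has Darboux rank $4$, and writing $\omega_0 = a_1 \wedge b_1 + a_2 \wedge b_2$ with $K := \langle a_1,b_1,a_2,b_2\rangle$, the $4$-form $\eta$ is a scalar multiple of $\omega_0^2/2 = a_1\wedge b_1 \wedge a_2 \wedge b_2 \in \wedge^4 K$; the same decomposition argument applied to any $\omega' \in U'$, whose wedge with $\omega_0$ must again lie in $\wedge^4 K$, shows $\omega' \in \wedge^2 K$, so $U' \subset \wedge^2 K$ with $\dim K = 4$. The main obstacle is precisely this last case: the linear-algebraic bookkeeping that rules out Darboux ranks $\geq 6$ and forces the ``off-$K$'' components of $\omega'$ to vanish is routine but slightly involved, and naturally lives in the appendix.
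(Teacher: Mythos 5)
Your proposal is correct, and for the main case it takes a genuinely different route from the paper's own proof. (A cosmetic point: your arrow labels are swapped relative to the statement --- what you call $(\Leftarrow)$ is the implication $\dim a_U\geq n-3\Rightarrow\rank\varphi_U\leq 1$ --- but both directions are covered.) The two arguments agree on the easy direction and on the case $\varphi_U=0$, where both invoke the classification (\ref{eq:xxxxx}) of planes in the Grassmannian. For $\rank\varphi_U=1$, however, the paper works with the $\tau$-conic $q=\rG(n-1,V)\cap\mP(U)$ cut out by $\varphi_U$ and argues according to the rank of $q$: for a smooth $q$ it analyzes the splitting type of $\eS^*|_q$, excludes the $\rho$- and $\sigma$-alternatives, uses $\SL(V)$-homogeneity of rank-four quadrics to reduce to the normal form of Example \ref{ex:conics} and reads off $a_U$ there; for the rank-two and rank-one conics it computes $a_U$ explicitly in coordinates, passing to $\wedge^2V^*$ only for the double line. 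You instead stay in $\wedge^2V^*$ throughout and never mention the conic: choosing $\omega_0\in U'$ with $\omega_0\wedge\omega_0\neq0$, you use the graded decomposition of $\wedge^4V^*$ relative to $V^*=\mathrm{supp}\,(\omega_0)\oplus W$ together with the injectivity of $\omega_0\wedge(\,\cdot\,)$ on $\wedge^1$ of the support (and on $\wedge^2$ of the support when the rank is at least six, which is where $\dim U'=3$ enters) to force the Darboux rank to be four and then $U'\subset\wedge^2K$ with $\dim K=4$; these injectivity statements are the standard hard-Lefschetz facts for a symplectic form, so the ``routine bookkeeping'' you defer does indeed close. Your version is shorter, uniform in the rank of the conic, and independent of the projective geometry of $\rG(n-1,V)$ beyond the plane classification; what the paper's case analysis buys in exchange is the explicit normal forms and the exact determination of $a_U$ in each case (in particular the coordinate description of rank-one conics), which the text refers back to elsewhere, e.g.\ in the proof of Proposition \ref{prop:conicquad}.
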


By this proposition, {it is immediate to see} that
\[
\overline{\hcoY}=\left\{ [U]\in\rG(3,\wedge^{n-1}V)
\mid\dim a_{U}\geq n-3\right\}. 
\]
{Below we define} a Springer type resolution 
$\hcoY_3\to \overline{\hcoY}'$, 
which turns out to be a small resolution.
\begin{defn}
\label{def:Y3} {For $n\geq 3$,} we define
\[
\hcoY_{3}=\left\{ ([U],[V_{n-3}])\mid V_{n-3}\subset a_{U}\right\} 
\subset\mathrm{G}(3,\wedge^{n-1}V)\times\mathrm{G}(n-3,V),
\]
{where $\mathrm{G}(n-3,V)$ should be understood as one point when $n=3$.}
Obviously,
the image of the projection of $\hcoY_{3}$ to the
first factor 
coincides with $\overline{\hcoY}$.
\end{defn}
Since $E_{v}\vert_{U}=0\,(\forall v\in V_{n-3})$ implies that $U$ is
the $\mathbb{C}$-span of non-vanishing vectors of 
the form $\bar{u}_{i}\wedge v_1\wedge \dots \wedge v_{n-3}\,(i=1,2,3)$
with $\bar{u}_{i}\in\wedge^{2}(V/V_{n-3})$ and $v_1,\dots, v_{n-3}$ 
{being a} basis of $V_{n-3}$, the fiber of the natural
projection $\hcoY_{3}\to\mathrm{G}(n-3,V)$ over $[V_{n-3}]\in\mathrm{G}(n-3,V)$
can be identified with $\mathrm{G}(3,\wedge^{2}(V/V_{n-3}))$. Hence
we see that \[
\hcoY_{3}=\mathrm{G}(3,\wedge^2 \mathfrak{Q}),\]
and in particular $\hcoY_{3}$ is smooth. 

\begin{prop}
\label{pro:barYsing} 
The morphism $\Lrho_{\hcoY_{3}}:\hcoY_{3}\to\overline{\hcoY}'$
{is isomorphic over  $\overline{\hcoY}'\setminus\nu^{-1}\overline{\Prt}_{\rho}$ and is a small resolution with $\Lrho_{\hcoY_{3}}^{\,-1}(x)\simeq \mP^{n-3}$ for 
each $x \in \nu^{-1}\overline{\Prt}_{\rho}$.}
%
In particular, $\Lrho_{\hcoY_{3}}$ is an isomorphism if $n=3$, and
$\nu^{-1}\overline{\Prt}_{\rho}=\Sing \overline{\hcoY}'$ if $n\geq 4$.
\end{prop}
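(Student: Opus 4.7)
My strategy is to reduce the statement to classifying the annihilator $a_{U}$ for $[U]\in\overline{\hcoY}$, then combine the fiber description with Zariski's Main Theorem and a dimension count.

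By Definition~\ref{def:Y3}, the fiber of $\Lrho_{\hcoY_{3}}$ over $[U]\in\overline{\hcoY}$ is $\rG(n-3,a_{U})$. So the whole proposition reduces to the following claim: for every $[U]\in\overline{\hcoY}$, $\dim a_{U}\leq n-2$, with equality if and only if $[U]\in\overline{\Prt}_{\rho}$. To prove the claim, set $k=\dim a_{U}$, choose a splitting $V=a_{U}\oplus W$ and decompose $\wedge^{n-1}V=\bigoplus_{i+j=n-1}\wedge^{i}a_{U}\otimes\wedge^{j}W$. For $0\leq i<k$, an elementary coordinate computation shows that the map $\alpha\mapsto(v\mapsto v\wedge\alpha)$ from $\wedge^{i}a_{U}$ to $\Hom(a_{U},\wedge^{i+1}a_{U})$ is injective. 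Hence the condition $v\wedge u=0$ for every $v\in a_{U}$ forces the components of $u$ with $i<k$ to vanish, so $u\in\wedge^{k}a_{U}\otimes\wedge^{n-1-k}W$, a subspace of dimension $\binom{n+1-k}{2}$. Since $\dim U=3$, we get $k\leq n-2$; and when $k=n-2$ equality forces $U=\wedge^{n-2}a_{U}\wedge W={\rm P}_{a_{U}}$, a $\rho$-plane in the sense of Proposition~\ref{prop:barPrho-barPsigma}. Conversely, for $U={\rm P}_{V_{n-2}}$ a direct computation gives $a_{U}=V_{n-2}$. Combining with Proposition~\ref{lem:appendixB-UU-solve} (which gives $\dim a_{U}\geq n-3$ on $\overline{\hcoY}$), we conclude $\dim a_{U}=n-3$ off $\overline{\Prt}_{\rho}$ and $\dim a_{U}=n-2$ on $\overline{\Prt}_{\rho}$.

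Given the claim, the fiber of $\Lrho_{\hcoY_{3}}$ is a reduced point off $\overline{\Prt}_{\rho}$ and $\rG(n-3,V_{n-2})\cong\mP^{n-3}$ on $\overline{\Prt}_{\rho}$. Since $\hcoY_{3}$ is smooth (hence normal), the proper birational morphism $\Lrho_{\hcoY_{3}}$ factors through the normalization $\nu\colon\overline{\hcoY}'\to\overline{\hcoY}$; on $\overline{\hcoY}'\setminus\nu^{-1}\overline{\Prt}_{\rho}$ the resulting morphism is finite and bijective with reduced fibers onto the normal target, hence is an isomorphism by Zariski's Main Theorem. A dimension count gives $\dim\hcoY_{3}=4(n-3)+9=4n-3$ and $\dim\Lrho_{\hcoY_{3}}^{-1}(\nu^{-1}\overline{\Prt}_{\rho})=3(n-2)+(n-3)=4n-9$, so the exceptional locus has codimension $6\geq 2$; thus $\Lrho_{\hcoY_{3}}$ is a small resolution.

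Finally, for $n=3$ the exceptional fiber $\mP^{n-3}$ degenerates to a point, so $\Lrho_{\hcoY_{3}}$ is bijective with reduced fibers and hence an isomorphism. For $n\geq 4$, $\overline{\hcoY}'$ is smooth off $\nu^{-1}\overline{\Prt}_{\rho}$ because it is there isomorphic to an open of the smooth $\hcoY_{3}$; along $\nu^{-1}\overline{\Prt}_{\rho}$ a small birational morphism from a smooth source cannot be an isomorphism over a point with positive-dimensional fiber (by purity of the branch locus, or directly by Zariski's Main Theorem), so $\overline{\hcoY}'$ must be singular there and $\Sing\overline{\hcoY}'=\nu^{-1}\overline{\Prt}_{\rho}$. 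The main obstacle will be the annihilator claim; once it is in hand, the rest is dimension counting and a standard application of Zariski's Main Theorem.
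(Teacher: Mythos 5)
Your proof is correct and follows essentially the same route as the paper: the paper's one-line proof ("the fiber over each point of $\nu^{-1}\overline{\Prt}_{\rho}$ is $\rG(n-3,n-2)\simeq\mP^{n-3}$, bijective elsewhere") rests exactly on your annihilator dichotomy $\dim a_{U}\in\{n-3,n-2\}$ with equality $n-2$ precisely on $\overline{\Prt}_{\rho}$, since the fiber over $[U]$ is $\rG(n-3,a_{U})$ by Definition~\ref{def:Y3}. You merely make explicit the splitting argument behind the paper's "easy to see," and the standard Zariski Main Theorem, dimension-count, and purity-of-the-exceptional-locus steps that the paper leaves implicit.
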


\begin{proof} {It is easy to see that the} fiber 
of $\hcoY_{3}\to\overline{\hcoY}'$ over
each point of $\nu^{-1}\overline{\Prt}_{\rho}$ 
is $\rG(n-3,n-2)\simeq\mP^{n-3}$, and $\hcoY_{3}\to\overline{\hcoY}'$
is bijective over $\overline{\hcoY}'\setminus\nu^{-1}\overline{\Prt}_{\rho}$. 
\end{proof}

\begin{rem}
\label{rem:Y3=Ybar}
In case $n=3$, we have
$\hcoY_3=\overline{\hcoY}'=\overline{\hcoY}=\rG(3,\wedge^2 V)$.
\end{rem}
\subsection{Small resolution $\widetilde{\hcoY}\to\overline{\hcoY}'$ 
via the Hilbert scheme $\hcoY_0$ \label{subsection:BlowUp}}

We construct another small resolution 
$\Lp\,_{\widetilde{\hcoY}}\colon 
\widetilde{\hcoY}\to \overline{\hcoY}'$ for $n\geq 4$,
which is the (anti-)flip of $\hcoY_3\to \overline{\hcoY}'$.
{We give} $\widetilde{\hcoY}$ from $\hcoY_{0}$ by contracting
the exceptional set (divisor) over {$\nu^{-1}\overline{\Prt}_{\sigma}$.} 

Let 
$R_{\rho}$ (resp.~$R_{\sigma}$) be the extremal ray spanned by
lines in fibers of $G_{\rho}\to \nu^{-1}\overline{\Prt}_{\rho}$ 
(resp.~$F_{\sigma}\to \nu^{-1}\overline{\Prt}_{\sigma}$). 
We show that $R_{\rho}\not =R_{\sigma}$.
Indeed, note that $F_{\sigma}$ is a prime divisor and 
$G_{\rho}\cap F_{\sigma}=\emptyset$. Therefore, $F_{\sigma}\cdot R_{\rho}=0$ 
and $F_{\sigma}\cdot R_{\sigma}<0$ and hence $R_{\rho}\not =R_{\sigma}$.  
Since $\overline{\hcoY}'$ is smooth along $\overline{\Prt}_{\sigma}$
by Proposition \ref{pro:barYsing},
the discrepancy of $F_{\sigma}$ is positive and then $R_{\sigma}$ is 
$K_{\hcoY_0}$-negative. Therefore there exists a unique extremal 
contraction $\hcoY_0\to\widetilde{\hcoY}$ over $\overline{\hcoY}'$ 
associated to $R_{\sigma}$, which is nothing but the contraction 
of $F_{\sigma}$. We denote by $G_{\sigma}$ the image of $F_{\sigma}$. 

{
The following proposition follows from the above construction 
of $\widetilde{\hcoY}$:} 
\begin{prop}
$\widetilde{\hcoY}$ parameteirizes $\tau$- and $\rho$-conics, and 
$\sigma$-planes.
\end{prop}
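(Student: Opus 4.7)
The plan is to analyze the set-theoretic effect of the contraction $\hcoY_0 \to \widetilde{\hcoY}$ on each stratum and check that the resulting point set is the claimed disjoint union. By Theorem \ref{prop:Y0Hilb}, points of $\hcoY_0$ biject with conics in $\rG(n-1,V)$, and every such conic is exactly one of a $\tau$-, $\rho$-, or $\sigma$-conic depending on how its spanning plane sits with respect to $\rG(n-1,V)$.

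First I would identify the exceptional divisor $F_\sigma \subset \hcoY_0$ precisely as the locus of $\sigma$-conics. By Proposition \ref{prop:barY-1-2-3}(3), the fiber of $F_\sigma \to \nu^{-1}\overline{\Prt}_\sigma$ over (the preimage of) a $\sigma$-plane $P$ is the $\mathbb{P}^5$ of conics contained in $P$, i.e., exactly the $\sigma$-conics spanning $P$. Since each $\sigma$-conic determines a unique spanning $\sigma$-plane, the underlying set of $F_\sigma$ is precisely the collection of $\sigma$-conics in $\rG(n-1,V)$, and conversely every $\sigma$-conic lies in $F_\sigma$.

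Next, since $\hcoY_0 \to \widetilde{\hcoY}$ was constructed as the extremal contraction of the ray $R_\sigma$, it is an isomorphism over the complement of $G_\sigma$. Hence points of $\widetilde{\hcoY} \setminus G_\sigma$ correspond bijectively to conics in $\hcoY_0 \setminus F_\sigma$, i.e., to $\tau$- and $\rho$-conics. Over $F_\sigma$, the contraction collapses each $\mathbb{P}^5$-fiber of $F_\sigma \to \nu^{-1}\overline{\Prt}_\sigma$ to a single point, so $G_\sigma$ is naturally in bijection with $\nu^{-1}\overline{\Prt}_\sigma$. By Proposition \ref{prop:barPrho-barPsigma}, $\overline{\Prt}_\sigma \simeq \rF(n-3,n,V)$ is a smooth flag variety, hence normal, so the normalization $\nu$ is an isomorphism over $\overline{\Prt}_\sigma$; thus $\nu^{-1}\overline{\Prt}_\sigma \simeq \overline{\Prt}_\sigma$ parameterizes $\sigma$-planes.

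Combining the three strata yields a bijection of points of $\widetilde{\hcoY}$ with the disjoint union of $\tau$-conics, $\rho$-conics, and $\sigma$-planes, as claimed. The only delicate point is the identification of $G_\sigma$ with the parameter space of $\sigma$-planes through the normalization, but this reduces to the normality of $\overline{\Prt}_\sigma$, so I do not expect any real obstacle.
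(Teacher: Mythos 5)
Your overall route is exactly the one the paper intends (the paper gives no written proof, asserting the proposition follows from the construction of $\widetilde{\hcoY}$), and most of it is fine: off $G_{\sigma}$ the contraction $\hcoY_0\to\widetilde{\hcoY}$ is an isomorphism, so those points are $\tau$- and $\rho$-conics, and each point of $G_{\sigma}$ is the image of a contracted $\mP^5$-fiber of $F_{\sigma}\to\nu^{-1}\overline{\Prt}_{\sigma}$, i.e.\ of the family of $\sigma$-conics lying in one fixed $\sigma$-plane.

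The step that fails as written is your identification $\nu^{-1}\overline{\Prt}_{\sigma}\simeq\overline{\Prt}_{\sigma}$. You argue that because $\overline{\Prt}_{\sigma}\simeq\rF(n-3,n,V)$ is smooth, hence normal, the normalization $\nu\colon\overline{\hcoY}'\to\overline{\hcoY}$ must be an isomorphism over it. That is a non sequitur: whether $\nu$ is injective over a locus is governed by whether $\overline{\hcoY}$ itself is normal (or at least unibranch) along that locus, not by the normality of the locus as an abstract variety --- for a nodal curve the node is a smooth point-subvariety, yet the normalization has two points above it. Moreover the paper explicitly does not establish normality of $\overline{\hcoY}$ in general (it only remarks that one should be able to show it), which is precisely why everything is phrased on $\overline{\hcoY}'$. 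The needed claim is nevertheless true and can be obtained from ingredients you already invoke: since $\hcoY_0$ is smooth and $\hcoY_0\to\overline{\hcoY}$ is birational, the induced morphism $\hcoY_0\to\overline{\hcoY}'$ is its Stein factorization and hence has connected fibers; the full preimage in $\hcoY_0$ of a point $[U]\in\overline{\Prt}_{\sigma}$ is the single irreducible $\mP^5$ of conics in $\mP(U)$, so it cannot be split into the disjoint nonempty closed fibers lying over two or more points of $\nu^{-1}([U])$. Hence $\nu$ is bijective over $\overline{\Prt}_{\sigma}$, $G_{\sigma}$ is in bijection with the set of $\sigma$-planes, and with this replacement your argument is complete.
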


{We retain the notation $G_{\rho}$ to represent the locus in 
$\widetilde{\hcoY}$ parameterizing $\rho$-conics and denote by 
$\sQ_{\rho}$ the universal quotient bundle on $\rG(n-2,V)$.}

\begin{prop} \label{prop:Grho}
$G_{\rho}$ is isomorphic to $\mP({\ft S}^2 \sQ_{\rho}^*)$.
{It} is also isomorphic to $\widetilde{\nS}_3$.
\end{prop}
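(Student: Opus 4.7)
The plan is to construct the two isomorphisms directly, exploiting the explicit description of $\rho$-conics established in Subsection \ref{sub:Conics-and-planes}.

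First I would identify a fiber-wise picture over $\rG(n-2,V)$. By Proposition \ref{prop:barPrho-barPsigma}, the set $\overline{\Prt}_{\rho}$ of $\rho$-planes is isomorphic to $\rG(n-2,V)$ via $[V_{n-2}]\mapsto [(V/V_{n-2})\wedge(\wedge^{n-2}V_{n-2})]$, and the $\rho$-plane ${\rm P}_{V_{n-2}}$ is naturally isomorphic to $\mP(V/V_{n-2})$ through the assignment $[\Pi]\mapsto[\Pi/V_{n-2}]$. A conic in this $\mP^2\simeq\mP(V/V_{n-2})$ is by definition an element of $\mP(\ft{S}^2(V/V_{n-2})^*)$. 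Relativizing over $\rG(n-2,V)$, the fiber of $\sQ_{\rho}$ over $[V_{n-2}]$ is $V/V_{n-2}$, so $\mP(\ft{S}^2\sQ_{\rho}^*)\to\rG(n-2,V)$ has fibers $\mP(\ft{S}^2(V/V_{n-2})^*)$, and each of its points is exactly the datum of a $\rho$-conic together with its supporting $\rho$-plane.

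Second, I would match this $\mP^5$-bundle with $G_{\rho}$. By construction in Subsection \ref{subsection:BlowUp}, $G_{\rho}\subset\widetilde{\hcoY}$ inherits the $\mP^5$-bundle structure of $G_{\rho}\subset\hcoY_0$ over $\nu^{-1}\overline{\Prt}_{\rho}\simeq\overline{\Prt}_{\rho}\simeq\rG(n-2,V)$ from Proposition \ref{prop:barY-1-2-3}(3), and the fiber parametrizes $\rho$-conics in the fixed $\rho$-plane. Since the universal family on $G_{\rho}$ (coming from the Hilbert scheme $\hcoY_0$) is precisely the universal conic in the tautological $\rho$-plane, the universal property of $\mP(\ft{S}^2\sQ_{\rho}^*)$ produces a morphism $G_{\rho}\to\mP(\ft{S}^2\sQ_{\rho}^*)$ over $\rG(n-2,V)$; as it is bijective between $\mP^5$-bundles with the same base and compatible polarizations, it is an isomorphism.

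Third, I would identify $\mP(\ft{S}^2\sQ_{\rho}^*)$ with $\widetilde{\nS}_3$. By the definition (\ref{eq:deftSr}) with $r=3$, we have $\widetilde{\nS}_3=\mP(\ft{S}^2\mathfrak{Q}^*)\to\rG(n+1-3,V)=\rG(n-2,V)$, where $\mathfrak{Q}$ is the rank-$3$ universal quotient on $\rG(n-2,V)$. But this is the same as $\sQ_{\rho}$, since both have fiber $V/V_{n-2}$ at $[V_{n-2}]$ and both are defined via the tautological exact sequence on $\rG(n-2,V)$. Hence $\mP(\ft{S}^2\sQ_{\rho}^*)=\widetilde{\nS}_3$, giving the second claimed isomorphism.

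The only non-routine step is the second one: checking that the abstract $\mP^5$-bundle $G_{\rho}\to\rG(n-2,V)$ coming from Proposition \ref{prop:barY-1-2-3}(3) agrees \emph{as a projective bundle} with $\mP(\ft{S}^2\sQ_{\rho}^*)$, rather than merely being a fiber-wise match. I would handle this by using the universal conic on $\hcoY_0\times_{\rG(3,\wedge^{n-1}V)}\mathrm{something}$ restricted to $G_{\rho}$: its fiber-wise equation at $([U],[c_U])$ with $\mP(U)={\rm P}_{V_{n-2}}$ is a well-defined element of $\ft{S}^2(V/V_{n-2})^*$ up to scalar, which globalizes to a section of the relative $\sO(1)$ on $\mP(\ft{S}^2\sQ_{\rho}^*)$ and produces the desired morphism over $\rG(n-2,V)$ without ambiguity.
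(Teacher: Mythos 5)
Your proposal is correct and follows essentially the same route as the paper: the paper's (very terse) proof likewise identifies $G_{\rho}$ with $\mP(\ft{S}^2\sQ_{\rho}^*)$ via the family of $\rho$-planes $\mP(\sQ_{\rho})\to\overline{\Prt}_{\rho}\simeq\rG(n-2,V)$ and then invokes the definition $\widetilde{\nS}_3=\mP(\ft{S}^2\mathfrak{Q}^*)$ over $\rG(n-2,V)$ from (\ref{eq:deftSr}). Your extra care in upgrading the fiber-wise match to an isomorphism of projective bundles using the universal conic from $\hcoY_0$ is a legitimate fleshing-out of what the paper dismisses as "clear," not a different argument.
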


\begin{proof}
The first claim {is clear} since 
$\mP(\sQ_{\rho})\to \overline{\Prt}_{\rho}\simeq \rG(n-2,V)$ is the 
family of {$\rho$-planes.}
The second one follows {from the definition of the resolution  
$\Lp\,_{\widetilde{\nS}_3}:\widetilde{\nS}_3\to\nS_3$ 
(see Proposition~\ref{prop:Spr}).}
\end{proof}


\begin{prop}
\label{prop:tildeY}
$\Lp_{\widetilde{\hcoY}}\colon \widetilde{\hcoY}\to \overline{\hcoY}'$ is 
a small resolution for $n\geq 4$,
and is the blow-up along $\nu^{-1}\overline{\Prt}_{\rho}$ for $n=3$.
Non-trivial fibers of $\Lp_{\widetilde{\hcoY}}$ are copies of $\mP^5$.
\end{prop}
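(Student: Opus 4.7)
The plan is to exploit the construction of $\widetilde{\hcoY}$ from $\hcoY_0$ by the extremal contraction of $F_{\sigma}$, and to analyze $\Lp_{\widetilde{\hcoY}}\colon\widetilde{\hcoY}\to\overline{\hcoY}'$ via its factorization through $\hcoY_0$. First I would check that $\hcoY_0\to\overline{\hcoY}'$ factors through $\widetilde{\hcoY}$: the natural projection to $\rG(3,\wedge^{n-1}V)$ collapses the $\mP^5$-fibers of both $G_{\rho}\to\nu^{-1}\overline{\Prt}_{\rho}$ and $F_{\sigma}\to\nu^{-1}\overline{\Prt}_{\sigma}$ by Proposition \ref{prop:barY-1-2-3}\,(3), and in particular it contracts the curves spanning $R_{\sigma}$, so by the rigidity of the extremal contraction $\hcoY_0\to\widetilde{\hcoY}$ we obtain the desired factorization $\hcoY_0\to\widetilde{\hcoY}\overset{\Lp_{\widetilde{\hcoY}}}{\longrightarrow}\overline{\hcoY}'$.

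Next I would describe the non-trivial fibers of $\Lp_{\widetilde{\hcoY}}$. Because $G_{\rho}\cap F_{\sigma}=\emptyset$ (the disjointness used to separate $R_{\rho}$ from $R_{\sigma}$ in Subsection \ref{subsection:BlowUp}), the contraction $\hcoY_0\to\widetilde{\hcoY}$ is an isomorphism on an open neighborhood of $G_{\rho}$. Hence $G_{\rho}\subset\widetilde{\hcoY}$ maps to $\nu^{-1}\overline{\Prt}_{\rho}$ as a $\mP^5$-bundle, exactly as in $\hcoY_0$. Over $\nu^{-1}\overline{\Prt}_{\sigma}$ the $\mP^5$-fibers of $F_{\sigma}$ have been collapsed, so $G_{\sigma}:=\Lpi_{\hcoY_0\to\widetilde{\hcoY}}(F_{\sigma})$ maps isomorphically onto $\nu^{-1}\overline{\Prt}_{\sigma}$, and $\Lp_{\widetilde{\hcoY}}$ is an isomorphism there. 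Elsewhere, Proposition \ref{prop:barY-1-2-3}\,(2) already gives an isomorphism. Thus the only non-trivial fibers of $\Lp_{\widetilde{\hcoY}}$ lie over $\nu^{-1}\overline{\Prt}_{\rho}$ and are copies of $\mP^5$.

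For $n\geq 4$ the smallness now follows from a dimension count: using $\dim\rG(n-2,V)=3(n-2)$, we get $\dim G_{\rho}=3(n-2)+5=3n-1$, while $\dim\widetilde{\hcoY}=\dim\hcoY_0=4n-3$, so $\mathrm{codim}_{\widetilde{\hcoY}}G_{\rho}=n-2\geq 2$, hence $\Lp_{\widetilde{\hcoY}}$ is a small contraction. For $n=3$ the same count gives $\mathrm{codim}\,G_{\rho}=1$, so $G_{\rho}$ is a prime divisor; by Remark \ref{rem:Y3=Ybar}, $\overline{\hcoY}'=\rG(3,\wedge^2 V)$ is smooth, and $\nu^{-1}\overline{\Prt}_{\rho}\simeq\rG(1,V)=\mP^3$ sits inside of codimension $6$. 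To identify $\Lp_{\widetilde{\hcoY}}$ with the blow-up along $\nu^{-1}\overline{\Prt}_{\rho}$, I would apply the universal property: $\Lp_{\widetilde{\hcoY}}$ is proper and birational, the pull-back of the ideal sheaf of $\nu^{-1}\overline{\Prt}_{\rho}$ is invertible because $G_{\rho}$ is Cartier, and $\widetilde{\hcoY}$ is smooth; the induced map to the blow-up is then a small birational morphism between smooth varieties, hence an isomorphism. The step I expect to be most delicate is this last identification for $n=3$: one must verify that the $\mP^5$-bundle structure of $G_{\rho}$ really coincides with the projectivized normal bundle $\mP(N_{\nu^{-1}\overline{\Prt}_{\rho}/\overline{\hcoY}'})$, which calls for an explicit local model of $\rG(3,\wedge^2 V)$ along $\nu^{-1}\overline{\Prt}_{\rho}$ matching the parametrization of $\rho$-conics in a fixed $\rho$-plane.
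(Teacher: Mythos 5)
Your treatment of $n\geq 4$ and of the fibers is sound and is essentially the paper's argument: since $G_{\rho}\cap F_{\sigma}=\emptyset$, the contraction $\hcoY_0\to\widetilde{\hcoY}$ is an isomorphism near $G_{\rho}$, so the $\Lp_{\widetilde{\hcoY}}$-exceptional locus is $G_{\rho}$ with $\mP^5$-fibers over $\nu^{-1}\overline{\Prt}_{\rho}$, and the count $\dim G_{\rho}=3n-1$ against $\dim\widetilde{\hcoY}=4n-3$ gives smallness for $n\geq4$ (you should also state explicitly that $\widetilde{\hcoY}$ is smooth — the word ``resolution'' requires it — but this does follow from what you wrote, since $\Lp_{\widetilde{\hcoY}}$ is an isomorphism near $G_{\sigma}$ onto a locus where $\overline{\hcoY}'$ is smooth by Proposition \ref{pro:barYsing}, and elsewhere $\widetilde{\hcoY}$ is an open piece of the smooth $\hcoY_0$).

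For $n=3$, however, there is a genuine gap at exactly the step you flag. To invoke the universal property of the blow-up you need the inverse image ideal sheaf $\Lp_{\widetilde{\hcoY}}^{-1}\sI_{\nu^{-1}\overline{\Prt}_{\rho}}\cdot\sO_{\widetilde{\hcoY}}$ to be invertible, and this does \emph{not} follow from the fact that the set-theoretic preimage $G_{\rho}$ is a Cartier divisor on the smooth $\widetilde{\hcoY}$: the pulled-back ideal can differ from $\sO_{\widetilde{\hcoY}}(-kG_{\rho})$ by an ideal whose zero locus has codimension at least two (compare $(x^2,xy)=(x)\cdot(x,y)$, whose zero locus is the divisor $\{x=0\}$ but which is not locally principal). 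Verifying invertibility amounts precisely to the local analysis you postpone in your final sentence, so as written the factorization through $Bl_{\nu^{-1}\overline{\Prt}_{\rho}}\overline{\hcoY}'$ — and hence the identification — is not established (the remaining step, that a small birational morphism onto the smooth blow-up is an isomorphism, is fine once one notes the induced map cannot contract $G_{\rho}$, since the preimage of the blow-up's exceptional divisor is contained in $G_{\rho}$ and must dominate it). The paper avoids this issue entirely: from smoothness of $\overline{\hcoY}'$ (Proposition \ref{pro:barYsing} for $n=3$) and the $\mP^5$-bundle structure of $G_{\rho}$ it computes $K_{\widetilde{\hcoY}}=\Lp_{\widetilde{\hcoY}}^*K_{\overline{\hcoY}'}+5G_{\rho}$, observes that the blow-up along $\nu^{-1}\overline{\Prt}_{\rho}$ has exceptional divisor of the same discrepancy $5$ and that there is a unique divisorial valuation with this center and discrepancy $5$ (the minimum for a smooth center of codimension $6$), so the two models agree in codimension one, and then concludes they are isomorphic because $-K$ is relatively ample over $\overline{\hcoY}'$ on both sides, by \cite[Lemma 5.5]{Tk}. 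To complete your proof you should either adopt this discrepancy/valuation comparison or actually carry out the local identification of $G_{\rho}\to\nu^{-1}\overline{\Prt}_{\rho}$ with the projectivized normal bundle.
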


\begin{proof}
$\widetilde{\hcoY}$ is smooth
since $\hcoY_0$ is smooth by Theorem \ref{prop:Y0Hilb}
and $\overline{\hcoY}'$ is smooth along $\nu^{-1}\overline{\Prt}_{\sigma}$
by Proposition \ref{pro:barYsing}.

Note that $G_{\rho}$ is the $\Lp_{\widetilde{\hcoY}}$-exceptional locus
since the restriction of $\Lp_{\widetilde{\hcoY}}|_{G_{\rho}}$
is a $\mP^5$-bundle over $\nu^{-1}\overline{\Prt}_{\rho}\simeq 
\overline{\Prt}_{\rho}$.   
If $n\geq 4$, then $G_{\rho}$ is not a divisor by dimension count.
In case $n=3$, $G_{\rho}$ is a prime divisor. 
Since $\overline{\hcoY}'$ is smooth by 
Proposition \ref{pro:barYsing}, 
and $G_{\rho}\to \nu^{-1}\overline{\Prt}_{\sigma}$ is a $\mP^5$-bundle,
we see that $K_{\widetilde{\hcoY}}=
\Lp_{\widetilde{\hcoY}}^*K_{\overline{\hcoY}'}+5G_{\rho}$.
Let  
$\Lp{\,'}_{\widetilde{\hcoY}}\colon \widetilde{\hcoY}'\to \overline{\hcoY}'$
be the blow-up along $\nu^{-1}\overline{\Prt}_{\rho}$
and $G'_{\rho}$ the $\Lp{\,}'_{\widetilde{\hcoY}}$-exceptional divisor.
Then we have 
$K_{\widetilde{\hcoY}'}=
{\Lp{\,'}}_{\widetilde{\hcoY}}^*K_{\overline{\hcoY}'}+5G'_{\rho}$.
It is well-known that there is only one valuation of $k(\overline{\hcoY}')$
 associated to exceptional divisors with center 
{$\nu^{-1}\overline{\Prt}_{\rho}$} and discrepancy $5$.
Therefore, $\widetilde{\hcoY}$ and $\widetilde{\hcoY}'$ are isomorphic in 
codimension one. Moreover, since 
$-K_{\widetilde{\hcoY}}$ and $-K_{\widetilde{\hcoY}'}$ are relatively ample
over $\overline{\hcoY}'$,  
$\widetilde{\hcoY}$ and $\widetilde{\hcoY}'$ 
{must be} isomorphic by \cite[Lemma 5.5]{Tk}.
\end{proof}


\subsection{\label{sub:tildeYtoH}Rational map $\hcoY_3\dashrightarrow \Hes$ via double spin decomposition}
\label{subsection:spin}

Consider a point $({[U]},[V_{n-3}])
\in\hcoY_{3}=\rG(3,\wedge^2 \mathfrak{Q})$
with ${[U]}\in\rG(3,\wedge^{2}(V/V_{n-3}))$. To describe $\wedge^{3}{U}$,
we use the following irreducible decomposition as $sl(V/V_{n-3})$-modules
(see \cite[$\S 19.1$]{FH} for example): 
\begin{equation}
\begin{aligned}
&\wedge^{3}(\wedge^{2}(V/V_{n-3})) = \\
&\ft{S}^{2}(V/V_{n-3})\otimes\det(V/V_{n-3})\oplus
\ft{S}^{2}(V/V_{n-3})^{*}\otimes\det(V/V_{n-3})^{\otimes2}.
\end{aligned}
\label{eq:spin}
\end{equation}
 We will call this {}``double spin'' decomposition since the 
{symmetric powers} in the r.h.s. are identified with 
$V_{2\lambda_{s}}$ and $V_{2\lambda_{\bar{s}}}$
as $so(\wedge^{2}V/V_{n-3})(\simeq sl(V/V_{n-3}))$-modules, where $\lambda_{s}$
and $\lambda_{\bar{s}}$ represent the spinor and conjugate spinor
weights, respectively (see {[}\textit{loc. cit.}{]}). 
{Considering this decomposition fiberwise in the projective 
bundle $\mP(\wedge^3(\wedge^2\mathfrak{Q}))$ over $\rG(n-3,V)$,
we have} the following sequence of
(rational) maps: 
\begin{equation}
\begin{aligned}
\hcoY_{3}\hookrightarrow 
& \mP(\ft{S}^{2}\mathfrak{Q}\otimes\sO_{\mathrm{G}(n-3, V)}(-1)
   \oplus\ft{S}^{2}\mathfrak{Q}^{*}) \\
& 
  \qquad\qquad 
 \dashrightarrow 
\UU=\mP(\ft{S}^{2}\mathfrak{Q}^{*})\hookrightarrow\mP(\ft{S}^{2}V^{*}\otimes\sO_{\mathrm{G}(n-3, V)}),\end{aligned}
\label{eq:phi-Y-H-as-bundle-hom}
\end{equation}
where the rational map in the middle is the projection to the second
factor and the {last} inclusion comes from the surjection $V\otimes\sO_{\mathrm{G}(n-3, V)}\to \mathfrak{Q}\to0$.
We further consider the natural projection $\mP(\ft{S}^{2}V^{*}\otimes\sO_{\mathrm{G}(n-3, V)})\to\mP(\ft{S}^{2}V^{*})$.
Then the image of the composite is contained in the locus $\Hes$ of 
the quadrics of rank $\leq 4$, and {hence} we have a rational map
\[
\phi\colon \hcoY_{3}\dashrightarrow\Hes\,{(:=\nS_4).}
\]
To obtain a morphism,
we consider the inverse {images} 
$\Prt_{\rho},\Prt_{\sigma}$ of {$\nu^{-1}\overline{\Prt}_{\rho}$
and $\nu^{-1}\overline{\Prt}_{\sigma}$, respectively, under 
the resolution $\hcoY_{3}\to\overline{\hcoY}'$.}
Then it is clear from the definitions that \begin{equation}
\Prt_{\rho}\simeq \rF(n-3,n-2;V)\simeq\mP(\mathfrak{Q}),\;\;\Prt_{\sigma}\simeq \rF(n-3,n;V)\simeq\mP(\mathfrak{Q}^{*}).\label{eq:Prho-Ptau-by-Tangent}\end{equation}
\begin{prop}
\label{prop:Prt}
Under the embedding {$\hcoY_{3}\subset\mP(\ft{S}^{2}\mathfrak{Q}
\otimes\sO_{\rG(n-3,V)}(-1)\oplus\ft{S}^{2}\mathfrak{Q}^{*})$,}
$\Prt_{\rho}$ and $\Prt_{\sigma}$ are identified with \[
\Prt_{\rho}=v_{2}(\mP(\mathfrak{Q})),\;\;\Prt_{\sigma}=v_{2}(\mP(\mathfrak{Q}^{*})).\]
Moreover,
$\Prt_{\rho}=\hcoY_3\cap \mP(\ft{S}^{2}\mathfrak{Q}\otimes\sO_{\mathrm{G}(n-3, V)}(-1))$ scheme-theoretically.
\end{prop}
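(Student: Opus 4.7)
The plan is to reduce the statement to a fiberwise claim over $\rG(n-3,V)$, in which each fiber of $\hcoY_3=\rG(3,\wedge^2\mathfrak{Q})$ becomes $\rG(3,6)$, and then to interpret it via the classical isomorphism $\mathrm{SL}_4\simeq\mathrm{Spin}_6$ applied to $W=V/V_{n-3}$ equipped with the wedge-product form on $\wedge^2 W$. Fixing $[V_{n-3}]\in\rG(n-3,V)$ and setting $W=V/V_{n-3}$, the proof of Proposition~\ref{pro:barYsing} together with the descriptions of $\rho$- and $\sigma$-planes shows that a point $([U],[V_{n-3}])\in\hcoY_3$ lies in $\Prt_{\rho}$ exactly when $U=\ell\wedge W$ for some line $\ell\subset W$, and in $\Prt_{\sigma}$ exactly when $U=\wedge^2 H$ for some hyperplane $H\subset W$. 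Parametrizing by $[\ell]\in\mP(W)$ and $[H]\in\mP(W^*)$ recovers $\Prt_{\rho}\simeq\mP(\mathfrak{Q})$ and $\Prt_{\sigma}\simeq\mP(\mathfrak{Q}^*)$ as in (\ref{eq:Prho-Ptau-by-Tangent}).

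In each fiber, the embedding (\ref{eq:phi-Y-H-as-bundle-hom}) is the Plücker embedding $\rG(3,\wedge^2 W)\hookrightarrow\mP(\wedge^3(\wedge^2 W))$ followed by the decomposition (\ref{eq:spin}). I would then consider the composite $\mP(W)\to\rG(3,\wedge^2 W)\hookrightarrow\mP(\wedge^3(\wedge^2 W))$ given by $[v]\mapsto[\wedge^3(v\wedge W)]$. It is $\mathrm{SL}(W)$-equivariant, and by the classical pure-spinor identification of the two components of $\mathrm{OG}(3,6)$ with the two half-spin representations, its image lies entirely in the first summand $\mP(\ft{S}^2 W\otimes\det W)$. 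Since $\mP(W)$ is $\mathrm{SL}(W)$-homogeneous and the only line in $\ft{S}^2 W$ fixed by the parabolic stabilizing $[v]$ is the highest weight line $\mC\cdot v^{\otimes 2}$, this equivariant map is forced to be the $2$-Veronese $[v]\mapsto[v^{\otimes 2}]$; a direct check in a basis $e_1,\dots,e_4$ of $W$ with $v=e_1$ pins down the scalar. Globalizing over $\rG(n-3,V)$ gives $\Prt_{\rho}=v_2(\mP(\mathfrak{Q}))\subset\mP(\ft{S}^2\mathfrak{Q}\otimes\sO(-1))$, and the parallel argument with the line $\phi\in W^*$ annihilating $H$ yields $\Prt_{\sigma}=v_2(\mP(\mathfrak{Q}^*))$.

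For the final assertion, the closed subscheme $\hcoY_3\cap\mP(\ft{S}^2\mathfrak{Q}\otimes\sO(-1))$ is defined on $\hcoY_3$ by the vanishing of the $\ft{S}^2\mathfrak{Q}^*$-component of the Plücker image, and set-theoretically equals $\Prt_{\rho}$ by what precedes. The main obstacle is that the intersection is of excess dimension: in each fiber the actual dimension is $3$ against the expected $-1$, so a priori the intersection could acquire embedded or non-reduced components. I would rule this out by a fiberwise tangent-space computation at each $U=\ell\wedge W\in\Prt_{\rho}$: computing the first-order perturbation $\frac{d}{dt}\big|_{t=0}\wedge^3 U_t$ for a tangent direction $\phi\in T_U\rG(3,\wedge^2 W)=\Hom(U,\wedge^2 W/U)$ and showing that its projection onto $\ft{S}^2 W^*\otimes(\det W)^{\otimes 2}$ vanishes exactly when $\phi$ corresponds to a deformation of $\ell\subset W$, thereby cutting out precisely the $3$-dimensional $T_U v_2(\mP(W))$. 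Equivalently, one must check that the restrictions to $\mP(\ft{S}^2 W\otimes\det W)$ of the Plücker quadrics of $\rG(3,6)$ generate the reduced ideal of the $2$-Veronese of $\mP(W)$; the $\mathrm{SL}_4\simeq\mathrm{Spin}_6$ picture organizes the book-keeping and is the main technical tool.
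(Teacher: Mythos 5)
Your proposal is correct, but it reaches the statement by a different route than the paper. The paper's own proof is a one-line reduction to Appendix \ref{sec:Appendix-A}: in the ``double spin'' coordinates $[v,w]$ of Proposition \ref{prop:B1}, the Pl\"ucker ideal of $\rG(3,\wedge^2W)$ (with $W=V/V_{n-3}$) is generated by the relations (\ref{eq:Ivw}), so restricting to the linear subspace $\{w=0\}=\mP(\ft{S}^{2}\mathfrak{Q}\otimes\sO_{\rG(n-3,V)}(-1))$ kills the generators $(v.w)_{ij}$ and turns the remaining ones into the $2\times2$ minors of the symmetric matrix $v$; since those minors generate the saturated ideal of the second Veronese, the identifications $\Prt_{\rho}=v_{2}(\mP(\mathfrak{Q}))$, $\Prt_{\sigma}=v_{2}(\mP(\mathfrak{Q}^{*}))$ (read off from (\ref{eq:vw-plucker})) and the scheme-theoretic equality drop out simultaneously. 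You instead get the identification from $\SL(W)$-equivariance of $[v]\mapsto[\wedge^{3}(v\wedge W)]$ together with a basis check, and you handle the scheme-theoretic claim by a fiberwise tangent-space computation at points $U=\ell\wedge W$. That computation does work: writing $\phi\in\Hom(U,\wedge^{2}W/U)$, the projection of the first-order deformation of $\wedge^{3}U$ onto the $\ft{S}^{2}W^{*}$-summand imposes six independent linear conditions on the nine parameters, whose common kernel is exactly the three-dimensional tangent space of the family $\{\ell'\wedge W\}$; hence the intersection is smooth of the expected dimension along $\Prt_{\rho}$, so reduced there. What your route buys is independence from the explicit coordinate formulas; what the paper's route buys is the stronger ideal-theoretic fact (the restricted ideal is literally the ideal of $2\times2$ minors, i.e., the Veronese ideal) with no extra work.

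Two points to tighten. First, the tangent-space argument only controls $Z:=\hcoY_{3}\cap\mP(\ft{S}^{2}\mathfrak{Q}\otimes\sO_{\rG(n-3,V)}(-1))$ at points of $\Prt_{\rho}$; to conclude $Z=\Prt_{\rho}$ you must first know that the support of $Z$ equals $\Prt_{\rho}$, whereas ``by what precedes'' you only have $\Prt_{\rho}\subset Z$, and smoothness at points of $\Prt_{\rho}$ cannot exclude components of $Z$ disjoint from $\Prt_{\rho}$. The missing inclusion is easy but needs saying: if $w=0$, the first line of (\ref{eq:Ivw}) forces all $2\times2$ minors of $v$ to vanish, so the Pl\"ucker image lies on $v_{2}(\mP(W))$, and injectivity of the Pl\"ucker embedding then puts $U$ in $\Prt_{\rho}$. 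Second, your claimed ``equivalent'' reformulation---that the restricted Pl\"ucker quadrics generate the reduced ideal of the Veronese---is strictly stronger than the scheme-theoretic equality your tangent computation yields; it is true, but proving it is essentially re-deriving Proposition \ref{prop:B1}, i.e., the paper's own argument.
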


\begin{proof} {The claims follows from the decomposition
(\ref{eq:spin}) and its explicit description given in 
Proposition \ref{prop:B1}, (\ref{eq:Ivw}).}
\end{proof}

\begin{defn}\label{def:Y2}
We define $\Lrho_{\hcoY_2}\colon\hcoY_2\to \hcoY_3$ to be the 
blow-up along $\Prt_{\rho}$, {and denote}
by $F_{\rho}$ its exceptional divisor.
\end{defn}

{
Clearly there is a morphism $\hcoY_2\to\rG(n-3,V)$ as well as 
$\hcoY_3\to\rG(n-3,V)$.}

\subsection{The case $n=3$ ($\dim V=4$)}
\label{subsection:dimV=4}
{
When $n=3$, projective bundles over $\rG(n-3,V)$ reduce to the corresponding projective spaces, and considerable simplifications may be observed, for example, 
in 
\[
\hcoY_3=\overline{\hcoY}'=\overline{\hcoY}=\rG(3,\wedge^2V) \text{ and } 
\Prt_\rho=v_2(\mP(V))\subset \mP(\ft{S}^2V). 
\]
Also in this case, we have $\hcoY_2
\simeq\widetilde{\hcoY}$ by Propositions \ref{pro:barYsing} and 
\ref{prop:tildeY}. Then the birational morphism 
$\phi\colon \hcoY_{3}\dashrightarrow\Hes(=\mP(\ft{S}^2V^*))$ lifts to
a morphism $\widetilde{\phi}\colon \hcoY_{2}\to\Hes$ by the last assertion in Proposition \ref{prop:Prt}.}

{In this subsection, we study the case $n=3 \,(\dim V=4)$ (where 
$\Hes=\mP(\ft{S}^2V^*)$).  
The results below  will be used to study the case of 
$n\geq 4 \,(\dim V\geq 5)$ (where $\Hes=\nS_4\subset \mP(\ft{S}^2V^*)$) in
the next subsection.  Also these will be used extensively in~\cite{ReyeEnr}.}

\begin{prop}
\label{prop:n=3fib}
\begin{enumerate}[$(1)$]
\item
The Stein factorization of $\widetilde{\phi}\colon \widetilde{\hcoY}\to
\Hes$ factors through the double cover $\Lrho\,_{\hcoY}\colon \hcoY\to \Hes$.
\item Let $\Lrho_{\widetilde{\hcoY}}\colon \widetilde{\hcoY}\to \hcoY$ be the
induced morphism. Then $\Lrho_{\widetilde{\hcoY}}$ is birational
and a $K_{\widetilde{\hcoY}}$-negative extremal divisorial contraction.
\item
Let $F_{\widetilde{\hcoY}}$ be the $\Lrho_{\widetilde{\hcoY}}$-exceptional divisor. Then the image of $F_{\widetilde{\hcoY}}$ by $\Lrho_{\widetilde{\hcoY}}$
coincides with $G_{\hcoY}$, and $F_{\widetilde{\hcoY}}\to G_{\hcoY}$
is a $\mP^1\times \mP^1$-fibration outside $G_{\hcoY}^1$.
\item
It holds that 
\[
K_{\widetilde{\hcoY}}=\Lrho_{\widetilde{\hcoY}}^{\;*}K_{\hcoY}+F_{\widetilde{\hcoY}}.
\]
In particular, $\hcoY$ has only terminal singularities with 
$\Sing\hcoY=G_{\hcoY}$.
\item
Let $w=(w_{kl})$ be the $4\times4$ matrix representing $[Q]\in\mP(\ft{S}^{2}V^{*})$.
Then the fiber of $\tilde{\phi}$ is described according to the rank of $w$ as follow\,$:$
\begin{enumerate}[$(\rm a)$]
\item When $\rank w=4$, $\tilde{\phi}^{-1}([Q])$ consists of two points.

\item  When $\rank w=3$, $\tilde{\phi}^{-1}([Q])$
consists of one point. 

\item  When $\rank w=2$, 
$\tilde{\phi}^{-1}([Q])\simeq\mP^{1}\times\mP^{1}.$

\item  When $\rank w=1$, 
$\tilde{\phi}^{-1}([Q])\simeq\mP(1^{3},2).$
The vertex of $\tilde{\phi}^{-1}([Q])$
corresponds to the $\sigma$-plane ${\rm P}_{V_3}$,
where $Q=2\mP(V_3)$, and $\tilde{\phi}^{-1}([Q])\cap F_{\rho}\simeq \mP^2$
which is a hyperplane section of $\mP(1^{3},2)\subset \mP^6$.
\end{enumerate}

\end{enumerate}
\end{prop}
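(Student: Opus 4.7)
The plan is to make the rational map $\phi\colon \hcoY_3\dashrightarrow \Hes$ (which equals the morphism $\widetilde\phi\colon \widetilde\hcoY\to \Hes$ since $\hcoY_2\simeq \widetilde\hcoY$ when $n=3$) completely explicit via the double-spin decomposition \eqref{eq:spin}. For $n=3$ we have $V_{n-3}=0$, so $\hcoY_3=\rG(3,\wedge^2V)$ and
\[
\wedge^3(\wedge^2V)=\ft{S}^2V\otimes\det V\ \oplus\ \ft{S}^2V^*\otimes(\det V)^{\otimes 2}.
\]
For $U\in \rG(3,\wedge^2V)$, the vector $\wedge^3U$ decomposes as a pair $([w'],[w])$ of symmetric forms with $\widetilde\phi([U])=[w]\in \Hes$. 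After fixing a basis of $V$, the Pl\"ucker coordinates of $U$ become concrete quadratic expressions giving $(w',w)$; this is the main computational tool.

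Using this explicit description I would prove (5) rank by rank. For $\rank w=4$, Proposition \ref{cla:double2} identifies the two $\tau$-conics coming from the two rulings of $Q$, giving two points in the fiber. For $\rank w=3$, the cone $Q$ over a smooth plane conic contains a unique $\rho$-conic through its vertex, so a single point. For $\rank w=2$, $Q=\mP(V_3)\cup \mP(V_3')$, and the $U$'s with $\widetilde\phi([U])=[Q]$ are parameterized by a choice of line in each ruling of $Q$, forming $\mP^1\times \mP^1$ (matching the reducible conics of Example \ref{ex:ranktwo}). For $\rank w=1$, $Q=2\mP(V_3)$ for a unique $V_3$; parameterizing the admissible $U$'s directly, one direction in $U$ must carry weight $2$ (from the square root of $w$) and the other two weight $1$, producing $\mP(1^3,2)$. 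The vertex corresponds to the $\sigma$-plane $\mathrm{P}_{V_3}$ (fixed by the weight-$2$ $\mC^*$-action), and the hyperplane section cut out by $F_\rho$ is identified via Proposition \ref{prop:Prt}. Claims (1)--(4) then follow from (5): generic two-to-one behavior with connected higher-dimensional fibers forces the Stein factorization of $\widetilde\phi$ through a finite double cover of $\Hes=\mP(\ft{S}^2V^*)$ branched along $\nS_3$, which by uniqueness is $\hcoY=\nT_4$ (proving (1)); the induced $\Lrho_{\widetilde\hcoY}\colon \widetilde\hcoY\to \hcoY$ is birational with exceptional locus $\Lrho_{\widetilde\hcoY}^{-1}(G_\hcoY)$, whose generic $\mP^1\times\mP^1$ fiber sits over the $6$-dimensional $G_\hcoY\setminus G_\hcoY^1$ to produce a divisor $F_{\widetilde\hcoY}$ of dimension $8=\dim\widetilde\hcoY-1$, establishing (2) and (3) (with $K$-negativity from the Fano nature of the fibers). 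For (4) I would restrict both sides of adjunction to a generic $\mP^1\times\mP^1$ fiber: a local computation will give $F_{\widetilde\hcoY}|_{\mP^1\times \mP^1}=\sO(-1,-1)$, and comparing with $K_{\mP^1\times\mP^1}=(K_{\widetilde\hcoY}+F_{\widetilde\hcoY})|_{\mP^1\times\mP^1}$ forces the discrepancy coefficient to equal $1$; terminality of $\hcoY$ and $\Sing \hcoY=G_\hcoY$ are then automatic since $\Lrho_{\widetilde\hcoY}$ is an isomorphism away from $G_\hcoY$.

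The hardest step I anticipate is the rank-$1$ analysis: extracting the weighted structure $\mP(1^3,2)$ from the double-spin decomposition and pinning down both its vertex and its $F_\rho$-section explicitly, which requires careful tracking of how the two summands in \eqref{eq:spin} degenerate when $w$ becomes a square. A secondary delicate point is verifying that the normal bundle of a generic $\mP^1\times\mP^1$ fiber in $\widetilde\hcoY$ is $\sO(-1,-1)$; I would check this either by explicit local coordinates along the fiber or by exploiting the description $\widetilde\hcoY=\hcoY_2=\mathrm{Bl}_{v_2(\mP(V))}\rG(3,\wedge^2V)$ from Definition \ref{def:Y2}, tracing how the exceptional divisor of this blow-up interacts with the relative conormal geometry of $\widetilde\phi$.
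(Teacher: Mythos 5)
Your overall route is the paper's own: the explicit ``double spin'' coordinates and the relations (I.1)--(I.5) of Appendix \ref{sec:Appendix-A} drive a rank-by-rank analysis of the fibers, (1) is obtained from uniqueness of the degree-two cover of $\mP(\ft{S}^{2}V^{*})$ branched along $\nS_3$, and (2)--(4) come from intersection numbers on rulings of the generic $\mP^1\times\mP^1$ fiber (your $F_{\widetilde{\hcoY}}\vert_{\Gamma}=\sO(-1,-1)$ computation is exactly the paper's computation on a ruling of $S'$). The reordering (proving (5) first and deducing (1)--(4)) is harmless in principle, but as written it skips the step that carries most of the weight of the paper's argument.

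The gap is the behavior of $\widetilde{\phi}$ on the exceptional divisor $F_{\rho}$ of $\widetilde{\hcoY}\simeq\hcoY_2\to\hcoY_3$. By (I.3), no point of $\hcoY_3\setminus\Prt_{\rho}$ maps to a rank-three quadric, so the \emph{entire} fiber over every point of $\nS_3\setminus\nS_2$ --- and, a priori, parts of the fibers over $\nS_2$ and $\nS_1$ --- lies on $F_{\rho}$. Points of $F_{\rho}$ are not conics in $\rG(2,V)$ in the naive sense (they are normal directions along $v_2(\mP(V))$), so your arguments ``a rank-three cone contains a unique $\rho$-conic through its vertex,'' ``the rank-two fiber is exactly the $\mP^1\times\mP^1$ of line pairs,'' and the vertex/$F_{\rho}$-section claims in (5)(d) are unjustified until you know what the lifted map does on $F_{\rho}$. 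The paper devotes most of its proof to exactly this: it shows $\widetilde{\phi}(F_{\rho})=\nS_3$ (via (I.3) and a dimension count), uses (5)(a) together with flatness of the terminal, hence Cohen--Macaulay, Stein factor and simple connectedness of $\mP(\ft{S}^2V^*)$ to identify the finite part with the double cover branched along $\nS_3$, deduces that $F_{\rho}\to\nS_3$ is birational, and then identifies $F_{\rho}\to\nS_3$ with the crepant Springer resolution $\widetilde{\nS}_3\to\nS_3$ (uniqueness of the crepant valuation plus the negativity lemma); only then do (5)(b), the exactness of (5)(c), and (5)(d) follow. In your scheme you must either reproduce this identification or compute the lift of $\phi$ on $F_{\rho}$ explicitly in blow-up coordinates (a point $([xx^{t}],[\dot w])$ of $F_{\rho}$ must be shown to map to the cone with vertex $[x]$ over the conic defined by $\dot w$); since you derive (1) from (5)(b), the gap propagates to (1). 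A smaller point: $\Sing\hcoY= G_{\hcoY}$ is not ``automatic'' from $\Lrho_{\widetilde{\hcoY}}$ being an isomorphism elsewhere, because a discrepancy-one divisorial contraction is also what the blow-up of a smooth codimension-two center produces; you need, e.g., that the double cover is singular exactly over $\Sing\nS_3=\nS_2$, or that the fibers of $F_{\widetilde{\hcoY}}\to G_{\hcoY}$ are two-dimensional.
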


\begin{proof}
Let $\widetilde{\hcoY}\to \hcoY'\to \Hes$ be the Stein factorization of 
$\widetilde{\phi}$. We denote by $\Lrho_{\widetilde{\hcoY}}$
and $F_{\widetilde{\hcoY}}$, 
the induced morphism $\widetilde{\hcoY}\to \hcoY'$
and the $\Lrho_{\widetilde{\hcoY}}$-exceptional locus respectively
(this notation will be compatible
with {(2) and (3)} after showing that
the induced morphism $\hcoY'\to \Hes$ coincides with
the double cover $\Lrho\,_{\hcoY}\colon\hcoY\to \Hes$).

Let us start with showing
that $\widetilde{\phi}(F_{\rho})=\nS_3$.
Let $Q$ be a rank three quadric $Q$ in $\mP(V)$. 
Then, from (I.3) in Appendix \ref{sec:Appendix-A}, 
$[Q]$ cannot be {in} the image of $\phi$. 
Therefore the locus $\nS_3$ is contained in $\widetilde{\phi}(F_{\rho})$. Since $F_{\rho}$ and $\nS_3$ are prime divisors in $\widetilde{\hcoY}$ and $\Hes$ respectively,
it holds that $\widetilde{\phi}(F_{\rho})=\nS_3$.

\noindent {\bf Proof of (5) (a).}
Let $Q$ be a rank four quadric $Q$ in $\mP(V)$, 
{i.e., $[Q]\in\nS_4\setminus\nS_3$.} 
From (I.2) in Appendix$\,$\ref{sec:Appendix-A},
$\phi^{-1}([Q])$ consists of two points $[v,w]$
satisfying $v.w=\pm\sqrt{\det w}\,\id_{4}$. 
{Since} $\widetilde{\phi}(F_{\rho})=\nS_3$,
$\widetilde{\phi}^{-1}([Q])$ also consists of two points.



{We know now that} $\hcoY'\to \Hes$ is a finite morphism
of degree two, and its branch locus is contained in $\nS_3$.

\noindent {\bf Proof of a weaker assertion than (5) (c).}
Let $Q$ be a rank two quadric $Q$ in $\mP(V)$ and $w$ an associated 
symmetric matrix. We show that $\tilde{\phi}^{-1}([Q])$ contains a $\mP^1\times \mP^1$. 
Changing the coordinate of $V$ suitably, we may assume that
$[w]$ is given in the form $w_{0}=\left(\begin{smallmatrix}\begin{smallmatrix}0 & 1\\
1 & 0\end{smallmatrix} & O_{2}\\
O_{2} & O_{2}\end{smallmatrix}\right)$ with $O_{2}$ being the $2\times2$ zero matrix. Then by the properties
(I.4) and (I.2), we obtain $v=\left(\begin{smallmatrix}O_{2} & O_{2}\\
O_{2} & \begin{smallmatrix}v_{11} & v_{12}\\
v_{12} & v_{22}\end{smallmatrix}\end{smallmatrix}\right)$ . Now substituting $[v,w]=[v,tw_{0}]\,(t\not=0)$ into the equation
in the first line of (\ref{eq:Ivw}), we have \[
v_{11}v_{22}-v_{12}^{2}+t^{2}=0\;\;(t\not=0).\]
The closure $S$ of this locus in {$\hcoY_3=\rG(3,\wedge^2V)$} is 
isomorphic to $\mP^1\times \mP^1$. 
{Note that the restriction of the blow-up} $\widetilde\hcoY \to \hcoY_3$ 
over $S\subset \hcoY_3$ is the blow-up along the locus $t=0$. 
Hence the strict 
transform $S'$ of $S$ in $\widetilde{\hcoY}$
is also isomorphic to $\mP^1\times \mP^1$.
Note that $S'$ is contained in the fiber {of the restriction over $S$.}

\noindent {\bf Proof of a similar statement to (2) for 
$\widetilde{\hcoY}\to \hcoY'$.}
Since $\rho(\widetilde{\hcoY})=2$, we have 
$\rho(\widetilde{\hcoY}/\hcoY')\leq 1$.
Moreover, since the fiber over a rank two point is at least $2$-dimensional
and $\dim \nS_2=6$, $F_{\widetilde{\hcoY}}$ is a prime divisor.
We see that 
the contraction $\Lrho_{\widetilde{\hcoY}}$ is $K_{\widetilde{\hcoY}}$-negative by computing the intersection number between 
$K_{\widetilde{\hcoY}}$ and a ruling of $S'$. 
Thus $\hcoY'$ has only terminal singularities.

\noindent {\bf Proof of (1).}
$\hcoY'$ is Cohen-Macaulay since it is terminal
and hence
$\hcoY'\to \Hes$ is flat. Then its branch locus is 
empty or a divisor but the former case cannot occur since $\Hes=\mP({\ft S}^2 V^*)$ is simply connected. Therefore the branch locus of $\hcoY'\to \Hes$ coincides with $\nS_3$. Now we see that $\hcoY'\simeq \hcoY$ since
both $\hcoY'\to \Hes$ and $\hcoY\to \Hes$ are both flat, 
finite of degree two and
are branched along $\nS_3$.

\noindent {\bf Proof of (5) (b).}
Since $\nS_3$ is the branch locus, $F_{\rho}\to \nS_3$ is birational.
Therefore we see that the fiber over a rank three points consists of one point 
{as claimed.}


\vspace{5pt}

We have shown (1), (2), the {first} half of (3) and (5) (b).
The {second half} of (3) will follow from (5) (c).

We will show two resolutions $F_{\rho}\to \nS_3$ and 
$\widetilde{\nS}_3\to \nS_3$ coincides with each other.
First we note that $\rho(F_{\rho})=\rho(\widetilde{\nS}_3)=2$ and then
$\rho(F_{\rho}/\nS_3)=\rho(\widetilde{\nS}_3/\nS_3)=1$.
Since $\nS_3$ is $\mQ$-factorial, $F_{\rho}\to \nS_3$ is a divisorial contraction.
Let $G_1$ and $G_2$ be the exceptional divisors of 
$F_{\rho}\to \nS_3$ and $\widetilde{\nS}_3\to \nS_3$ respectively.
Since $\widetilde{\nS}_3\to \nS_3$ is a crepant resolution,
the valuation of $G_2$ in $k(\nS_3)$ is a unique crepant valuation.
If the discrepancy of $G_1$ is positive, then we see that
any exceptional valuation in $k(\nS_3)$ must have positive discrepancy by computing the discrepancies of
exceptional divisors over $F_{\rho}$, {which is} 
a contradiction to the existence of $G_2$.
Therefore $F_{\rho}\to \nS_3$ is crepant, and moreover
the valuations of $G_1$ and $G_2$ are the same by the uniqueness of 
the crepant valuation. In particular,  
$F_{\rho}\to \nS_3$ and $\widetilde{\nS}_3\to \nS_3$ are isomorphic in
codimension one.
Note that $-G_1$ and $-G_2$ are relatively ample over $\nS_3$.
Let $p\colon \Gamma\to F_{\rho}$ and $q\colon \Gamma\to \nS_3$
be a common resolution of $F_{\rho}$ and $\nS_3$.
Thus, by the standard argument using the negativity lemma,
we see that $p^*(-G_1)=q^*(-G_2)$.
This implies that  
two resolutions $F_{\rho}\to \nS_3$ and 
$\widetilde{\nS}_3\to \nS_3$ coincides with each other.

\noindent {\bf Proof of (5) (c).}
As we see above, 
the fiber over a rank two point $[Q]$
contains at least $S'\simeq \mP^1\times \mP^1$.
The fiber of $F_{\rho}\to \nS_3$ over
$[Q]$ is isomorphic to $\mP^1$ by the description of the fibers of 
$\widetilde{\nS}_3\to \nS_3$.
Thus the fiber $\tilde{\phi}^{-1}([Q])$
coincides with $S'$.

\noindent {\bf Proof of (4).}
We obtain the {claimed formula} 
by computing the intersection number between 
$K_{\widetilde{\hcoY}}$ and a ruling of $S'$. 

\noindent {\bf Proof of (5) (d).}
Let $Q$ be a rank one quadric in $\mP(V)$
and $w$ an associated symmetric matrix.
Then $w$ can be written as $(a_{k}a_{l})$ with some
$a\in V^{*}$. Then from (I.5) in Appendix, we see that
$\rank v\leq 1$. Writing $v_{ij}=x_{i}x_{j}$ with $x\in V$
and also solving (\ref{eq:Ivw}) we obtain
\begin{equation}
\label{eq:WPSeq}
\phi^{-1}([Q])=\left\{ [x_{i}x_{j},ta_{k}a_{l}]\mid a.x=0,t\not=0\right\} .
\end{equation}
The closure of this locus in $\hcoY_3$ is isomorphic to
the cone over $v_{2}(\mP^{2})\simeq\mP^{2}$
from the vertex $[0,a_{k}a_{l}]\in\mP(\ft{S}^{2} V\oplus\ft{S}^{2} V^{*})$,
which is isomorphic to $\mP(1^{3},2)$. 
Then we have the former assertion (5) (d) by a similar argument in case $\rank w=2$. The latter assertion is clear from the above description.

\end{proof}

\begin{rem}
\label{rem:WPS}
It is convenient to give a coordinate-free description of $\widetilde{\phi}^{-1}([Q])$ in case $\rank Q=1$. 
{Instead of $\widetilde{\phi}^{-1}([Q])$, 
we may} describe {its isomorphic} 
image $\Phi \subset \hcoY_3$ {under 
$\widetilde\hcoY=\hcoY_2\to\hcoY_3$}.  
Note that $\Phi$ is the closure {in $\hcoY_3$} of $\phi^{-1}([Q])$ 
and its equation is given by 
(\ref{eq:WPSeq}).
Let $Q=2\mP(V_3)$ as in Proposition \ref{prop:n=3fib} (5) (d).
The vertex of $\widetilde{\phi}^{-1}([Q])$ corresponds to
the $\sigma$-plane ${\rm P}_{V_3}=\{\mC^2\subset V_3\}$.
By the equation (\ref{eq:WPSeq}),
points $[{\rm P}_{V_1}]$ which correspond to $\rho$-planes and are contained in $\Phi$ satisfy $V_1\subset V_3$.
Since $\Phi$ is the cone over the Veronese surface $v_2(\mP(V_3))$,
it is swept out by lines joining $[{\rm P}_{V_3}]$ and $[{\rm P}_{V_1}]$ such that
$V_1\subset V_3$.
\end{rem}

\begin{prop}
\label{prop:conicquad}
For a $\tau$- or $\rho$-conic $q$,
$\Lrho_{\widetilde{\hcoY}}([q])$ is the point corresponding to
the quadric generated by lines which $q$ parameterizes.
For a $\sigma$-plane $\rm{P}_{V_3}$,
$\Lrho_{\widetilde{\hcoY}}([\rm{P}])$ is the point corresponding to
the rank one quadric $2\mP(V_3)$.
In particular,
the exceptional locus $F_{\widetilde{\hcoY}}$ consists of the points 
corresponding to $\tau$- or $\rho$-conics of rank at most two
or $\sigma$-planes,
and the image of $F_{\widetilde{\hcoY}}$ coincides with
$G_{\hcoY}$.
\end{prop}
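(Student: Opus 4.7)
The plan is to prove the three assertions in sequence: (i) the image of a $\tau$- or $\rho$-conic under $\Lrho_{\widetilde{\hcoY}}$, (ii) the image of a $\sigma$-plane, and (iii) the identification of $F_{\widetilde{\hcoY}}$ and its image. The governing principle is that everything ultimately comes from the double spin decomposition (\ref{eq:spin}) and the explicit formula (\ref{eq:phi-Y-H-as-bundle-hom}) for $\phi\colon\hcoY_3\dashrightarrow\Hes$, combined with the factorization of $\widetilde{\phi}$ through $\Lrho_{\hcoY}\colon\hcoY\to\Hes$ provided by Proposition~\ref{prop:n=3fib}(1).

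For assertion (i), I would start with a $\tau$-conic. Such a $q$ lifts to a point $[U]\in\hcoY_3$ with $\mP_q=\mP(U)$ and $\mP(U)\cap\rG(n-1,V)=q$. The quadric $\phi([U])$ is, by construction, obtained from the component of $\wedge^3 U$ lying in the conjugate-spinor summand $\ft{S}^2 V^*\otimes\det V^{\otimes 2}$ of (\ref{eq:spin}). The key point is that this assignment is natural and $\mathrm{SL}(V)$-equivariant, so if $[\Pi]\in q$ is any line in $\mP(V)$ parameterized by $q$, the quadric $\phi([U])$ vanishes on $\mP(\Pi)$; this can be verified either by the coordinate computation suggested by Example~\ref{ex:conics} (take $q=q_\tau$, write down a basis of $U$, project $\wedge^3U$ to the second summand, and check containment of the two rulings), or more intrinsically from the identity $\ft{S}^2\Pi\subset\ker\bigl(\ft{S}^2V\to\ft{S}^2(V/\Pi)\bigr)$ paired against the second-summand projection. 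The $\rho$-conic case is analogous: after passing through the flip $\hcoY_3\dashrightarrow\widetilde{\hcoY}$, $q$ is parameterized by a point of $G_\rho\subset\widetilde{\hcoY}$, and the same spin projection identifies $\phi([U])$ with the rank-$3$ cone over $q$ with vertex at the common point $[V_1]$ of the lines.

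For assertion (ii), I would use the explicit fiber description in Proposition~\ref{prop:n=3fib}(5)(d) together with Remark~\ref{rem:WPS}: for $Q=2\mP(V_3)$, the fiber $\widetilde{\phi}^{-1}([Q])\simeq\mP(1^3,2)$ has a distinguished vertex, which is identified in that remark precisely with the point $[\mathrm{P}_{V_3}]$. Hence $\Lrho_{\widetilde{\hcoY}}([\mathrm{P}_{V_3}])$ is (the preimage in $\hcoY$ of) $[Q]$, proving the claim. Alternatively, one can argue by continuity: degenerate a family of $\tau$-conics lying in planes $\mP_{q_t}$ approaching the $\sigma$-plane $\mP(\wedge^2 V_3)$; the union of the lines of $q_t$ degenerates to the double plane $2\mP(V_3)$, and the map $\Lrho_{\widetilde{\hcoY}}$ is continuous.

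For assertion (iii), the exceptional locus $F_{\widetilde{\hcoY}}$ is characterized in Proposition~\ref{prop:n=3fib}(3) as the preimage of $G_{\hcoY}=\Lrho_{\hcoY}^{-1}(\nS_2)$. By (i) and (ii), a point of $\widetilde{\hcoY}$ lies over $G_{\hcoY}$ exactly when the associated quadric has rank $\leq 2$. For a $\tau$- or $\rho$-conic $q$, the quadric generated by its lines has rank $\leq 2$ iff the lines are contained in the union of at most two hyperplanes, iff $q$ itself is reducible; this is precisely Example~\ref{ex:ranktwo}, giving conics of rank $\leq 2$. Every $\sigma$-plane produces the rank-$1$ quadric $2\mP(V_3)$ by (ii), hence also lies in $F_{\widetilde{\hcoY}}$. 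Together this gives the claimed description of $F_{\widetilde{\hcoY}}$ and its image $G_{\hcoY}$. The main obstacle will be the first step (i): pinning down the conjugate-spinor projection concretely enough to verify that $\phi([U])$ really contains the lines of $q_U$, rather than merely some quadric of the expected rank; I expect this to reduce, as in Proposition~\ref{prop:Prt}, to the explicit tensor identities of Proposition~\ref{prop:B1} and formula (\ref{eq:Ivw}) used in the appendix.
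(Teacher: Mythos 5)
Your handling of $\tau$-conics, of $\sigma$-planes, and of the final ``in particular'' clause tracks the paper's own proof: the paper also reduces the $\tau$-conic and $\sigma$-plane assertions to the explicit descriptions in Examples \ref{ex:conics}, \ref{ex:ranktwo} and Appendix \ref{sec:Appendix-B} together with direct computation in the double-spin coordinates of Appendix \ref{sec:Appendix-A}, and your use of Proposition \ref{prop:n=3fib}\,(5)(d) and Remark \ref{rem:WPS} for $\sigma$-planes is legitimate since those are established before Proposition \ref{prop:conicquad}. The deduction that $F_{\widetilde{\hcoY}}$ consists exactly of the rank $\leq 2$ conics and $\sigma$-planes, with image $G_{\hcoY}$, is also sound once the first two assertions are in place.

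The genuine gap is the $\rho$-conic case, which you dispose of as ``analogous''. It is not. By Proposition \ref{prop:Prt}, $\Prt_{\rho}=\hcoY_3\cap\mP(\ft{S}^{2}\mathfrak{Q}\otimes\sO(-1))$ is exactly the locus where the $\ft{S}^{2}\mathfrak{Q}^{*}$-component vanishes, i.e.\ the indeterminacy locus of $\phi$ (and, for $n=3$, the center of the blow-up $\widetilde{\hcoY}=\hcoY_2\to\hcoY_3$; there is no flip here, contrary to your phrasing). So for a $\rho$-conic $q$ spanning the $\rho$-plane $\mP(U)={\rm P}_{V_1}$, the expression ``$\phi([U])$'' does not exist, and no recipe depending on $[U]$ alone can possibly give the answer: all conics in a fixed $\rho$-plane share the same $[U]$, yet they must map to distinct quadric cones with vertex $[V_1]$. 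The conic enters only through the point of the exceptional divisor $F_{\rho}$ lying over $[U]$ (a normal direction to $\Prt_{\rho}$), so what actually has to be identified is the restriction of the lifted morphism $\widetilde{\phi}$ to $F_{\rho}$. This is precisely the ingredient the paper supplies and that your plan omits: in the proof of Proposition \ref{prop:n=3fib} it is shown (uniqueness of the crepant valuation plus the negativity lemma) that $F_{\rho}\to\nS_3$ coincides with the Springer resolution $\widetilde{\nS}_3\to\nS_3$, whose fiber over $[V_1]$ is $\mP(\ft{S}^{2}(V/V_1)^{*})$, the space of conics in ${\rm P}_{V_1}\simeq\mP(V/V_1)$; under this identification the point of $F_{\rho}$ given by $q$ goes to the cone over $q$ with vertex $[V_1]$, which is the claimed quadric. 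To repair your step (i) you must either invoke that identification (the paper's route) or carry out the local blow-up computation extracting the leading term of the $w$-component along the chosen normal direction; as written, the $\rho$-conic part of your argument fails.
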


\begin{proof}
We {have described} $\tau$-conics and $\sigma$-planes in
Examples \ref{ex:conics} and \ref{ex:ranktwo}
and Appendix \ref{sec:Appendix-B}.
The {assertions} for $\tau$-conics and $\sigma$-planes {follow from}
their descriptions and
direct computations
based on the results in Appendix \ref{sec:Appendix-A}. 
{For $\rho$-conics, the assertion follows from the isomorphism} 
$F_{\rho}\simeq \widetilde{\nS}_3$
as in the proof of Proposition \ref{prop:n=3fib}.
\end{proof}

\subsection{Divisorial contraction $\Lrho_{\widetilde{\hcoY}}\colon \widetilde{\hcoY}\to\hcoY$ for {$n\geq4$ ($\dim V\geq 5$)} \label{sub:tildeY-Y}}
{ Recall that we have the morphisms
\[
\hcoY_3\to \rG(n-3,V),\;\; \hcoY_2\to\rG(n-3,V) \;\text{ and } \; 
\UU \to \rG(n-3,V)
\]
from Definition \ref{def:Y2} and (\ref{eq:deftSr}) 
with $\UU:=\widetilde{\nS}_4.$ 
In this subsection, we consider the relative setting over $\rG(n-3,V)$ for 
$n\geq4$. Thus, for example, the geometry of $\hcoY_2$ is 
considered as the family of the blow-ups  
of $\rG(3,\wedge^2(V/V_{n-3}))$ along 
$\Prt_\rho\vert_{[V_{n-3}]}=v_2(\mP(V/V_{n-3}))$ for $[V_{n-2}]\in \rG(n-3,V)$. 
The results in the preceding subsection apply to each member of 
the family with the 4-dimensional vector space $V/V_{n-3}$. }

{
\begin{lem} 
There exists a morphism $\hcoY_2\to \UU$ defined over $\rG(n-3,V)$.
\end{lem}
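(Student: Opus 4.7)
The plan is to resolve the rational map $\phi\colon \hcoY_3 \dashrightarrow \UU$ constructed via the double spin decomposition in Subsection~\ref{sub:tildeYtoH} by the blow-up $\hcoY_2\to\hcoY_3$. Recall from the display (\ref{eq:phi-Y-H-as-bundle-hom}) that $\hcoY_3$ sits inside the projective bundle $\mP(\ft{S}^{2}\mathfrak{Q}\otimes\sO_{\rG(n-3,V)}(-1)\oplus\ft{S}^{2}\mathfrak{Q}^{*})$ over $\rG(n-3,V)$, and the rational map to $\UU=\mP(\ft{S}^{2}\mathfrak{Q}^{*})$ is the restriction to $\hcoY_3$ of the linear projection onto the second summand. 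This projection is a morphism on the open locus where the second coordinate is nonzero, and is everywhere a morphism over $\rG(n-3,V)$.

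The first step is to resolve the rational projection at the ambient level. Writing $A:=\ft{S}^{2}\mathfrak{Q}\otimes\sO(-1)$ and $B:=\ft{S}^{2}\mathfrak{Q}^{*}$, the projection $\mP(A\oplus B)\dashrightarrow\mP(B)$ has scheme-theoretic base locus equal to the subbundle $\mP(A)$, and it becomes a morphism on the blow-up of $\mP(A\oplus B)$ along $\mP(A)$ (this is standard: the blow-up of a projective bundle along a linear subbundle is the relative resolution of the linear projection, fiberwise just a classical linear projection from a linear subspace).

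The second step is to transfer this to $\hcoY_3$. By the last assertion of Proposition~\ref{prop:Prt}, we have $\Prt_\rho=\hcoY_3\cap\mP(A)$ scheme-theoretically. Consequently the scheme-theoretic base locus of $\phi$ inside $\hcoY_3$ is precisely $\Prt_\rho$, and the strict transform of $\hcoY_3$ under the blow-up of $\mP(A\oplus B)$ along $\mP(A)$ is naturally identified with the blow-up of $\hcoY_3$ along $\Prt_\rho$, which is $\hcoY_2$ by Definition~\ref{def:Y2}. Composing the induced morphism $\hcoY_2\hookrightarrow\mathrm{Bl}_{\mP(A)}\mP(A\oplus B)$ with the resolved projection to $\mP(B)=\UU$ yields a morphism $\hcoY_2\to\UU$, and since every construction above was performed relative to $\rG(n-3,V)$, it is defined over $\rG(n-3,V)$.

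The only point requiring genuine care is the scheme-theoretic identification $\Prt_\rho=\hcoY_3\cap\mP(A)$: without this (e.g., if the intersection were non-reduced), the blow-up of $\hcoY_3$ along $\Prt_\rho$ would differ from the strict transform, and the rational map would not extend after a single blow-up. This reducedness is exactly what Proposition~\ref{prop:Prt} delivers, via the explicit description of the spin decomposition (\ref{eq:spin}) worked out in Appendix \ref{sec:Appendix-B}, so the proof reduces to quoting that proposition together with the elementary geometry of projection from a linear subbundle.
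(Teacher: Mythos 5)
Your proof is correct, and it hinges on exactly the same key input as the paper's construction, namely the last assertion of Proposition~\ref{prop:Prt} that $\Prt_{\rho}=\hcoY_3\cap \mP(\ft{S}^{2}\mathfrak{Q}\otimes\sO_{\rG(n-3,V)}(-1))$ holds scheme-theoretically, so that a single blow-up along $\Prt_{\rho}$ resolves the linear projection and $\hcoY_2$ is identified with the strict transform of $\hcoY_3$ in the ambient blow-up. The organization, however, differs from the paper's: you carry out the resolution of the projection $\mP(A\oplus B)\dashrightarrow\mP(B)$ directly in the relative setting over $\rG(n-3,V)$ for general $n$, whereas the paper first establishes the lift $\widetilde{\phi}\colon\hcoY_2\to\Hes$ in the special case $n=3$ (Subsection~\ref{subsection:dimV=4}, again via Proposition~\ref{prop:Prt}) and then proves the Lemma by restricting to the fibers over $[V_{n-3}]\in\rG(n-3,V)$, observing that $\hcoY_2\vert_{[V_{n-3}]}$ is the blow-up of $\rG(3,\wedge^2(V/V_{n-3}))$ along $v_2(\mP(V/V_{n-3}))$ and $\UU\vert_{[V_{n-3}]}=\mP(\ft{S}^2(V/V_{n-3})^*)$, and invoking the $n=3$ morphism fiberwise. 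Your version has the merit of producing the relative morphism in one stroke without the fiberwise reduction (and makes explicit the standard fact about strict transforms under blow-up of an ambient center), while the paper's version buys consistency with its overall strategy of treating the geometry over $\rG(n-3,V)$ as a family of the $\dim V=4$ picture, which is reused repeatedly afterwards (e.g.\ in Proposition~\ref{prop:gendescr}). One small slip: the explicit computation underlying the scheme-theoretic identity in Proposition~\ref{prop:Prt} is the double spin coordinate description of Proposition~\ref{prop:B1} and (\ref{eq:Ivw}) in Appendix~\ref{sec:Appendix-A}, not Appendix~\ref{sec:Appendix-B}.
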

}
{
\begin{proof}
Denote by $\hcoY_2\vert_{[V_{n-3}]}, \hcoY_3\vert_{[V_{n-3}]}, \UU\vert_{[V_{n-3}]}$ 
the restrictions to the fibers over $[V_{n-3}]\in \rG(n-3,V)$. 
Then $\hcoY_2\vert_{[V_{n-3}]}$ is the blow-up of $\hcoY_3\vert_{[V_{n-3}]}=
\rG(3,\wedge^2(V/V_{n-3}))$, as 
described above, and $\UU\vert_{[V_{n-3}]}=\mP(\ft{S}^2(V/V_{n-3})^*)$. The claimed morphism is the one described in Proposition \ref{prop:n=3fib} (1). 
\end{proof}
}

\begin{prop}
\label{prop:gendescr}
\begin{enumerate}[$(1)$]
\item There exists an extremal divisorial contraction
$\tLrho\,_{\hcoY_2}\colon \hcoY_2\to \widetilde{\hcoY}$ which is the blow-up 
along $G_{\rho}$ with the exceptional divisor $F_{\rho}$.
Any fiber of $F_{\rho}\to G_{\rho}$ is a copy of $\mP^{n-3}$
and is mapped to a fiber of $\hcoY_3\to \overline{\hcoY}'$ isomorphically.
\item There exists an extremal divisorial contraction 
$\Lrho\,_{\widetilde{\hcoY}}\colon \widetilde{\hcoY}\to \hcoY$.
\end{enumerate}
\end{prop}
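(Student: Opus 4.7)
The plan is two-fold: for (1), I realize $\tLrho\,_{\hcoY_2}$ as a resolution of the indeterminacy of the (anti-)flipping rational map $\hcoY_3\dashrightarrow\widetilde{\hcoY}$; for (2), I realize $\Lrho\,_{\widetilde{\hcoY}}$ as the Stein factorization of the natural morphism sending each conic (or $\sigma$-plane) to the quadric it generates. Both steps build on Propositions \ref{prop:n=3fib} and \ref{prop:conicquad}, using the relative viewpoint over $\rG(n-3,V)$.

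For (1), I first observe that the (anti-)flipping rational map $\hcoY_3\dashrightarrow\widetilde{\hcoY}$ has indeterminacy precisely along $\Prt_\rho$, since $\hcoY_3\to\overline{\hcoY}'$ and $\widetilde{\hcoY}\to\overline{\hcoY}'$ agree outside this locus (Propositions \ref{pro:barYsing} and \ref{prop:tildeY}). Blowing up $\Prt_\rho$ therefore produces a morphism $\tLrho\,_{\hcoY_2}\colon\hcoY_2\to\widetilde{\hcoY}$. To match this with the blow-up of $\widetilde{\hcoY}$ along $G_\rho$, I use the identifications $\Prt_\rho\simeq\mP(\mathfrak{Q})\simeq\rF(n-3,n-2;V)$ from (\ref{eq:Prho-Ptau-by-Tangent}) and $G_\rho\simeq\mP(\ft{S}^2\sQ_\rho^*)$ from Proposition \ref{prop:Grho}: at a point $([U],[V_{n-3}])\in\Prt_\rho$ with $\mP(U)={\rm P}_{V_{n-2}}$, the normal $\mP^5$-fiber $\mP(\ft{S}^2U^*)$ of $F_\rho\to\Prt_\rho$ parameterizes conics in the $\rho$-plane ${\rm P}_{V_{n-2}}$; forgetting the choice $V_{n-3}\subset V_{n-2}$ sends this $\mP^5$ isomorphically onto the fiber of $G_\rho\to\overline{\Prt}_\rho=\rG(n-2,V)$ over $[V_{n-2}]$. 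The resulting map $F_\rho\to G_\rho$ is a $\mP^{n-3}$-bundle whose fibers $\mP(V_{n-2}^*)\simeq\mP^{n-3}$ are carried isomorphically by $\hcoY_2\to\hcoY_3$ onto the corresponding fibers of $\hcoY_3\to\overline{\hcoY}'$. Extremality and $K_{\hcoY_2}$-negativity of the ray spanned by these $\mP^{n-3}$'s follow from the blow-up formula $K_{\hcoY_2}=\Lrho\,_{\hcoY_2}^{*}K_{\hcoY_3}+5F_\rho$, and the blow-up structure itself from the Fujiki--Nakano criterion, since $\hcoY_2$, $\widetilde{\hcoY}$, $F_\rho$, and $G_\rho$ are all smooth.

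For (2), my plan is that every point of $\widetilde{\hcoY}$ corresponds to a $\tau$-conic, a $\rho$-conic, or a $\sigma$-plane in $\rG(n-1,V)$ (Section \ref{subsection:BlowUp}), each parameterizing a connected family of $(n-2)$-planes filling a unique quadric of rank $\leq 4$ by Proposition \ref{cla:double2}; this yields a morphism $\widetilde{\phi}\colon\widetilde{\hcoY}\to\Hes$. Taking its Stein factorization $\widetilde{\hcoY}\to\hcoY'\to\Hes$, the generic fiber over a rank-$4$ point has two points by Proposition \ref{cla:double2}, so $\hcoY'\to\Hes$ is a finite double cover. I then follow the argument in Proposition \ref{prop:n=3fib}(1): $\hcoY'$ is Cohen--Macaulay, hence flat over $\Hes$, with branch locus contained in $\nS_3$, so $\hcoY'$ must coincide with the double cover $\hcoY=\nT_4\to\Hes$, giving $\Lrho\,_{\widetilde{\hcoY}}\colon\widetilde{\hcoY}\to\hcoY$. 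The exceptional divisor $F_{\widetilde{\hcoY}}$ is the locus of rank-$\leq 2$ conics/planes, a prime divisor of codimension one by a dimension count using Proposition \ref{cla:double2}; $K_{\widetilde{\hcoY}}$-negativity of the contraction follows by intersecting with a ruling of a $(\mP^1\times\mP^1)$-fiber of $F_{\widetilde{\hcoY}}\to G_\hcoY$ over a rank-$2$ point, as in Proposition \ref{prop:n=3fib}(4).

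The hard part will be the application of the Fujiki--Nakano criterion in (1), specifically establishing that $\sO_{\hcoY_2}(F_\rho)$ restricts to $\sO(-1)$ on each $\mP^{n-3}$-fiber of $F_\rho\to G_\rho$. This normal-bundle computation is most naturally carried out using the embedding of $\hcoY_3$ in $\mP(\ft{S}^2\mathfrak{Q}\otimes\sO_{\rG(n-3,V)}(-1)\oplus\ft{S}^2\mathfrak{Q}^*)$ together with the scheme-theoretic identification $\Prt_\rho=\hcoY_3\cap\mP(\ft{S}^2\mathfrak{Q}\otimes\sO_{\rG(n-3,V)}(-1))$ from Proposition \ref{prop:Prt}, combined with the double-spin decomposition of Subsection \ref{sub:tildeYtoH}. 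If a direct verification proves delicate, I would instead identify $\hcoY_2\to\widetilde{\hcoY}$ with the blow-up by first producing an isomorphism in codimension one and then invoking the negativity lemma, as at the end of the proof of Proposition \ref{prop:tildeY}.
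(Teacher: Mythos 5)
Your plan diverges from the paper's proof at its two crucial points, and in both places there is a genuine gap. For (1), the sentence ``blowing up $\Prt_{\rho}$ therefore produces a morphism $\tLrho\,_{\hcoY_2}\colon\hcoY_2\to\widetilde{\hcoY}$'' is precisely what has to be proved: knowing that the indeterminacy of the (anti-)flip $\hcoY_3\dashrightarrow\widetilde{\hcoY}$ is contained in $\Prt_{\rho}$ does not imply that a single smooth blow-up of $\Prt_{\rho}$ resolves it. To argue this way you would have to identify the local structure of the flip (e.g.\ compute $\sN_{\Prt_{\rho}/\hcoY_3}$ along the fibers of $\hcoY_3\to\overline{\hcoY}'$ so that it is a standard flip), or construct a flat family of conics over $\hcoY_2$ and use the universal property of $\hcoY_0$; neither is done, and your fallback via \cite[Lemma 5.5]{Tk} only identifies the morphism with the blow-up \emph{after} the morphism to $\widetilde{\hcoY}$ exists. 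The paper never resolves the flip directly: it obtains $\hcoY_2\to\hcoY_{\UU}=\widetilde{\nT}_4$ as the Stein factorization of the relative morphism $\hcoY_2\to\UU$ over $\rG(n-3,V)$ (so that it is fiberwise the $n=3$ contraction of Proposition \ref{prop:n=3fib}(2)), then maps $\hcoY_2\to\overline{\hcoY}'\times\hcoY$, normalizes the image, and identifies the result with $\widetilde{\hcoY}$ by the uniqueness of the flip, ruling out $\hcoY_3$ because $\rho(\hcoY_3)=2$; only then is the blow-up structure along $G_{\rho}$ deduced by the argument of Proposition \ref{prop:tildeY}.

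For (2), your transfer of the $n=3$ argument of Proposition \ref{prop:n=3fib}(1) breaks down for $n\geq 4$. First, the morphism $\widetilde{\phi}\colon\widetilde{\hcoY}\to\Hes$ is only specified pointwise (``the quadric generated''); for $\sigma$-planes the value $2\mP(V_n)$ is a limit, and algebraicity needs a construction (in the paper it comes from the explicit double-spin equations when $n=3$ and from the relative construction over $\rG(n-3,V)$ in general). Second, and more seriously, ``$\hcoY'$ is Cohen--Macaulay, hence flat over $\Hes$'' is miracle flatness and requires the base to be \emph{regular}; for $n\geq 4$ the base $\Hes=\nS_4$ is singular along $\nS_3$, so this step, as well as the simple-connectedness/purity argument locating the branch divisor and the uniqueness-of-the-double-cover conclusion, do not carry over. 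This is exactly why the paper works relatively over $\rG(n-3,V)$, where the covering base is the smooth $\UU=\widetilde{\nS}_4$, and then obtains (2) simply by observing that $\hcoY_2\to\hcoY_{\UU}$, $\hcoY_{\UU}\to\hcoY$ and $\hcoY_2\to\widetilde{\hcoY}$ are divisorial contractions, so $\widetilde{\hcoY}\to\hcoY$ is one as well. Finally, for $n\geq4$ the fibers of $F_{\widetilde{\hcoY}}\to G_{\hcoY}$ over rank-two points are $\mP^{n-2}\times\mP^{n-2}$, not $\mP^1\times\mP^1$, and that description (Proposition \ref{prop:begin}) is proved in the paper \emph{after} and by means of Proposition \ref{prop:gendescr}, so invoking it at this stage would be circular.
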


\begin{proof}
We {reproduce} here a part of the diagram (\ref{eq:STcomm}):
\begin{equation}
\label{eq:STcommrev}
\begin{matrix}
\xymatrix{\hcoY_{\UU}=
\widetilde{\nT}_4\ar[r]^{\Lrho\,_{\widetilde{\nT}_4}}\ar[d]_{\Lp\,_{\widetilde{\nT}_4}} 
& \UU=\widetilde{\nS}_4\ar[d]_{\Lpi_{\widetilde{\nS}_4}}\\
\hcoY=\nT_4\ar[r]_{\Lrho\,_{\nT_4}} & \Hes=\nS_4.}
\end{matrix}
\end{equation}
By construction, we see 
that $\Lrho\,_{\widetilde{\nT}_4}\colon \hcoY_{\UU}\to \UU$ is
the family over $\rG(n-3,V)$
of the double covers $\nT_4\to \nS_4$ for 
$4$-dimensional vector spaces $V/V_{n-3}$.


{Consider the Stein factorization of the morphism $\hcoY_2\to\UU$. 
By the uniqueness of finite double cover, it is given by $\hcoY_2\to\hcoY_\UU
\to\UU$. Then the induced morphism 
$\hcoY_2\to \hcoY_{\UU}$ 
is the family over $\rG(n-3,V)$
of the divisorial contraction described in Proposition \ref{prop:n=3fib} (2) 
(for $4$-dimensional vector spaces $V/V_{n-3}$).}
In particular, a birational morphism $\hcoY_2\to \hcoY$ is induced.
By Proposition~\ref{pro:barYsing} and the definition of $\hcoY_2$,
a birational morphism $\hcoY_2\to \overline{\hcoY}'$ is also induced.
Therefore we obtain a map $\hcoY_2\to
\overline{\hcoY}'\times \hcoY$.
Let $\widetilde{\hcoY}'$ be the normalization of the image of this map.
We will show that $\hcoY_2\to \widetilde{\hcoY}'$ is non-trivial. 
Let $Q$ be a quadric in $\mP(V)$ of rank $3$ and $\mP(V_{n-2})$ 
its singular locus.
By Proposition \ref{prop:conicquad},
the fiber $\Gamma$ of $\hcoY_2\to \hcoY$ over $[Q]$
is isomorphic to $\rG(n-3,V_{n-2})$.
By Proposition~\ref{pro:barYsing} and the definition of $\hcoY_2$,
$\Gamma$ is also contracted by $\hcoY_2\to \overline{\hcoY}'$.
Therefore $\hcoY_2\to \widetilde{\hcoY}'$ is non-trivial.
{$\widetilde{\hcoY}'$ can not be isomorphic to $\overline{\hcoY}'$ nor 
$\hcoY$ since $\rho(\overline{\hcoY}')=\rho(\hcoY)=1$ and
$\overline{\hcoY}'\not \simeq \hcoY$.}
Therefore {
$\widetilde{\hcoY}'\to \overline{\hcoY}'$} is 
a small birational morphism.
By the uniqueness of the flip (cf.~\cite{KM}), 
we see that $\widetilde{\hcoY}'\simeq \widetilde{\hcoY}\ \text{or}\ \hcoY_3$.
There does not exist, however,
a contraction $\hcoY_3\to \hcoY$ since 
{$\rho(\hcoY_3)=2$ and there are }
two non-trivial contractions $\hcoY_3\to \rG(n-3,V)$ and
$\hcoY_3\to \overline{\hcoY}'$. {Therefore we must have} 
$\widetilde{\hcoY}'\simeq \widetilde{\hcoY}$.
Now extending (\ref{eq:STcommrev}), we have
\begin{equation}
\begin{matrix}
\xymatrix{\hcoY_2\ar[rr]^{/\rG(n-3,V)}\ar[d] & & \hcoY_{\UU}=\widetilde{\nT}_4\ar[rr]^{/\rG(n-3,V)}_{\Lrho\,_{\widetilde{\nT}_4}}\ar[d]_{\Lp\,_{\widetilde{\nT}_4}} & & \UU=\widetilde{\nS}_4\ar[d]_{\Lpi_{\widetilde{\nS}_4}}\\
\widetilde{\hcoY}\ar[rr]_{\Lrho_{\widetilde{\hcoY}}}& & \hcoY=\nT_4\ar[rr]_{\Lrho\,_{\nT_4}} & & \Hes=\nS_4.}
\end{matrix}
\end{equation}
Note that $\hcoY_2\to \hcoY_{\UU}$ and $\hcoY_{\UU}\to \hcoY$
are divisorial contractions.
Moreover, $\hcoY_2\to \widetilde{\hcoY}$ is also a divisorial contraction
contracting $F_{\rho}$ to $G_{\rho}$.
Therefore $\widetilde{\hcoY}\to \hcoY$ is a divisorial contraction,
and moreover its exceptional divisor $F_{\widetilde{\hcoY}}$ is 
the image of the exceptional divisor of 
$\hcoY_2\to \hcoY_{\UU}$. 
%

Finally {we show that
$\hcoY_2\to \widetilde{\hcoY}$ is the blow-up of $G_{\rho}$.
This morphism is given by forgetting the markings by 
$[V_{n-3}]$ in $\rG(n-3,V)$. 
%
But, since $G_{\rho}\simeq \mP({\ft S}^2 \sQ_{\rho}^*)$ 
(see Proposition~\ref{prop:Grho}), 
the markings by $[V_{n-2}]$ in $\rG(n-2,V)$ are retained.} 
Therefore the fiber of $\hcoY_2\to \widetilde{\hcoY}$
over a point $(q, [V_{n-2}])$ in $\mP({\ft S}^2 \sQ_{\rho}^*)$
is isomorphic to $\rG(n-3, V_{n-2})\simeq \mP^{n-3}$.
We may conclude that 
$\hcoY_2\to \widetilde{\hcoY}$ is the blow-up of $G_{\rho}$
by the same argument as in the proof of Proposition \ref{prop:tildeY}.
\end{proof}

\begin{rem}
\label{rem:blup}
In a similar way to the proof of Proposition \ref{prop:gendescr} (1),
we can show that $\hcoY_0\to \widetilde{\hcoY}$ is the blow-up along $G_{\sigma}$.
\end{rem}
By Propositions \ref{prop:conicquad} and \ref{prop:gendescr},
we have the following:

\begin{prop}
\label{prop:FYdisc}
For a $\tau$- or $\rho$-conic $q$,
$\Lrho_{\widetilde{\hcoY}}([q])$ is the point corresponding to
the quadric generated by {$\mP(V_{n-1})$'s} which $q$ parameterizes.
For a $\sigma$-plane {$\rm{P}_{V_{n-3}V_{n}}$,
$\Lrho_{\widetilde{\hcoY}}([\rm{P}_{V_{n-3}V_{n}}])$} is the point corresponding to
the rank one quadric $2\mP(V_{n})$.
In particular,
the exceptional locus $F_{\widetilde{\hcoY}}$ consists of the points 
corresponding to $\tau$- or $\rho$-conics of rank at most two
or $\sigma$-planes,
and the image of $F_{\widetilde{\hcoY}}$ coincides with
$G_{\hcoY}$.
\end{prop}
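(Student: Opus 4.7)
The plan is to reduce everything to the $n=3$ case already handled in Proposition \ref{prop:conicquad}, using the relative structure over $\rG(n-3,V)$ developed in Proposition \ref{prop:gendescr} and the commutative diagram extending \eqref{eq:STcommrev}.

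For the first two claims, I would start by fixing a point $y\in\widetilde{\hcoY}$ and identifying a natural $[V_{n-3}]\in\rG(n-3,V)$ over which it lies: if $y$ represents a $\tau$- or $\rho$-conic $q$ with spanning $3$-plane $U$, pick $V_{n-3}\subset a_{U}$ (possible by Proposition \ref{lem:appendixB-UU-solve} and Definition \ref{def:Y3}); if $y$ represents a $\sigma$-plane $P_{V_{n-3}V_{n}}$, take the given $V_{n-3}$. The restriction of the diagram
\[
\xymatrix{\hcoY_{2}\ar[r]\ar[d] & \hcoY_{\UU}\ar[r]\ar[d] & \UU\ar[d]\\
\widetilde{\hcoY}\ar[r]_{\Lrho_{\widetilde{\hcoY}}} & \hcoY\ar[r]_{\Lrho_{\nT_{4}}} & \Hes}
\]
to the fibre over $[V_{n-3}]\in\rG(n-3,V)$ reproduces exactly the $n=3$ picture for the quotient $V/V_{n-3}$, under the natural identifications: an $(n-1)$-plane $V_{n-3}\subset\Pi\subset V$ corresponds to the line $\Pi/V_{n-3}\subset V/V_{n-3}$ (so conics in $\rG(n-1,V)$ with vertex containing $V_{n-3}$ correspond to conics in $\rG(2,V/V_{n-3})$), and a quadric $\overline{Q}$ on $\mP(V/V_{n-3})$ corresponds to its pull-back, a quadric on $\mP(V)$ singular along $\mP(V_{n-3})$ (this is the content of the morphism $\UU\to\Hes$ via \eqref{eq:UUtoS2V} and Proposition \ref{prop:Spr}(2)).

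Under this correspondence, the first two claims follow immediately from Proposition \ref{prop:conicquad} applied to $V/V_{n-3}$: a $\tau$- or $\rho$-conic $q$ in the fibre parameterises lines in $\mP(V/V_{n-3})$, which generate a quadric of rank $3$ or $4$, whose pull-back is the quadric on $\mP(V)$ generated by the $(n-1)$-planes parameterised by $q$; a $\sigma$-plane $P_{V_{n-3}V_{n}}$ corresponds to the $\sigma$-plane $P_{V_{n}/V_{n-3}}\subset \mathrm{G}(2,V/V_{n-3})$, which Proposition \ref{prop:conicquad} sends to the rank one quadric $2\mP(V_{n}/V_{n-3})$, pulling back to $2\mP(V_{n})$.

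For the description of $F_{\widetilde{\hcoY}}$, I would argue as follows. By Proposition \ref{prop:gendescr}(2), $\Lrho_{\widetilde{\hcoY}}$ is an extremal divisorial contraction, and its exceptional divisor $F_{\widetilde{\hcoY}}$ is the image, under $\hcoY_{2}\to\widetilde{\hcoY}$, of the exceptional divisor of $\hcoY_{2}\to\hcoY_{\UU}$. Fibrewise over $\rG(n-3,V)$, this latter divisor is the $n=3$ exceptional divisor described in Proposition \ref{prop:n=3fib}(3), which maps onto the locus of rank $\leq 2$ quadrics. Since $G_{\hcoY}=\Lrho_{\nT_{4}}^{-1}(\nS_{2})$ globally, taking the union over $\rG(n-3,V)$ gives $\Lrho_{\widetilde{\hcoY}}(F_{\widetilde{\hcoY}})=G_{\hcoY}$. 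Combining with the first two claims and the classification of conics in Examples \ref{ex:conics}--\ref{ex:ranktwo}, the points of $F_{\widetilde{\hcoY}}$ are exactly those corresponding to $\tau$- or $\rho$-conics of rank at most two or $\sigma$-planes.

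The main technical point — and the only place where care is required — is the compatibility of the fibrewise $n=3$ picture with the global one: one must ensure that varying $[V_{n-3}]$ sweeps out all of $F_{\widetilde{\hcoY}}$ (rather than only a subset). This is guaranteed by the irreducibility of $F_{\widetilde{\hcoY}}$ as an extremal exceptional divisor (Proposition \ref{prop:gendescr}(2)) together with the fact that every rank $\leq 2$ quadric in $\mP(V)$ has a vertex containing some $\mP(V_{n-3})$, so every point of $G_{\hcoY}$ is covered. No genuine obstacle remains beyond this bookkeeping.
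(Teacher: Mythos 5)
Your proposal is correct and follows essentially the same route as the paper: the paper deduces Proposition \ref{prop:FYdisc} directly from Propositions \ref{prop:conicquad} and \ref{prop:gendescr}, i.e., by viewing $\hcoY_2\to\hcoY_{\UU}\to\hcoY$ relatively over $\rG(n-3,V)$ and applying the $n=3$ description fibrewise to $V/V_{n-3}$, exactly as you do. Your write-up merely makes explicit the bookkeeping (choice of $V_{n-3}\subset a_U$, coverage of $G_{\hcoY}$ by varying $[V_{n-3}]$) that the paper leaves implicit.
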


\def\FigHilbCoYs{\resizebox{10cm}{!}{\includegraphics{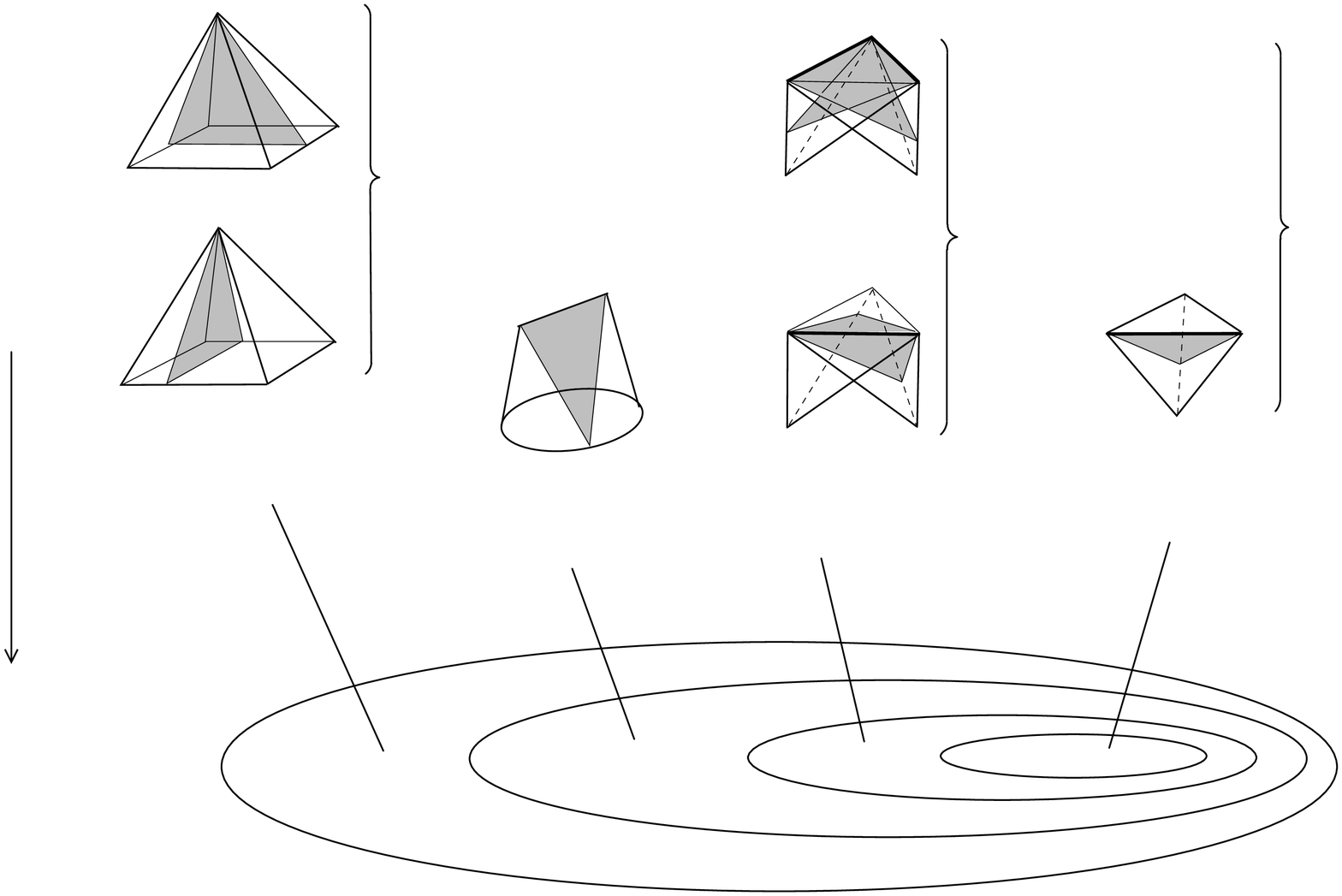}}} 
\def\upIN{\vcorr{90}{$\in$}} 
\def\xyFigCoYs{ 
\begin{xy} 
(0,0)*{\FigHilbCoYs}, 
(-49,13)*{\widetilde\hcoY}, 
(-49,-20)*{\Hes}, 
(-32,-28)*{\Hes}, 
(-10,-28)*{\nS_3}, 
( 14,-26.5)*{\nS_2=G_\hcoY}, 
(33,-26)*{\nS_1=G_\hcoY^1}, 
(-15,20)*{\tau\text{-conics}}, 
(-15,17)*{\,_{(\text{rk} \tau =3)}}, 
(-7,-3)*{\rho\text{-conic}}, 
(-7,-6)*{\,_{(\text{rk} \rho =3)}}, 
( 13, 18)*{\tau\text{-conics}}, 
( 13, 15)*{\,_{(\text{rk} \tau =2)}}, 
( 13,-1)*{\rho\text{-conics}}, 
( 13,-4)*{\,_{(\text{rk} \rho =2)}}, 
(38, 22)*{\tilde{\phi}^{-1}([Q])}, 
(38,16)*{\text{\upIN}}, 
(38,-1)*{\rho\text{-conics}}, 
(38,-4)*{\,_{(\text{rk} \rho=1)}}, 
( 56, 18)*{\text{double lines}}, 
( 52, 14)*{\text{and}}, 
( 53, 10)*{\sigma\text{-planes}}, 
(0,0)*{} \end{xy} }
\vspace{0.1cm}

\vbox{
\[
\xyFigCoYs\]
\vspace{0.0cm}
 \begin{fcaption} 
\item \textbf{Fig.2. The fibers of $\tilde{\phi}=
\Lrho\,_{\nT_4}\circ\Lrho\,_{\widetilde\hcoY}\colon \widetilde{\hcoY}\to\Hes$ 
when $n=4$.}
\end{fcaption} }


\section{{\bf Geometry of $F_{\widetilde{\hcoY}}\to G_{\hcoY}$ and flattening}}
\label{section:FY}

In this section, we determine the structure of $F_{\widetilde{\hcoY}}\to G_{\hcoY}$ and construct its flattening.

\subsection{Birational model $F^{(1)}/\mZ_2$ of $F_{\widetilde{\hcoY}}$}

From the description of the conics of rank two in Example \ref{ex:ranktwo} 
and Proposition \ref{prop:FYdisc}, we introduce the following $\mZ_{2}$-subvariety $F^{(1)}$
of $\mathrm{F}(n-2,n,V)^{\times 2}$ to study the exceptional locus $F_{\widetilde{\hcoY}} \subset \widetilde{\hcoY}$:
\begin{equation}
F^{(1)}:=\left\{ ([V_{n-2}],[V_{n-2}'];[V_{n}],[V_{n}'])\,\bigg\vert\;\begin{matrix}V_{n-2},V_{n-2}'\subset V_{n}\cap V_{n}'\\
\dim (V_{n-2}\cap V_{n-2}')\geq n-3\end{matrix}\right\},\label{eq:FsY}
\end{equation}
where $\mZ_{2}$ acts by the simultaneous exchanges $V_{n-2}\leftrightarrow V_{n-2}'$
and $V_{n}\leftrightarrow V_{n}'$. 
We set
\[
\widehat{G}:=\mP(V^{*})\times\mP(V^{*}),\, \Delta_{G}:=
\text{the diagonal of $\widehat{G}$}, 
\]
and note that the natural
projection
$F^{(1)}\to\widehat{G}$
is a $\mP^{n-2}\times \mP^{n-2}$-fibration outside $\Delta_G$.
Let $\oF{(1)}$ be the following open subset of $F^{(1)}$: 
\begin{equation}
\oF{(1)}:=\left\{ ([V_{n-2}],[V_{n-2}'];[V_{n}],[V_{n}'])\,\bigg\vert\;
V_{n}\not =V_{n}'
\right\} \subset F^{(1)}.\end{equation}
\begin{prop}
\label{prop:begin}
The natural map
$\oF{(1)}/\mZ_2\to (\widehat{G}\setminus \Delta_{G})/\mZ_2$
is isomorphic to
$F_{\widetilde{\hcoY}}\setminus \Lrho\,_{\widetilde{\hcoY}}^{-1}(G^1_{\hcoY})
\to G_{\hcoY}\setminus G^1_{\hcoY}$.
In particular,
$F_{\widetilde{\hcoY}}\setminus \Lrho\,_{\widetilde{\hcoY}}^{-1}(G^1_{\hcoY})
\to G_{\hcoY}\setminus G^1_{\hcoY}$
is a $\mP^{n-2}\times \mP^{n-2}$-fibration.
\end{prop}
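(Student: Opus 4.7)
The plan is to identify both sides explicitly via the description of points of $F_{\widetilde{\hcoY}}$ as rank 2 conics, provided by Proposition \ref{prop:FYdisc}, and then match the data on fibers and bases.

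First I would identify $G_{\hcoY}\setminus G^1_{\hcoY}$ with $(\widehat{G}\setminus\Delta_G)/\mZ_2$. Since $\Lrho\,_{\nT_4}\colon\hcoY\to\Hes$ is a double cover branched along $\nS_3\supset\nS_2\setminus\nS_1$, its restriction yields a bijection $G_{\hcoY}\setminus G^1_{\hcoY}\simeq\nS_2\setminus\nS_1$; the latter parametrizes rank 2 quadrics $\mP(V_n)\cup\mP(V_n')$ with $V_n\neq V_n'$, i.e., unordered pairs of distinct hyperplanes in $\mP(V)$, which is exactly $(\widehat{G}\setminus\Delta_G)/\mZ_2$.

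Next I would construct a morphism $\oF{(1)}\to F_{\widetilde{\hcoY}}$ via the Hilbert scheme $\hcoY_0$. Given $(V_{n-2},V_{n-2}';V_n,V_n')\in\oF{(1)}$, the condition $V_n\neq V_n'$ forces $V_n\cap V_n'=V_{n-1}$ to be a hyperplane, and $V_{n-2},V_{n-2}'\subset V_{n-1}$ are then hyperplanes in $V_{n-1}$, so the dimension condition $\dim(V_{n-2}\cap V_{n-2}')\geq n-3$ is automatic. The two lines $l_{V_{n-2}V_n}$ and $l_{V_{n-2}'V_n'}$ in $\rG(n-1,V)$ meet exactly at $[V_{n-1}]$, hence form a rank 2 conic (Example~\ref{ex:ranktwo}). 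The resulting flat family of conics over $\oF{(1)}$ induces, by the universal property of the Hilbert scheme, a morphism to $\hcoY_0$; composing with the contractions $\hcoY_0\to\widetilde{\hcoY}$ described in Section \ref{subsection:BlowUp} lands in $F_{\widetilde{\hcoY}}$ (by Proposition \ref{prop:FYdisc}, as our conics have rank $\leq 2$), and more precisely in the open subset $F_{\widetilde{\hcoY}}\setminus\Lrho\,_{\widetilde{\hcoY}}^{-1}(G^1_{\hcoY})$, since by Proposition \ref{prop:FYdisc} the swept quadric is $\mP(V_n)\cup\mP(V_n')$ of rank exactly $2$. The swap $(V_{n-2},V_n)\leftrightarrow(V_{n-2}',V_n')$ preserves the conic as a subscheme of $\rG(n-1,V)$ and simultaneously swaps $(V_n,V_n')$, so the morphism is $\mZ_2$-equivariant with respect to the trivial action on the target; this shows it descends to $\oF{(1)}/\mZ_2\to F_{\widetilde{\hcoY}}\setminus\Lrho\,_{\widetilde{\hcoY}}^{-1}(G^1_{\hcoY})$ and commutes with the projections to $(\widehat{G}\setminus\Delta_G)/\mZ_2$ and $G_{\hcoY}\setminus G^1_{\hcoY}$.

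For bijectivity on closed points, any rank 2 conic in $\rG(n-1,V)$ decomposes uniquely as a union of two lines, and each line in $\rG(n-1,V)$ is of the form $l_{V_{n-2}V_n}$ for a unique pair $(V_{n-2},V_n)$; this recovers the unordered pair in $\oF{(1)}/\mZ_2$. Fiberwise over a point $([V_n],[V_n'])$ with $V_n\neq V_n'$ (writing $V_{n-1}:=V_n\cap V_n'$), the fiber parametrizes ordered pairs of hyperplanes $(V_{n-2},V_{n-2}')$ in $V_{n-1}$, which is $\mP(V_{n-1}^*)\times\mP(V_{n-1}^*)\simeq\mP^{n-2}\times\mP^{n-2}$. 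To upgrade the bijection to a scheme-theoretic isomorphism, I would use that the $\mZ_2$-action on $\oF{(1)}$ is free (fixed points would require $V_n=V_n'$), hence $\oF{(1)}/\mZ_2$ is smooth; the target is likewise smooth outside $\Lrho\,_{\widetilde{\hcoY}}^{-1}(G^1_{\hcoY})$ via the blow-up structure of Proposition \ref{prop:gendescr}. A bijective morphism between smooth varieties of the same dimension is then an isomorphism. The ``In particular'' statement follows from the visible $\mP^{n-2}\times\mP^{n-2}$-fibration structure of $\oF{(1)}\to\widehat{G}\setminus\Delta_G$ descending to the quotient. The main technical obstacle is the scheme-theoretic upgrade; the potential issue is to confirm smoothness of $F_{\widetilde{\hcoY}}$ away from $\Lrho\,_{\widetilde{\hcoY}}^{-1}(G^1_{\hcoY})$ independently of the proposition being proved, which amounts to tracing the local structure of the contraction $\widetilde{\hcoY}\to\hcoY$ near rank 2 quadrics.
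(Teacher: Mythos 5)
Your construction of the map and the bijectivity argument follow the same route as the paper (identify $(\widehat{G}\setminus\Delta_G)/\mZ_2$ with $G_{\hcoY}\setminus G^1_{\hcoY}$, produce the morphism from $\oF{(1)}/\mZ_2$ via the universal property of the Hilbert scheme $\hcoY_0$ composed with $\hcoY_0\to\widetilde{\hcoY}$, and use Proposition \ref{prop:FYdisc} to pin down the image and the fibers). The genuine gap is in your final step, and it is exactly the one you flag yourself: to upgrade the bijection to an isomorphism you invoke smoothness of $F_{\widetilde{\hcoY}}\setminus \Lrho\,_{\widetilde{\hcoY}}^{-1}(G^1_{\hcoY})$, justified only by an appeal to ``the blow-up structure of Proposition \ref{prop:gendescr}''. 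That proposition gives smoothness of the ambient $\widetilde{\hcoY}$ and the blow-up description of $\hcoY_2\to\widetilde{\hcoY}$, but it says nothing about smoothness of the divisor $F_{\widetilde{\hcoY}}$ itself; in fact the only a priori control one has on $\Sing F_{\widetilde{\hcoY}}$ away from $\Lrho\,_{\widetilde{\hcoY}}^{-1}(G^1_{\hcoY})$ is that (by $\SL(V)$-invariance) it is contained in the locus of rank-two $\rho$-conics, which is nonempty there. Smoothness of the target on this open set is really a consequence of the isomorphism you are trying to prove (since $\oF{(1)}/\mZ_2$ is smooth), so as written the argument is circular, and ``tracing the local structure of the contraction near rank two quadrics'' would amount to reproving the proposition.

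The fix is to weaken what you need about the target: a bijective morphism onto a \emph{normal} variety is an isomorphism in characteristic zero by Zariski's main theorem, and normality of $F_{\widetilde{\hcoY}}\setminus \Lrho\,_{\widetilde{\hcoY}}^{-1}(G^1_{\hcoY})$ can be checked by Serre's criterion without circularity: it satisfies $S_2$ because $F_{\widetilde{\hcoY}}$ is a divisor on the smooth variety $\widetilde{\hcoY}$, and it satisfies $R_1$ because, by the $\SL(V)$-action, its singular locus is at most the locus of rank-two $\rho$-conics, which has codimension $n-2\geq 2$ in $F_{\widetilde{\hcoY}}$ for $n\geq 4$ (for $n=3$ one uses the explicit fiber description of Proposition \ref{prop:n=3fib}(5) to get smoothness directly). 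This is precisely how the paper closes the argument; with this replacement your remaining steps (freeness of the $\mZ_2$-action on $\oF{(1)}$, the fiberwise identification $\mP(V_{n-1}^*)\times\mP(V_{n-1}^*)\simeq\mP^{n-2}\times\mP^{n-2}$, and the descent of the fibration to the quotient) go through unchanged.
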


\begin{proof}
First note that
$\widehat{G}/\mZ_2\simeq G_{\hcoY}$, 
$\Delta_G/\mZ_2\simeq G^1_{\hcoY}$ and hence 
$(\widehat{G}\setminus \Delta_{G})/\mZ_2\simeq 
G_{\hcoY}\setminus G^1_{\hcoY}$.

Let us note that 
$\oF{(1)}/\mZ_2$ parameterizes line pairs in $\mathrm{G}(n-1,n+1)$
which are reducible conics of rank two and not on $\sigma$-planes (see  
Example \ref{ex:ranktwo} for explicit descriptions). 
%
%
Therefore we have 
the unique injective morphism  $\oF{(1)}/\mZ_2\to \hcoY_0$ which is induced by
the universality of the Hilbert scheme $\hcoY_0$.
By Proposition \ref{prop:FYdisc},
the image of $\oF{(1)}/\mZ_2$ coincides with $F_{\widetilde{\hcoY}}\setminus \Lrho\,_{\widetilde{\hcoY}}^{-1}(G^1_{\hcoY})$,
and the map $\oF{(1)}/\mZ_2\to F_{\widetilde{\hcoY}}\setminus \Lrho\,_{\widetilde{\hcoY}}^{-1}(G^1_{\hcoY})$ induces the following commutative diagram:
\[
\xymatrix{\oF{(1)}/\mZ_2\ar[r]\ar[d] & F_{\widetilde{\hcoY}}\setminus \Lrho\,_{\widetilde{\hcoY}}^{-1}(G^1_{\hcoY})\ar[d]\\
(\widehat{G}\setminus \Delta_{G})/\mZ_2\ar[r]^{\simeq} & {G_{\hcoY}\setminus G^1_{\hcoY}}.}
\]
Note that 
$F_{\widetilde{\hcoY}}\setminus \Lrho\,_{\widetilde{\hcoY}}^{-1}(G^1_{\hcoY})$
is normal. Indeed, $F_{\widetilde{\hcoY}}$ satisfies the $S_2$ condition
since it is a divisor on a smooth variety.
It also satisfies the $R_1$ condition since, by considering the $\SL(V)$-action, its singular locus is at most the locus of $\rho$-conics of rank two which is codimension $n-2\geq 2$ in $F_{\widetilde{\hcoY}}$ if $n\geq 4$ (resp.~it is smooth
if $n=3$ by Proposition \ref{prop:n=3fib} (5)). Hence  
$F_{\widetilde{\hcoY}}\setminus \Lrho\,_{\widetilde{\hcoY}}^{-1}(G^1_{\hcoY})$
is normal.
Therefore the bijective morphism 
$\oF{(1)}/\mZ_2\to F_{\widetilde{\hcoY}}\setminus \Lrho\,_{\widetilde{\hcoY}}^{-1}(G^1_{\hcoY})$ is an isomorphism by the Zariski main theorem.

Finally, 
the natural map $\oF{(1)}\to \widehat{G}$ is obviously a $\mP^{n-2}\times \mP^{n-2}$-fibration, and then so is 
$\oF{(1)}/\mZ_2\to (\widehat{G}\setminus \Delta_{G})/\mZ_2$
since the $\mZ_2$-action interchanges the fibers over
$(x,y)$ and $(y,x)$ in $\widehat{G}\setminus \Delta_G$.
\end{proof}
%
%
The following corollary will be used in the companion paper \cite{DerSym}.

\begin{cor}
\label{cor:Steinn=any}
It holds that 
\begin{equation}
\label{eq:adjn=n}
K_{\widetilde{\hcoY}}=\Lrho_{\widetilde{\hcoY}}^{\;*}K_{\hcoY}+(n-2)F_{\widetilde{\hcoY}}. 
\end{equation}
\end{cor}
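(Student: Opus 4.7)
Since $\widetilde{\hcoY}$ is smooth and $\hcoY$ has Gorenstein canonical singularities by Proposition~\ref{cla:ZY}\,(2), we may write
\[
K_{\widetilde{\hcoY}}=\Lrho_{\widetilde{\hcoY}}^{*}K_{\hcoY}+a\,F_{\widetilde{\hcoY}}
\]
for a unique non-negative integer $a$. The plan is to determine $a=n-2$ by restricting to a general fiber $\mP\simeq\mP^{n-2}\times\mP^{n-2}$ of $F_{\widetilde{\hcoY}}\to G_{\hcoY}$ over a rank-$2$ point, whose existence is guaranteed by Proposition~\ref{prop:begin}. Since $\mP$ is contracted by $\Lrho_{\widetilde{\hcoY}}$, one has $K_{\widetilde{\hcoY}}|_{\mP}=a\,F_{\widetilde{\hcoY}}|_{\mP}$.

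As $\mP$ is a smooth fiber of a morphism, the normal bundle $N_{\mP/F_{\widetilde{\hcoY}}}$ is trivial, so adjunction yields $K_{\mP}=K_{\widetilde{\hcoY}}|_{\mP}+F_{\widetilde{\hcoY}}|_{\mP}$, and combining with the previous identity gives
\[
(a+1)\,F_{\widetilde{\hcoY}}|_{\mP}=K_{\mP}=\sO_{\mP}(-(n-1),-(n-1)).
\]
The $\mZ_{2}$-symmetry exchanging the two factors of $\mP$ in $F^{(1)}$ (see~\eqref{eq:FsY}) forces $F_{\widetilde{\hcoY}}|_{\mP}=\sO_{\mP}(-k,-k)$ for some $k\geq1$, hence $(a+1)k=n-1$, and the proof reduces to showing $k=1$.

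To verify this, I exhibit a $\mP^{1}\times\mP^{1}\subset\mP$ embedded as line$\times$line and leverage the $n=3$ case treated in Proposition~\ref{prop:n=3fib}\,(4). Fix any $V_{n-3}\subset V_{n-1}=V_{n}\cap V'_{n}$ and restrict to pairs $(V_{n-2},V'_{n-2})$ with $V_{n-3}\subset V_{n-2}\cap V'_{n-2}$; via $V_{n-2}\mapsto[\wedge^{n-2}V_{n-2}]$, each factor sweeps a line in $\mP^{n-2}$. The same $\mP^{1}\times\mP^{1}$ is a general fiber of the $n=3$ analogue of $F_{\widetilde{\hcoY}}\to G_{\hcoY}$ inside the fiber $\hcoY_{2}|_{[V_{n-3}]}$ of $\hcoY_{2}\to\rG(n-3,V)$, which by construction is the $n=3$ $\widetilde{\hcoY}$ applied to $V/V_{n-3}$. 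Since the generic point of $G_{\rho}$ is a smooth $\rho$-conic, hence of rank three and not in $F_{\widetilde{\hcoY}}$, the pull-back $\tLrho_{\hcoY_{2}}^{*}F_{\widetilde{\hcoY}}$ coincides with its strict transform and restricts to the $n=3$ instance of $F_{\widetilde{\hcoY}}$ on $\hcoY_{2}|_{[V_{n-3}]}$. Taking a line $\ell$ in one factor of $\mP^{1}\times\mP^{1}$ that avoids the diagonal (which is the $\rho$-conic locus), $\tLrho_{\hcoY_{2}}$ is an isomorphism along $\ell$, and the $n=3$ version of the preceding adjunction combined with Proposition~\ref{prop:n=3fib}\,(4) gives $F_{\widetilde{\hcoY}}\cdot\ell=-1$. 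Therefore $k=1$ and $a=n-2$. The main technical difficulty is the verification that the pull-back of $F_{\widetilde{\hcoY}}$ under the blow-up $\tLrho_{\hcoY_{2}}\colon\hcoY_{2}\to\widetilde{\hcoY}$ restricts to the $n=3$ exceptional divisor on each fiber of $\hcoY_{2}\to\rG(n-3,V)$.
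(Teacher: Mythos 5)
Your overall strategy coincides with the paper's: restrict to a fiber $\Gamma\simeq\mP^{n-2}\times\mP^{n-2}$ of $F_{\widetilde{\hcoY}}\to G_{\hcoY}$ over a rank-two point, use adjunction to get $(a+1)\,F_{\widetilde{\hcoY}}\cdot\ell=-(n-1)$ for a ruling line $\ell$, and reduce the computation of $F_{\widetilde{\hcoY}}\cdot\ell$ to the $n=3$ case through $\hcoY_2$. (Two side remarks: the appeal to the $\mZ_2$-symmetry to force $F_{\widetilde{\hcoY}}|_{\mP}=\sO(-k,-k)$ is unnecessary, since $(a+1)F_{\widetilde{\hcoY}}|_{\mP}=K_{\mP}=\sO(-(n-1),-(n-1))$ already forces equal bidegrees; and it is not quite correct as stated, because the involution exchanges the two fibers of $\widehat{F}\to\widehat{G}$ over $([V_n],[V_n'])$ and $([V_n'],[V_n])$ rather than acting on a single fiber. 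This is harmless.)

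The genuine problem is in the last step. Your $\mP^1\times\mP^1$ is necessarily $L_0\times L_0\subset\mP(V_{n-1}^*)\times\mP(V_{n-1}^*)$, where $L_0$ is the pencil of hyperplanes of $V_{n-1}$ containing the fixed $V_{n-3}$ --- this is forced if the curve is to lie in the single fiber $\hcoY_2|_{[V_{n-3}]}$. But then every ruling $\{[V_{n-2}]\}\times L_0$ satisfies $[V_{n-2}]\in L_0$ and hence meets the diagonal, i.e.\ the $\rho$-conic locus $G_{\rho}\cap\Gamma$, at the point $([V_{n-2}],[V_{n-2}])$. A ruling of this $\mP^1\times\mP^1$ avoiding the diagonal does not exist, so the claim that $\tLrho\,_{\hcoY_2}$ is an isomorphism along $\ell$ fails. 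The conclusion $F_{\widetilde{\hcoY}}\cdot\ell=-1$ is still true, but it has to be extracted as the paper does: since $G_{\rho}\not\subset F_{\widetilde{\hcoY}}$ (a generic $\rho$-conic has rank three), the pull-back $\tLrho\,_{\hcoY_2}^{\,*}F_{\widetilde{\hcoY}}$ equals the strict transform $F'_{\widetilde{\hcoY}}$, so by the projection formula $F_{\widetilde{\hcoY}}\cdot\ell=F'_{\widetilde{\hcoY}}\cdot\ell'$ for the strict transform $\ell'$ of $\ell$; one then checks that $\ell'$ is a ruling of a fiber $\simeq\mP^1\times\mP^1$ of $\hcoY_2\to\hcoY_{\UU}$, i.e.\ of the relative $n=3$ contraction over $\rG(n-3,V)$, and Proposition~\ref{prop:n=3fib}(4) gives $F'_{\widetilde{\hcoY}}\cdot\ell'=-1$. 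With this repair your argument becomes the paper's proof.
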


\begin{proof}
Let $a$ be the discrepancy of $F_{\widetilde{\hcoY}}$. We show $a=n-2$.
Let $\Gamma\simeq \mP^{n-2}\times \mP^{n-2}$ be a fiber of $F_{\widetilde{\hcoY}}\to G_{\hcoY}$ outside the diagonal of $G_{\hcoY}$ and 
$l$ a line in a ruling of $\Gamma\simeq \mP^{n-2}\times \mP^{n-2}$.
Since $K_{\Gamma}\cdot l=-(n-1)$ and $K_{\Gamma}=K_{F_{\widetilde{\hcoY}}}|_{\Gamma}=(a+1)F_{\widetilde{\hcoY}}|_{\Gamma}$, we have 
$(a+1)F_{\widetilde{\hcoY}}\cdot l=-(n-1)$.
Therefore we have only to show $F_{\widetilde{\hcoY}}\cdot l=-1$.
For this we take $l$ so that $l\cap G_{\rho}\not =\emptyset$.
Now we consider the diagram (\ref{eq:STcommrev}).
Since $\Gamma\cap G_{\rho}$ is the diagonal by Proposition \ref{prop:begin},
the strict transform $l'$ is a ruling of a fiber $\simeq \mP^1\times \mP^1$
of $\hcoY_2\to \hcoY_{\UU}$.
Therefore $F'_{\widetilde{\hcoY}}\cdot l'=-1$ where
$F'_{\widetilde{\hcoY}}$ is the strict transform of $F_{\widetilde{\hcoY}}$.
Since $G_{\rho}\not \subset F_{\widetilde{\hcoY}}$,
we have $F_{\widetilde{\hcoY}}\cdot l=F'_{\widetilde{\hcoY}}\cdot l'=-1$
as desired.
\end{proof}

By Proposition \ref{prop:begin},
we have a birational map
$F^{(1)}/\mZ_2\dashrightarrow F_{\widetilde{\hcoY}}$
extending the isomorphism
$\oF{(1)}/\mZ_2\simeq
F_{\widetilde{\hcoY}}\setminus \Lrho\,_{\widetilde{\hcoY}}^{-1}(G^1_{\hcoY})$.
In the sequel of this section,
we will give an explicit description of this birational map
using the minimal model theory,
which leads to a precise description of $F_{\widetilde{\hcoY}}$.
We summarize our description in the following diagram:

\begin{equation}
\begin{matrix}\xymatrix{ & {F}^{(3)}\ar[dl]\ar[dr]\\
{F}^{(2)}\ar[ddr]_{\;_{\text{\ensuremath{\mP^{n-2}\times\mP^{n-2}}-fib.}}}\ar@{-->}[rr]^{\;_{\text{(anti-)flip}}}\ar[dr] &  & {F}^{(4)}\ar[d]^{\;_{\text{div. cont.}}}\ar[dl]\\
 & {F}^{(1)}\ar[dr] & \widehat{F}\ar[d]\ar[r]_{\;_{\text{\ensuremath{\mZ_{2}}-quot.}}} & F_{\widetilde{\hcoY}}\ar[d]^{\Lrho_{\widetilde{\hcoY}}|_{F_{\widetilde{\hcoY}}}}\\
 & \widehat{G}'\ar[r]_{_{\text{{diag.blow up}}}} & \widehat{G}\ar[r]^{\;_{\text{\ensuremath{\mZ_{2}}-quot.}}} & {G}_{{\hcoY}}.}
\end{matrix}\label{eq:house}\end{equation}

\subsection{Small resolution and flip}

First we determine the singularities of $F^{(1)}$.
\begin{prop}
\label{pro:F2-proof-appendix-1} $F^{(1)}$ is singular along the
diagonal set
\begin{equation}
\Delta_{{F}^{(1)}}:=\{([V_{n-2}],[V_{n-2}];[V_{n}],[V_{n}])\mid V_{n-2}\subset V_{n}\}\simeq\mathrm{F}(n-2,n,V)\subset{F}^{(1)}.\label{eq:DelF}\end{equation}
 The singularity at each point on $\Delta_{F^{(1)}}$ is isomorphic
to the cone over the Segre variety $\mP^{1}\times\mP^{n-2}$. \end{prop}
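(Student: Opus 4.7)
The plan is a local coordinate analysis, reducing to the study of a single transverse slice. Since $\mathrm{GL}(V)$ acts on $F^{(1)}$ preserving $\Delta_{F^{(1)}}\simeq \mathrm{F}(n-2,n,V)$ as a single orbit, it suffices to analyze the singularity at one chosen point $p_{0}$. Fix a basis $e_{1},\ldots,e_{n+1}$ of $V$ and set $V_{n-2}^{0}=\langle e_{1},\ldots,e_{n-2}\rangle$, $V_{n}^{0}=\langle e_{1},\ldots,e_{n}\rangle$, $p_{0}=([V_{n-2}^{0}],[V_{n-2}^{0}];[V_{n}^{0}],[V_{n}^{0}])$. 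In standard affine charts near $p_{0}$, parametrize $V_{n},V_{n}'$ by coordinates $a_{k},a_{k}'$ via $V_{n}=\ker(e_{n+1}^{*}+\sum_{k=1}^{n}a_{k}e_{k}^{*})$, and represent $V_{n-2},V_{n-2}'$ as the spans of $v_{i}=e_{i}+\sum_{j=n-1}^{n+1}b_{ij}e_{j}$, $v_{i}'=e_{i}+\sum_{j=n-1}^{n+1}b_{ij}'e_{j}$ for $i=1,\ldots,n-2$.

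Next I would write out the defining equations of $F^{(1)}$ in these charts. Setting $\alpha_{k}:=a_{k}-a_{k}'$ and $c_{ij}:=b_{ij}'-b_{ij}$, the four flag conditions $V_{n-2},V_{n-2}'\subset V_{n}\cap V_{n}'$ determine $b_{i,n+1},b_{i,n+1}'$ and $c_{i,n+1}$, and yield the linear relations
\[
\alpha_{i}+b_{i,n-1}\alpha_{n-1}+b_{i,n}\alpha_{n}=0,\qquad c_{i,n-1}\alpha_{n-1}+c_{i,n}\alpha_{n}=0
\]
for $i=1,\ldots,n-2$. Since $V_{n-2}$ stays transverse to $\langle e_{n-1},e_{n},e_{n+1}\rangle$ near $p_{0}$, the condition $\dim(V_{n-2}\cap V_{n-2}')\geq n-3$ is equivalent to $\dim(V_{n-2}+V_{n-2}')\leq n-1$, and using the identity $c_{i,n+1}=-c_{i,n-1}a_{n-1}-c_{i,n}a_{n}$ this reduces to the rank $\leq 1$ condition on the $(n-2)\times 2$ matrix $C_{0}:=(c_{i,n-1},c_{i,n})_{i}$.

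The key observation is that the kernel condition $C_{0}(\alpha_{n-1},\alpha_{n})^{T}=0$ and the rank $\leq 1$ condition on $C_{0}$ assemble into the $2\times 2$ minors of the single $(n-1)\times 2$ matrix
\[
M=\begin{pmatrix}c_{1,n-1}&c_{1,n}\\ \vdots&\vdots\\ c_{n-2,n-1}&c_{n-2,n}\\ \alpha_{n}&-\alpha_{n-1}\end{pmatrix}.
\]
Eliminating the dependent variables $b_{i,n+1},b_{i,n+1}',\alpha_{i}$ ($i\leq n-2$) then gives, locally near $p_{0}$,
\[
F^{(1)}\simeq \mA^{3n-4}\times W,
\]
where $\mA^{3n-4}$ is parametrized by the smooth coordinates $a_{1},\ldots,a_{n},b_{i,n-1},b_{i,n}$, and $W\subset \mA^{2n-2}$ is the standard affine cone over the Segre variety $\mP^{1}\times\mP^{n-2}\hookrightarrow\mP^{2n-3}$, cut out by the $2\times 2$ minors of $M$. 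The vertex of $W$ corresponds exactly to the trace of $\Delta_{F^{(1)}}$ in the chart (i.e.\ $\alpha=c=0$, which is $V_{n}=V_{n}'$ and $V_{n-2}=V_{n-2}'$), so the proposition follows: $F^{(1)}$ is singular precisely along $\Delta_{F^{(1)}}$, with transverse singularity the cone over $\mP^{1}\times\mP^{n-2}$.

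The main obstacle is the bookkeeping in the coordinate elimination and recognizing that the residual equations assemble into the $2\times 2$ minors of a single matrix $M$. Once this miraculous packaging is established, the identification with the cone over $\mP^{1}\times\mP^{n-2}$ is immediate, and smoothness of $F^{(1)}\setminus\Delta_{F^{(1)}}$ follows from the standard fact that the rank-$1$ locus of such a determinantal variety is smooth away from the zero matrix.
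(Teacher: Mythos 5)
Your proposal is correct and is essentially the paper's own argument: both reduce by homogeneity to an explicit coordinate computation in which the residual equations are exactly the $2\times 2$ minors of an $(n-1)\times 2$ (in the paper, $2\times(n-1)$) matrix built from the incidence and rank conditions, identifying the transverse singularity along $\Delta_{F^{(1)}}$ as the cone over the Segre variety $\mP^{1}\times\mP^{n-2}$. The only cosmetic difference is that the paper performs the computation on the fiber of the first projection $F^{(1)}\to\rF(n-2,n,V)$ over a fixed flag (in Pl\"ucker-type coordinates), whereas you work in an affine chart of the product at a point of the diagonal and exhibit the local product decomposition $\mA^{3n-4}\times W$ directly; both verifications are sound.
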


\begin{proof}
Recall that $F^{(1)}$ is a subvariety of
$\rF(n-2,n,V)^{\times 2}$ and consider the first projection 
${F}^{(1)} \to \rF(n-2,n,V)$.
Let $\Gamma$ be a fiber of this projection over a point
$([V_{n-2}];[V_n])\in \rF(n-2,n,V)$. 
We consider $\Gamma$ as a subvariety of $\rF(n-2,n,V)$
parameterizing $V'_{n-2}\subset V'_n$
such that $V'_{n-2}\subset V_n$, $V_{n-2}\subset V'_n$ and $\dim (V_{n-2}\cap V'_{n-2})\geq n-3$.
To describe $\Gamma$, we choose a basis $\{{\bf e}_1,\dots,{\bf e}_{n+1}\}$
of $V$ so that
$V_{n-2}=\langle {\bf e}_1,\dots, {\bf e}_{n-2}\rangle$ and
$V_n=\langle {\bf e}_1,\dots, {\bf e}_n\rangle$.
An $(n-2)$-dimensional subspace $V'_{n-2}$ of $V_n$
with $\dim (V_{n-2}\cap V'_{n-2})\geq n-3$
is spanned by $n-3$ vectors in $V_{n-2}$ and a vector 
$b_1{\bf e}_1+\dots+b_n{\bf e}_n$ in $V_n$.
We arrange these vectors into an $(n-2)\times n$ matrix as
\begin{equation}
\label{eq:V'n-2}
\begin{pmatrix}
A & \bf{0} & \bf{0}\\
b_1\dots b_{n-2}& b_{n-1} & b_n
\end{pmatrix},
\end{equation}
where the row vectors of $A$ represents the $n-3$ vectors in $V_{n-2}$. 
  %
  %
We denote by $q_{ij}$ the Pl\"ucker coordinate of 
$V'_{n-2}$
given by the $(n-2)\times (n-2)$ minors of (\ref{eq:V'n-2}) with the 
$i$- and $j$-th columns omitted. 
  %
Denote by $x_1, \dots,x_{n+1}$, and $y_1,\dots,y_{n+1}$ 
the homogeneous coordinates of $\mP(V)$ and $\mP(V^*)$, respectively, 
associated to the basis $\{{\bf e}_1,\dots,{\bf e}_{n+1}\}$
and its dual basis.
An $n$-dimensional subspace $V'_n$ of $V$
containing $V_{n-2}$ is of the form
$\{c_{n-1} x_{n-1}+c_n x_n+c_{n+1} x_{n+1}=0\}$,
where we consider $(0,\dots,0, c_{n-1},c_n, c_{n+1})$ as 
the coordinates of $[V'_n]$ in $V^*$.
Therefore $V'_n$ contains $V'_{n-2}$ if and only if
$c_{n-1} b_{n-1}+c_n b_n=0$.
From the above considerations,
we can deduce that 
\[
\Gamma=
\left\{(q_{ij};y_1,\dots, y_{n+1}) \; \bigg| \;  \begin{array}{l} 
q_{ij}=0\, \text{for $1\leq i,j \leq n-2$}, \\
\rank \begin{pmatrix}
q_{1\,n} & q_{2\,n} & ...  & q_{n-2\,n} & -y_n \\
q_{1\,n-1} & q_{2\,n-1}& ... & q_{n-2\,n-1} & y_{n-1} 
\end{pmatrix}\leq 1 \end{array} \right\}.
\]
From this, it is easy to see the assertion.
\end{proof}
The cone over $\mP^1\times \mP^{n-2}$ has exactly two small resolutions; 
one of which has a $\mP^1$ as the exceptional set and
another has a $\mP^{n-2}$ as the exceptional set.
Corresponding to these, we have two small resolutions of $F^{(1)}$. 
One of them is given by the following variety $F^{(2)}$:
\[
\begin{aligned}
F^{(2)} & :=\rF(n-2,n-1,n,V)\times_{\rG(n-1,V)} \rF(n-2,n-1,n,V) \\
&=
\left\{ ([V_{n-2}],[V_{n-2}'];[V_{n-1}];[V_{n}],[V_{n}'])\,\bigg\vert\; V_{n-2},V_{n-2}'\subset V_{n-1}\subset V_{n}\cap V_{n}'\right\}.\end{aligned}
\]
We set
\[
\begin{aligned}
\hat{G}' &:=
\rF(n-1,n,V)\times_{\rG(n-1,V)} \rF(n-1,n,V) \\
&=\left\{ ([V_{n-1}];[V_{n}],[V_{n}'])\mid V_{n-1}\subset V_{n}\cap V_{n}'\right\}.
\end{aligned}
\]
$F^{(2)}$ has a $\mP^{n-2}\times \mP^{n-2}$-fibration $F^{(2)}\to\hat{G}'$.
We note that there is a morphism $\widehat{G}'\to\widehat{G}=\mP(V^{*})\times\mP(V^{*})$
defined by $([V_{n-1}];[V_{n}],[V_{n}'])\mapsto([V_{n}],[V_{n}'])$,
which is nothing but the blow-up of $\widehat{G}$ along the diagonal
$\Delta_G$.

\begin{prop}
\label{pro:F2-proof-appendix-2}\noindent $(1)$ $F^{(2)}$ is smooth.
The natural projection $F^{(2)}\to F^{(1)}$ is a small resolution 
with every non-trivial fiber $\gamma$
being isomorphic to $\mP^{1}$.

\noindent $(2)$
The normal bundle $\sN_{\gamma/F^{(2)}}$ of a non-trivial fiber $\gamma$ of $F^{(2)}\to F^{(1)}$
is isomorphic to
$\sO_{\mP^1}(-1)^{\oplus n-1}\oplus \sO_{\mP^1}^{\oplus 3n-4}$. 

\noindent $(3)$ There is another small resolution $F^{(4)}\to F^{(1)}$,
whose non-trivial fiber is isomorphic to $\mP^{n-2}$. $F^{(2)}$ and
$F^{(4)}$ fit into the following diagram$:$\def\Fdiagram{ 
\begin{xy}
(0,0)*+{F^{(3)}}="Fiii",
(-13,-10)*+{F^{(2)}}="Fii",
(13,-10)*+{F^{(4)}}="Fiv",
(0,-20)*+{F^{(1)},}="Fi",
\ar_p "Fiii";"Fii",
\ar "Fiii";"Fiv",
\ar "Fii";"Fi",
\ar "Fiv";"Fi",
\end{xy}}\begin{equation}
\begin{matrix}{\Fdiagram}\end{matrix}\label{eq:F-diagram}\end{equation}
where $p\colon F^{(3)}\to F^{(2)}$ is the blow-up along the exceptional locus
of $F^{(2)}\to F^{(1)}$, and $F^{(3)}\to F^{(4)}$ is the contraction
of the exceptional divisor of the blow-up $F^{(3)}\to F^{(2)}$ in
another direction.\end{prop}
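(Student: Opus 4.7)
My plan for (1) is to use the natural bundle structure. The projection $(V_{n-2},V_{n-2}';V_{n-1};V_n,V_n')\mapsto (V_{n-1};V_n,V_n')$ exhibits $F^{(2)}$ as a $\mP^{n-2}\times \mP^{n-2}$-bundle over $\widehat{G}'$, and $\widehat{G}'\to \rG(n-1,V)$ is itself a $\mP^1\times \mP^1$-bundle (by choice of the two hyperplanes $V_n,V_n'\supset V_{n-1}$), so $F^{(2)}$ is smooth. For the small-resolution statement, I compute fibers of $F^{(2)}\to F^{(1)}$ directly: the hypothesis $\dim(V_{n-2}\cap V_{n-2}')\geq n-3$ forces $V_{n-1}=V_{n-2}+V_{n-2}'$ whenever $V_{n-2}\neq V_{n-2}'$, and dually $V_{n-1}=V_n\cap V_n'$ whenever $V_n\neq V_n'$; only over $\Delta_{F^{(1)}}$ does $V_{n-1}$ vary freely, ranging in $\mP(V_n/V_{n-2})\simeq \mP^1$. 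Since $F^{(1)}$ is Cohen-Macaulay from the cone description in Proposition~\ref{pro:F2-proof-appendix-1} and regular in codimension one (because $\Delta_{F^{(1)}}$ has codimension $n\geq 2$), it is normal, so $F^{(2)}\to F^{(1)}$ is a small resolution.

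For (2), I would reduce to the local cone model. The first projection $F^{(1)}\to \rF(n-2,n,V)\simeq \Delta_{F^{(1)}}$ realizes $F^{(1)}$ as an equisingular family of cones $\Gamma\simeq C$ over $\mP^1\times \mP^{n-2}$ with vertices along the diagonal section, which yields an \'etale-local product $F^{(1)}\simeq \Delta_{F^{(1)}}\times C$ and accordingly $F^{(2)}\simeq \Delta_{F^{(1)}}\times \widetilde{C}_1$, where $\widetilde{C}_1=\mathrm{Tot}(\sO_{\mP^1}(-1)^{\oplus (n-1)})$ is the small resolution recording the ratio $[V_{n-1}/V_{n-2}]\in \mP(V_n/V_{n-2})$. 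The normal bundle of the exceptional $\mP^1$ in $\widetilde{C}_1$ is $\sO(-1)^{\oplus (n-1)}$, and the $(3n-4)$ trivial directions along $\Delta_{F^{(1)}}$ contribute $\sO^{\oplus (3n-4)}$, giving the claim. A hands-on cross-check: split $T_{F^{(2)}}|_\gamma$ via the projective-bundle structure $F^{(2)}\to \rG(n-1,V)$, with $T_{F^{(2)}/\rG}|_\gamma=\sO(-1)^{\oplus (2n-2)}$ and $T_{\rG}|_\ell=\sO(2)\oplus \sO(1)^{\oplus (n-1)}\oplus \sO^{\oplus (n-2)}$ for the Schubert line $\ell$ traced out by $\gamma$, and then quotient by $T_\gamma=\sO(2)$.

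For (3), the other small resolution $\widetilde{C}_2=\mathrm{Tot}(\sO_{\mP^{n-2}}(-1)^{\oplus 2})$ of $C$, with exceptional $\mP^{n-2}$, globalizes by the same equisingular argument to a small resolution $F^{(4)}\to F^{(1)}$ with $\mP^{n-2}$-fibers over $\Delta_{F^{(1)}}$. For a canonical construction yielding the diagram I set $F^{(3)}:=\mathrm{Bl}_E F^{(2)}$, with $E\simeq \rF(n-2,n-1,n,V)$ the exceptional $\mP^1$-bundle of $F^{(2)}\to F^{(1)}$. The anti-diagonal calculation behind (2) identifies the normal bundle as $\sN_{E/F^{(2)}}\simeq \sL\otimes \pi^*\sW^*$, where $\sL$ is the tautological quotient line bundle on $E$ (restricting to $\sO(-1)$ on each $\gamma$) and $\sW^*$ is a rank-$(n-1)$ bundle on $\Delta_{F^{(1)}}$ pulled back along $\pi\colon E\to \Delta_{F^{(1)}}$. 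Since projectivization is insensitive to the line-bundle twist, $\mP(\sN_{E/F^{(2)}})\simeq E\times_{\Delta_{F^{(1)}}}\mP(\sW^*)$, and projection to the second factor induces the required contraction $F^{(3)}\to F^{(4)}$; then $F^{(4)}\to F^{(1)}$ is a small resolution whose non-trivial fibers $\mP(\sW^*_x)\simeq \mP^{n-2}$ realize the claim.

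The principal technical hurdle is establishing the equisingular local product structure of $F^{(1)}$ along $\Delta_{F^{(1)}}$, equivalently the bundle identification $\sN_{E/F^{(2)}}\simeq \sL\otimes \pi^*\sW^*$; once this is in hand, the rest is standard cone and small-resolution bookkeeping together with an application of the Kawamata-Shokurov contraction theorem to perform $F^{(3)}\to F^{(4)}$.
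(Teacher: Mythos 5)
Your proposal is correct and follows essentially the same route as the paper: smoothness of $F^{(2)}$ via the $\mP^{n-2}\times\mP^{n-2}$-fibration over $\widehat{G}'$, smallness by the same fiber analysis along $\Delta_{F^{(1)}}$, the normal bundle in (2) by reducing to the local model of the cone over the Segre variety $\mP^{1}\times\mP^{n-2}$ furnished by Proposition \ref{pro:F2-proof-appendix-1}, and $F^{(4)}$ obtained by blowing up the exceptional locus and contracting the exceptional divisor in the other ruling via the base point free/contraction theorem. The technical hurdles you flag are precisely what the paper supplies (the local cone description along $\Delta_{F^{(1)}}$ and the intersection numbers $K_{F^{(2)}}\cdot\gamma=n-3$, $-K_{F^{(3)}}\cdot\gamma'=1$ used in the contraction argument); only note that your tangent-sequence cross-check by itself would not determine $\sN_{\gamma/F^{(2)}}$, since the relevant extension is non-split.
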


\begin{proof}
(1) $F^{(2)}$ is smooth since it has a $\mP^{n-2}\times \mP^{n-2}$-fibration
over a smooth variety $\widehat{G}'$.
We show that ${F}^{(2)}\to{F}^{(1)}$
is a small resolution. For a point 
\[
([V_{n-2}],[V_{n-2}'];[V_{n-1}];[V_{n}],[V_{n}'])\in F^{(2)},
\]
$V_{n-1}=V_{n-2}+V_{n-2}'$ holds when $V_{n-2}\not=V_{n-2}'$, and also $V_{n-1}=V_{n}\cap V_{n}'$
when $V_{n}\not=V_{n}'$. Hence the morphism ${F}^{(2)}\to{F}^{(1)}$
is isomorphic outside the diagonal set $\Delta_{{F}^{(1)}}$.
The fiber of ${F}^{(2)}\to{F}^{(1)}$ over a point $([V_{n-2}],[V_{n-2}];[V_{n}],[V_{n}])\in\Delta_{{F}^{(1)}}$
is \[
\{([V_{n-2}],[V_{n-2}];[V_{n-1}];[V_{n}],[V_{n}])\mid[V_{n-1}]\in\mathrm{G}(n-1,V),V_{n-2}\subset V_{n-1}\subset V_{n}\}\simeq\mP^{1}.\]
We calculate the dimension of the exceptional set of ${F}^{(2)}\to{F}^{(1)}$
as $\dim\Delta_{{F}^{(1)}}+1=3n-3$. Hence ${F}^{(2)}\to{F}^{(1)}$ is
small since $\dim F^{(1)}=4n-4$. 

(2) The two small resolutions of $F^{(1)}$ locally coincide with
those of the cone over $\mP^1\times \mP^{n-3}$.
Therefore the description of the normal bundle of $\gamma$ follows
by that of a non-trivial fiber of the small resolutions of 
 the cone over $\mP^1\times \mP^{n-3}$.

(3) Let $D$ be the $p$-exceptional divisor.
Then any fiber of $D$ is $\mP^1\times \mP^{n-2}$ by
Proposition \ref{pro:F2-proof-appendix-1}. 
Let $\gamma\simeq \mP^1$ be a fiber of $F^{(2)}\to F^{(1)}$.
Since $K_{F^{(2)}}\cdot \gamma=n-3$ by (2), we see that $p^*K_{F^{(2)}}+(n-3)D$ is nef
and $(p^*K_{F^{(2)}}+(n-3)D)-K_{F^{(3)}}=-D$ is nef and big over $F^{(1)}$,
$p^*K_{F^{(2)}}+(n-3)D$ is semi-ample over $F^{(1)}$
by the Kawamata-Shokurov base point free theorem. Since $p^*K_{F^{(2)}}+D$ is 
numerically trivial for any fiber $\gamma'$ of 
$\mP^1\times \mP^{n-2}\to \mP^{n-2}$,
the birational morphism $F^{(3)}\to F^{(4)}$ over $F^{(1)}$ defined by a sufficiently high multiple of
$p^*K_{F^{(2)}}+(n-3)D$
contracts $\gamma'$. 
Since $-K_{F^{(3)}}\cdot \gamma'=1$ by (3), $F^{(4)}$ is smooth
and $F^{(3)}\to F^{(4)}$ is the blow-up along the image of $D$
(cf.~the proof of Proposition \ref{prop:tildeY} in case $n=3$).
\end{proof}

\subsection{Divisorial contraction}

Let $D^{(2)}$ be the inverse image in $F^{(2)}$ of the diagonal $\Delta_{G}$
of $\widehat{G}$, namely,
\[
D^{(2)}:=\rF(n-2,n-1,n,V)\times_{\rF(n-1,n,V)} \rF(n-2,n-1,n,V).
\] 
We denote by $D^{(1)}$ the image on $F^{(1)}$ of
$D^{(2)}$.
It is easy to verify the following properties:
\begin{lem}
\begin{enumerate}[$(1)$]
\item
$D^{(2)}$ is a prime divisor of $F^{(2)}$.
\item
The projection $D^{(2)}\to \rF(n-1,n,V)$ is a $\mP^{n-2}\times \mP^{n-2}$-fibration.
\item All the non-trivial fibers of
$F^{(2)}\to F^{(1)}$ are contained in $D^{(2)}$,
namely, they coincide with the fibers of
$D^{(2)}\to D^{(1)}$.
Therefore
$D^{(2)}\to D^{(1)}$ is birational with
any non-trivial fiber being a copy of $\mP^{1}$.
\end{enumerate}
\end{lem}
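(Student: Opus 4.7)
The plan is to reduce the three claims to a single structural fact: $\widehat{G}'\to \widehat{G}$ is the blow-up along the diagonal $\Delta_G$, and $D^{(2)}$ is the preimage of $\Delta_G$ in $F^{(2)}$ under the $\mP^{n-2}\times\mP^{n-2}$-fibration $F^{(2)}\to \widehat{G}'$ (equivalently, the preimage of the exceptional divisor of this blow-up). The blow-up identification is immediate from the description $\widehat{G}'=\{([V_{n-1}];[V_n],[V_n'])\mid V_{n-1}\subset V_n\cap V_n'\}$: off $\Delta_G$ one has $V_n\neq V_n'$ and $V_{n-1}=V_n\cap V_n'$ is forced, while over $\Delta_G$ (where $V_n=V_n'$) the component $V_{n-1}$ ranges over $\rG(n-1,V_n)\simeq\mP^{n-1}$. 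Hence the exceptional divisor is an irreducible divisor in $\widehat{G}'$, realized as a $\mP^{n-1}$-bundle over $\Delta_G\simeq \mP(V^*)$.

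With this in place, (1) follows at once: $D^{(2)}$ is a $\mP^{n-2}\times\mP^{n-2}$-bundle over this irreducible exceptional divisor, so it is irreducible of codimension one in $F^{(2)}$, hence a prime divisor. For (2), the fiber-product definition $D^{(2)}=\rF(n-2,n-1,n,V)\times_{\rF(n-1,n,V)}\rF(n-2,n-1,n,V)$ realizes $D^{(2)}\to \rF(n-1,n,V)$ as the product over $\rF(n-1,n,V)$ of two copies of the $\mP^{n-2}$-fibration $\rF(n-2,n-1,n,V)\to \rF(n-1,n,V)$; the fiber over $([V_{n-1}],[V_n])$ is $\rG(n-2,V_{n-1})\times \rG(n-2,V_{n-1})\simeq\mP^{n-2}\times\mP^{n-2}$.

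For (3), I would invoke the analysis in the proof of Proposition~\ref{pro:F2-proof-appendix-2}(1), which shows that the non-trivial fibers of $F^{(2)}\to F^{(1)}$ are exactly those over the diagonal $\Delta_{F^{(1)}}$, where $V_{n-2}=V_{n-2}'$ and $V_n=V_n'$. Because $V_n=V_n'$ on these fibers, they lie in $D^{(2)}$, so they coincide with the non-trivial fibers of $D^{(2)}\to D^{(1)}$; each is the $\mP^1$ of $V_{n-1}$'s with $V_{n-2}\subset V_{n-1}\subset V_n$. Birationality of $D^{(2)}\to D^{(1)}$ then follows because away from $\Delta_{F^{(1)}}$ the two hyperplanes $V_{n-2},V_{n-2}'$ already span a unique $V_{n-1}\subset V_n$. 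No step presents a serious obstacle; the only point requiring care is the clean identification of $\widehat{G}'\to \widehat{G}$ as a blow-up of a smooth center of codimension $n$, from which the remaining assertions are essentially formal.
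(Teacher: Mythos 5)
Your proof is correct: the paper offers no argument for this lemma (it is stated as "easy to verify"), and your verification via the fiber-product descriptions of $D^{(2)}$, $\widehat{G}'$ and the diagonal locus is exactly the routine check the paper intends, with part (3) correctly resting on the earlier analysis of the fibers of $F^{(2)}\to F^{(1)}$ in Proposition~\ref{pro:F2-proof-appendix-2}. The only cosmetic remark is that for (1) you do not even need the blow-up identification of $\widehat{G}'\to\widehat{G}$: irreducibility and codimension one already follow from $D^{(2)}$ being a $\mP^{n-2}\times\mP^{n-2}$-bundle over the irreducible $\rF(n-1,n,V)$ of dimension $2n-1$ inside the $(4n-4)$-dimensional $F^{(2)}$.
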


Now we set 
\begin{equation}
\label{eqn:FsY''}
\begin{aligned}
{D}^{(4)} & :=
\rF(n-3,n-2,n,V)\times_{\rF(n-3,n,V)} \rF(n-3,n-2,n,V) \\ 
&=
\left\{
([V_{n-3}];[V_{n-2}],[V'_{n-2}];[V_n],[V_n]) \, \big| \,
\begin{array}{l}
V_{n-3}\subset V_{n-2}\cap V'_{n-2}, \\
V_{n-2}, V'_{n-2} \subset V_n 
\end{array}
\right\}.
\end{aligned}
\end{equation}
Then we can deduce easily the following commutative diagram:
\begin{equation}
\label{eq:smallhouse}
\begin{matrix}
\xymatrix{{D}^{(2)}\ar[d]_{\;_{\text{$\mP^{n-2}\times \mP^{n-2}$-fib.}}}
\ar@{-->}[rr]\ar[dr] & &
{D}^{(4)}\ar[d]^{\;_{\text{$\mP^2\times \mP^2$-fib.}}}\ar[dl]& & \\ 
\rF(n-1,n,V)\ar[dr] & {D}^{(1)}\ar[d] & 
\rF(n-3,n,V)\ar[dl]\\ 
& \Delta_{G}, &}
\end{matrix}
\end{equation}
where
$D^{(4)}\to D^{(1)}$ is birational with
any non-trivial fiber being a copy of $\mP^{n-3}$.

\begin{lem}
\label{cla:D_4}
${D}^{(4)}$
is the strict transform on
${F}^{(4)}$ of
${D}^{(2)}$, and
the diagram
$(\ref{eq:smallhouse})$
follows from the restriction of 
$(\ref{eq:F-diagram})$.
\end{lem}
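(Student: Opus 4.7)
The plan is to recognise (\ref{eq:smallhouse}) as the restriction of (\ref{eq:F-diagram}) to the preimage of $D^{(1)}\subset F^{(1)}$. This amounts to identifying the strict transform of $D^{(2)}$ on $F^{(4)}$ with the variety $D^{(4)}$ defined in (\ref{eqn:FsY''}); once this is done, all the outer projection maps in (\ref{eq:smallhouse}) are automatically restrictions of the natural flag projections implicit in (\ref{eq:F-diagram}), and commutativity is inherited.

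First, by the preceding lemma, every non-trivial fibre of $F^{(2)}\to F^{(1)}$ is contained in $D^{(2)}$. Consequently the centre $E\subset F^{(2)}$ of the blow-up $p\colon F^{(3)}\to F^{(2)}$ is contained in $D^{(2)}$, and the $p$-exceptional divisor $D\subset F^{(3)}$ lies over $D^{(2)}$. Writing $\widetilde{D}^{(2)}$ for the strict transform, we have $p^{*}D^{(2)}=\widetilde{D}^{(2)}+aD$ with $a\geq 0$. Using the explicit normal bundle $\mathcal{N}_{\gamma/F^{(2)}}\cong\mathcal{O}(-1)^{\oplus n-1}\oplus\mathcal{O}^{\oplus 3n-4}$ from Proposition~\ref{pro:F2-proof-appendix-2}(2), over a point $p\in\Delta_{F^{(1)}}$ the fibre of $D$ is $\mathbb{P}^{1}\times\mathbb{P}^{n-2}$, the first factor being the fibre of $E\to\Delta_{F^{(1)}}$ and the second the projectivised normal direction. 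Because $D^{(2)}$ is a smooth (Cartier) divisor of $F^{(2)}$ containing $E$ transversely to the normal $\mathbb{P}^{n-2}$-directions, $\widetilde{D}^{(2)}$ meets this $\mathbb{P}^{1}\times\mathbb{P}^{n-2}$ in $\mathbb{P}^{1}\times\mathbb{P}^{n-3}$.

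Next, by Proposition~\ref{pro:F2-proof-appendix-2}(3), the contraction $F^{(3)}\to F^{(4)}$ collapses the $\mathbb{P}^{1}$-factors of $D$ while preserving the $\mathbb{P}^{n-2}$-factors. Hence the image $\overline{D}$ of $\widetilde{D}^{(2)}$ in $F^{(4)}$ is a divisor which is an isomorphism over $D^{(1)}\setminus\Delta_{F^{(1)}}$ and has fibre $\mathbb{P}^{n-3}$ over each point of $\Delta_{F^{(1)}}\cong\mathrm{F}(n-2,n,V)$. On the other hand, a direct inspection of (\ref{eqn:FsY''}) shows $D^{(4)}\to D^{(1)}$ is an isomorphism over $D^{(1)}\setminus\Delta_{F^{(1)}}$ (the subspace $V_{n-3}=V_{n-2}\cap V_{n-2}'$ is uniquely determined when $\dim(V_{n-2}\cap V_{n-2}')=n-3$), and over a point of $\Delta_{F^{(1)}}$ the fibre is $\mathrm{G}(n-3,V_{n-2})\cong\mathbb{P}^{n-3}$, matching the fibre structure of $\overline{D}$.

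To promote this matching of fibre structures into an isomorphism $\overline{D}\cong D^{(4)}$, I would argue as follows. Both varieties sit naturally over $D^{(1)}$ and admit the same (flag-theoretic) morphism to $\mathrm{F}(n-3,n,V)$: on $D^{(4)}$ this is tautological from the definition, while on $\overline{D}$ it comes from forgetting $V_{n-2}$ in the universal $V_{n-3}\subset V_{n-2}\subset V_n$ structure that the $\mathbb{P}^{n-2}$-fibres of $F^{(4)}\to F^{(1)}$ naturally carry (the $\mathbb{P}^{n-2}$-direction of the flip is precisely $\mathbb{P}(V_{n-2})=\mathrm{G}(n-3,V_{n-2})$ at a diagonal point, dual to the $\mathbb{P}^{1}=\mathbb{P}(V_{n}/V_{n-2})$ direction resolved by $F^{(2)}$). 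This yields a birational morphism $D^{(4)}\to\overline{D}$ over $D^{(1)}$ which is an isomorphism in codimension zero and one; since $\overline{D}$ is normal (being Cartier in the smooth variety $F^{(4)}$) and the morphism is finite by the fibre-by-fibre check above, Zariski's main theorem forces the two to coincide. Once $\overline{D}=D^{(4)}$, restricting (\ref{eq:F-diagram}) over $D^{(1)}$ yields (\ref{eq:smallhouse}).

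The main obstacle is the last identification step: without an explicit moduli description of $F^{(4)}$ it takes some care to produce the map $D^{(4)}\to F^{(4)}$ canonically. An alternative is to bypass construction by invoking the uniqueness of the (local) flip along $\Delta_{F^{(1)}}$: the cone over $\mathbb{P}^{1}\times\mathbb{P}^{n-2}$ has exactly two small resolutions, and the strict transforms of a hyperplane divisor through the vertex on these two resolutions are related by the flip in a unique way, which is precisely the relationship between $D^{(2)}$ and $D^{(4)}$.
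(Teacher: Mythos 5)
Your fibre computations are the right ingredients and agree with the paper: the strict transform of $D^{(2)}$ on $F^{(4)}$ is an isomorphism over $D^{(1)}\setminus\Delta_{F^{(1)}}$ with $\mP^{n-3}$ fibres over $\Delta_{F^{(1)}}$, and the flag-theoretic $D^{(4)}$ of (\ref{eqn:FsY''}) has the same behaviour. But the step you yourself flag as the obstacle is a genuine gap, and your main route does not close it: matching fibre dimensions does not by itself produce a morphism $D^{(4)}\to\overline{D}$, your normality claim for $\overline{D}$ is unjustified (a Cartier divisor in a smooth variety gives only the $S_2$ condition, not normality; one would still need $R_1$), and even the assertion that $\widetilde{D}^{(2)}$ meets each exceptional $\mP^{1}\times\mP^{n-2}$ in a \emph{product} $\mP^{1}\times\mP^{n-3}$ (a constant family of hyperplanes, so that collapsing the $\mP^{1}$'s really yields $\mP^{n-3}$ and not something larger) is asserted rather than proved; it needs a small normal-bundle computation such as $\sN_{E/D^{(2)}}\vert_{\gamma}\simeq\sO_{\mP^{1}}(-1)^{\oplus n-2}$, using $\sN_{\gamma/F^{(2)}}$ from Proposition \ref{pro:F2-proof-appendix-2}(2) and $\sO(D^{(2)})\vert_{\gamma}\simeq\sO_{\mP^1}(-1)$.

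Your fallback via uniqueness of the flip is exactly the idea the paper uses, but it is only sketched and, as stated (two small resolutions of the ambient cone over $\mP^{1}\times\mP^{n-2}$ and strict transforms of a hyperplane through the vertex), it still does not identify the abstractly defined strict transform with the concrete fibre product (\ref{eqn:FsY''}). The paper makes this precise differently: it first shows that $D^{(1)}$ itself is singular along $\Delta_{F^{(1)}}$ with transverse singularity the cone over $\mP^{1}\times\mP^{n-3}$ (and is smooth when $n=3$), so that $D^{(1)}$ has exactly two small resolutions, distinguished by their fibre types $\mP^{1}$ and $\mP^{n-3}$; it then introduces the explicit flag variety $D^{(3)}$ of (\ref{eqn:FsY'''}) so that the diagram (\ref{eq:F-diagram'}) visibly exhibits $D^{(2)}\to D^{(1)}$ and $D^{(4)}\to D^{(1)}$ as these two small resolutions; finally, the restriction of (\ref{eq:F-diagram}) to the strict transforms of $D^{(1)}$ also gives the two small resolutions, so the two diagrams must coincide, which is the content of the lemma. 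The case $n=3$, where $D^{(4)}\to D^{(1)}$ is an isomorphism and $D^{(2)}\to D^{(1)}$ is the blow-up of $\Delta_{F^{(1)}}$ rather than a small map, needs (and in the paper receives) separate wording, while your argument tacitly assumes $n\geq 4$. So the strategy is right, but to complete your proof you would need to supply the singularity analysis of $D^{(1)}$ (or an equivalent uniqueness statement) and the product-structure computation above.
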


\begin{proof}
In a similar way to the case of $F^{(1)}$,
we may show that $D^{(1)}$
is singular along 
$\Delta_{{F}^{(1)}}$, and 
the singularity at each point on $\Delta_{F^{(1)}}$ is isomorphic
to the cone over the Segre variety $\mP^{1}\times\mP^{n-3}$ if $n\geq 4$
($D^{(1)}$ is smooth if $n=3$).
Moreover, 
by restricting  (\ref{eq:F-diagram}) to $D^{(1)}$ and its strict transforms, 
we have a similar diagram for $D^{(1)}$. 
In particular, the  restriction of 
(\ref{eq:F-diagram}) gives two small resolutions of $D^{(1)}$ if $n\geq 4$
(for $n=3$, the restriction of $F^{(2)}\to F^{(1)}$ is 
the blow-up along $\Delta_{{F}^{(1)}}$, and  
the restriction of $F^{(4)}\to F^{(1)}$ is
an isomorphism).
Let us define 
\begin{equation}
\label{eqn:FsY'''}
\begin{aligned}
{D}^{(3)} & := 
\rF(n\text{--}3,n\text{--}2,n\text{--}1,n,V)\times_{\rF(n\text{--}3,n\text{--}1,n,V)} 
\rF(n\text{--}3,n\text{--}2,n\text{--}1,n,V)\\
&=\left\{
([V_{n-3}];[V_{n-2}],[V'_{n-2}];[V_{n-1}];[V_n],[V_n]) \,\big|\,
\begin{array}{l}
V_{n-3}\subset V_{n-2},  \\ 
V'_{n-2} \subset V_{n-1} \subset V_n.
\end{array}
\right\} \\
\end{aligned}
\end{equation}
Then $D^{(1)},\dots, D^{(4)}$ fit into the following diagram
with the natural projections:
\def\Fdiagram'{ 
\begin{xy}
(0,0)*+{D^{(3)}}="Fiii",
(-13,-10)*+{D^{(2)}}="Fii",
(13,-10)*+{D^{(4)}}="Fiv",
(0,-20)*+{D^{(1)}.}="Fi",
\ar "Fiii";"Fii",
\ar "Fiii";"Fiv",
\ar "Fii";"Fi",
\ar "Fiv";"Fi",
\end{xy}}\begin{equation}
\begin{matrix}{\Fdiagram'}\end{matrix}\label{eq:F-diagram'}\end{equation}
By construction,
it is easy to see that $D^{(2)}\to D^{(1)}$ and $D^{(4)}\to D^{(1)}$ are
two small resolutions of $D^{(1)}$ if $n\geq 4$ (for $n=3$, 
$D^{(2)}\to D^{(1)}$ is the blow-up along 
$\Delta_{{F}^{(1)}}$ and $D^{(4)}\to D^{(1)}$ is an isomorphism).
Therefore the diagram 
(\ref{eq:F-diagram'}) coincides with the restriction of 
(\ref{eq:F-diagram}) considered above, and hence the assertions follow.
\end{proof}

\begin{prop}
\label{pro:Div-cont-Flat}
There exists a divisorial contraction 
${F}^{(4)}\to \widehat{F}$ over
$\widehat{G}$ which  
contracts the strict transform ${D}^{(4)}$ 
of ${D}^{(1)}$ to
the locus isomorphic to the flag variety $\mathrm{F}(n-3,n,V)$.
The discrepancy of 
${D}^{(4)}$ is two.
\end{prop}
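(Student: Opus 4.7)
The plan is to construct the divisorial contraction $F^{(4)}\to\widehat{F}$ over $\widehat{G}$ by applying the Kawamata--Shokurov base point free theorem relatively over $\widehat{G}$. In the $\widehat{G}$-relative cone $\mathrm{NE}(F^{(4)}/\widehat{G})$, the two rulings of the $\mP^{2}\times\mP^{2}$-fibers of $D^{(4)}\to\rF(n-3,n,V)$ (cf.~(\ref{eq:smallhouse})) generate two extremal rays $R_{1},R_{2}$, interchanged by the $\mZ_{2}$-symmetry of $F^{(4)}$ inherited from that of $F^{(1)}$; these are the rays that need to be contracted. The first step would be to exhibit a $\widehat{G}$-nef Cartier divisor $L$ on $F^{(4)}$ that vanishes exactly on $R_{1}\cup R_{2}$ and is positive on every other curve over $\widehat{G}$, with $L-K_{F^{(4)}}$ being $\widehat{G}$-ample. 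A natural candidate is assembled from pull-backs of ample classes from $\widehat{G}$ and from $\rF(n-3,n,V)$ (the latter via the projection $D^{(4)}\to\rF(n-3,n,V)$, extended to $F^{(4)}$ using $D^{(4)}$ as a Cartier divisor), together with a suitable multiple of the flip-exceptional class of $F^{(4)}\to F^{(1)}$. Nefness of $L$ and ampleness of $L-K_{F^{(4)}}$ are most transparently verified by pulling back through $F^{(3)}\to F^{(4)}$ and transferring intersection computations via the flip diagram (\ref{eq:F-diagram}) to the side of $F^{(2)}$, where the $\mP^{n-2}\times\mP^{n-2}$-fibration $F^{(2)}\to\widehat{G}'$ (with $\widehat{G}'=\mathrm{Bl}_{\Delta_{G}}\widehat{G}$) is explicit. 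Kawamata--Shokurov then yields a morphism $\sigma\colon F^{(4)}\to\widehat{F}$ over $\widehat{G}$ whose exceptional locus is $D^{(4)}$ and whose restriction to $D^{(4)}$ realises the fibration $D^{(4)}\to\rF(n-3,n,V)$ from (\ref{eq:smallhouse}); in particular the image of $D^{(4)}$ in $\widehat{F}$ is isomorphic to $\rF(n-3,n,V)$.

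For the discrepancy, let $l$ be a line in one of the two rulings of a generic fiber $\mP^{2}\times\mP^{2}\subset D^{(4)}$. Writing $K_{F^{(4)}}\equiv\sigma^{*}K_{\widehat{F}}+aD^{(4)}$, one has $K_{F^{(4)}}\cdot l=aD^{(4)}\cdot l$ since $l$ is $\sigma$-contracted; adjunction then yields $K_{\mP^{2}\times\mP^{2}}\cdot l=(K_{F^{(4)}}+D^{(4)})\cdot l=(a+1)D^{(4)}\cdot l=-3$. A local computation of the normal bundle of $D^{(4)}$ along $l$, using the cone-over-$\mP^{1}\times\mP^{n-2}$ local model for $F^{(1)}$ from Proposition~\ref{pro:F2-proof-appendix-1} and the flip (\ref{eq:F-diagram}), gives $D^{(4)}\cdot l=-1$, so $a=2$ as required.

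The principal obstacle is the explicit construction and verification of the divisor $L$, together with the companion intersection number $D^{(4)}\cdot l=-1$. Both require tracking the geometry carefully through the flip (\ref{eq:F-diagram}) to the $F^{(2)}$-side, where the underlying fibrations are transparent; the subsequent applications of Kawamata--Shokurov and adjunction are then routine.
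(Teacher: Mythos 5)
Your overall strategy coincides with the paper's: find a relatively nef supporting divisor for the face of $\overline{\mathrm{NE}}(F^{(4)}/\widehat{G})$ spanned by the rulings of the fibers of $D^{(4)}\to\rF(n-3,n,V)$, contract it by the contraction/base point free theorem, and extract the discrepancy from intersection numbers transported through the flip. The difficulty is that the step carrying the real content --- producing the divisor $L$ and proving it is nef over $\widehat{G}$ with null locus exactly that face --- is precisely the step you defer as ``the principal obstacle,'' so the proposal as written does not establish the existence of the contraction. Moreover the candidate you sketch is problematic: an ample class on $\rF(n-3,n,V)$ pulled back to $D^{(4)}$ does not extend to $F^{(4)}$ in any evident way, and the exceptional locus of $F^{(4)}\to F^{(1)}$ has codimension at least two, so ``the flip-exceptional class'' contributes no divisor class to assemble with. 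The paper resolves exactly this point by taking $L=-K_{F^{(4)}}+2D^{(4)}$ itself: having computed, by transport through the flip from the $F^{(2)}$-side (where $K_{F^{(2)}}\cdot r=-(n-1)$ and $D^{(2)}\cdot r=0$ for a ruling line $r$ meeting the flipping locus transversally), that $K_{F^{(4)}}\cdot r'=-2$ and $D^{(4)}\cdot r'=-1$, it verifies relative nefness over $\widehat{G}$ by a three-case analysis (curves over points off $\Delta_{G}$; exceptional curves of $F^{(4)}\to F^{(1)}$; curves inside $D^{(4)}$, where $(-K_{F^{(4)}}+2D^{(4)})|_{D^{(4)}}$ is shown to be a pullback from $\rF(n-3,n,V)$), identifies the null face with the one spanned by the fiber rulings, and contracts it by Mori theory; since $-K_{F^{(4)}}+2D^{(4)}$ then descends to $-K_{\widehat{F}}$, the discrepancy $2$ is immediate.

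Your discrepancy computation is essentially equivalent to the paper's and is correct once the contraction exists: adjunction along a fiber gives $(K_{F^{(4)}}+D^{(4)})\cdot l=-3$, and $D^{(4)}\cdot l=-1$ (which you assert; the paper proves it by the transport $D^{(2)}\cdot r=0\mapsto D^{(4)}\cdot r'=-1$ through the flip), whence $a=2$. Note, however, that writing $K_{F^{(4)}}\equiv\sigma^{*}K_{\widehat{F}}+aD^{(4)}$ presupposes that $K_{\widehat{F}}$ is $\mQ$-Cartier; with the paper's choice of supporting divisor this comes for free, because $-K_{F^{(4)}}+2D^{(4)}$ is numerically trivial on the contracted face and therefore descends to $-K_{\widehat{F}}$, whereas with an unspecified $L$ you would need a separate argument for this descent.
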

\begin{proof}
Let $\Delta'_{\mP}$ be the inverse image 
in $\widehat{G}'$ of $\Delta_{G}$.
Note that $\Delta'_{\mP}\simeq \rF(n-1,n,V)$.
Let $\Gamma$ be a fiber of the $\mP^{n-2}\times \mP^{n-2}$-fibration 
${D}^{(2)}\to
\Delta'_{\mP}$.
Then $\Gamma$ intersects the flipping locus of $F^{(2)}\dashrightarrow F^{(4)}$
along the diagonal transversally.
Take a line $r\subset \mP^{n-2}\times \mP^{n-2}$ 
which is contained in a fiber of the second projection $\Gamma\to \mP^{n-2}$
and intersects the flipping locus.
$r$ is of the form with some fixed $V_{n-3}\subset V'_{n-2}\subset V_{n-1}\subset V_n$ and moving $V_{n-2}$ as follows:
\[
r:=\{([V_{n-2}], [V'_{n-2}];[V_{n-1}];[V_n],[V_n])\mid V_{n-3}\subset V_{n-2}\subset V_{n-1}\}.
\]
Then its strict transform $r'$ on 
${D}^{(4)}$
is contracted by the morphism 
${D}^{(4)}\to \rF(n-3,n,V)$.
Since ${F}^{(2)}\to
\widehat{G}'$ is a $\mP^{n-2}\times \mP^{n-2}$-fibration and
${D}^{(2)}$ is the pull-back of
$\Delta'_{\mP}$,
we see that $K_{{F}^{(2)}}\cdot r=-(n-1)$
and
${D}^{(2)}\cdot r=0$.
By the standard calculations of the changes of the intersection numbers 
by the flip,
we have 
$K_{{F}^{(4)}}\cdot r'=-(n-1)+(n-3)=-2$
and
${D}^{(4)}\cdot r'=0-1=-1$.
These equalities of the intersection numbers still hold
for any line in a ruling of a fiber of   
${D}^{(4)}\to \rF(n-3,n,V)$.

We show $-K_{F^{(4)}}+2D^{(4)}$ is relatively nef 
over $\widehat{G}$. Let $\gamma$ be a curve on 
$F^{(4)}$ which is contracted to a point $t$ on $\widehat{G}$.
If $t\not \in \Delta_{G}$,
then $(-K_{F^{(4)}}+2D^{(4)})\cdot \gamma>0$ since
$D^{(4)}\cap \gamma=\emptyset$ and $F^{(4)}\to \widehat{G}$
is a $\mP^{n-2}\times \mP^{n-2}$ fibration outside $\Delta_{G}$.
If $t\in \Delta_{G}$ and $\gamma$ is an exceptional curve of $F^{(4)}\to F^{(1)}$, then
$(-K_{F^{(4)}}+2D^{(4)})\cdot \gamma>0$
since $-K_{F^{(4)}}\cdot \gamma>0$ and $D^{(4)}\cdot \gamma>0$.
In the remaining cases, $t\in \Delta_{G}$ and $\gamma\subset D^{(4)}$.
Therefore we have only to consider the relative nefness of 
$(-K_{F^{(4)}}+2D^{(4)})|_{D^{(4)}}$ over $\Delta_{G}$. Now we take as $\gamma$
any line in a ruling of a fiber of   
${D}^{(4)}\to \rF(n-3,n,V)$. As we see in the first paragraph, 
$(-K_{F^{(4)}}+2D^{(4)})\cdot \gamma=0$.
Therefore $(-K_{F^{(4)}}+2D^{(4)})|_{D^{(4)}}$ is the pull-back of 
some divisor $D_F$ on $\mathrm{F}(n-3,n,V)$. It suffices to show 
$D_F$ is relatively nef over $\Delta_{G}$, which is true
since an exceptional curve of $D^{(4)}\to D^{(1)}$
is positive for $(-K_{F^{(4)}}+2D^{(4)})|_{D^{(4)}}$ as above and
is mapped to a curve on a fiber of  
$\rF(n-3,n,V)\to \Delta_{G}$.
Therefore 
$-K_{F^{(4)}}+2D^{(4)}$ is relatively nef 
over $\widehat{G}$.

Moreover, by this argument, we see that
$(-K_{F^{(4)}}+2D^{(4)})^{\perp}\cap \overline{\mathrm{NE}}(F^{(4)}/\widehat{G})$
is generated by the numerical class of the curves on fibers of
$D^{(4)}\to \rF(n-3,n,V)$. In particular,
$(-K_{F^{(4)}}+2D^{(4)})^{\perp}\cap \overline{\mathrm{NE}}(F^{(4)}/\widehat{G})\subset (K_{F^{(4)}})^{<0}$. Therefore, by Mori theory, 
there exists a contraction associated to
this extremal face, which is nothing but the divisorial contraction
contracting $D^{(4)}$ to $\rF(n-3,n,V)$
such that 
$-K_{F^{(4)}}+2D^{(4)}$ is the pull-back of $-K_{\widehat{F}}$.
Thus 
the discrepancy of 
${D}^{(4)}$ is two.
\end{proof}

\subsection{$\mZ_2$-quotient}
All the above constructions are $\mZ_2$-equivariant, 
hence we can take $\mZ_2$-quotient $\widehat{F}/\mZ_2$.
Comparing the morphisms
$a\colon F_{\widetilde{\hcoY}}\to G_{{\hcoY}}$
and
$b\colon \widehat{F}/\mZ_2
\to G_{{\hcoY}}$, we obtain

\begin{prop}
\label{prop:coincides}
$\widehat{F}/\mZ_2\simeq F_{\widetilde{\hcoY}}$ over $G_{\hcoY}$.
\end{prop}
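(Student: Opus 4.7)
The plan is to construct a morphism $b'\colon \widehat{F}/\mZ_2\to F_{\widetilde{\hcoY}}$ over $G_{\hcoY}$ extending the open isomorphism of Proposition~\ref{prop:begin}, and then to conclude that $b'$ is an isomorphism by verifying bijectivity and invoking Zariski's main theorem.

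First, observe that the birational surgeries $F^{(1)}\leftarrow F^{(2)}\leftarrow F^{(3)}\to F^{(4)}\to \widehat{F}$ in diagram~(\ref{eq:house}) all take place over the diagonal $\Delta_G\subset \widehat{G}$: the small resolutions $F^{(2)}, F^{(4)}\to F^{(1)}$ are isomorphisms outside $\Delta_{F^{(1)}}\subset D^{(1)}$, and the divisorial contraction $F^{(4)}\to \widehat{F}$ contracts $D^{(4)}$, which lies over $\Delta_G$. Consequently $\oF{(1)}$ sits naturally as an open subset of $\widehat{F}$, and Proposition~\ref{prop:begin} gives an isomorphism $\widehat{F}/\mZ_2\setminus b^{-1}(G^1_{\hcoY})\simeq F_{\widetilde{\hcoY}}\setminus a^{-1}(G^1_{\hcoY})$.

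To extend this to a morphism on the whole of $\widehat{F}/\mZ_2$, I would use the universal property of the Hilbert scheme $\hcoY_0$. The contractions producing $\widehat{F}$ are designed precisely to flatten the family of rank-two conics: the tautological family of reducible conics $l_{V_{n-2}V_n}\cup l_{V'_{n-2}V'_n}$ over $\oF{(1)}$ extends to a flat family of length-two subschemes of $\rG(n-1,V)$ parametrized by $\widehat{F}$, the limits over $\Delta_G$ being $\sigma$-conics lying in the $\sigma$-plane $\mathrm{P}_{V_{n-3}V_n}$. By universality, this yields a classifying morphism $\widehat{F}\to \hcoY_0$; composing with the blow-down $\hcoY_0\to \widetilde{\hcoY}$ of $F_\sigma$ to $G_\sigma$ (Remark~\ref{rem:blup}) produces $\widehat{F}\to \widetilde{\hcoY}$. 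By Proposition~\ref{prop:FYdisc} the image lies in $F_{\widetilde{\hcoY}}$, and since the $\mZ_2$-action merely swaps the two components of a reducible conic without changing it, the morphism descends to $b'\colon \widehat{F}/\mZ_2\to F_{\widetilde{\hcoY}}$ over $G_{\hcoY}$.

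Since $F_{\widetilde{\hcoY}}$, as a Cartier divisor on the smooth $\widetilde{\hcoY}$, is normal, and $b'$ is automatically proper over $G_{\hcoY}$, it suffices by Zariski's main theorem to show that $b'$ is bijective on closed points. This is already known over $G_{\hcoY}\setminus G^1_{\hcoY}$ by Proposition~\ref{prop:begin}. Over a rank-one point $[2\mP(V_n)]\in G^1_{\hcoY}$, the fiber of $b$ is, by Proposition~\ref{pro:Div-cont-Flat}, the fiber of $\rF(n-3,n,V)\to \mP(V^*)\simeq \Delta_G/\mZ_2\simeq G^1_{\hcoY}$ over $[V_n]$, namely $\rG(n-3,V_n)$; the fiber of $a$ is, by Proposition~\ref{prop:FYdisc}, the set of $\sigma$-planes $\mathrm{P}_{V_{n-3}V_n}$ with $V_{n-3}\subset V_n$, which is also $\rG(n-3,V_n)$. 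A direct check shows that $b'$ identifies them via $[V_{n-3}]\mapsto [\mathrm{P}_{V_{n-3}V_n}]$, yielding bijectivity.

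The main obstacle is verifying the flatness of the extended family of conics over $\widehat{F}$, so that the universal property of $\hcoY_0$ can be applied cleanly. The contractions in~(\ref{eq:house}) are chosen precisely to realize this flattening, but a rigorous check requires tracking the tautological family through each of the flip and the divisorial contraction. An alternative, more elementary route is to construct $b'$ directly by verifying that the rational extension of the classifying map over $\oF{(1)}$ becomes regular on $\widehat{F}$ via explicit local calculations at the exceptional loci, in the spirit of the fiber analysis of Proposition~\ref{prop:n=3fib} for $n=3$, which then propagates to general $n$ via the relative setting over $\rG(n-3,V)$.
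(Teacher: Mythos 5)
Your strategy has a genuine obstruction at its central step. The classifying morphism $\widehat{F}\to\hcoY_{0}$ you want does not exist: the divisorial contraction ${F}^{(4)}\to\widehat{F}$ of Proposition \ref{pro:Div-cont-Flat} collapses each fiber $\mP^{2}\times\mP^{2}$ of ${D}^{(4)}\to\rF(n-3,n,V)$ to a single point, and the points of any such fiber correspond to \emph{distinct} rank-two conics $l_{V_{n-2}V_{n}}\cup l_{V'_{n-2}V_{n}}$ inside the $\sigma$-plane ${\rm P}_{V_{n-3}V_{n}}$. Approaching the contracted point $\hat d\in\widehat{F}$ along curves whose strict transforms hit different points of that $\mP^{2}\times\mP^{2}$ produces different flat limits in the Hilbert scheme, so the rational map $\widehat{F}\dashrightarrow\hcoY_{0}$ is genuinely undefined along the image of ${D}^{(4)}$; equivalently, the tautological family of conics on $\oF{(1)}$ admits no flat extension over $\widehat{F}$. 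This is exactly why the paper contracts $F_{\sigma}$ to $G_{\sigma}$ and works with $\widetilde{\hcoY}$ rather than $\hcoY_{0}$: the natural target of any extension is $\widetilde{\hcoY}$, where the whole $\mP^{5}$ of $\sigma$-conics in a fixed $\sigma$-plane has already been collapsed, and constructing a morphism $\widehat{F}\to\widetilde{\hcoY}$ directly is essentially the original problem. So the ``main obstacle'' you flag is not a technical verification but an actual failure of the route through the universal property of $\hcoY_{0}$.

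Two further points would also need repair even if a morphism $b'$ were available. First, your identification of the fibers over a rank-one point is incorrect: by Proposition \ref{lem:inverse-rk1} the fiber of $F_{\widetilde{\hcoY}}\to G_{\hcoY}$ over $[2\mP(V_{n})]$ is the image of $\mP(\sO_{\rG(n-2,V_{n})}\oplus\sU^{*}_{\rG(n-2,V_n)}(1))$ with $\mP(\sU^{*}_{\rG(n-2,V_n)}(1))$ contracted to $\rG(n-3,V_{n})$, a positive-dimensional variety containing rank-two and rank-one conics as well as $\sigma$-planes, not just $\rG(n-3,V_{n})$; the corresponding fiber of $b$ is likewise the contracted $\mP^{n-2}$-bundle, so the bijectivity check over $G^{1}_{\hcoY}$ is not established. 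Second, being a Cartier divisor on the smooth $\widetilde{\hcoY}$ only gives the $S_{2}$ condition for $F_{\widetilde{\hcoY}}$; normality also requires $R_{1}$, which the paper obtains from the generic $\mP^{n-2}\times\mP^{n-2}$-fibration structure (Proposition \ref{prop:begin}) together with the codimension estimate of Lemma \ref{lem:dim}. For comparison, the paper's proof constructs no morphism at all: it compares the two morphisms $a\colon F_{\widetilde{\hcoY}}\to G_{\hcoY}$ and $b\colon\widehat{F}/\mZ_{2}\to G_{\hcoY}$, which agree outside $G^{1}_{\hcoY}$, and invokes the uniqueness statement \cite[Lem.~5.5]{Tk} after checking that the preimages of $G^{1}_{\hcoY}$ have codimension at least two, that both sides are normal, and that both relative anticanonical divisors are $\mQ$-Cartier and relatively ample (the last point resting on the discrepancy computation for ${D}^{(4)}$ and a $\mZ_{2}$-equivariant relative Picard number count). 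Redirecting your argument toward that comparison lemma, or else constructing $\widehat{F}\to\widetilde{\hcoY}$ by some means that does not pass through $\hcoY_{0}$, is what is needed.
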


\begin{lem}
\label{lem:dim}
The fiber of $F_{\widetilde{\hcoY}}\to G_{\hcoY}$
at any point of $G^1_{\hcoY}$
is of dimension at most $3n-6$.
In particular, codimension of the inverse image in 
$F_{\widetilde{\hcoY}}$ of $G^1_{\hcoY}$ is at least two. 
\end{lem}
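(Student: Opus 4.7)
The plan is to use Proposition~\ref{prop:coincides} to identify $F_{\widetilde{\hcoY}}\to G_{\hcoY}$ with the $\mZ_2$-quotient of $\widehat{F}\to\widehat{G}$; since $\widehat{G}\to G_{\hcoY}$ restricts to a bijection $\Delta_G\to G^1_{\hcoY}$, it suffices to bound by $3n-6$ the dimension of the fiber of $\widehat{F}\to\widehat{G}$ over an arbitrary point $([V_n],[V_n])\in\Delta_G$.

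First I would describe $F^{(1)}|_{([V_n],[V_n])}$ explicitly from~(\ref{eq:FsY}): it parametrizes pairs $(V_{n-2},V'_{n-2})$ of subspaces of $V_n$ with $\dim(V_{n-2}\cap V'_{n-2})\geq n-3$. Splitting according to whether $V_{n-2}=V'_{n-2}$ or $V_{n-2}\cap V'_{n-2}$ has dimension exactly $n-3$, a direct count yields $\dim F^{(1)}|_{([V_n],[V_n])}=3n-5$, with diagonal sub-locus $\Delta_{F^{(1)}}|_{([V_n],[V_n])}\simeq\rG(n-2,V_n)$ of dimension $2n-4$. By Proposition~\ref{pro:F2-proof-appendix-2} the small resolution $F^{(4)}\to F^{(1)}$ has $\mP^{n-2}$ fibers over $\Delta_{F^{(1)}}$, whereas the explicit description~(\ref{eqn:FsY''}) of $D^{(4)}$ shows that the strict transform contains only the $\mP^{n-3}$ sub-fiber parametrizing choices $V_{n-3}\subset V_{n-2}$. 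Consequently the complement $F^{(4)}|_{([V_n],[V_n])}\setminus D^{(4)}|_{([V_n],[V_n])}$ is supported over $\Delta_{F^{(1)}}|_{([V_n],[V_n])}$ with fibers $\mP^{n-2}\setminus\mP^{n-3}$ of dimension $n-2$, and therefore has total dimension $(2n-4)+(n-2)=3n-6$.

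Next, by Proposition~\ref{pro:Div-cont-Flat} the divisorial contraction $F^{(4)}\to\widehat{F}$ is an isomorphism outside $D^{(4)}$ and collapses $D^{(4)}$ onto $\rF(n-3,n,V)$ via the $\mP^2\times\mP^2$-fibration. Hence $D^{(4)}|_{([V_n],[V_n])}$ is sent to $\rF(n-3,n,V)|_{[V_n]}\simeq\rG(n-3,V_n)$ of dimension $3(n-3)=3n-9$, while the complement, lying outside $D^{(4)}$, maps isomorphically and contributes dimension $3n-6$ disjoint from $\rF(n-3,n,V)$ in $\widehat{F}$. Combining, $\dim\widehat{F}|_{([V_n],[V_n])}=\max(3n-9,3n-6)=3n-6$, as required. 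For the last assertion, since $G^1_{\hcoY}\simeq\nS_1=v_2(\mP(V^*))$ has dimension $n$ and $\dim F_{\widetilde{\hcoY}}=\dim\widetilde{\hcoY}-1=4n-4$, the preimage of $G^1_{\hcoY}$ in $F_{\widetilde{\hcoY}}$ has dimension at most $n+(3n-6)=4n-6$, so its codimension is at least two.

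The hard part will be distinguishing the $\mP^{n-3}$ portion of each $\mP^{n-2}$ fiber of $F^{(4)}\to F^{(1)}$ over $\Delta_{F^{(1)}}$ (which belongs to $D^{(4)}$ and is collapsed by the contraction) from the complementary $\mP^{n-2}\setminus\mP^{n-3}$ portion (which escapes the contraction and saturates the bound $3n-6$); a cruder estimate from $\dim F^{(4)}|_{([V_n],[V_n])}=3n-5$ alone would yield only the weaker bound $3n-5$ and miss the codimension-two conclusion.
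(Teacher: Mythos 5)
Your argument is circular within the paper's logical structure. Lemma \ref{lem:dim} exists precisely to feed into the proof of Proposition \ref{prop:coincides}: there the identification $\widehat{F}/\mZ_2\simeq F_{\widetilde{\hcoY}}$ is obtained from \cite[Lem.~5.5]{Tk}, whose hypothesis (1) — that the preimages of $G^1_{\hcoY}$ under both $a\colon F_{\widetilde{\hcoY}}\to G_{\hcoY}$ and $b\colon \widehat{F}/\mZ_2\to G_{\hcoY}$ have codimension at least two — is verified for $a$ by citing Lemma \ref{lem:dim}. So you may not invoke Proposition \ref{prop:coincides} when proving the lemma. Without it, the only comparison available is Proposition \ref{prop:begin}, which identifies $a$ and $b$ only \emph{outside} $G^1_{\hcoY}$; hence your (correct) computation that the fiber of $\widehat{F}\to\widehat{G}$ over a diagonal point $([V_n],[V_n])$ has dimension $3n-6$ — which indeed matches Proposition \ref{lem:inverse-rk1} — says nothing a priori about the fibers of $F_{\widetilde{\hcoY}}\to G_{\hcoY}$ over $G^1_{\hcoY}$, which is what the lemma asserts. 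The intermediate dimension counts on $F^{(1)}$, $F^{(4)}$, $D^{(4)}$ and the contraction to $\widehat{F}$, as well as the final codimension computation $\dim F_{\widetilde{\hcoY}}=4n-4$, $\dim G^1_{\hcoY}=n$, $n+(3n-6)=4n-6$, are fine; the defect is solely in the first reduction.

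The paper instead bounds the fiber of $\widetilde{\hcoY}\to\hcoY$ directly, using the diagram (\ref{eq:STcommrev}): the fiber of $\hcoY_2\to\hcoY_{\UU}$ over a rank-one point in a fiber of $\hcoY_{\UU}\to\rG(n-3,V)$ is $\mP(1^3,2)$, of dimension $3$, by Proposition \ref{prop:n=3fib} (5)(d) applied fiberwise, while the fiber of $\hcoY_{\UU}\to\hcoY$ over a rank-one point equals the fiber of $\UU\to\Hes$ over $[2V_n]\in\nS_1$, a copy of $\rG(n-3,V_n)$ of dimension $3(n-3)$; since $\hcoY_2\to\widetilde{\hcoY}$ is surjective, the fiber of $F_{\widetilde{\hcoY}}\to G_{\hcoY}$ over a point of $G^1_{\hcoY}$ has dimension at most $3+3(n-3)=3n-6$. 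If you want to keep the spirit of your computation, you would have to rework it on the $\hcoY_2$ side (or otherwise bound the fibers of $a$ itself) rather than on the $\widehat{F}$ side.
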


\begin{proof}
We consider the diagram (\ref{eq:STcommrev}).
By Proposition \ref{prop:n=3fib} (5),
the fiber of $\hcoY_2\to \hcoY_{\UU}$ over
a rank one point in a fiber of $\hcoY_{\UU}\to \rG(n-3,V)$
is isomorphic to $\mP(1^3,2)$.
The fiber of $\hcoY_{\UU}\to \hcoY$ over a rank one point is
isomorphic to that of $\UU\to \Hes$ over a rank one point $[2V_n]\in \nS_1$, 
and hence is a copy of $\rG(n-3,V_n)$.
Therefore, 
the fiber of $F_{\widetilde{\hcoY}}\to G_{\hcoY}$
at any point of $G^1_{\hcoY}$
is of dimension at most $3+3(n-3)=3n-6$.
\end{proof}

\begin{proof}[{\bf Proof of Proposition $\ref{prop:coincides}$}]
Note that the morphisms $a$ and $b$ are isomorphic outside $G^1_{\hcoY}$
by Proposition \ref{prop:begin}.
Therefore,
by \cite[Lem.~5.5]{Tk} for example,
it suffices to check the following properties:
\begin{enumerate}
\item
The inverse images of $G^1_{\hcoY}$
by the morphisms $a$ and $b$ are of codimension at least two.
\item
Both $F_{\widetilde{\hcoY}}$ and 
$\widehat{F}/\mZ_2$
are normal.
\item
$-K_{F_{\widetilde{\hcoY}}}$ and
$-K_{\widehat{F}/\mZ_2}$
are $\mQ$-Cartier.
\item
$-K_{F_{\widetilde{\hcoY}}}$ is $a$-ample and
$-K_{\widehat{F}/\mZ_2}$ is $b$-ample.
\end{enumerate}
We show these in order. 

\noindent (1) The inverse image of $G^1_{\hcoY}$ by the morphism
$a$  
has codimension at least two
in $F_{\widetilde{\hcoY}}$ by Lemma \ref{lem:dim}
and 
the inverse image of $G^1_{\hcoY}$ by the morphism
$b$  
has codimension two
in $\widehat{F}/\mZ_2$ by the construction of
$\widehat{F}/\mZ_2$.

\noindent
(2) The variety $F_{\widetilde{\hcoY}}$ is normal.
Indeed, it satisfies the $S_2$ condition 
since it is a Cartier divisor on 
a smooth variety. It satisfies the $R_1$ condition
since it is a $\mP^{n-2}\times \mP^{n-2}$-fibration
outside the locus of codimension at least two by Proposition
\ref{prop:begin} and Lemma \ref{lem:dim}.
We see that the variety $\widehat{F}/\mZ_2$ is normal
by its explicit construction as above.
 
\noindent
(3), (4) The divisor $-K_{F_{\widetilde{\hcoY}}}$ is $\mQ$-Cartier
since ${F_{\widetilde{\hcoY}}}$ is a divisor on 
the smooth variety $\widetilde{\hcoY}$.
We see that $-K_{F_{\widetilde{\hcoY}}}$ is $a$-ample
since the relative Picard number $\rho(\widetilde{\hcoY}/\hcoY)$ is one
and $a$ is generically a $\mP^{n-2}\times \mP^{n-2}$-fibration.

Arguments for the morphism $b$ are similar.  Let us first show that 
$-K_{\widehat{F}/\mZ_2}$ is
$\mQ$-Cartier.
Indeed, by Lemma \ref{pro:Div-cont-Flat},
the discrepancy of ${D}^{(4)}$ is two. 
Then, by the Kawamata-Shokurov base point free theorem,
$-K_{{F}^{(4)}}-
2{D}^{(4)}$ is the pull-back of
a Cartier divisor on 
$\widehat{F}$,
which turns out to be the anti-canonical divisor 
$-K_{\widehat{F}}$.
Thus 
$-K_{\widehat{F}/\mZ_2}$ is
$\mQ$-Cartier.

To show 
$-K_{\widehat{F}/\mZ_2}$ is $b$-ample,
it suffices to see 
the relative Picard number $\rho((\widehat{F}/\mZ_2)/G_{{\hcoY}})$ is one because $b$ is generically a $\mP^{n-2}\times \mP^{n-2}$-fibration.
We compute $\rho((\widehat{F}/\mZ_2)/G_{{\hcoY}})$
using the above construction. 
The relative Picard number $\rho({F}^{(2)}/\widehat{G}')$ is 
two since ${F}^{(2)}
\to \widehat{G}'$ is  
a $\mP^{n-2}\times \mP^{n-2}$-fibration
and it is easy to see that
it is the composite of two $\mP^{n-2}$-fibrations.
Moreover we have 
$\rho^{\mZ_2}({F}^{(2)}/
\widehat{G}')=1$
since
rulings 
in two directions
of a fiber $\mP^{n-2}\times \mP^{n-2}$ of
${F}^{(2)}
\to \widehat{G}'$ are exchanged
by the $\mZ_2$-action.
Therefore
$\rho^{\mZ_2}({F}^{(2)})=3$ since 
$\rho^{\mZ_2}(\widehat{G}')=2$.
It holds that  
$\rho^{\mZ_2}({F}^{(4)})=3$ since
the flip preserves the Picard number
and the flip is $\mZ_2$-equivariant.
Since a divisorial contraction drops 
the Picard number at least by one,
we have
$\rho^{\mZ_2}(\widehat{F})\leq 2$.
Now we see that
$\rho((\widehat{F}/\mZ_2)/G_{{\hcoY}})$ is one
since 
$\rho(G_{{\hcoY}})=1$
and
the morphism $\widehat{F}/\mZ_2\to G_{{\hcoY}}$
is non-trivial.
Therefore we conclude $-K_{\widehat{F}/\mZ_2}$ is $b$-ample.
\end{proof}

\subsection{Flattening $F^{(3)}\to\widehat{G}'$ \label{sub:Flatenning-F-G}}

We describe the fibers of $F_{\widetilde{\hcoY}}\to G_{\hcoY}$
in the diagram (\ref{eq:house}).

\begin{prop}
\label{lem:inverse-rk1} 

There is a birational morphism $\mP(\sO_{\rG(n-2,V_{n})}\oplus\sU_{\rG(n-2,V_n)}^{*}(1))\to\Lrho_{\widetilde{\hcoY}}^{-1}([V_n])$
which contracts the divisor $\mP(\sU_{V_{n}}^{*}(1))$
to $\rG(n-3,V_n)$, where $\sU_{\rG(n-2,V_n)}$ is the universal subbundle
of the Grassmannian $\rG(n-2,V_{n})$.
\end{prop}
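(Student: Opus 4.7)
My plan is to restrict the commutative diagram \eqref{eq:STcommrev} over the rank-one point $[2V_n]\in\Hes$, trace the fiber structure, and then match it with $\mP(\sO\oplus\sU^*(1))$. First, by Proposition \ref{prop:Spr}\,(2), the fiber of $\UU=\widetilde{\nS}_4\to\Hes$ over $[2V_n]$ consists of pairs $([V_{n-3}],[Q])$ with $V_{n-3}\subset V_n=\Sing(2\mP(V_n))$, hence is isomorphic to $\rG(n-3,V_n)$. Since $[2V_n]\in\nS_1\subset\nS_3$ lies in the branch locus of $\Lrho\,_{\nT_4}$, the fiber of $\hcoY_{\UU}\to\hcoY$ above the unique preimage of $[2V_n]$ is also $\rG(n-3,V_n)$. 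By Proposition \ref{prop:gendescr}, $\hcoY_2\to\hcoY_{\UU}$ is the family over $\rG(n-3,V)$ of the $n=3$ morphism studied in Proposition \ref{prop:n=3fib}, so by part $(5)(\mathrm{d})$ of that proposition together with Remark \ref{rem:WPS}, each fiber of $\hcoY_2\to\hcoY_{\UU}$ over a rank-one point is $\mP(1^3,2)$, whose hyperplane section $F_{\rho}\cap\mP(1^3,2)$ is $\mP(V_n/V_{n-3})\simeq\mP^2$. Thus $\hcoY_2|_{[2V_n]}$ is a $\mP(1^3,2)$-family over $\rG(n-3,V_n)$ of total dimension $3n-6$, and the blowdown $\hcoY_2\to\widetilde{\hcoY}$ along $G_{\rho}$ (Proposition \ref{prop:gendescr}\,(1)) restricts over $[2V_n]$ to the contraction of $F_{\rho}|_{[2V_n]}=\rF(n-3,n-2,V_n)$ onto $G_{\hcoY}|_{[2V_n]}\cap G_{\rho}=\rG(n-2,V_n)$ by forgetting $V_{n-3}$.

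Second, I would identify the divisor $\mP(\sU^*_{\rG(n-2,V_n)}(1))\subset\mP(\sO\oplus\sU^*(1))$ canonically with $\rF(n-3,n-2,V_n)$: the fiber $\mP(V_{n-2}^*)$ of $\mP(\sU^*(1))$ over $[V_{n-2}]$ parametrizes hyperplanes $V_{n-3}\subset V_{n-2}$, and the forgetful projection $\mP(\sU^*(1))\to\rG(n-3,V_n)$ matches the $\mP^2$-bundle contraction of $F_{\rho}|_{[2V_n]}$ induced by the $\mP(1^3,2)$-family structure from Step~1. I would then construct the birational morphism $\mP(\sO\oplus\sU^*(1))\to\Lrho\,_{\widetilde{\hcoY}}^{-1}([V_n])$ so that the $\mP(\sO)$-section $\rG(n-2,V_n)$ maps isomorphically onto $G_{\rho}|_{[2V_n]}$ (the rank-one $\rho$-conics indexed by $V_{n-2}\subset V_n$), the divisor $\mP(\sU^*(1))$ contracts onto the locus of $\sigma$-planes $\rG(n-3,V_n)\subset\widetilde{\hcoY}|_{[2V_n]}$, and the affine complement of the divisor is parametrized via the tautological section of $\sU^*(1)$, matching the explicit affine chart $\Phi\setminus\Prt_{\rho}$ appearing in Remark \ref{rem:WPS}.

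The main obstacle is verifying that the prescription above defines a global regular morphism and is birational. I would handle this by working fiberwise over $\rG(n-3,V_n)$ using the spin decomposition embedding $\hcoY_3\hookrightarrow\mP(\ft{S}^2\frQ\otimes\sO_{\rG(n-3,V)}(-1)\oplus\ft{S}^2\frQ^*)$ from Section \ref{subsection:spin}: over each $[V_{n-3}]\in\rG(n-3,V_n)$ this embedding realizes the corresponding $\mP(1^3,2)$-fiber as the weighted projective compactification of $V_n/V_{n-3}\oplus\mC\langle 2\rangle$ (via Remark \ref{rem:WPS}), and globalizing this tautological construction over $\rG(n-3,V_n)$ produces a birational model dual to the $\mP^{n-2}$-bundle $\mP(\sO\oplus\sU^*(1))$ over $\rG(n-2,V_n)$ by interchanging the base between the $V_{n-3}$- and $V_{n-2}$-parametrizations. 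Birationality then follows from the matching total dimension $3n-6$ and injectivity of the constructed map on the generic affine stratum, while the contraction of $\mP(\sU^*(1))$ onto $\rG(n-3,V_n)$ is forced by the identification with $\rF(n-3,n-2,V_n)$ established above.
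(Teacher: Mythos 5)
Your setup is sound and genuinely different from the paper's: you work with the restriction of the diagram \eqref{eq:STcommrev} over the rank-one point, obtaining the fiber of $\hcoY_2\to\hcoY$ as a $\mP(1^3,2)$-fibration over $\rG(n-3,V_n)$ with $F_\rho$-part $\rF(n-3,n-2,V_n)$, whereas the paper never touches $\hcoY_2$ here and instead describes the fiber as the image of $\Gamma'$, the restriction over $[V_n]$ of the exceptional locus of the small resolution $F^{(4)}\to F^{(1)}$, under the divisorial contraction $F^{(4)}\to\widehat{F}$ of Proposition \ref{pro:Div-cont-Flat}. The set-theoretic picture you extract (the $\sigma$-plane locus $\rG(n-3,V_n)$, the rank-one $\rho$-conic locus $\rG(n-2,V_n)=G_\rho\cap\Lrho_{\widetilde{\hcoY}}^{-1}([V_n])$, and the identification $\mP(\sU^*_{\rG(n-2,V_n)}(1))\simeq\rF(n-3,n-2,V_n)$) is all correct.

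The gap is in your third step, and it is the mathematical heart of the statement. What your construction actually yields is the \emph{other} birational model of the fiber: $W_A=$ (the $\mP(1^3,2)$-fibration over $\rG(n-3,V_n)$) maps onto $\Lrho_{\widetilde{\hcoY}}^{-1}([V_n])$ by contracting $\rF(n-3,n-2,V_n)$ along its $\mP^{n-3}$-fibers onto $\rG(n-2,V_n)$, while the asserted model $W_B=\mP(\sO\oplus\sU^*(1))$ contracts $\rF(n-3,n-2,V_n)$ along its $\mP^2$-fibers onto $\rG(n-3,V_n)$. These two divisors define divisorial valuations with \emph{different} centers on the fiber, so $W_A\dashrightarrow W_B$ contracts one divisor and extracts another; it is not a mere ``interchange of the base between the $V_{n-3}$- and $V_{n-2}$-parametrizations'' of a common flag bundle, and the existence of a regular morphism $W_B\to\Lrho_{\widetilde{\hcoY}}^{-1}([V_n])$ does not follow from anything you have built. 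Concretely, two things are missing: (i) a construction of $W_B$ together with the morphism (in the paper this is free, since $\Gamma'$ sits inside $F^{(4)}$ and the morphism is the restriction of the already-established contraction $F^{(4)}\to\widehat{F}$); and (ii) the identification of the $\mP^{n-2}$-bundle as $\mP(\sO\oplus\sU^*(1))$ --- that is, why the relative twist between the two summands is exactly $\sU^*(1)$ versus $\sO$ and why the extension splits. The paper pins down the twist by computing $D^{(4)}|_{\Gamma'}=H_{\mP(\sA^*)}-2L$ from the discrepancy $2$ of $D^{(4)}$ together with the fact that $\sO_{\mP(\sU^*(-1))}(1)$ is pulled back from $\rG(n-3,V_n)$, forcing the kernel of $\sA\to\sU(1)$ to be $\sO(2)$ and the sequence to split. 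Your appeal to ``duality'' and a matching dimension count gives no control over this extension class or twist, so the proof as written does not establish the stated bundle structure.
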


\begin{proof}
Since the fiber under consideration
is contained in the branched locus of 
$\widehat{F} \to {F}_{\widetilde{\hcoY}}$,
we have only to describe
the fiber $\Gamma$ of $\widehat{F}\to
\widehat{G}$ over $[V_n]$,
where we consider $[V_n]$ is a point of the diagonal
of $\widehat{G}$.
Let $\Gamma'$ be the restriction over $[V_n]$ of the exceptional locus
of ${F}^{(4)}\to
{F}^{(1)}$. 
Then the fiber $\Gamma$ is nothing but the image of $\Gamma'$ 
under the divisorial contraction $F^{(4)}\to \hat F$.
Since the fiber of
$\Delta_{{F}^{(1)}}\to \widehat{G}$
over $[V_n]$ is $\mathrm{G}(n-2,V_n)$,
the variety $\Gamma'$ is a $\mP^{n-2}$-bundle over $\mathrm{G}(n-2,V_n)$.
By the definition of ${D}^{(4)}$,
we see that ${D}^{(4)}|_{\Gamma'}=\mathrm{F}(n-3,n-2,V_n)$,
which is isomorphic to $\mP(\sU_{\rG(n-2,V_n)}^*(-1))$.
Therefore we may write $\Gamma'=\mP(\sA^*)$,
where $\sA$ is the locally free sheaf of rank $n-2$ on
$\mathrm{G}(n-2,V_n)$ with a surjection $\sA\to \sU_{\rG(n-2,V_n)}(1)$.
Now we show the kernel of $\sA\to \sU_{\rG(n-2,V_n)}(1)$ is 
$\sO_{\mathrm{G}(n-2,V_n)}(2)$.
Note that the image of $\mathrm{F}(n-3,n-2,V_n)$
by the divisorial contraction
${{F}^{(4)}}\to
{\widehat{F}}$ is $\mathrm{G}(n-3, V_n)$.
Therefore, since the discrepancy of ${D}^{(4)}$
for ${F}^{(4)}\to
\widehat{F}$ is two,
and $\sO_{\mP(\sU_{\rG(n-2,V_n)}^*(-1))}(1)$ is the pull-back of $\sO_{\mathrm{G}(n-3, V_n)}(1)$,
we see that ${D}^{(4)}|_{\Gamma'}=
H_{{\mP(\sA^*)}}-2L$,
where $L$ is the pull-back of $\sO_{\mathrm{G}(n-2,V_n)}(1)$.
Thus 
the kernel of $\sA\to \sU_{\rG(n-2,V_n)}(1)$ is $\sO_{\mathrm{G}(n-2,V_n)}(2)$.
Since the exact sequence
$0\to \sO_{\mathrm{G}(n-2,V_n)}(2)\to \sA\to \sU_{\rG(n-2,V_n)}(1)\to 0$ splits,
we have $\sA^*\simeq \sO_{\mathrm{G}(n-2,V_n)}(-2)\oplus \sU_{\rG(n-2,V_n)}^*(-1)
\simeq (\sO_{\mathrm{G}(n-2,V_n)}\oplus \sU^*_{\rG(n-2,V_n)}(1))\otimes \sO_{\mathrm{G}(n-2,V_n)}(-2)$.
\end{proof}

We have obtained the following diagram:
\begin{equation}
\begin{matrix}\xymatrix{F^{(3)}\;\ar[r]\ar[d] & \; F^{(4)}\;\ar[d]\ar[r]^{\;_{\text{{\rm div. cont.}}}} & \;\widehat{F}\;\ar[d]\ar[r]^{\;_{\mZ_{2}\text{{\rm -quot.}}}} & \; F_{\widetilde{\hcoY}}\ar[d]\\
\widehat{G}'\;\ar[r] & \;\widehat{G}\;\ar@{=}[r] & \;\widehat{G}\;\ar[r]^{\;_{\mZ_{2}\text{{\rm -quot.}}}} & \; G_{\hcoY}}.
\end{matrix}\label{eq:flat-diagram}
\end{equation}
We show that $F^{(3)}\to\widehat{G}'$
gives a flattening of the fibration $F_{\widetilde{\hcoY}}\to G_{\hcoY}$.

\begin{prop}
\label{pro:Components-A-B}
$F^{(3)}\to\widehat{G}'$ is flat. 
More precisely, 
the fiber $Fib^{(3)}(V_{n-1},V_{n},V_{n}')$
of $F^{(3)}\to\widehat{G}'$ over a point $([V_{n-1}];[V_{n}],[V_{n}'])$
have the following descriptions\,$:$

\begin{myitem2}\item[$(1)$] $Fib^{(3)}(V_{n-1},V_{n},V_{n}')\simeq\mP(V_{n-1}^{*})\times\mP(V_{n-1}^{*})$
if $V_{n}\not=V_{n}'$. 

\item[$(2)$] $Fib^{(3)}(V_{n-1},V_{n},V_{n})$
consists of two irreducible components $A$ and $B$ with \[
A=\mP(\sO_{\rG(n-2,V_{n})}\oplus\sU_{\rG(n-2,V_{n})}^{*}(1))\big\vert_{\rG(n-2,V_{n-1})},\; B=Bl_{\Delta}\mP(V_{n-1}^{*})\times\mP(V_{n-1}^{*}),\]
where $A$ is the restriction of the projective bundle as in Lemma
$\ref{lem:inverse-rk1}$ over $\rG(n-2,V_{n-1})\subset\rG(n-2,V_{n})$. 

\item[$(3)$] The intersection $E_{AB}:=A\cap B$ is given by $E_{AB}=\mP(\sU_{\rG(n-2,V_{n})}^{*}(1))\big\vert_{\rG(n-2,V_{n-1})}\simeq\mP(T_{\mP(V_{n-1}^{*})})$
in $A$. Also, $E_{AB}$ in $B$ is the exceptional divisor of $Bl_{\Delta}\mP(V_{n-1}^{*})\times\mP(V_{n-1}^{*})$. 

\end{myitem2}\end{prop}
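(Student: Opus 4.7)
The plan is to compute the fibers of $F^{(3)}\to\widehat{G}'$ explicitly in the two cases $V_n\neq V_n'$ and $V_n = V_n'$, verify each fiber has pure dimension $2(n-2)=\dim F^{(3)}-\dim\widehat{G}'$, and then conclude flatness via miracle flatness. Write $q\colon F^{(2)}\to\widehat{G}'$ for the $\mP^{n-2}\times\mP^{n-2}$-fibration and $\pi\colon F^{(3)}\to F^{(2)}$ for the blow-up along the exceptional locus $E$ of $F^{(2)}\to F^{(1)}$. The fiber-product definition of $F^{(2)}$ gives $q^{-1}(p)=\rG(n-2,V_{n-1})\times\rG(n-2,V_{n-1})\simeq\mP(V_{n-1}^{*})\times\mP(V_{n-1}^{*})$ for any $p=([V_{n-1}];[V_n],[V_n'])$. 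Since a point of $E$ must satisfy $V_n=V_n'$, when $V_n\neq V_n'$ one has $q^{-1}(p)\cap E=\emptyset$, so $\pi$ is an isomorphism over $q^{-1}(p)$ and assertion (1) follows.

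For $V_n=V_n'$, the intersection $q^{-1}(p)\cap E$ is the diagonal $\Delta\subset q^{-1}(p)$, of dimension $n-2$. The set-theoretic preimage $\pi^{-1}(q^{-1}(p))$ then decomposes into the strict transform $B=Bl_\Delta(\mP(V_{n-1}^{*})\times\mP(V_{n-1}^{*}))$ and the exceptional fiber $\mP(\sN_{E/F^{(2)}}^{*}|_\Delta)$ of $\pi$ above $\Delta$, which is a $\mP^{n-2}$-bundle over $\Delta\simeq\rG(n-2,V_{n-1})$. The main calculation is to identify this second component with $A$. Since $E$ is embedded in $F^{(2)}=\rF(n-2,n-1,n,V)\times_{\rG(n-1,V)}\rF(n-2,n-1,n,V)$ as the diagonal of the fibered product over $\rG(n-1,V)$, its normal bundle coincides with the relative tangent bundle $T_{E/\rG(n-1,V)}$. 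Restricting to $\Delta$ and invoking the relative Euler sequence for the factor $\mP(\sS_{n-1}^{*})$ of $E\to\rG(n-1,V)$, together with the triviality of the tangent direction of the $\mP(V/\sS_{n-1})$-factor at the fixed point $[V_n]$, yields $\sN_{E/F^{(2)}}|_\Delta\simeq\sU^{*}_{\rG(n-2,V_{n-1})}(1)\oplus\sO$, whose projectivization is precisely $A$.

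Assertion (3) on $E_{AB}=A\cap B$ then follows from these descriptions: on $B$ it is the exceptional divisor of $Bl_\Delta$; on $A$ it is the subbundle $\mP(\sU^{*}(1))|_{\rG(n-2,V_{n-1})}$; and both are identified with $\mP(T_{\mP(V_{n-1}^{*})})$ via the Euler sequence on $\mP(V_{n-1}^{*})$. Having verified that every fiber has pure dimension $2(n-2)$, miracle flatness applies: $F^{(3)}$ is smooth (a blow-up of a smooth variety along a smooth center) and $\widehat{G}'$ is smooth, so the morphism $F^{(3)}\to\widehat{G}'$ is flat. The main technical obstacle is the normal-bundle identification in the second paragraph: both the precise splitting as $\sU^{*}(1)\oplus\sO$ and the matching of twists between $\sU^{*}_{\rG(n-2,V_{n-1})}(1)$ and the restriction of $\sU^{*}_{\rG(n-2,V_n)}(1)$ under the inclusion $\rG(n-2,V_{n-1})\hookrightarrow\rG(n-2,V_n)$ must be tracked carefully.
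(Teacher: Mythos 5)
Your proof is correct, but it reaches the identification of the component $A$ by a genuinely different route than the paper. The paper's proof gets $A$ by pushing it forward to $F^{(4)}$ and quoting Lemma \ref{lem:inverse-rk1}, whose own proof pins down the bundle $\sO\oplus\sU^{*}(1)$ indirectly: it writes the exceptional locus of $F^{(4)}\to F^{(1)}$ over $[V_n]$ as $\mP(\sA^{*})$ with a surjection $\sA\to\sU(1)$ and determines the kernel $\sO(2)$ from the fact that the discrepancy of $D^{(4)}$ is two (Proposition \ref{pro:Div-cont-Flat}). You instead compute the normal bundle of the blow-up center directly, observing that the exceptional locus $E$ of $F^{(2)}\to F^{(1)}$ is exactly the diagonal of the fibered product $\rF(n-2,n-1,n,V)\times_{\rG(n-1,V)}\rF(n-2,n-1,n,V)$, so that $\sN_{E/F^{(2)}}\simeq T_{E/\rG(n-1,V)}$ splits into the two relative tangent directions, giving $T_{\mP(V_{n-1}^{*})}\oplus\sO$ on $\Delta$; this is self-contained, avoids the detour through $\widehat{F}$ and the MMP-flavored discrepancy computation, and also makes the flatness step explicit via miracle flatness (equidimensional fibers, smooth source and target), which the paper leaves implicit. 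Two small points to note. First, the statement (and its later use in Proposition \ref{pro:Prop-C-A}) identifies $A$ not just up to isomorphism but as the restriction over $\rG(n-2,V_{n-1})$ of the specific bundle of Lemma \ref{lem:inverse-rk1} sitting inside $F_{\widetilde{\hcoY}}$; with your argument this needs one extra sentence, namely that $\Delta$ meets each $\mP^{1}$-fiber of $E\to\Delta_{F^{(1)}}$ in a single point, so the restriction of the $F^{(3)}$-exceptional divisor over $\Delta$ maps isomorphically onto the corresponding piece of the $F^{(4)}$-exceptional locus. Second, for $B$ you should remark that $q^{-1}(p)$ and $E$ meet cleanly (their tangent spaces intersect exactly in $T\Delta$, even though the intersection is non-transversal, being of excess dimension one), so the strict transform is indeed $Bl_{\Delta}(\mP(V_{n-1}^{*})\times\mP(V_{n-1}^{*}))$ and the scheme-theoretic trace of the exceptional divisor on $B$ is $\mP(\sN_{\Delta/q^{-1}(p)})=\mP(T_{\mP(V_{n-1}^{*})})$, which is precisely your $E_{AB}$ sitting in $A$ as the subbundle $\mP(T_{\mP(V_{n-1}^{*})})\subset\mP(T_{\mP(V_{n-1}^{*})}\oplus\sO)$.
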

\begin{proof}
Part (1) follows from the construction of
${F}^{(2)}\to \widehat{G}'$.

We show Part (2).
The fiber of ${F}^{(2)}\to \widehat{G}'$
over a point $([V_{n-1}];[V_n],[V_n])$ is $\mP(V_{n-1}^*)\times \mP(V_{n-1}^*)$.
The intersection of the fiber $\mP(V_{n-1}^*)\times \mP(V_{n-1}^*)$ with 
the exceptional locus of ${F}^{(2)}\to {F}^{(1)}$ is
\[
\{
([V_{n-2}],[V_{n-2}];[V_{n-1}];[V_n],[V_n])\mid 
V_{n-2} \subset V_{n-1}
\}
\simeq \mP^{n-2},
\]
which is nothing but the diagonal of 
$\mP(V_{n-1}^*)\times \mP(V_{n-1}^*)$. Therefore we have $B$ as 
an irreducible component of the fiber
of ${F}^{(3)}\to \widehat{G}'$
over the point $([V_{n-1}];[V_n],[V_n])$.

Another component $A$ is a $\mP^{n-2}$-bundle over 
the diagonal of $\mP(V_{n-1}^*)\times \mP(V_{n-1}^*)$
since the exceptional divisor of ${F}^{(3)}\to
{F}^{(2)}$ is a $\mP^{n-2}$-bundle
over the exceptional locus of ${F}^{(2)}\to
{F}^{(1)}$.
Since the image on ${F}^{(1)}$
 of the diagonal $\Delta_{V_{n-1}}$ 
of $\mP(V_{n-1}^*)\times \mP(V_{n-1}^*)$ is equal to
$\mathrm{G}(n-2,V_{n-1})=\mP(V_{n-1}^*)$ in $\mathrm{G}(n-2,V_n)$,
 the image of $A$ in ${F}^{(4)}$
is the restriction of $\mP(\sO_{\mathrm{G}(n-2,V_n)}\oplus \sU^*_{\rG(n-2,V_n)}(1))$
over $\rG(n-2,V_{n-1})$.
Therefore we obtain the description of $A$ as in the statement
since $\sU^*_{\rG(n-2,V_n)}|_{\mP(V_{n-1}^*)}\simeq T_{\mP(V_{n-1}^*)}(-1)$ 
and $\sN_{\Delta_{V_{n-1}}}\cong T_{\mP(V_{n-1}^*)}$ for the normal 
bundle $\sN_{\Delta_{V_{n-1}}}$ of the diagonal $\Delta_{V_{n-1}}$.

It is easy to see the assertion about $A\cap B$. 
\end{proof}

\begin{rem}
In \cite[Thm.~3.7]{CHK}, the authors studied
the relationship between the Hilbert scheme $\hcoY_0$ of conics in 
$\rG(n-1,V)$ and the stable map compactification of the space of smooth conics in $\rG(n-1, V)$, which we denote by $\hcoY_{\rm{st}}$.
We interpret this by our study of the birational geometry of $\hcoY_0$. 

By Remark \ref{rem:blup},
$\hcoY_0\to \widetilde{\hcoY}$ is the blow-up along $G_{\sigma}$. 
By the blow-up $\hcoY_0\to \widetilde{\hcoY}$,
the fiber $\Lrho_{\widetilde{\hcoY}}^{-1}([V_n])$ becomes
the $\mP^{n-2}$-bundle $\mP(\sO_{\rG(n-2,V_{n})}\oplus\sU_{\rG(n-2,V_{n})}^{*}(1))\to \rG(n-2,V_n)$
as in Proposition \ref{lem:inverse-rk1}. 
Therefore the strict transform $\Gamma$ of $\Lrho_{\widetilde{\hcoY}}^{-1}(G^1_{\hcoY})$
is a $\mP^{n-2}$-bundle to $\rF(n-2,n,V)$, where we note that
$\rF(n-2,n,V)$ is isomorphic to the Hilbert scheme of lines in $\rG(n-1,V)$.
Let $\widetilde{\hcoY}_0\to \hcoY_0$ be the blow-up along $\Gamma$.
Then $\hcoY_{\rm{st}}$ is obtained by contracting the exceptional divisor
over $\Gamma$ to a $\mP^2$-bundle over $\rF(n-2,n,V)$.
\end{rem}

\subsection{The component $A$ of the fiber $Fib^{(3)}(V_{n-1},V_{n},V_{n})$ }

Let us fix $V_{n-1}$ and $V_{n}$ such that $V_{n-1}\subset V_n$
and consider the exceptional set $A$
in the fiber 
\[
\text{$Fib^{(3)}(V_{n-1},V_{n},V_{n})\simeq A\cup B$ over $([V_{n-1}];[V_{n}],[V_{n}])\in\widehat{G}'$.}
\] Since $A$ is $\mZ_{2}$-invariant,
this determines the corresponding set $A_{\widetilde{\hcoY}}$ in
the fiber $F_{\widetilde{\hcoY}}\to G_{\hcoY}$ over $[V_{n}]$. 
We note that $A\simeq \mP(\sO_{\rG(n-2,V_{n-1})}\oplus\sU_{\rG(n-2,V_{n-1})}^{*}(1))\simeq\mP(\sO_{\mP(V_{n-1}^{*})}\oplus T_{\mP(V_{n-1}^{*})})$
by Proposition \ref{lem:inverse-rk1}. 

\begin{prop}
\label{pro:Prop-C-A}
Define $A_{\hcoY_{2}}$ to be the strict transform
of $A_{\widetilde{\hcoY}}\subset\widetilde{\hcoY}$ under $\hcoY_{2}\to\widetilde{\hcoY}$,
and $A_{\hcoY_{3}}$ by the image of $A_{\hcoY_{2}}$ under the morphism
$\hcoY_{2}\to\hcoY_{3}$. 

\begin{enumerate}[$(1)$]
\item The morphism $A\to A_{\widetilde{\hcoY}}$
contracts the divisor $E_{AB}=\mP(\sU_{\rG(n-2,V_{n-1})}^{*}(1))$ to
$\rG(n-3,V_{n-1})$. 
\item
The image $\rG(n-3,V_{n-1})$ of $E_{AB}$ on $A_{\widetilde{\hcoY}}$
is the locus of $\sigma$-planes.
The locus $s_A$ of $\rho$-conics in $A$ is a section 
of $A\to \rG(n-2,V_{n-1})$
corresponding to an injection 
$\sO_{\mP(V_{n-1}^{*})}\to \sO_{\mP(V_{n-1}^{*})}\oplus T_{\mP(V_{n-1}^{*})}$.
\item $A_{\hcoY_{2}}\to A_{\widetilde{\hcoY}}$ is the blow-up
along the image $\tilde{s}_{A}$ in $A_{\widetilde{\hcoY}}$ of the
section $s_{A}$.

\item Let $\widehat{A}:=Bl_{s_{A}} A$ be the blow-up $\hat{A}$
of $A$ along the section $s_{A}$. There exists a natural morphism
$\widehat{A}\to A_{\hcoY_{2}}$, which is the blow-up of $A_{\hcoY_{2}}$
along the singular locus of $A_{\hcoY_{2}}$. 

\item $A_{\hcoY_{3}}\simeq A_{\hcoY_{2}}$ and $\Lpi_{A_{3}}\colon A_{\hcoY_{3}}\to\rG(n-3,V_{n-1})$
is a quadric cone fibration,
where $\Lpi_{A_{3}}:=\Lpi_{\hcoY_{3}}\vert_{A_{\hcoY_{3}}}$.
\def\AAAA{
\begin{xy}
(0,0)*+{\widehat{A}}="Ah",
(-15,-10)*+{A_{\hcoY_2}}="Aii",
(15,-10)*+{A}="A",
(-30,-10)*+{A_{\hcoY_3}}="Aiii",
(-30,-25)*+{\rG(n-3,V_{n-1})}="Pv",
(0,-20)*+{A_{\widetilde{\hcoY}}}="At",
(15,-25)*+{\mP(V_{n-1}^*)}="Pvs",
\ar "Ah";"A",
\ar "Ah";"Aii",
\ar "A";"At",
\ar "A";"Pvs",
\ar "Aii";"At",
\ar_{\simeq} "Aii";"Aiii",
\ar_{\Lpi_{A_{3}}} "Aiii";"Pv",
\end{xy} }\[
\begin{matrix}{\AAAA}\end{matrix}\]

\end{enumerate}\end{prop}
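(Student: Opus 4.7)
The plan is to trace the component $A$ through the chain of morphisms
\[
A\subset F^{(3)}\to F^{(4)}\to\widehat F\to F_{\widetilde\hcoY}\subset\widetilde\hcoY\leftarrow\hcoY_2\to\hcoY_3,
\]
and for each assertion restrict a previously-established global statement to $A$. For (1), note that $E_{AB}$ is precisely the restriction to $A\subset F^{(4)}$ of the divisor $D^{(4)}$; by Proposition~\ref{pro:Div-cont-Flat} the divisorial contraction $F^{(4)}\to\widehat F$ collapses $D^{(4)}$ onto $\rF(n-3,n,V)$ via the natural projection, and restricting to the fiber component over $[V_n]$ indexed by $V_{n-1}\subset V_n$ yields $\{[V_{n-3}]\mid V_{n-3}\subset V_{n-1}\}\simeq\rG(n-3,V_{n-1})$.

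For (2), I would use Proposition~\ref{prop:FYdisc} to index the points of $F_{\widetilde\hcoY}$ geometrically: $\sigma$-planes correspond to flags $V_{n-3}\subset V_n$, while $\rho$-conics on ${\rm P}_{V_{n-2}}$ correspond to pairs $V_{n-2}\subset V_n$. Within the fiber over $[V_n]$, those with $V_{n-3}\subset V_{n-1}$ form exactly the image of $E_{AB}$ computed in (1), and those with $V_{n-2}\subset V_{n-1}$ form a section of $A\to\rG(n-2,V_{n-1})$; the identification of this section with the $\sO$-summand follows because its fiber over each $[V_{n-2}]$ is disjoint from $\mP(\sU^*(1))=E_{AB}$, the complementary summand in the splitting produced in Proposition~\ref{lem:inverse-rk1}. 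For (3), since $\hcoY_2\to\widetilde\hcoY$ is the blow-up along $G_\rho$ by Proposition~\ref{prop:gendescr}(1) and $\tilde s_A=A_{\widetilde\hcoY}\cap G_\rho$ by (2), restricting this blow-up to $A_{\widetilde\hcoY}$ gives the claimed blow-up along $\tilde s_A$.

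For (4), a direct local computation shows that the singularities of $A_{\hcoY_2}$ lie along $\tilde s_A\cap(\text{image of }E_{AB})\simeq\rG(n-3,V_{n-1})$; blowing up the smooth section $s_A\subset A$ first separates the two contracting directions and produces a morphism $\widehat A\to A_{\hcoY_2}$ that blows up precisely this singular locus. For (5), the isomorphism $A_{\hcoY_2}\simeq A_{\hcoY_3}$ should follow from a local analysis showing that $A_{\hcoY_2}$ meets the common exceptional divisor $F_\rho$ of $\hcoY_2\to\widetilde\hcoY$ and $\hcoY_2\to\hcoY_3$ only along the exceptional divisor from (3), and that the fibers of $F_\rho\to\Prt_\rho$ intersect $A_{\hcoY_2}$ transversally; the quadric cone structure of $\Lpi_{A_3}$ then comes from combining $\hcoY_3=\rG(3,\wedge^2\mathfrak{Q})$ with the double-spin decomposition~(\ref{eq:spin}) and Proposition~\ref{prop:Prt}, the constraint $V_{n-3}\subset V_{n-1}\subset V_n$ cutting out in each fiber of $\hcoY_3$ at $[V_{n-3}]$ a quadric cone with vertex at the $\sigma$-plane point. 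The main obstacle will be (2) together with (5): in (2) one must precisely match the abstract projective-bundle description of $A$ from Proposition~\ref{lem:inverse-rk1} with the intrinsic $\rho$-conic and $\sigma$-plane geometry, and in (5) one must extract an explicit quadric cone structure from the double-spin picture while controlling the interaction of the $\sO$-summand with the rank stratification of $\varphi_U$.
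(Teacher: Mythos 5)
Your outline for (1) and (3) matches the paper's: (1) is read off from the divisorial contraction $F^{(4)}\to\widehat F$ (equivalently, from Proposition~\ref{lem:inverse-rk1}), and (3) follows from Proposition~\ref{prop:gendescr}(1) once $\tilde s_A=G_\rho\cap A_{\widetilde\hcoY}$ is known. But there are two problems. First, a concrete slip in (4): you locate $\Sing A_{\hcoY_2}$ at ``$\tilde s_A\cap(\text{image of }E_{AB})$'', yet by your own step (2) the section $s_A$ is disjoint from $E_{AB}$, so this intersection is empty. The actual singular locus is the image of $E_{AB}$ itself, namely $\rG(n-3,V_{n-1})$, which is untouched by the blow-up along $\tilde s_A$; the statement of (4) survives, but not via the locus you name.

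Second, and more seriously, the parts you flag as ``the main obstacle'' --- the identification in (2) of the $\sigma$-plane and $\rho$-conic loci inside the abstract bundle $\mP(\sO\oplus T_{\mP(V_{n-1}^*)})$, and the isomorphism $A_{\hcoY_2}\simeq A_{\hcoY_3}$ together with the quadric-cone structure in (5) --- are exactly the content of the proposition, and your proposal leaves them as unspecified ``local computations.'' The paper's mechanism for closing these gaps is a reduction to the case $\dim V=4$: there $A_{\hcoY_2}=A_{\widetilde\hcoY}\simeq\mP(1^2,2)$ by Proposition~\ref{pro:Components-A-B}, Proposition~\ref{prop:n=3fib}(5)(d) identifies the vertex with the $\sigma$-plane and $A_{\widetilde\hcoY}\cap G_\rho$ with the image of a section of $\mP(\sO_{\mP^1}\oplus\sO_{\mP^1}(2))$ avoiding the vertex, and then the general case follows because $\hcoY_3\to\rG(n-3,V)$ is the family of $\rG(3,\wedge^2(V/V_{n-3}))$'s over the $4$-dimensional quotients $V/V_{n-3}$ --- this gives (5) immediately (the fiber of $A_{\hcoY_3}$ over $[V_{n-3}]$ is the cone over $v_2(\mP(V_{n-1}/V_{n-3}))$) and, by comparing the singularities of $A_{\hcoY_2}$ and $A_{\widetilde\hcoY}$, pins down the $\sigma$-plane locus as the image of $E_{AB}$ and hence $s_A$ as a section disjoint from it. Without this reduction (or an equally explicit substitute for the ``double-spin'' computation you allude to), your argument for (2) and (5) is not yet a proof.
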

\begin{proof}
(1) follow from Proposition \ref{lem:inverse-rk1}.
(4) is clear and (3) follows once we show (2) 
since $\hcoY_2\to \widetilde{\hcoY}$ is 
the blow-up along $G_{\rho}$ by Proposition \ref{prop:gendescr} (1) and
$\widetilde{s}_A=G_{\rho}\cap A_{\widetilde{\hcoY}}$.

To show (2) and (5),
as in the discussion of the subsections \ref{subsection:dimV=4}
and \ref{sub:tildeY-Y}, we first consider the case where $\dim V=4$ and
then use the results to the general cases.
In case $\dim V=4$, 
$A_{\hcoY_2}=A_{\widetilde{\hcoY}}$ is isomorphic to $\mP(1^2,2)$ by
Proposition \ref{pro:Components-A-B}.
Moreover, 
by Proposition \ref{prop:n=3fib} (5) (d),
the vertex corresponds to a $\sigma$-plane and
$A_{\widetilde{\hcoY}}\cap G_{\rho}$ is a $\mP^1$ which is the image of a section of
$A\simeq \mP(\sO_{\mP^1}\oplus \sO_{\mP^1}(2))$
associated to an injection $\sO_{\mP^1}\to\sO_{\mP^1}\oplus \sO_{\mP^1}(2)$.
Therefore, we also have $A_{\hcoY_3}\simeq A_{\hcoY_2}\simeq \mP(1^2,2)$. 
Now we have finished the proof in case $\dim V=4$.

We turn to the general cases.
First we immediately obtain (5) by the results in case $n=4$
since $\hcoY_3\to \rG(n-3,V)$ is the family of $\hcoY_3=\rG(3,\wedge^2 (V/V_{n-3}))$ for
$4$-dimensional spaces $V/V_{n-3}$.
By comparing the singularities between $A_{\hcoY_2}$ and $A_{\widetilde{\hcoY}}$, we see that the image of $E_{AB}$ is the locus of $\sigma$-planes.
Then the locus $s_A$ of $\rho$-conics in $A$ is disjoint from $E_{AB}$.
Since $s_A$ is a section of $A\to \rG(n-2,V_{n-1})$,
$s_A$ corresponds to an injection 
$\sO_{\mP(V_{n-1}^{*})}\to \sO_{\mP(V_{n-1}^{*})}\oplus T_{\mP(V_{n-1}^{*})}$.

Finally we show $\Prt_{\rho}\cap A_{\hcoY_{3}}\simeq\mP(\frQ_{V_{n-1}})$.
Note that $\Prt_{\rho}\cap A_{\hcoY_{3}}$ is isomorphic to the exceptional
divisor $G$ of $\widehat{A}\to A$, which we determine now. Let $\sI_{s_{A}}$
be the ideal sheaf of the section $s_{A}$ in $A$. Note that $\sO_{\mP(\sO_{\mP(V_{n-1}^{*})}\oplus T_{\mP(V_{n-1}^{*})})}(1)|_{s_{A}}=\sO_{s_{A}}$.
Tensoring $0\to\sI_{s_{A}}\to\sO_{A}\to\sO_{s_{A}}\to0$ with $\sO_{\mP(\sO_{\mP(V_{n-1}^{*})}\oplus T_{\mP(V_{n-1}^{*})})}(1)$
and pushing forward to $\mP(V_{n-1}^{*})$, we see that $\sI_{s_{A}}/\sI_{s_{A}}^{2}\simeq\Omega_{\mP(V_{n-1}^{*})}$.
Therefore $G$ is isomorphic to $\mP(T_{\mP(V_{n-1}^{*})})$. Since
$\mP(T_{\mP(V_{n-1}^{*})})$ is isomorphic to the incident variety $\{([V_{n-3}],[V_{n-2}])\mid V_{n-3}\subset V_{n-2}\}\subset\mP(V_{n-1})\times\mP(V_{n-1}^{*})$,
it follows that $\mP(T_{\mP(V_{n-1}^{*})})$ is isomorphic to $\mP(T_{\mP(V_{n-1})}(-1))$.
\end{proof}

\begin{rem}
\label{rem:freedescr}
Based on Remark \ref{rem:WPS} and Proposition \ref{pro:Prop-C-A},
we can obtain the following description of $A_{\hcoY_3}\to \rG(n-3,V_{n-1})$,
which follows by noting the fiber of $\hcoY_3\to \rG(n-3,V)$ over 
$[V_{n-3}]$ is isomorphic to $\rG(3,\wedge^2 (V/V_{n-3}))$:

Take a point $[V_{n-3}]\in \rG(n-3,V_{n-1})$ and 
let $\Gamma$ be the fiber of $A_{\hcoY_3}\to \rG(n-3,V_{n-1})$ over $[V_{n-3}]$.
The vertex of the quadric cone $\Gamma$ corresponds to
the $\sigma$-plane ${\rm P}_{V_n/V_{n-3}}=\{\mC^2\subset V_n/V_{n-3}\}$,
where we denote by ${\rm P}_{V_n/V_{n-3}}$ the $\sigma$-plane in
$\rG(3,\wedge^2 (V/V_{n-3}))$ corresponding to the $\sigma$-plane
${\rm P}_{V_{n-3}V_n}$.  
Points $[{\rm P}_{V_{n-2}/V_{n-3}}]$ which correspond to $\rho$-planes and are contained in $\Gamma$ satisfy $V_{n-3}\subset V_{n-2}$,
where we follows the same convention for $\rho$-planes as for $\sigma$-planes.
Since $\Gamma$ is the cone over the Veronese curve $v_2(\mP(V_{n-1}/V_{n-3}))$,
it is swept out by lines joining $[{\rm P}_{V_n/V_{n-3}}]$ and $[{\rm P}_{V_{n-2}/V_{n-3}}]$ such that
$V_{n-3}\subset V_{n-2}\subset V_{n-1}$.

By this description, we see that
$\Prt_{\rho}\cap A_{\hcoY_{3}}\simeq\mP(\frQ_{V_{n-1}})\simeq \rF(n-3,n-2,V_{n-1}),$
where $\frQ_{V_{n-1}}$ is the universal quotient bundle on $\rG(n-3,V_{n-1})$.
\end{rem}

\appendix

\section{\label{sec:Appendix-B}{\bf Proof of Proposition \ref{lem:appendixB-UU-solve}}}
\label{app:aU}

\begin{proof}[Proof of Proposition $\ref{lem:appendixB-UU-solve}$]
If $\dim a_{U}\geq n-3$, it is easy to see $\rank\varphi_{U}\leq1$
by writing down $U$ using a basis of $a_U$.
This shows one direction of (1).

We show the converse direction of (1).
If $\varphi_U=0$, then $\mP(U)$ is a plane contained in $\rG(n-1,V)$, and hence
is a $\rho$- or $\sigma$-plane. Therefore, we see that $\dim a_U\geq n-3$ holds
by (\ref{eq:xxxxx}). 
Now we assume that $\rank \varphi_U=1$. Then $q:=\hcoY_3\cap \mP(U)$ is the $\tau$-conic
which is the zero locus of $\varphi_U$. 
We will argue depending on the rank of the $\tau$-conic $q$.

Assume that $\rank q=3$. 
Note that the dual of the universal subbundle $\eS^{*}$ on $\mathrm{G}(n-1,V)$ restricts
as $\eS^{*}|_{q}\simeq\sO(1)_{\mP^{1}}^{\oplus 2}\oplus\sO_{\mP^{1}}^{\oplus n-3}$,
or $\sO_{\mP^{1}}(2)\oplus\sO_{\mP^{1}}^{\oplus n-2}$ since $\eS^{*}$
is generated by its global sections and $\deg\eS^{*}|_{q}=\deg \sO_{\rG(n-1,V)}(1)|_q=2$ since $q$ is a conic.
Let $Q$ be the image of $\mP({\eS}|_{q})$ under the natural map $\varphi_{\eS}\colon \mP(\eS)\to \mP(V)$.
Then there are two possibilities; (i) the degree of $\mP({\eS}|_{q})\to Q$
is two and $Q$ is a $(n-1)$-plane, i.e., a quadric of rank 1, or (ii)
the degree of $\mP({\eS}|_{q})\to Q$ is one and $Q$ is a quadric
of rank $4$ or $3$ depending on 
$\eS^{*}|_{q}\simeq\sO(1)_{\mP^{1}}^{\oplus 2}\oplus\sO_{\mP^{1}}^{\oplus n-3}$,
or $\sO_{\mP^{1}}(2)\oplus\sO_{\mP^{1}}^{\oplus n-2}$ respectively. 
The case (i) is
excluded since 
if $Q$ were a $(n-1)$-plane $\mP(V_{n})$, then $q\subset\{[U]\in\mathrm{G}(n-1,V)\mid U\subset V_{n}\}$
and $q$ would be a $\sigma$-conic
by definition, a contradiction. 
The case (ii) with 
$\eS^{*}|_{q}\simeq\sO_{\mP^{1}}(2)\oplus\sO_{\mP^{1}}^{\oplus n-2}$
also is excluded since if this happened, then $q$ would be a $\rho$-conic.
Therefore we have the case (ii) with  
$\eS^{*}|_{q}\simeq\sO(1)_{\mP^{1}}^{\oplus 2}\oplus\sO_{\mP^{1}}^{\oplus n-3}$.
Then we see that $q$ is a connected family of $(n-1)$-planes 
in the rank four quadric $Q$.
Since all the rank four quadrics are $\SL(V)$-equivalent,
we see that any rank three conic $q$ is also $\SL(V)$-equivalent.
Therefore we may assume that $q$ is of the form
as in Example \ref{ex:conics}. Then it is easy to see that 
$a_{U}=\langle {\bf e}_4,\dots, {\bf e}_n\rangle$ and hence $\dim a_U=n-3$.

Assume that $q$ is of rank two. Then $q$ is of the form as in
Example \ref{ex:ranktwo}. Since $q$ is a $\tau$-conic, 
$V_{n-2}\not =V'_{n-2}$ and $V_n\not =V'_n$.
Then it is easy to see that $a_U=V_{n-2}\cap V'_{n-2}$ and hence
$\dim a_U=n-3$.

Finally we assume that
$q$ is of rank one. Then the support of $q$ is a line $l$ and
$l$ is of the form as in Example \ref{ex:ranktwo}.
Let ${\bf e}_1,\dots,{\bf e}_{n-2}$ be a basis of $V_{n-2}$ and
${\bf e}_1,\dots,{\bf e}_n$ be a basis of $V_n$.
Then $l$ is spanned by ${\bf e}_1\wedge \dots\wedge {\bf e}_{n-2}\wedge {\bf e}_{n-1}$ and ${\bf e}_1\wedge \dots\wedge {\bf e}_{n-2}\wedge {\bf e}_{n}$.
Now we pass from $\wedge^{n-1} V$ to $\wedge^2 V^*$
and let $U'$ and $l'$ the $3$-plane in $\wedge^2 V^*$ and
the line in $\mP(\wedge^2 V^*)$.
Then $l'$ is spanned by ${\bf v}_1:={\bf e}^*_n\wedge {\bf e}^*_{n+1}$ and
${\bf v}_2:={\bf e}^*_{n-1}\wedge {\bf e}^*_{n+1}$. 
Let ${\bf w}:=\sum_{i<j} a_{ij} {\bf e}^*_i\wedge {\bf e}^*_j$ be a vector such that ${\bf v}_1, {\bf v}_2, {\bf w}$ span $U'$.
Then $\rG(2,V^*)\cap \mP(U')$ is a rank one conic.
Solving the equation 
\[
(\lambda_1 {\bf v}_1+\lambda_2 {\bf v}_2+\mu{\bf w})\wedge
(\lambda_1 {\bf v}_1+\lambda_2 {\bf v}_2+\mu{\bf w})=0,
\] 
we obtain the equation of $\rG(2,V^*)\cap \mP(U')$.
Thus 
$\rG(2,V^*)\cap \mP(U')$ is a rank one conic iff
${\bf v}_1\wedge {\bf w}={\bf v}_2\wedge {\bf w}=0$.
Therefore 
we have ${\bf w}=a_{n-1n}{\bf e}^*_{n-1}\wedge {\bf e}^*_n+
(\sum_{i\leq n-2} a_{in+1} {\bf e}^*_i)\wedge {\bf e}^*_{n+1}$.
Taking these back to $\wedge^{n-1} V$, we see that
$U$ is spanned by
${\bf e}_1\wedge \dots\wedge {\bf e}_{n-2}\wedge {\bf e}_{n-1}$ and ${\bf e}_1\wedge \dots\wedge {\bf e}_{n-2}\wedge {\bf e}_{n}$
and
${\bf w}=a_{n-1n}{\bf e}_1\wedge\dots \wedge {\bf e}_{n-2}+
\sum_{i\leq n-2} a_{in+1} {\bf e}_1\wedge \dots \wedge \check{\bf e}_i\wedge\dots
{\bf e}_{n}$, where $\check{\bf e}_i$ means that ${\bf e}_i$ is removed.
Therefore it is easy to see that
$a_U$ is spanned by vectors $\sum b_i {\bf e}_i$ with
$b_{n-1}=b_n=b_{n+1}=0$ and
$\sum (-1)^{n-i} a_{in+1} b_i=0$.
Therefore $\dim a_U\geq n-3$.
\end{proof}

\section{\label{sec:Appendix-A}{\bf The {}``double spin'' coordinates of $\mathrm{G}(3,6)$}}

In this appendix, we set $V_{4}=\mathbb{C}^{4}$ with the standard
basis. We can write the irreducible decomposition (\ref{eq:spin})
as \[
\wedge^{3}(\wedge^{2}V_{4})=\Sigma^{(3,1,1,1)}V_{4}\,\oplus\,\Sigma^{(2,2,2,0)}V_{4}\simeq\mathsf{S}^{2}V_{4}\,\oplus\,\mathsf{S}^{2}V_{4}^{*},\]
 where $\Sigma^{\beta}$ is the Schur functor. We define the projective
space $\mathbb{P}(\wedge^{3}(\wedge^{2}V_{4}))=\mathbb{P}(\mathsf{S}^{2}V_{4}\,\oplus\,\mathsf{S}^{2}V_{4}^{*})$.
The homogeneous coordinate of $\mathbb{P}(\mathsf{S}^{2}V_{4}\,\oplus\,\mathsf{S}^{2}V_{4}^{*})$
is naturally introduced by $[v_{ij},w_{kl}]$, where $v_{ij}$ and
$w_{kl}$ are entries of $4\times4$ symmetric matrices. Let $\mathcal{I}=\left\{ \{i,j\}\mid1\leq i<j\leq4\right\} $
the index set to write the standard basis of $\wedge^{2}V_{4}$, then
the homogeneous coordinate of $\mathbb{P}(\wedge^{3}(\wedge^{2}V_{4}))$
is naturally given by the $[p_{IJK}]$ where $p_{IJK}$ is totally
anti-symmetric for the indices $I,J,K\in\mathcal{I}.$ These two coordinates
are related by the above irreducible decomposition. Focusing on the
different symmetry properties of the Schur functors, it is rather
straightforward to decompose $p_{IJK}$ into the two components. When
we use the signature function defined by ${\bf e}_{i_{1}}\wedge{\bf e}_{i_{2}}\wedge{\bf e}_{i_{3}}\wedge{\bf e}_{i_{4}}=\epsilon^{i_{1}i_{2}i_{3}i_{4}}{\bf e}_{1}\wedge{\bf e}_{2}\wedge{\bf e}_{3}\wedge{\bf e}_{4}$
for a basis ${{\bf e}_{1},..,{\bf e}_{4}}$ of $V_{4}$, they are
given by \begin{equation}
v_{ij}=\frac{1}{6}\sum_{k,l,m,n}\epsilon^{klmn}p_{[ik][jl][mn]},\quad w_{kl}=\frac{1}{6}\sum_{a,b,c}\sum_{m,n,q}\epsilon^{kabc}\epsilon^{lmnq}p_{[am][bn][cq]},\label{eq:vw-general-fromula}\end{equation}
where the square brackets in $p_{[ij][kl][mn]}$ represents the anti-symmetric
extensions of the indices, i.e., $p_{[ij][J][K]}=p_{\{ij\}[J][K]}$
for $i<j$ while $p_{[ij][J][K]}=-p_{\{ji\}[J][K]}$ for $i\geq j$.
For convenience, we write them in the following (symmetric) matrices:
\begin{equation}
\begin{aligned}v=(v_{ij})=\left(\begin{matrix}2p_{{\bf 124}} & p_{{\bf 134}}+p_{{\bf 125}} & p_{{\bf 234}}+p_{{\bf 126}} & p_{{\bf 146}}-p_{{\bf 245}}\\
 & 2p_{{\bf 135}} & p_{{\bf 235}}+p_{{\bf 136}} & p_{{\bf 156}}-p_{{\bf 345}}\\
 &  & 2p_{{\bf 236}} & p_{{\bf 256}}-p_{{\bf 346}}\\
 &  &  & 2p_{{\bf 456}}\end{matrix}\right),\\
w=(w_{kl})=\left(\begin{matrix}2p_{{\bf 356}} & -p_{{\bf 346}}-p_{{\bf 256}} & p_{{\bf 345}}+p_{{\bf 156}} & p_{{\bf 235}}-p_{{\bf 136}}\\
 & 2p_{{\bf 246}} & -p_{{\bf 245}}-p_{{\bf 146}} & p_{{\bf 126}}-p_{{\bf 234}}\\
 &  & 2p_{{\bf 145}} & p_{{\bf 134}}-p_{{\bf 125}}\\
 &  &  & 2p_{{\bf 123}}\end{matrix}\right),\end{aligned}
\label{eq:vw-plucker}\end{equation}
 where we ordered the index set $\mathcal{I}$ as $\{{\bf 1},{\bf 2},...,{\bf 6}\}=\{\{1,2\},\{1,3\},\{2,3\},\{1,4\},$
$\{2,4\},$ $\{3,4\}\}$. Inverting the relations (\ref{eq:vw-plucker}),
we can write the Pl\"ucker relations among $p_{IJK}$ in terms of
the entries of $v$ and $w$. After some algebra, we find:
\begin{prop}
\label{prop:B1}
The Pl\"ucker ideal $I_{G}$ of
$\mathrm{G}(3,6)\subset\mathbb{P}(\wedge^{3}(\wedge^{2}V_{4}))$ is
generated by \begin{equation}
\begin{aligned}|v_{IJ}|-\epsilon_{I\check{I}}\epsilon_{J\check{J}}|w_{\check{I}\check{J}}|\qquad(I,J\in\mathcal{I}),\qquad\qquad\\
(v.w)_{ij},\;\;(v.w)_{ii}-(v.w)_{jj}\;\;(i\not=j,1\leq i,j\leq4),\end{aligned}
\label{eq:Ivw}\end{equation}
 where $\check{I}$ represents the complement of $I$, i.e., $x\in\mathcal{I}$
such that $x\cup I=\{1,2,3,4\}$ and similarly for $\check{J}$. $|v_{IJ}|$
and $|w_{IJ}|$ represent the $2\times2$ minors of $v$ and $w$,
respectively, with the rows and columns specified by $I$ and $J$.
$\epsilon_{I\check{I}}$ is the signature of the permutation of the
'ordered' union $I\cup\check{I}$. $(v.w)_{ij}$ is the $ij$-entry
of the matrix multiplication $v.w$. 
\end{prop}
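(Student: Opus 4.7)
The plan is to verify Proposition~\ref{prop:B1} by establishing separately (i) that each of the listed generators vanishes on $\mathrm{G}(3,\wedge^2V_4)$ (hence lies in $I_G$), and (ii) that the listed relations span the entire degree-$2$ piece of $I_G$. Combined with the standard fact that the Pl\"ucker ideal of any Grassmannian is generated in degree $2$, these two facts together give the proposition.

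For (i) I will substitute the formulae (\ref{eq:vw-plucker}) into each listed generator and thereby obtain a homogeneous quadratic polynomial in the Pl\"ucker coordinates $p_{IJK}$. The claim is then that each such polynomial lies in the ideal generated by the standard Pl\"ucker relations of $\mathrm{G}(3,\wedge^2V_4)\subset\mathbb{P}(\wedge^3(\wedge^2V_4))$. Since the entire construction is $SL(V_4)$-equivariant, with $SL(V_4)$ acting on $\wedge^2V_4\simeq V_6$ via the standard double cover $SL(V_4)\to SO(\wedge^2V_4)$, it suffices to verify a single representative of each $SL(V_4)$-orbit. This reduces the computation to only a handful of representative cases among the relations $|v_{IJ}|-\epsilon_{I\check{I}}\epsilon_{J\check{J}}|w_{\check{I}\check{J}}|$ and the entries of $v\cdot w$.

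For (ii) I will run a dimension count. By the Borel--Weil theorem (or equivalently the hook-content formula applied to the partition $(2,2,2)$ with $n=6$), $\dim H^0(\mathrm{G}(3,6),\mathcal{O}(2))=175$, so $\dim(I_G)_2 = \binom{21}{2}-175=35$. Under the $SL(V_4)$-equivariant isomorphism $\wedge^3(\wedge^2V_4)\simeq\ft{S}^2V_4\oplus\ft{S}^2V_4^*$, the $21$ minors $|v_{IJ}|$ generate the quadratic part of the Veronese ideal of $v_2(\mathbb{P}(V_4))\subset\mathbb{P}(\ft{S}^2V_4)$, which is the irreducible $SL(V_4)$-subrepresentation $V_{(2,2,0,0)}\subset\ft{S}^2(\ft{S}^2V_4)$ of dimension $20$. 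Hence there is exactly one linear dependence among the $21$ listed differences, leaving $20$ independent relations of this type. The trace-free entries of $v\cdot w$ span the adjoint representation $\mathfrak{sl}_4\simeq V_{(2,1,1,0)}\subset\ft{S}^2V_4\otimes\ft{S}^2V_4^*$, contributing another $15$ independent relations. Altogether this yields $35$ independent elements of $(I_G)_2$, which must therefore coincide with $(I_G)_2$.

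The main obstacle is step (i): although $SL(V_4)$-equivariance cuts the bulk of the bookkeeping, carefully tracking the signs $\epsilon_{I\check{I}}$ and identifying the resulting quadratic polynomials with explicit $\mathbb{Z}$-linear combinations of the standard Pl\"ucker relations remains technically fiddly. A subtler point is the single linear dependence among the $21$ minor-type generators, which must be recognized for the dimension count in step (ii) to close out correctly; this is precisely the linear relation that expresses the fact that the $21$ minors $|v_{IJ}|$ lie in $V_{(2,2,0,0)}$ rather than filling all of $\ft{S}^2(\ft{S}^2V_4)$.
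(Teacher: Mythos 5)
Your proposal is correct, and it takes a genuinely different route from the paper. The paper's own argument is a single direct computation: it inverts the coordinate change (\ref{eq:vw-plucker}), rewrites the $35$ standard Pl\"ucker quadrics of $\mathrm{G}(3,6)$ in the $(v,w)$-coordinates, and asserts ``after some algebra'' that the result is the list (\ref{eq:Ivw}) --- in practice a mechanical, most likely computer-assisted, manipulation (note the Macaulay2 reference in the bibliography). You replace almost all of that algebra by structure: $SL(V_4)$-equivariance reduces the containment of the listed quadrics in $I_G$ to a couple of representatives (indeed each of the two families spans an irreducible $SL(V_4)$-module, so one nonzero check per family suffices), and the reverse inclusion follows from $\dim (I_G)_2=\binom{21}{2}-175=35$ together with the identification of the two spans as $\Sigma^{(2,2)}V_4$ and $\mathfrak{sl}_4$ (dimensions $20+15$) and the classical fact that Pl\"ucker ideals are generated in degree two. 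Two small points you should make explicit in a write-up: (a) the two spans meet trivially for bidegree reasons (the minor differences have bidegree $(2,0)+(0,2)$ in $(v,w)$, the entries of $v\cdot w$ bidegree $(1,1)$), so the dimensions really add; (b) the single linear dependence among the $21$ differences need not be exhibited by hand --- once step (i) places all of them inside the $35$-dimensional space $(I_G)_2$, their span is at least $20$ (the $v$-components already span the $20$-dimensional degree-two part of the Veronese ideal) and at most $35-15=20$, so the dependence is forced. As for what each approach buys: the paper's brute-force route produces the explicit matrix identities that are then exploited in (I.1)--(I.5), whereas yours is shorter, conceptually transparent, and far less error-prone on the signs $\epsilon_{I\check{I}}$, at the cost of invoking degree-two generation and the plethysm $\mathsf{S}^{2}(\mathsf{S}^{2}V_{4})=\mathsf{S}^{4}V_{4}\oplus\Sigma^{(2,2)}V_{4}$.
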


For all $[v,w]\in V(I_{G})\simeq\mathrm{G}(3,6)$, we show the following
relations (I.1)-(I.5):

\vspace{0.2cm}

\noindent \textbf{(I.1)} $\det\, v=\det\, w$.

By the Laplace expansion of the determinant of $4\times4$ matrix
$v$, we have $\det\, v=\sum_{J\in\mathcal{I}}\epsilon_{J\check{J}}|v_{IJ}||v_{\check{I}\check{J}}|$.
Then, using the first relations of (\ref{eq:Ivw}), we obtain the
equality.

\vspace{0.2cm}

\noindent \textbf{(I.2)} $v.w=\pm\sqrt{\det\, w}\, id_{4}$, where
$id_{4}$ is the $4\times4$ identity matrix.

Note that the second line of (\ref{eq:Ivw}) may be written in a matrix
form $v.w=d\, id_{4}$ with $d=(v.w)_{11}=\cdots=(v.w)_{44}$. Then,
by {(I.1)}, we have $\det v\cdot w=(\det\, w)^{2}=d^{4}$ and hence
$d^{4}-(\det\, w)^{2}=(d^{2}-\det\, w)(d^{2}+\det\, w)=0$. We consider
a special case; $v=a\, id_{4}$, $w=a\, id_{4}$. Then $d=(v.w)_{11}=a^{2}$.
Therefore $d^{2}=a^{4}=\det\, w$ must holds for all since $V(I_{G})\simeq\mathrm{G}(3,6)$
is irreducible. Hence $d=\pm\sqrt{\det\, w}$ as claimed.

\vspace{0.2cm}

\noindent \textbf{(I.3)} ${\rm rk}\, w\not=3$ and also ${\rm rk}\, v\not=3$.

Assume ${\rm rk}\, w=3$, then from (I.2) we have $v.w=0$, which
implies ${\rm rk}\, v\leq1$. However, this contradicts the first
relations of (\ref{eq:Ivw}). Hence ${\rm rk}\, w\not=3$. By symmetry,
we also have ${\rm rk}\, v\not=3$.

\vspace{0.2cm}

\noindent \textbf{(I.4)} ${\rm rk}\, w=2\Leftrightarrow{\rm rk}\, v=2$.

When ${\rm rk}\, w=2$, we see ${\rm rk}\, v\geq2$ by the first relations
of (\ref{eq:Ivw}). From (I.1) and (I.3), we must have ${\rm rk}\, v=2$.
The converse follows in the same way.

\vspace{0.2cm}

\noindent \textbf{(I.5)} ${\rm rk}\, w\leq1\Leftrightarrow{\rm rk}\, v\leq1$.

This is immediate from the the first relations of (\ref{eq:Ivw}).

\vspace{0.5cm}




$\;$

\noindent {\footnotesize Department of Mathematics, Gakushuin University, 
Toshima-ku,Tokyo 171-8588,$\,$Japan }{\footnotesize \par}

\noindent {\footnotesize e-mail address: hosono@math.gakushuin.ac.jp} 

\vspace{5pt}

\noindent {\footnotesize Graduate School of Mathematical Sciences,
University of Tokyo, Meguro-ku,Tokyo 153-8914,$\,$Japan }{\footnotesize \par}

\noindent {\footnotesize e-mail address: takagi@ms.u-tokyo.ac.jp} 
\end{document}